\newtheorem{proposition}{Proposition}[section]
\newtheorem{theorem}[proposition]{Theorem}
\newtheorem{lemma}[proposition]{Lemma}
\newtheorem{prop}[proposition]{Proposition}
\newtheorem{cor}[proposition]{Corollary}
\newtheorem{thm}[proposition]{Theorem}
\theoremstyle{definition}
\newtheorem{example}[proposition]{Example}
\theoremstyle{remark}
\newtheorem{remark}[proposition]{Remark}
\numberwithin{equation}{section}
\definecolor{darkblue}{cmyk}{1,0.4,0,0.4}  
\newcommand\fs@boxedtopcap{\def\@fs@cfont{\bfseries}\let\@fs@capt\floatc@plain
  \def\@fs@pre{\setbox\@currbox\vbox{\hbadness10000
    \moveleft3.4pt\vbox{\advance\hsize by6.8pt
      \hrule \hbox to\hsize{\vrule\kern3pt
        \vbox{\kern3pt\box\@currbox\kern3pt}\kern3pt\vrule}\hrule}}}%
  \def\@fs@mid{\kern2pt}%
  \def\@fs@post{}\let\@fs@iftopcapt\iftrue}
\newcommand{\margincolor}{red}      
\definecolor{darkgreen}{rgb}{0,0.7,0}
\newcounter{margincounter}
\newcommand{\marginnum}{
\ifnum\value{margincounter}<10
\textcolor{\margincolor}{\begin{picture}(0,0)\put(2.2,2.4){\circle{9}}\end{picture}\footnotesize\arabic{margincounter}}
\else\ifnum\value{margincounter}<100
\textcolor{\margincolor}{\begin{picture}(0,0)\put(4.256,2.5){\circle{11}}\end{picture}\footnotesize\arabic{margincounter}}
\else
\textcolor{\margincolor}{\begin{picture}(0,0)\put(6.8,2.5){\circle{14}}\end{picture}\footnotesize\arabic{margincounter}}
\fi\fi
}
\newcommand{\switchmargin}{
\if@reversemargin
\normalmarginpar
\else
\reversemarginpar
\fi
}
\newcommand{\newword}[1]{\textbf{\emph{#1}}}
\newcommand{\integers}{\mathbb Z}
\newcommand{\reals}{\mathbb R}
\newcommand{\covered}{\lessdot}
\newcommand{\set}[1]{{\left\lbrace #1 \right\rbrace}}
\newcommand{\br}[1]{{\langle #1 \rangle}}
\renewcommand{\P}{{\mathcal P}}
\newcommand{\Q}{{\mathcal Q}}
\newcommand{\R}{{\mathcal R}}
\newcommand{\e}{\mathbf{e}}
\newcommand{\ubar}[1]{\text{\b{$#1$}}}
\newcommand{\eb}{\bar{\e}}
\newcommand{\eub}{\ubar{\e}}
\newcommand{\ck}{\spcheck}
\newcommand{\fin}{\mathrm{fin}}
\newcommand{\doub}{\mathrm{doub}}
\newcommand{\out}{\mathrm{out}}
\newcommand{\tNC}{\widetilde{NC}}
\newcommand{\tNCDc}{\tNC\mathstrut^D_{\!c}}
\newcommand{\tNCDetac}{\tNC\mathstrut^D_{\!\eta(c)}}
\newcommand{\tNCDcircc}{\tNC\mathstrut^{D,\circ}_{\!c}}
\newcommand{\tNCDcircetac}{\tNC\mathstrut^{D,\circ}_{\!\eta(c)}}
\newcommand{\tNCBc}{\tNC\mathstrut^B_{\!c}}
\newcommand{\tNCBcircc}{\tNC\mathstrut^{B,\circ}_{\!c}}
\newcommand{\afftype}[1]{{\widetilde{\raisebox{0pt}[6pt][0pt]{#1}}}}
\newcommand{\Stilde}{\raisebox{0pt}[0pt][0pt]{$\,{\widetilde{\raisebox{0pt}[6.1pt][0pt]{\!$S$}}}$}}
\newcommand{\Stildes}{\Stilde^{\mathrm{s}}_{\!2n}} 
\newcommand{\Stildeses}{\Stilde^{\mathrm{ses}}_{2n}} 
\newcommand{\Stildedes}{\Stilde^{\mathrm{des}}_{2n}} 
\newcommand{\Stildejes}{\Stilde^{\mathrm{jes}}_{2n}} 
\newcommand{\Stildebes}{\Stilde^{\mathrm{bes}}_{2n}} 
\newcommand{\nbig}{\#\mathrm{big}}
\newcommand{\nneg}{\#\mathrm{neg}}
\newcommand{\perm}{\mathsf{perm}}
\newcommand{\McSul}{\mathsf{McSul}}
\newcommand{\New}{\mathsf{New}}
\newcommand{\C}{\mathsf{C}}
\newcommand{\vr}{{\varrho}}
\newcommand{\fuu}{f_{\uparrow\uparrow}}
\newcommand{\fud}{f_{\uparrow\downarrow}}
\newcommand{\fdu}{f_{\downarrow\uparrow}}
\newcommand{\fdd}{f_{\downarrow\downarrow}}
\renewcommand{\mod}[1]{\ (\mathrm{mod}\ #1)}
\newcommand{\cmod}[1]{(\mathrm{mod}\ #1)}  
\newcommand{\curve}{\mathsf{curve}}
\title[Noncrossing partitions of an annulus with double points]{Symmetric noncrossing partitions of an annulus with double points\\\vspace{-10pt}}
\author{Nathan Reading\\\vspace{-10pt}}
\thanks{Nathan Reading was partially supported by the Simons Foundation under award number 581608 and by the National Science Foundation under award number DMS-2054489.  }
\subjclass[2010]{Primary: 20F55, 05E16; Secondary: 20F36}
\begin{document}

\begin{abstract}
For affine Coxeter groups of affine types $\afftype D$ and $\afftype B$, we model the interval $[1,c]_T$ in the absolute order by symmetric noncrossing partitions of an annulus with one or two double points.
In type $\afftype B$ (and \emph{almost} in type $\afftype D$), the diagrams also model the larger lattice defined by McCammond and Sulway.
\end{abstract}

\maketitle

\vspace{-20pt}

\setcounter{tocdepth}{2}
\tableofcontents

%
%
%
%

\section{Introduction}
The object of this paper is to construct combinatorial models for certain intervals in the absolute order on a Coxeter group~$W$ and analogous intervals in supergroups of~$W$.
Specifically, we consider classical affine Coxeter groups and study the interval $[1,c]_T$ between the identity element and a Coxeter element.
In finite type, this interval is a Garside structure for the associated Artin group~\cite{Bessis,Bra-Wa}.
In affine type, McCammond and Sulway~\cite{McSul} constructed a supergroup of $W$ and showed that the analogous interval between $1$ and $c$ is a lattice and is a Garside structure for a supergroup of the Artin group.
In~\cite{affncA} and this paper, we construct combinatorial models for the intervals $[1,c]_T$ and for McCammond and Sulway's lattice in classical affine type, types $\afftype A$ and $\afftype C$ in \cite{affncA} and types $\afftype D$ and $\afftype B$ here.

The combinatorial model consists of symmetric noncrossing partitions of an annulus with two double points (type $\afftype D$) or one double point (type $\afftype B)$.
See Figures~\ref{nc ex fig} and~\ref{nc ex fig B}.
In fact, the McCammond-Sulway lattice in type $\afftype D$ is \emph{slightly} beyond the reach of our combinatorial model, but close enough to make the combinatorial model useful:
We need to add a finite number of group elements, which we identify explicitly as permutations, to the model to obtain the entire interval.
(Indeed, the interval $[1,c]_T$ contains all but finitely many elements of the McCammond-Sulway lattice, but the combinatorial model leaves out fewer elements.)
In type~$\afftype B$, the poset of symmetric noncrossing partitions is isomorphic to the McCammond-Sulway lattice.
In both types, the model captures the interval $[1,c]_T$ as the poset of symmetric noncrossing partitions with no ``dangling annular blocks''.
These planar diagrams ultimately come from projecting a small orbit to Coxeter plane, in the spirit of~\cite{plane}.
The constructions were generalized in \cite{surfnc} to a notion of symmetric noncrossing partitions of marked surfaces with double points.

The planar diagrams in this paper are analogous to the noncrossing partition diagrams in finite type D due to Athanasiadis and Reiner~\cite{Ath-Rei}.
There is also a construction~\cite{NicOan} of annular noncrossing partitions of types B and D, but these live in the finite Coxeter groups of these~types.

The isomorphisms from noncrossing partitions to $[1,c]_T$ and larger posets involve reading cycles in permutations from the boundaries of blocks.  
To prove the isomorphisms in type $\afftype D$, we must describe the cycles in great detail.
In type~$\afftype B$, we shortcut such details by ``folding'' the type $\afftype D$ results in the usual sense.

Many of the results in type $\afftype D$ appeared in Laura Brestensky's thesis \cite{BThesis}.
Here, we give a new account of these results (retaining some key ideas and some arguments), make the connection to the lattice of McCammond and Sulway, and prove the analogous results in type $\afftype B$.
(Along the way, we correct an error in \cite{BThesis} that was repeated in early versions of~\cite{surfnc}.
See Remark~\ref{error} and \cite[Section~4.6]{surfnc}.)

\section{Affine Coxeter groups of type $\afftype{D}$}
\label{aff type d}
In this section, after some general background on Coxeter groups and the affine case, we construct a root system, a reflection representation, and a permutation representation for a Coxeter group of type $\afftype{D}_{n-1}$.
We also construct a supergroup of the Coxeter group.
Some of what follows requires the assumption that $n\ge5$.

\subsection{General background on affine Coxeter groups}\label{gen sec}
We assume many standard definitions and facts about Coxeter groups and root systems.
Much of the assumed background, including justification of some facts given below, can be found in \cite[Section~2]{affncA}.
Here, we highlight some aspects of our approach that may be different from what some readers expect.

We use the usual reflection representation of a Coxeter group $W$ on a vector space $V$ spanned by the simple roots $\alpha_i$.
However, we depart from the typical Lie theoretic conventions as follows:
We take the simple co-roots $\alpha_i\ck$ to be scalings of the simple roots and take the fundamental weights $\rho_i$ to be the basis of the dual space $V^*$ dual to the simple co-roots.

A Coxeter element $c$ of $W$ is the product of the simple reflections $S$ in some order.
The choice of Coxeter element determines a skew-symmetric bilinear form $\omega_c$ on $V$, given by
\begin{equation}\label{omega def}
\omega_c(\alpha_i\ck,\alpha_j)=
\begin{cases}
a_{ij}&\text{if }s_i\text{ follows }s_j\text{ in }c,\\
0&\text{if }i=j\text{ or}\\
-a_{ij}&\text{if }s_i\text{ precedes }s_j\text{ in }c.\hfill\qedhere
\end{cases}
\end{equation}
Here, the $a_{ij}$ are entries of the Cartan matrix.
It follows from \cite[Lemma~3.8]{typefree} that $\omega_c(cx,cy)=\omega_c(x,y)$ for any $x,y\in V$.

We write $\ell_T$ for the length function with respect to the set $T$ of reflections, $\le_T$ for the \newword{absolute order} on~$W$, and $[1,c]_T$ for the interval from $1$ to $c$ in that order.

When $W$ is of affine type and rank $n$, we construct a rank-$n$ root system $\Phi$ and an $n\times n$ Cartan matrix~$A$ from a finite root system $\Phi_\fin$ in the standard way.
(See, for example, \cite[Chapter~4]{Humphreys}.)
Write $\delta$ for the shortest positive imaginary root.
The real roots are precisely the vectors $\beta+k\delta$ for $\beta\in\Phi_\fin$ and $k\in\integers$.

In affine type, the action of $c$ on $V$ has an eigenvalue $1$ with algebraic multiplicity~$2$ and with a $1$-dimensional fixed space spanned by $\delta$.
There is a unique generalized $1$-eigenvector $\gamma_c$ associated to $\delta$ with $\gamma_c$ contained in the span of~$\Phi_\fin$.
(The fact that $\gamma_c$ is a generalized $1$-eigenvector associated to $\delta$ means that ${c\gamma_c=\gamma_c+\delta}$.)
The action of $c$ on $V^*$ has a $1$-eigenvector $\omega_c(\delta,\,\cdot\,)$ with an associated generalized $1$-eigenvector $\omega_c(\gamma_c,\,\cdot\,)$.
This means that $c\cdot\omega_c(\gamma_c,\,\cdot\,)=\omega_c(\gamma_c,\,\cdot\,)+\omega_c(\delta,\,\cdot\,)$.
The plane in $V^*$ spanned by $\omega_c(\delta,\,\cdot\,)$ and $\omega_c(\gamma_c,\,\cdot\,)$ is the \newword{Coxeter plane in $V^*$}.

\subsection{Affine signed permutations}\label{C sec}
To define the Coxeter group of type $\afftype D_{n-1}$, it is convenient to first define the Coxeter group of type $\afftype C_{n-1}$.
Consider $\reals^{n+1}$ with basis $\e_1,\ldots,\e_{n+1}$ and take $V$ to be the set of vectors in~$\reals^{n+1}$ whose $\e_n$-coordinate is zero.
Define $\delta$ to be $\e_{n+1}+\e_{n-1}$ and define vectors $\e_i\in V$ for $i\in\integers\setminus\set{\ldots,-n,0,n,\ldots}$ by two rules:
$\e_{-i}=-\e_i$ and $\e_{i+2n}=\e_i+\delta$.
Define a symmetric bilinear form~$K$ on $V$ as the usual inner product on $\e_1,\ldots,\e_{n-1}$ with $K(\e_{n+1},x)=-K(\e_{n-1},x)$ for all $x\in V$.
In particular, $K(\e_{n+1},\e_{n+1})=-K(\e_{n-1},\e_{n+1})={K(\e_{n-1},\e_{n-1})=1}$.

Define simple roots $\alpha_0=\e_1-\e_{-1}=2\e_1$, $\alpha_i=\e_{i+1}-\e_i$ for $i=1,\ldots,n-2$ and $\alpha_{n-1}=\e_{n+1}-\e_{n-1}$.
The simple roots have squared lengths $K(\alpha_i,\alpha_i)=2$ except that $K(\alpha_0,\alpha_0)=4$ and $K(\alpha_{n-1},\alpha_{n-1})=4$.
The simple coroots are thus $\alpha_0\ck=\e_1$, $\alpha_i\ck=\e_{i+1}-\e_i$ for $1\le i\le n-2$, and $\alpha_{n-1}\ck=\frac12(\e_{n+1}-\e_{n-1})$.
One can check that these roots and coroots determine the correct Cartan matrix for a root system of type $\afftype{C}_{n-1}$.
Also, $\delta=\alpha_0+2\sum_{i=1}^{n-2}\alpha_i+\alpha_{n-1}$, as expected for type~$\afftype{C}_{n-1}$.

Each simple root $\alpha_i$ defines a simple reflection $s_i$ on $V$ by $s_i(x)=x-K(\alpha\ck_i,x)\alpha_i$.
Each $s_i$ acts by permuting the vectors $\set{\e_i:i\in\integers,\,i\not\equiv0\mod{n}}$, so we identify them with the corresponding permutations of indices.
The Coxeter group generated by the $s_i$ is the group $\Stildes$ of \newword{affine signed permutations}: the permutations $\pi:\integers\to\integers$ with $\pi(i+2n)=\pi(i)+2n$ and $\pi(-i)=-\pi(i)$ for all $i\in\integers$.
Affine signed permutations fix all multiples of $n$.
For details, see \cite[Section~8.4]{Bj-Br} (with slightly different conventions) and \cite[Section~4.1]{affncA} (with conventions as in this paper).

Because of the condition that $\pi(i+2n)=\pi(i)+2n$, we adopt the notation $(a_1\,\,\,a_2\,\cdots\,a_k)_{2n}$ for the infinite product $\prod_{\ell\in\integers}(a_1+2n\ell\,\,\,a_2+2n\ell\,\cdots\,a_k+2n\ell)$ of cycles.
For the same reason, the notation $(\cdots\,a_1\,\,\,a_2\,\cdots\,a_\ell\,\,\,a_1+2kn\,\cdots)$ uniquely specifies an infinite cycle, for any nonzero integer $k$.
Because of the additional condition that $\pi(-i)=-\pi(i)$, we adopt the notation $(\!(a_1\,\cdots\,a_k)\!)_{2n}$ for the infinite product of cycles $(a_1\,\cdots\,a_k)_{2n}\cdot(-a_1\,\cdots\,-a_k)_{2n}$.
Similarly, the notation $(\!(\cdots\,a_1\,\,\,a_2\,\cdots\,a_\ell\,\,\,a_1+2n\,\cdots)\!)$ means 
\[(\cdots\,a_1\,\,\,a_2\,\cdots\,a_\ell\,\,\,a_1+2n\,\cdots)(\cdots\,-\!a_1\,\,-\!a_2\,\cdots\,-a_\ell\,\,-\!a_1-2n\,\cdots).\]

With this notation, the simple reflections are $s_0=(-1\,\,\,1)_{2n}$, $s_i=(\!(i\,\,\,i+1)\!)_{2n}$ for $i=1,\ldots,n-2$, and $s_{n-1}=(n-1\,\,\,n+1)_{2n}$.
There are two types of reflections in $\Stildes$:
For each pair $i,j\in\integers\setminus\set{\ldots,-n,0,n,\ldots}$ with $j\not\equiv\pm i\mod {2n}$, there is a reflection $(\!(i\,\,\,j)\!)_{2n}$.
For each pair of indices $i$ and~$j$ in ${\integers\setminus\set{\ldots,-n,0,n,\ldots}}$ with $j\equiv-i\mod {2n}$, there is a reflection $(i\,\,\,j)_{2n}$.

\subsection{Affine doubly even-signed permutations}\label{D perm sec}
We build a root system of type~$\afftype{D}_{n-1}$ in the same vector space $V$ that was introduced in Section~\ref{C sec}. 
The simple roots are $\alpha_0=\e_2 + \e_1=\e_1-\e_{-2}$, $\alpha_{n-1}=\e_{n+1}-\e_{n-2} $ and  $\alpha_i={\e_{i+1}-\e_i}$ for $i=1\ldots n-2$.
The simple co-roots are $\alpha_i\ck=\alpha_i$ for $i=0,\ldots,{n-1}$.  
One can check that these simple roots/co-roots define the correct Cartan matrix.
The vector $\delta=\e_{n+1}+\e_{n-1}$ is $\alpha_0+\alpha_1+2\bigl(\sum_{i=2}^{n-3}\alpha_i\bigr)+\alpha_{n-2}+\alpha_{n-1}$, as expected.

The simple reflections $S=\set{s_0,\ldots,s_{n-1}}$ permute $\set{\e_i:i\in\integers,\,i\not\equiv0\mod{n}}$.
Using the cycle notation conventions from Section~\ref{C sec}, the corresponding permutations of $\set{i\in\integers:\,i\not\equiv0\mod{n}}$ are
$s_0=(\!(1 \,\, -2)\!)_{2n}$, $s_i=(\!(i \,\,\,i+1)\!)_{2n}$ for $i = 1,\ldots,n-2$, and $s_{n-1}=(\!(n-2 \,\,\,n+1)\!)_{2n}$.

The set $S$ generates a Coxeter group of type~$\afftype{D}_{n-1}$ (a subgroup of the group $\Stildes$ of affine signed permutations), namely the group $\Stildedes$ of \newword{affine doubly even-signed permutations}.
An affine signed permutation is doubly even-signed if it sends an even number of positive integers to negative integers and sends an even number of integers less than $n$ to integers greater than $n$.
Thus $\Stildedes$ is the set of permutation $\pi$ of the integers such that
\begin{itemize}
    \item $\pi(i + 2n) = \pi(i) + 2n$,
    \item $\pi(i) = -\pi(-i)$,
    \item $\set{i\in\integers:i>0,\pi(i)<0}$ has an even number of elements, and
    \item $\set{i\in\integers:i<n,\pi(i)>n}$ has an even number of elements.
\end{itemize}
The set $T$ of reflections in $\Stildedes$ is $\set{(\!(i \,\, j)\!)_{2n}:i\not\equiv\pm j\mod{2n}}$.
Correspondingly, the set of real roots in $\Phi$ is $\set{\e_j-\e_i:i\not\equiv\pm j\mod{2n}}$. 
The finite root system~$\Phi_\fin$ 
is $\set{\pm\e_j\pm \e_i:1\le i<j\le n-1}$. 
More details on this Coxeter group can be found in \cite[Section~8.6]{Bj-Br}, but with slightly different conventions.

\subsection{Affine jointly even-signed permutations}\label{D big sec}
We write $\Stildejes$ for the subgroup of $\Stildes$ (and supergroup of $\Stildedes$) consisting of \newword{affine jointly even-signed permutations}.
These are the permutations $\pi$ of $\mathbb{Z}$ such that:
\begin{itemize}
    \item $\pi(i + 2n) = \pi(i) + 2n$
    \item $\pi(i) = -\pi(-i)$
    \item the number of elements of $\set{i\in\integers:i>0,\pi(i)<0}$ plus the number of elements of $\set{i\in\integers:i<n,\pi(i)>n}$ is even.
\end{itemize}
Define permutations $\ell_i =(\!(\cdots\,i \,\,\, i+2n\,\cdots)\!)$ for $i=\pm1,\pm2,\ldots,\pm(n-1)$, noticing that $\ell_{-i}=\ell_i^{-1}$.
We call a permutation $\ell_i$ a \newword{loop} because eventually it will be associated to a noncrossing partition whose only nontrivial blocks resemble loops at~$i$ and~$-i$.
Let $L$ be the set $\set{\ell_{-n+1},\ldots\ell_{-1},\ell_1,\ldots\ell_{n-1}}$.

\begin{proposition}\label{big group gen}
The group $\Stildejes$ is generated by $S\cup\set{\ell_1}$.
It is also generated by~$T\cup L$.
\end{proposition}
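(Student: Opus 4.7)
The plan is to reduce both statements to a single fact: $\Stildedes$ is a subgroup of $\Stildejes$ of index~$2$, with $\ell_1\in\Stildejes\setminus\Stildedes$. Granting this, the group $\langle S\cup\{\ell_1\}\rangle$ contains $\langle S\rangle=\Stildedes$ and the element $\ell_1\notin\Stildedes$, so as a subgroup of $\Stildejes$ properly containing $\Stildedes$ it must equal $\Stildejes$. For the second statement, $\langle T\cup L\rangle\supseteq\langle S,\ell_1\rangle=\Stildejes$ because $S\subseteq T$ and $\ell_1\in L$; conversely, $T\subseteq\Stildedes\subseteq\Stildejes$ and an analogous parity computation (below) places every $\ell_i\in L$ in $\Stildejes$, so $\langle T\cup L\rangle\subseteq\Stildejes$.

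To establish the index, I would introduce the counts $A(\pi)=\#\{i>0:\pi(i)<0\}$ and $B(\pi)=\#\{i<n:\pi(i)>n\}$ on $\Stildes$. The relation $\pi(i+2n)=\pi(i)+2n$ makes both sets finite. I would then show that $A$ and $B$ each descend to a group homomorphism $\Stildes\to\integers/2\integers$. For $A(\pi\sigma)$, split the counted set by the sign of $\sigma(i)$ and use the symmetries $\pi(-j)=-\pi(j)$ and $\sigma^{-1}(-j)=-\sigma^{-1}(j)$ to rewrite
\[
A(\pi\sigma)=|P\triangle Q|,\quad\text{where }P=\{j>0:\pi(j)<0\}\text{ and }Q=\{j>0:\sigma^{-1}(j)<0\};
\]
the same symmetry also gives $A(\sigma^{-1})=A(\sigma)$, so $A(\pi\sigma)\equiv A(\pi)+A(\sigma)\pmod{2}$. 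The argument for $B$ is symmetric. Consequently $\Stildedes$ is the kernel of $(A,B)\bmod 2\colon\Stildes\to(\integers/2\integers)^2$ and $\Stildejes$ is the kernel of $(A+B)\bmod 2$, so $\Stildedes$ sits in $\Stildejes$ with index at most~$2$.

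Finally, I would verify $\ell_1\in\Stildejes\setminus\Stildedes$ by direct calculation. The permutation $\ell_1$ fixes every integer not congruent to $\pm 1\pmod{2n}$, sends each $i\equiv 1\pmod{2n}$ to $i+2n$, and sends each $i\equiv -1\pmod{2n}$ to $i-2n$. The only $i>0$ with $\ell_1(i)<0$ is $i=2n-1$ (mapped to $-1$), and the only $i<n$ with $\ell_1(i)>n$ is $i=1$ (mapped to $1+2n$). Hence $A(\ell_1)=B(\ell_1)=1$, placing $\ell_1$ in $\Stildejes\setminus\Stildedes$; the same analysis shows each $\ell_i\in L$ satisfies $A(\ell_i)=B(\ell_i)=1$. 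This pins $[\Stildejes:\Stildedes]$ at exactly~$2$ and finishes the plan. The main obstacle is the bookkeeping in showing $A$ and $B$ are mod-$2$ homomorphisms: the infinite periodic nature of $\Stildes$ requires one to combine finiteness (from periodicity) with the symmetry $\pi(-i)=-\pi(i)$ to turn the count into a symmetric-difference argument, after which everything else is essentially routine.
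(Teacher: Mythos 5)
Your proposal is correct, and it takes a genuinely different and arguably more conceptual route than the paper. The paper isolates a lemma (stated just before the proposition) asserting that $\nbig(\pi\ell_i)=\nbig(\pi)\pm 1$ and $\nneg(\pi\ell_i)=\nneg(\pi)\pm 1$, with the proof omitted as ``straightforward but tedious,'' and then applies it twice: once with $\pi=1$ to conclude $L\subseteq\Stildejes$, and once to show $\pi\ell_1^{-1}\in\Stildedes$ when both parities are odd. You instead promote $\nneg$ and $\nbig$ to mod-$2$ homomorphisms on the whole of $\Stildes$, via the symmetric-difference identity $A(\pi\sigma)=|P\triangle Q|$ together with $A(\sigma^{-1})=A(\sigma)$. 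This simultaneously realizes $\Stildedes$ and $\Stildejes$ as kernels (so they are automatically subgroups, a fact the paper implicitly assumes), gives $[\Stildejes:\Stildedes]\le 2$ for free, and reduces the rest to the one-line computation $\nneg(\ell_1)=\nbig(\ell_1)=1$. What this buys: the ``tedious'' lemma is replaced by a single clean argument valid for arbitrary products, not just multiplication by loops, and the homomorphism statement is reusable. What it gives up: the paper's lemma asserts the exact increment $\pm 1$, which is strictly stronger than parity, though only parity is needed here. Two small points to make explicit in a polished write-up: you should say $S\subseteq\Stildedes\subseteq\Stildejes$ when asserting that $\langle S\cup\{\ell_1\}\rangle$ is a subgroup of $\Stildejes$; and the ``symmetric'' argument for $B$ really does go through, using the identity $\pi(2n-i)=2n-\pi(i)$ (reflection about $n$) in place of $\pi(-i)=-\pi(i)$ to produce the analogous $|P'\triangle Q'|$ decomposition and the analogous $B(\sigma^{-1})=B(\sigma)$. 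Everything checks out.
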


Write $\nneg(\pi)$ for the number of elements of $\set{i\in\integers:i>0,\pi(i)<0}$ and $\nbig(\pi)$ for the number of elements of $\set{i\in\integers:i<n,\pi(i)>n}$.
The key to Proposition~\ref{big group gen} is the following lemma, whose straightforward but tedious proof is omitted here.

\begin{lemma}\label{loop big neg}
For $\pi\in\Stildes$ and $\ell_i\in L$, 
\[\nbig(\pi\ell_i)=\nbig(\pi)\pm1\quad\text{and}\quad\nneg(\pi\ell_i)=\nneg(\pi)\pm1.\]
\end{lemma}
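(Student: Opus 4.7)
The plan is to analyze how multiplication by $\ell_i$ changes the two statistics through a direct orbit-by-orbit calculation. The case $i < 0$ reduces to $i > 0$ via the identity $\ell_{-i} = \ell_i^{-1}$ (write $\pi = (\pi\ell_i)\ell_{-i}$ and apply the positive case to $\pi\ell_i$ and $\ell_{-i}$), so I may assume $i \in \{1, \ldots, n-1\}$. The cycle $\ell_i$ fixes every integer not congruent to $\pm i \pmod{2n}$; on the residue class of $i$ it shifts by $+2n$, and on the residue class of $-i$ it shifts by $-2n$. So $\pi\ell_i$ and $\pi$ agree outside these two residue classes, and the change in each statistic comes entirely from indices lying in them and satisfying the respective range condition ($j > 0$ for $\nneg$, $j < n$ for $\nbig$).

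For $\nneg$, the positive part of the support of $\ell_i$ splits as $P_+ = \{i + 2nk : k \ge 0\}$ and $P_- = \{-i + 2nk : k \ge 1\}$. Using $\pi(i + 2nk) = \pi(i) + 2nk$ and $\pi(-i + 2nk) = -\pi(i) + 2nk$, the contribution of $P_+$ to $\nneg$ changes from $\#\{k \ge 0 : \pi(i) + 2nk < 0\}$ under $\pi$ to $\#\{k \ge 0 : \pi(i) + 2n(k+1) < 0\}$ under $\pi\ell_i$: a telescoping shift whose net effect is $-1$ if $\pi(i) < 0$ and $0$ otherwise. A symmetric computation shows the contribution of $P_-$ changes by $+1$ if $\pi(i) > 0$ and $0$ otherwise. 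Since $\pi$ fixes $0$ and is a bijection, $\pi(i) \ne 0$, so exactly one of these cases triggers and the total change is $\sgn(\pi(i)) = \pm 1$.

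For $\nbig$, I would run the parallel analysis on $\{j < n\}$ intersected with the support, which splits as $Q_+ = \{i + 2nk : k \le 0\}$ and $Q_- = \{-i + 2nk : k \le 0\}$; the constraint $1 \le i \le n-1$ is precisely what pins down these index ranges. The same telescoping yields a change of $+1$ from $Q_+$ when $\pi(i) > -n$ and $0$ otherwise, and $-1$ from $Q_-$ when $\pi(i) < -n$ and $0$ otherwise. The relevant non-degeneracy here is that $\pi$ fixes every multiple of $n$, so by injectivity $\pi(i) \ne -n$ for $i \not\equiv 0 \pmod{n}$; hence exactly one case holds and the change in $\nbig$ is $\pm 1$.

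The argument is really bookkeeping rather than deep mathematics, which is presumably why the proof is described as ``straightforward but tedious''. The main places one could go wrong are miscounting the index ranges on the four partial orbits, or overlooking that $\pi(i)$ is forbidden from taking the two particular values ($0$ for $\nneg$, $-n$ for $\nbig$) that would otherwise make the change trivially zero; both exclusions follow directly from the defining properties of affine signed permutations recalled in Section~\ref{C sec}.
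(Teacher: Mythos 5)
Your proof is correct. The paper omits the proof as ``straightforward but tedious,'' and your direct orbit-by-orbit telescoping argument is exactly the kind of verification that phrase alludes to: the reduction of $i<0$ to $i>0$ via $\ell_{-i}=\ell_i^{-1}$, the identification of the four half-orbits $P_\pm$ and $Q_\pm$, the telescoping count on each, and the observation that $\pi(i)\ne 0$ and $\pi(i)\ne -n$ (since $\pi$ fixes multiples of $n$ and is injective) force exactly one of the two opposing contributions to fire — all check out.
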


\begin{proof}[Proof of Proposition~\ref{big group gen}]
Setting $\pi$ to be the identity in Lemma~\ref{loop big neg}, we see that $L\subseteq\Stildejes$.
Since $S$ generates $\Stildedes$ and $S\subseteq T$, it only remains to check that $\Stildedes\cup\set{\ell_1}$ generates~$\Stildejes$.
Suppose $\pi\in\Stildejes$, so that $\nbig(\pi)+\nneg(\pi)$ is even.
If $\nbig(\pi)$ and $\nneg(\pi)$ are both even, then $\pi\in\Stildedes$.
If $\nbig(\pi)$ and $\nneg(\pi)$ are both odd, then $\pi=(\pi\ell_1^{-1})\cdot\ell_1$, and Lemma~\ref{loop big neg} implies that $\pi\ell_1^{-1}\in\Stildedes$.
\end{proof}

We write $\ell_{T\cup L}(x)$ for the length of $x\in\Stildejes$ relative to the generating set $T\cup L$ (the length of a shortest expression for $x$ as a product of elements of $T\cup L$).
We also write $\le_{T\cup L}$ for the prefix/postfix/subword order on $\Stildejes$ relative to the alphabet $T\cup L$ and $[1,c]_{T\cup L}$ for the interval between the identity and $c$ in this order.

\section{The symmetric annulus with two double points}
\label{d annulus}
In this section, we initiate the construction of the planar model for $[1,c]_T$ in type~$\afftype{D}_{n-1}$ by defining the symmetric annulus with two double points.
This is a case of the definition of symmetric marked surfaces with double points \cite[Definition~3.1]{surfnc}.
Although the Coxeter-theoretic aspects of the model require the assumption that $n\ge5$, some of the combinatorics of the planar diagrams works for $n$ as small as $3$.
We implicitly assume that $n$ is large enough in every construction; always $n\ge5$ is safe.
We connect the symmetric annulus with two double points to Coxeter elements in two ways.
First, we show how a choice of Coxeter element is encoded by a choice of a specific symmetric annulus with two double points.
Second, we show how the symmetric annulus with two double points arises from the projection of an orbit in $V$ to the Coxeter plane in $V^*$.

\subsection{Coxeter elements in type $\afftype D$}\label{sym ann doub sec}

We write $D$ for the symmetric annulus with two double points and pass freely between two ways of viewing it.
Viewed more simply, $D$ is an annulus with an orientation-preserving symmetry $\phi$ whose two fixed points are called \newword{double points}, which we will distinguish as the \newword{upper double point} and \newword{lower double point}.
To make the symmetry $\phi$ definite, we temporarily view~$D$ as the cylindrical tube $\set{(x,y,z)\in\reals^3:x^2+y^2=1,\,-1\le z\le1}$.
The map $\phi$ is a half-turn symmetry of the tube about the $x$-axis, and the double points are $(1,0,0)$ and $(-1,0,0)$.
Moving forward, we picture $D$ as an annulus in the plane.

Viewed less simply, the two double points are literally ``doubled'' in the annulus, to obtain two copies of each double point, labeled $+$ and~$-$.
This could be accomplished, for example, by taking two disjoint copies of the annulus and identifying them pointwise, except the double points.
Crucially, when we consider the double points to be literally doubled, we let the map $\phi$ also swap $+$ and $-$ at each double point.
The left picture in Figure~\ref{sym ann fig} shows the annulus with two double points.
\begin{figure}
\scalebox{0.45}{\includegraphics{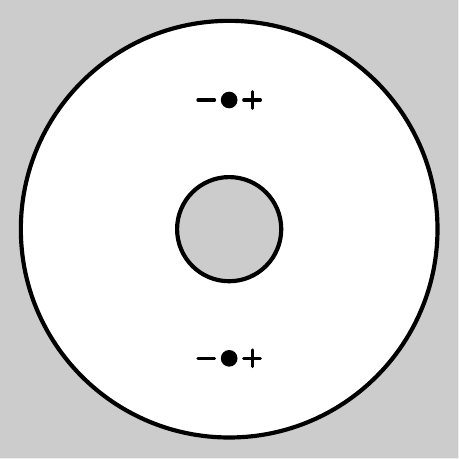}}
\quad
\scalebox{0.45}{\includegraphics{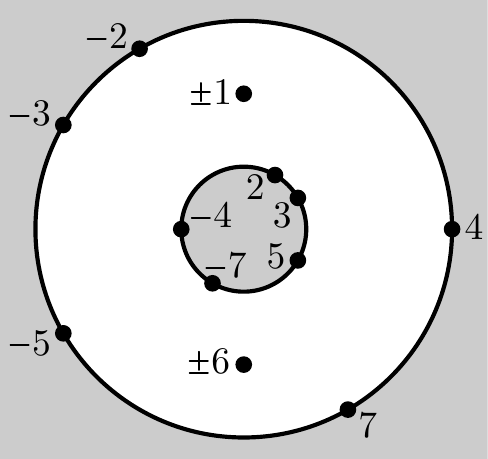}}
\quad
\scalebox{0.45}{\includegraphics{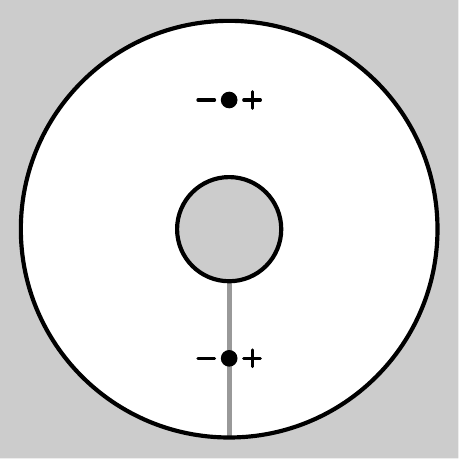}}
\caption{Left:  The symmetric annulus with two double points.
Center: Inner, outer and double points for $c=s_3s_6s_2s_0s_1s_5s_7s_4$.
Right:  The date line.}
\label{sym ann fig}
\end{figure}

A choice of Coxeter element $c$ of $\Stildedes$ corresponds to placing $\pm1,\ldots, {\pm(n-1)}$ on the boundary and double points of $D$ as follows.
Each double point gets two numbers with the same absolute value.
The numbers $\pm1$ are placed at the upper double point if and only if $s_0$ and $s_1$ either both precede or both follow $s_2$ in $c$.
Otherwise, $\pm2$ are at the upper double point.
Similarly, $\pm(n-1)$ are at the lower double point if and only if $s_{n-1}$ and $s_{n-2}$ either both precede or both follow $s_{n-3}$ in $c$.
Otherwise, $\pm(n-2)$ are placed at the lower double point.
We reuse the term \newword{double points} to describe the numbers that are placed at the double points of $D$.

The remaining numbers in $\set{\pm1,\ldots,\pm(n-1)}$ are placed on the inner or outer boundary of $D$ and called \newword{outer points} or \newword{inner points} accordingly.
For $i\in\set{2,\ldots,n-2}$, if $\pm i$ is not already on a double point, then $i$ is outer and $-i$ is inner if and only if $s_{i-1}$ precedes $s_{i}$ in $c$.
Otherwise, $i$ is inner and $-i$ is outer.
If $\pm1$ is not already on a double point, then $1$ is outer and $-1$ is inner if and only if $s_0$ precedes $s_2$ in $c$.
Otherwise, $1$ is inner and $-1$ is outer.
Similarly, if $\pm(n-1)$ is not on a double point, then $n-1$ is outer and $-(n-1)$ is inner if and only if $s_{n-3}$ precedes $s_{n-1}$ in $c$.
Otherwise $n-1$ is inner and $-(n-1)$ is outer.

The outer points are placed in increasing order clockwise, with negative numbers to the left and positive numbers to the right, and the inner points are similarly placed in increasing order clockwise.
The numbers are placed so that the angle from a negative number $-i$ to the bottom of the annulus equals the angle from the bottom to the positive number $i$.
In particular, $\phi$ maps each $i$ to $-i$.
An example is shown in the center picture of Figure~\ref{sym ann fig}.

\begin{lemma}\label{d annulus label}
Let $c$ be a Coxeter element of $\Stildedes$, represented as a partition of $\set{\pm1,\ldots,\pm(n-1)}$ into outer, inner, and double points.
If $a_1, \ldots, a_{n-3}$ are the outer points in increasing order, $p_1$ is a label of the upper double point, and~$p_2$ is the positive label of the lower double point, then $c$ is the permutation
\[
(\!(\cdots\,a_1\,\,\,a_2\,\cdots\,a_{n-3} \,\,\,a_1+2n\,\cdots)\!)(p_1\,\,\,{-p_1})_{2n}(p_2 \,\,\,{-p_2+2n})_{2n}
\]
\end{lemma}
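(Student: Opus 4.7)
The plan is to verify the formula by direct computation, exploiting the freedom to reorder commuting simple reflections. First, I would observe that the placement rules for labels give a bijective encoding of the acyclic orientation of the Coxeter diagram that defines $c$: for each $i \in \set{2, \ldots, n-2}$, the position of $\pm i$ (outer versus inner) records the orientation of the edge $\set{s_{i-1}, s_i}$, while the choice of label at each double point records the configuration at the corresponding branch of the diagram. Under this dictionary, the labeled annulus determines $c$ uniquely, so the task reduces to verifying the cycle structure of a specific product of simples.

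To compute this product, I would choose a linear extension of the orientation that groups the simples as: the upper branch ($s_0, s_1, s_2$) first, then the middle chain $s_3, \ldots, s_{n-4}$ in an order compatible with the orientation of that path, and finally the lower branch ($s_{n-3}, s_{n-2}, s_{n-1}$). Since non-adjacent simples commute, this regrouping does not change the product. Along the middle chain, the analysis parallels the type $\afftype A$ case worked out in \cite{affncA}: pre- or post-multiplying the running product by $s_k$ (according to the orientation) splices $\pm k$ into the emerging infinite cycle at a location determined by its inner/outer status. An induction on the length of the segment processed shows the cycle assembles as $(\!(\cdots\,a_1\,a_2\,\cdots\,a_{n-3}\,a_1+2n\,\cdots)\!)$. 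Handling the two branch segments produces the finite-order cycles $(p_1, -p_1)_{2n}$ and $(p_2, -p_2 + 2n)_{2n}$, while leaving the middle cycle intact.

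The main obstacle is the case analysis at the two branches. Four sub-cases arise, depending on whether $\pm 1$ or $\pm 2$ lies at the upper double point and whether $\pm(n-2)$ or $\pm(n-1)$ lies at the lower double point; the top--bottom symmetry of the diagram reduces these to essentially two distinct verifications. A further subtlety is the asymmetry between the two double-point cycles: $(p_1, -p_1)_{2n}$ sends $p_1$ to $-p_1$ within the same period, while $(p_2, -p_2 + 2n)_{2n}$ crosses a period boundary. This reflects the fact that the upper end of the Dynkin diagram is labeled near the fixed points $\set{2nk : k \in \integers}$ while the lower end is labeled near $\set{n + 2nk : k \in \integers}$. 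Once this is accounted for, the interface between the middle chain and each branch can be checked by a direct computation in a small neighborhood of each double point, and the lemma follows.
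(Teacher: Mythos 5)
Your proposed decomposition contains a gap.  You claim that for any Coxeter element one can choose a linear extension of the orientation with all of $s_0, s_1, s_2$ first, then the middle chain $s_3, \ldots, s_{n-4}$, then the lower branch $s_{n-3}, s_{n-2}, s_{n-1}$, and that this regrouping follows from commutation of non-adjacent simples.  But whether such a linear extension exists depends on the orientation of the cross-block edges $s_2 \dashname{} s_3$ and $s_{n-4} \dashname{} s_{n-3}$.  If, say, the orientation has $s_3 \to s_2$, then every linear extension places $s_3$ before $s_2$, so the upper branch cannot precede the middle chain.  Commutation relations never let you move $s_3$ past $s_2$, because they are adjacent in the diagram.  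So the product $c = u\cdot m \cdot l$ with the factors you describe simply does not hold for such a $c$, and the direct computation you sketch cannot proceed.  (The ``type $\afftype A$'' inductive analysis along the middle chain would be fine once a valid decomposition is in hand; the failure is in obtaining the decomposition.)

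For contrast, the paper avoids this issue entirely.  It verifies the formula by hand for the single Coxeter element $c = s_0 s_1 \cdots s_{n-1}$ (where the claimed grouping trivially holds), then observes that any Coxeter element is obtained from this one by a sequence of source--sink moves (conjugation by a simple reflection that is a source or a sink of the orientation), and checks that the conclusion of the lemma is preserved under each such move.  A source--sink move is a \emph{local} rewrite, so one never needs a global regrouping of the word; one only checks how conjugating by a single $s_i$ rearranges the cycle notation and how it simultaneously relabels which points are inner, outer, and double.  If you want to salvage a direct-computation route, you could instead prove an invariant by induction on the number of simples processed in an arbitrary linear extension (with no grouping hypothesis), but that is considerably more bookkeeping than the source--sink argument.
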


\begin{proof}
One can easily check the statement when ${c=s_0 s_1 \cdots s_{n-1}}$, with double points $\pm1$ and $\pm(n-1)$ and outer points $2,\ldots,n-2$.
Any two Coxeter elements of $\Stildedes$ are conjugate by source-sink moves, so we can complete the proof by showing that source-sink moves preserve the statement.
We omit the simple details.
\end{proof}

\subsection{Projecting to the Coxeter plane}\label{proj sec}
The symmetric annulus with two double points is related to the projection of a certain orbit in~$V$ to the Coxeter plane in~$V^*$.

The simple roots $\alpha_0$, $\alpha_1$, $\alpha_{n-2}$, and $\alpha_{n-1}$ are \newword{leaves} (i.e.\ the leaves of the Coxeter diagram) and the other simple roots are \newword{non-leaves}.
Given a Coxeter element~$c$, we write $s_i\to s_j$ to mean that $s_i$ and $s_j$ form an edge in the Coxeter diagram and~$s_i$ precedes $s_j$ in~$c$.
We restate \eqref{omega def} for $W$ of type~$\afftype{D}_{n-1}$: $\omega_c(\alpha_i,\alpha_j)=1$ if $s_i\to s_j$ or $\omega_c(\alpha_i,\alpha_j)=-1$ if $s_j \to s_i$ or $\omega_c(\alpha_i,\alpha_j)=0$ otherwise.
The following lemma is immediate because $\delta=\alpha_0+\alpha_1+2\bigl(\sum_{i=2}^{n-3}\alpha_i\bigr)+\alpha_{n-2}+\alpha_{n-1}$.

\begin{lemma} \label{big omega}
If $c$ is a Coxeter element of $\Stildedes$, then $\omega_c(\delta,\,\cdot\,) = \sum_{j=0}^{n-1} k_j \rho_j$, where 
\begin{align*}
k_j =\omega_c(\delta, \, \alpha_j\,)=&\,\,\#\text{ leaves } \alpha_i \text{ such that } s_i \to s_j \hspace{.1in} \\
& + 2\,(\#\text{ non-leaves } \alpha_i\text{ such that } s_i \to s_j) \hspace{.1in}\\
& - \,\# \text{ leaves } \alpha_i \text{ such that } s_j \to s_i \hspace{.1in}\\
& - 2\,(\#\text{ non-leaves } \alpha_i \text{ such that } s_j \to s_i).
\end{align*}
\end{lemma}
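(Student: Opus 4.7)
The plan is to recognize that this is essentially a bookkeeping exercise: expand $\omega_c(\delta,\alpha_j)$ using the formula for $\delta$ in terms of the simple roots, then apply \eqref{omega def} term by term. The only conceptual input is the duality between the $\rho_j$ and the $\alpha_j^\vee$ together with the identification $\alpha_j^\vee=\alpha_j$ from Section~\ref{D perm sec}.

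First I would justify the identity $k_j=\omega_c(\delta,\alpha_j)$. Because $\alpha_j\ck=\alpha_j$ in type $\afftype{D}_{n-1}$, the definition of the $\rho_j$ as the basis of $V^*$ dual to the simple co-roots gives $\rho_j(\alpha_i)=\delta_{ij}$. Hence any $\lambda\in V^*$ decomposes as $\lambda=\sum_j\lambda(\alpha_j)\,\rho_j$. Specializing to $\lambda=\omega_c(\delta,\,\cdot\,)$ yields the advertised coefficient $k_j=\omega_c(\delta,\alpha_j)$.

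Next I would compute $\omega_c(\delta,\alpha_j)$ by substituting the stated expression for $\delta$ and using bilinearity:
\[
\omega_c(\delta,\alpha_j)=\sum_{i\text{ leaf}}\omega_c(\alpha_i,\alpha_j)+2\sum_{i\text{ non-leaf}}\omega_c(\alpha_i,\alpha_j).
\]
Since $\alpha_i\ck=\alpha_i$, the definition \eqref{omega def} gives $\omega_c(\alpha_i,\alpha_j)=a_{ij}$ if $s_j\to s_i$, $\omega_c(\alpha_i,\alpha_j)=-a_{ij}$ if $s_i\to s_j$, and zero otherwise. Because $\afftype{D}_{n-1}$ is simply-laced, every off-diagonal Cartan entry between adjacent simple reflections equals $-1$, so each nonzero term $\omega_c(\alpha_i,\alpha_j)$ equals $+1$ when the edge is oriented $s_i\to s_j$ and $-1$ when it is oriented $s_j\to s_i$.

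Finally, grouping these $\pm 1$ contributions according to whether $\alpha_i$ is a leaf or a non-leaf and according to the orientation of the edge between $s_i$ and $s_j$ produces exactly the four summands in the statement. There is no real obstacle; the only point that deserves explicit mention is the coroot-root identification in type $\afftype{D}$, which legitimizes reading \eqref{omega def} as a formula in the $\alpha_i$ rather than the $\alpha_i\ck$, and the fact that the relevant Cartan entries are all $-1$ so that the signs come out cleanly.
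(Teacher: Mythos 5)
Your proof is correct and is exactly the argument the paper has in mind: the paper declares the lemma ``immediate from \eqref{omega def} and the fact that $\delta=\alpha_0+\alpha_1+2\bigl(\sum_{i=2}^{n-3}\alpha_i\bigr)+\alpha_{n-2}+\alpha_{n-1}$'' and writes no details, and your computation (expand $\omega_c(\delta,\alpha_j)$ via bilinearity, invoke $\alpha_i\ck=\alpha_i$ so that \eqref{omega def} applies directly to $\omega_c(\alpha_i,\alpha_j)$, and use that the simply-laced Cartan entries on edges are $-1$ to turn the $a_{ij}$ into $\pm1$ sorted by orientation and leaf/non-leaf status) is precisely the omitted verification. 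You also correctly supply the preliminary step $k_j=\omega_c(\delta,\alpha_j)$ from the duality of $\rho_j$ with $\alpha_j\ck=\alpha_j$.
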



The following lemma is immediate from Lemma~\ref{big omega} and the expressions for $\e_1,\ldots,\e_{n-1}$ in terms of simple roots, namely $\e_1=\frac12\alpha_0-\frac12\alpha_1$ and, for each ${i\in\set{2,\ldots, n-1}}$, $\e_i=\frac12\alpha_0+\frac12\alpha_1+\alpha_2+\cdots+\alpha_{i-1}$.

\begin{lemma} \label{d horiz proj}
Let $c$ be a Coxeter element of $\Stildedes$, represented as a partition of $\{\pm 1, \ldots, \pm (n-1)\}$ into inner, outer, and double points. 
For ${j\not\equiv0\mod n}$,
\[\omega_c(\delta, \e_j)=\begin{cases}
2&\text{if }j=i\mod{2n}\text{ for some inner point }i\\
-2&\text{if }j=i\mod{2n}\text{ for some outer point }i,\text{ or}\\
0&\text{if }j=i\mod{2n}\text{ for some double point }i.
\end{cases}\]
\end{lemma}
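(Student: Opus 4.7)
The plan is to compute $\omega_c(\delta,\e_j)$ directly by linearity, using Lemma~\ref{big omega} together with the expressions $\e_1=\tfrac12\alpha_0-\tfrac12\alpha_1$ and $\e_j=\tfrac12\alpha_0+\tfrac12\alpha_1+\alpha_2+\cdots+\alpha_{j-1}$ for $2\le j\le n-1$. Since $\omega_c$ is skew-symmetric, $\omega_c(\delta,\delta)=0$, and combining this with $\e_{-j}=-\e_j$ and $\e_{j+2n}=\e_j+\delta$ reduces the problem to checking the formula for $j\in\set{1,\ldots,n-1}$; under $j\mapsto -j$ the labels ``outer'' and ``inner'' exchange while ``double'' is preserved, consistently with the sign flip $\omega_c(\delta,\e_{-j})=-\omega_c(\delta,\e_j)$.

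Write $k_i=\omega_c(\delta,\alpha_i)$. For $j\ge 2$ the substitution gives $\omega_c(\delta,\e_j)=\tfrac12(k_0+k_1)+k_2+\cdots+k_{j-1}$, and Lemma~\ref{big omega} expresses each $k_i$ as a signed sum over the Coxeter-diagram neighbors of $\alpha_i$, with magnitude $1$ for leaf neighbors and $2$ for non-leaf neighbors, and sign determined by the orientation. The key observation is telescoping: for each interior pair of non-leaves $\alpha_{i-1},\alpha_i$, the edge between them contributes with opposite signs to $k_{i-1}$ and $k_i$, so consecutive contributions cancel; moreover the initial half-sum $\tfrac12(k_0+k_1)$ cancels exactly against the leaf contributions to $k_2$ coming from $\alpha_0$ and $\alpha_1$. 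For $j\in\set{3,\ldots,n-3}$ only one term survives, namely the $\pm 2$ contribution to $k_{j-1}$ from its non-leaf neighbor $\alpha_j$, and its sign matches the rule ``$i$ outer iff $s_{i-1}$ precedes $s_i$''.

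The boundary cases $j\in\set{1,2,n-2,n-1}$ require separate inspection because an unpaired leaf contribution survives the telescoping at the corresponding end of the diagram. For $j=2$ the surviving quantity is $\tfrac12(k_0+k_1)$, which by direct inspection of the four joint orientations of the two edges from $\alpha_0$ and $\alpha_1$ to $\alpha_2$ equals $\pm 2$ when the two orientations agree (precisely the rule placing $\pm 1$ at the upper double point and making $2$ either inner or outer) and equals $0$ when they disagree (precisely when $\pm 2$ itself is the upper double point); $j=1$ reduces analogously to $\tfrac12(k_0-k_1)$, and $j\in\set{n-2,n-1}$ are handled symmetrically at the lower end. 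The main obstacle will be the boundary bookkeeping at the two pairs of leaves: for each end one must carefully enumerate the four possible joint orientations of the two leaf edges and verify that the value of the relevant sum matches the classification of \emph{both} nearby labels ($\pm 1$ vs.\ $\pm 2$ at the top, and $\pm(n-1)$ vs.\ $\pm(n-2)$ at the bottom) in every sub-case. This is routine but is the place where a sign or case-matching slip is most likely.
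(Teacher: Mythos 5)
Your approach is exactly the paper's: the paper gives no argument at all, only the remark that the lemma is ``immediate from Lemma~3.3 and the expressions for $\e_1,\ldots,\e_{n-1}$ in terms of simple roots,'' and your substitution-and-telescoping is the intended calculation behind that remark. The telescoping of neighbor contributions through $k_2,\ldots,k_{j-1}$, the cancellation of $\tfrac12(k_0+k_1)$ against the leaf terms in $k_2$, and the isolation of $j\in\set{1,2,n-2,n-1}$ as the cases needing explicit enumeration of edge orientations are all the right observations; the sign-matching you flag at the end is indeed the only step requiring care.
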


We can now describe the projection of $\set{\e_i:i\in\integers,i\not\equiv0\mod n}$ to the Coxeter plane in $V^*$.
Recall that this plane is spanned by $\omega_c(\delta,\,\cdot\,)$ and $\omega_c(\gamma_c,\,\cdot\,)$.

\begin{theorem}\label{d orb proj}
Let $c$ be a Coxeter element of $\Stildedes$, represented as a partition of $\set{\pm1,\pm2, \ldots,\pm(n-1)}$ into inner, outer, and double points.
\begin{enumerate}[\quad\rm\bf1.]
\item \label{3 vert d}
Projection to the Coxeter plane in $V^*$ takes $\set{\e_i:i\in\integers,i\not\equiv0\mod n}$ into three distinct lines parallel to $\omega_c(\gamma_c,\,\cdot\,)$.
\item \label{pos neg zero vert d}
One line has positive $\omega_c(\delta,\,\cdot\,)$-coordinate and contains the projections of vectors $\e_i$ such that $i\mod {2n}$ is inner, one line has negative $\omega_c(\delta,\,\cdot\,)$-coordinate and contains the projections of vectors $\e_i$ such that $i\mod {2n}$ is outer, and one line has zero $\omega_c(\delta,\,\cdot\,)$-coordinate and contains the projections of vectors~$\e_i$ such that $i\mod {2n}$ is a double point.
\item \label{bigger vert d}
If $i<i'$ and $i$ and $i'$ are either both outer or both inner, then the projection of $\e_{i'}$ has strictly larger $\omega_c(\gamma_c,\,\cdot\,)$-coordinate than the projection of $\e_i$.
\item \label{evenly}
On the lines containing inner points and outer points, the difference between $\omega_c(\gamma_c,\,\cdot\,)$-coordinates of adjacent points is $2$.
\item \label{doubles share points upper}
If $i$ is an upper double point, then $\e_i$ and $\e_{-i}$ project to the same point.
\item \label{doubles share points lower}
If $i$ is a lower double point, then $\e_i$ and $\e_{-i+2n}$ project to the same point.
\item \label{const dist d}
The difference in the $\omega_c(\gamma_c,\,\cdot\,)$-coordinates of the projection of $\e_i$ and the projection of $\e_{i+2n}$ is $\omega_c(\gamma_c,\delta)=2n-6$, independent of $i$.
\end{enumerate}
\end{theorem}

\begin{proof}
Assertions~\ref{3 vert d} and~\ref{pos neg zero vert d} of the theorem follow immediately from Lemma~\ref{d horiz proj}.

We note that $\omega_c(\gamma_c,c^{-1}\e_i)$ equals $\omega_c(c\gamma_c,\e_i)$, which equals $\omega_c(\gamma_c+\delta,\e_i)$.
Thus $\omega_c(\gamma_c,c^{-1}\e_i)-\omega_c(\gamma_c,\e_i)=\omega_c(\delta,\e_i)$.
Lemma~\ref{d annulus label} says that if $i$ is outer, then $c^{-1}\e_i$ is the next smallest integer that is outer, and Lemma~\ref{d horiz proj} says that $\omega_c(\delta,\e_i)=-2$, so the projection of $\e_i$ has $\omega_c(\gamma_c,\,\cdot\,)$-coordinate $2$ larger than the projection of $\e_{i'}$ for $i'$ the next smallest outer integer.
Similarly, if $i\mod {2n}$ is inner, then $c^{-1}\e_i$ is the next largest integer that is inner and $\omega_c(\delta,\e_i)=2$, so the projection of $\e_i$ has $\omega_c(\gamma_c,\,\cdot\,)$-coordinate $2$ smaller than the projection of $\e_{i'}$ for $i'$ the next largest inner integer.
We have proved Assertions~\ref{bigger vert d} and~\ref{evenly}.

If $i$ is an upper double point, then $c^{-1}\e_i = \e_{-i}$. 
Thus $\omega_c(\gamma_c,\e_{-i})-\omega_c(\gamma_c,\e_i)$ is 
\[\omega_c(\gamma_c,c^{-1}\e_i)-\omega_c(\gamma_c,\e_i)=\omega_c(c\gamma_c,\e_i)-\omega_c(\gamma_c,\e_i)=\omega_c(\delta,\e_i) = 0.\] 
If $i>0$ is a lower double point, $c^{-1}\e_i=\e_{-i+2n}$, so $\omega_c(\gamma_c,\e_{-i+2n})-\omega_c(\gamma_c,\e_i)$ is
\[\omega_c(\gamma_c,c^{-1}\cdot\e_i)-\omega_c(\gamma_c,\e_i) = \omega_c(c\gamma_c,\e_i)-\omega_c(\gamma_c,\e_i)=\omega_c(\delta,\e_i) = 0.\] 
We have proved assertions~\ref{doubles share points upper} and~\ref{doubles share points lower}.

Since $\e_{i+2n}=\e_i+\delta$, the $\omega_c(\gamma_c,\,\cdot\,)$-coordinates of the projection of $\e_i$ and the projection of $\e_{i+2n}$ differ by $\omega_c(\gamma_c,\delta)$.
There are $n-3$ outer points in $\set{\pm1,\ldots,\pm(n-1)}$, so Assertion~\ref{evenly} implies that $\omega_c(\gamma_c,\delta)=2n-6$.
This is Assertion~\ref{const dist d}.
\end{proof}

We see from Theorem~\ref{d orb proj}.\ref{const dist d} that the projection of $\set{\e_i:i\in\integers,i\not\equiv0\mod n}$ to the Coxeter plane in $V^*$ lives on an infinite strip and has translational symmetry.
We mod out by the translational symmetry by identifying points in the Coxeter plane with the same $\omega_c(\delta,\,\cdot\,)$-coordinate and whose $\omega_c(\gamma_c,\,\cdot\,)$-coordinates differ by multiples of $\omega_c(\gamma_c,\delta)$.
We thus recover (up to shifting the points symmetrically along the boundary without moving points through each other) the symmetric annulus $D$ with outer points, inner points, and double points that describe~$c$.

\section{Structure of permutations in $[1,c]_{T\cup L}$}

\subsection{Rank function}\label{rank sec}
We describe the rank function in $[1,c]_{T\cup L}$ in terms of cycle structure.
We think of a permutation $\pi\in\Stildes$ as acting on $\integers\setminus\set{\ldots,-n,0,n,\ldots}$ and thus ignore the cycles $(0)_{2n}$ and $(n)_{2n}$.
Given a cycle in $\pi$, the \newword{class} of the cycle is the cycle together with all cycles that can be obtained by negating and/or adding multiples of $2n$.
The cycle decomposition of $\pi$ is a disjoint union of classes.

The classes of finite cycles of $\pi$ will be classified as \newword{nonsymmetric}, \newword{symmetric}, or \newword{tiny}.
Symmetric and tiny classes are furthermore distinguished as \newword{upper} or \newword{lower}.
A class of \newword{nonsymmetric finite cycles} is $(\!(a_1\,\,\,a_2\,\cdots\,a_m)\!)_{2n}$ with $m\ge1$, such that $a_p\not\equiv\pm a_q\mod{2n}$ for distinct $p,q\in\set{1,\ldots,m}$.

The class of \newword{upper tiny cycles} is $(i\,\,{-i})_{2n}$ where $i$ is the positive number on the upper double point.
This class consists of all cycles of the form $(b\,\,\,{-b+4qn})$ such that $q$ is an integer and $b=i+2qn$.
The class of \newword{lower tiny cycles} is $(j\,\,\,{-j+2n})_{2n}$, where $j$ is the positive number on the lower double point.
This class consists of all cycles of the form $(b\,\,\,\,{-b+(4q+2)n})$ where $q$ is an integer and $b=j+2qn$.
The cycles $(i\,\,\,{-i})$ and $(j\,\,\,{-j+2n})$ are called \newword{principal tiny cycles}.

A class of \newword{symmetric cycles} is a class of \emph{non-tiny} cycles of the form 
\begin{equation}\label{sym cyc}
(a_1\,\,\,a_2\,\cdots\,a_m\,\,\,-a_1+2kn\,\,-a_2+2kn\,\cdots\,-a_m+2kn)_{2n}
\end{equation}
for some integer $k$.
We omit the easy proof of the following observations.
\begin{lemma}\label{sym obs}
The integer $k$ appearing in the notation of \eqref{sym cyc} has constant parity within a class of symmetric cycles.
Writing a cycle in the notation of \eqref{sym cyc} and reading off $k$ defines a bijection between the cycles in the class and the even integers or the odd integers.
Negating a cycle corresponds to negating $k$.
\end{lemma}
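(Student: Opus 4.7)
The plan is to track how the integer $k$ in \eqref{sym cyc} transforms under the three operations that relate cycles in a class: cyclic rotation of the cycle notation, shifting every entry by $2n$, and negation. Writing $C = (x_1, \ldots, x_{2m})$, the symmetric form asserts $x_{i+m} = -x_i + 2kn$ for $i = 1, \ldots, m$, where indices on the left are reduced modulo $2m$. I would define a function $\kappa$ on cycles in the class by $\kappa(C) = k$ and derive all three claims by analyzing how $\kappa$ interacts with these operations.

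The first step is to show that $\kappa$ is well-defined: the $2m$ cyclic rotations of $C$ must all yield the same $k$. For a rotation by $s$ positions to $(x_{s+1}, \ldots, x_{s+2m})$, I would verify the required identity $x_{s+i+m} = -x_{s+i} + 2k'n$ by splitting into the cases $s+i \leq m$ and $s+i > m$. The first case reduces directly to the original identity, giving $k' = k$. The second case uses the original identity to rewrite $x_{s+i} = -x_{s+i-m} + 2kn$ and $x_{s+i+m} = x_{s+i-m}$, which together again force $k' = k$.

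Next, a direct calculation shows that shifting every entry of $C$ by $2n$ changes $k$ to $k+2$: the identity $-(a_i + 2n) + 2k'n = -a_i + 2kn + 2n$ forces $k' = k+2$. Iterating gives $\kappa(C + 2n\ell) = k + 2\ell$ for all $\ell \in \integers$, which proves statement (1). For the bijection in statement (2), I would show that the shifts $\{C + 2n\ell : \ell \in \integers\}$ are pairwise distinct as cycles (since a finite set of integers is not equal to a nontrivial translate of itself) and that they exhaust the class, using the observation that $-C$ equals $C - 2kn$ as a cycle: rotating $-C = (-a_1, \ldots, -a_m, a_1-2kn, \ldots, a_m-2kn)$ by $m$ positions yields $(a_1-2kn, \ldots, a_m-2kn, -a_1, \ldots, -a_m)$, which is $C - 2kn$. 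Hence the class coincides with the shift orbit of $C$, and $\ell \mapsto k+2\ell$ is a bijection onto $k + 2\integers$. Statement (3) is then immediate: $\kappa(-C) = \kappa(C - 2kn) = k + 2(-k) = -k$.

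The main obstacle is the well-definedness of $\kappa$, which requires the two-case check that cyclic rotation preserves $k$. All other steps are short calculations flowing from this invariance together with the definition of the symmetric form.
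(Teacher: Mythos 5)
Your proof is correct and complete. The paper itself omits the proof (it states only ``We omit the easy proof of the following observations''), so there is no argument in the paper to compare against; your route via the invariance of $k$ under cyclic rotation, the computation $\kappa(C+2n\ell)=k+2\ell$, and the identity $-C = C - 2kn$ as cycles is exactly the kind of elementary verification the authors had in mind, and it cleanly delivers all three statements at once.
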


Given a class of symmetric cycles written as in \eqref{sym cyc}, we call the representative with $k=0$ or $1$ a \newword{principal symmetric cycle}.

A class of symmetric cycles is a class of \newword{upper symmetric cycles} if $k$ is even or a class of \newword{lower symmetric cycles} if $k$ is odd.
Just as a symmetric cycle or a tiny cycle is either upper or lower, a double point or a mod-$2n$ translate of a double point is also either upper or lower.
We say that two of these objects \newword{match} if they are both upper or both lower.
If a double point matches a symmetric cycle $(a_1\,\cdots\,a_m\,\,\,-a_1+2kn\,\cdots\,-a_m+2kn)$, we say that the double point matches $k$.

We emphasize that by convention (to avoid constantly saying ``non-tiny symmetric cycle''), the tiny cycles are \emph{not} included under the heading of ``symmetric cycle''.
However, even though a symmetric cycle is not tiny, it \emph{may}, in the notation of \eqref{sym cyc}, have $m=1$ and/or \emph{may} consist entirely of double points modulo $2n$.

Every class of infinite cycles is finite, has an even number of cycles, and has a representatve $(\cdots\,a_1\,\,\,a_m\,\,\,a_1+2kn\,\cdots)$ with $k>0$, written with the numbers $a_1,\ldots,a_m$ all distinct modulo $2n$.
The class contains $2k$ cycles, the cycles \linebreak $(\cdots\,a_1+2\ell n\,\,\,a_m+2\ell n\,\,\,a_1+2(k+\ell)n\,\cdots)$ for $\ell=0,\ldots,k-1$ and their negatives.
A class of infinite cycles is \newword{flat} if it contains exactly two cycles (so that $k=1$).

%

We define a statistic $\vr$ on $\Stildejes$ that will serve as the rank function on $[1,c]_{T\cup L}$:
\begin{align*}
\vr(\pi)=(n-1)&-(\text{the number of classes of nonsymmetric finite cycles of }\pi)\\ 
&-\frac12(\text{the number of classes of symmetric cycles of }\pi)\\
&+\frac12(\text{the number of classes of tiny cycles in }\pi).
\end{align*}

\begin{prop}\label{interval rank}
If $\pi\in[1,c]_{T\cup L}$, then $\ell_{T\cup L}(\pi)=\vr(\pi)$.
\end{prop}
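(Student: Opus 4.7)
The plan is to combine a cycle-structure ``step lemma'' with explicit identifications of $\vr$ at $1$ and at $c$. Specifically, I would aim to establish four facts: (i) $\vr(1) = 0$; (ii) $\vr(c) = n$; (iii) there is a factorization of $c$ as a product of $n$ elements of $T \cup L$; and (iv) for every $w \in T \cup L$ and every $\sigma \in \Stildejes$, $|\vr(w\sigma) - \vr(\sigma)| \le 1$. From these the proposition follows: (i) and (iv) yield $\vr(\pi) \le \ell_{T\cup L}(\pi)$ for every $\pi \in \Stildejes$; in particular $n = \vr(c) \le \ell_{T\cup L}(c)$, while (iii) gives $\ell_{T\cup L}(c) \le n$, so $\ell_{T\cup L}(c) = n$. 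This forces $\vr$ to increase by exactly $1$ at each step of any reduced factorization of $c$, and for $\pi \in [1, c]_{T\cup L}$ the prefix-order characterization provides a reduced factorization of $c$ through $\pi$, yielding $\vr(\pi) = \ell_{T\cup L}(\pi)$.

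Facts (i) and (ii) are direct computations from the definition of $\vr$. The identity fixes every $i \not\equiv 0 \mod{n}$, producing the $n-1$ trivial nonsymmetric finite cycle classes $\set{\pm i} \mod{2n}$ for $i = 1,\ldots,n-1$ and no other finite classes, so $\vr(1) = (n-1)-(n-1)=0$. For $c$, Lemma~\ref{d annulus label} gives a cycle decomposition consisting of one (uncounted) infinite class, one upper tiny class, one lower tiny class, and no other finite classes, so $\vr(c) = (n-1) + \tfrac12 \cdot 2 = n$. For (iii), one can factor $c$ as a product of $n-2$ reflections and two loops using the form in Lemma~\ref{d annulus label} (the loops generating the two tiny classes, the reflections building up the infinite cycle class), with the planar picture from Section~\ref{d annulus} guiding the factorization.

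Fact (iv), the step lemma, is where the work lies. For a reflection $t = (\!(i\,j)\!)_{2n} \in T$, I would case-analyze the types of classes containing $i$ and $j$ (and their shifts by multiples of $2n$ and their negatives) in the cycle decomposition of $\sigma$. Multiplying by $t$ either merges two classes (when $i$ and $j$ lie in distinct classes) or splits one class (when they lie in a common class); the resulting pieces may be nonsymmetric finite, upper or lower symmetric, upper or lower tiny, flat infinite, or non-flat infinite. In each subcase, the weights $-1, -\tfrac12, +\tfrac12$ in the definition of $\vr$ must combine to an integer change of $0$ or $\pm 1$, which will emerge naturally once the possible pieces are enumerated. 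For a loop $\ell_i \in L$, a parallel analysis applies: $\ell_i$ interacts principally with the class containing $i$ (and $-i$), either fusing it with a flat infinite class or peeling a flat infinite class off of it.

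The main obstacle is the step lemma's case analysis. The subcases that touch symmetric or tiny classes are the delicate ones, because the half-integer weights in $\vr$ must conspire to integer changes; the upper/lower distinctions, together with the parity $k$ from~\eqref{sym cyc} and the matching with double points determined by Lemma~\ref{d annulus label}, must be tracked carefully to ensure the change is in $\{-1,0,+1\}$ (and to forbid larger jumps). A secondary concern is producing the length-$n$ factorization in~(iii), but this should follow readily from the explicit cycle form in Lemma~\ref{d annulus label}.
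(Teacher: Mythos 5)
Your proposal is essentially the paper's own proof: the paper also reduces everything to a step lemma (Lemma~\ref{only one}, proving $\vr(\tau\pi)\ge\vr(\pi)-1$ by a long case analysis on cycle structures), combines it with the computations $\vr(1)=0$ and $\vr(c)=n$ (the latter via Lemma~\ref{d annulus label}), and uses the length-$n$ word $c=s_0\cdots s_{n-1}$ together with the prefix-order characterization of $[1,c]_{T\cup L}$ to force equality along any reduced factorization. The only cosmetic differences are that the paper states the step lemma one-sided (which suffices, and the two-sided version follows since $T\cup L$ is closed under inverses) and uses the defining product of simple reflections for the length-$n$ word rather than a factorization read off Lemma~\ref{d annulus label}.
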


The bulk of the proof of Proposition~\ref{interval rank} is accomplished by the following lemma.
We will also use details from the proof of the lemma later in the paper.

\begin{lemma}\label{only one}
If $\pi\in\Stildejes$ and $\tau\in T\cup L$, then $\vr(\tau\pi)\ge\vr(\pi)-1$.  
\end{lemma}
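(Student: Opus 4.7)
The plan is to analyze how premultiplication by a generator $\tau$ affects the cycle-class structure of $\pi$ and then compute the resulting change in $\vr$.  Since $\vr(\pi)=(n-1)-N(\pi)-\tfrac12 S(\pi)+\tfrac12 T(\pi)$, where $N$, $S$, $T$ count the classes of nonsymmetric finite, symmetric, and tiny cycles respectively, it is enough to bound the combined change $\Delta N+\tfrac12\Delta S-\tfrac12\Delta T$ from above by $1$.

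First I would treat the case $\tau\in T$, writing $\tau=(\!(i\,\,j)\!)_{2n}$.  This generator is the product of ordinary transpositions $(i+2kn\,\,\,j+2kn)$ and $(-i+2kn\,\,\,-j+2kn)$ over all $k\in\integers$, and by the classical rule each such transposition either merges two cycles of $\pi$ into one or splits one cycle into two.  Because $\pi$ commutes with $x\mapsto-x$ and $x\mapsto x+2n$, it suffices to follow the effect of a single pair $(i\,\,j)$ together with its mirror $(-i\,\,-j)$; everything else is forced by the symmetries.  I would then split into sub-cases based on (a) whether $i,j,-i,-j$ lie in the same cycle of $\pi$, in negation-related cycles, or in different cycle classes, and (b) the type of each class involved — nonsymmetric, symmetric (upper or lower), tiny (upper or lower), or infinite.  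In every sub-case I would read off $(\Delta N,\Delta S,\Delta T)$ directly from the merge/split picture and verify $-\Delta N-\tfrac12\Delta S+\tfrac12\Delta T\ge -1$.

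Next I would handle $\tau=\ell_i\in L$.  Now $\tau$ itself is a pair of flat infinite cycles based at $\pm i$, so premultiplying by $\ell_i$ modifies the cycles of $\pi$ containing the translates $i+2kn$ and $-i-2kn$.  The analysis parallels the reflection case, but the relevant class of $\pi$ may now be infinite, and a finite class containing $i$ can merge with its $2n$-translates to become infinite (or an infinite class containing $i$ can lose its $2n$-translation orbit and break into finite classes).  Again I would enumerate the configurations, read off $(\Delta N,\Delta S,\Delta T)$, and verify the inequality.  Lemma~\ref{loop big neg} (or its proof) should be the model for how to track what $\ell_i$ does to the cycle of $\pi$ through $i$.

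The main obstacle will be the bookkeeping for the tiny and symmetric sub-cases.  Because tiny and symmetric classes enter $\vr$ with fractional weight and come with matching upper/lower subtypes, an elementary move may convert one type of class into another; Lemma~\ref{sym obs}, which controls the parity of the integer $k$ in the cycle notation \eqref{sym cyc} and its behavior under negation, will be the key input for keeping the parities straight.  The weights $-1,-\tfrac12,+\tfrac12$ on $N,S,T$ are engineered precisely so that each elementary merge or split changes $\vr$ by at most $1$; verifying this in every configuration is tedious but mechanical, and I expect the proof either to treat the cases explicitly or to dispatch many of them by citing the parity lemma and the symmetry of the action.
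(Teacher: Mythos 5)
Your plan is the paper's approach: encode $\vr(\tau\pi)-\vr(\pi)$ as $-\Delta N-\tfrac12\Delta S+\tfrac12\Delta T$ and enumerate cases by the types of cycle classes of $\pi$ that carry the entries of $\tau$, and whether those entries lie in the same cycle, a mod-$2n$ translate, a negate, or a distinct class. The paper's proof is exactly that enumeration carried out in full (five top-level cases with many subcases, computing the new cycles explicitly rather than invoking the finite merge/split heuristic), so what remains is precisely the tedious but mechanical verification you anticipate.
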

\begin{proof}
The proof of this lemma is long and tedious but completely elementary.
We check every case of how $\tau$ can relate to the cycle structure of $\pi$.

The first four numbered cases below are where $\tau$ is a reflection, and the fifth case is where $\tau$ is a loop.
When $\tau$ is a reflection, it is $\tau=(\!(a\,\,\,b)\!)_{2n}$ with $a\not\equiv b\mod{2n}$, and the four cases are the four possibilities for what kind of cycle $a$ is in, namely nonsymmetric finite, symmetric, tiny, or infinite.
There are also subcases giving the possibilities for~$b$.
Up to the symmetry of swapping $a$ and $b$, we may as well assume that $a$ appears weakly earlier than $b$ on this list of four types of cycles.

\medskip

\noindent\textbf{Case 1.}
The element $a$ is in a nonsymmetric finite cycle $(a_1\,\cdots\,a_m)$, with $a_1=a$.

\smallskip

\noindent\textit{Case 1a.}
The element $b$ is in the same cycle as $a$, say $b=a_i$ for $2\le i\le m$.
Then $\tau\pi$ has nonsymmetric finite cycles $(a_1\,\cdots\,a_{i-1})(a_i\,\cdots\,a_m)$ and no other cycles of $\pi$ are changed, so $\vr(\tau\pi)=\vr(\pi)-1$.  

\smallskip

\noindent\textit{Case 1b.}
The element $b$ is in a cycle that is a mod-$2n$ translate of the cycle containing $a$, say $b=a_i+2kn$ for $2\le i\le m$ and $k\neq0$.
Then $\tau\pi$ has infinite cycles $(\cdots\,a_1\,\cdots\,a_{i-1}\,\,\,a_1-2kn\,\cdots)$ and $(\cdots\,a_i\,\cdots\,a_m\,\,\,a_i+2kn\,\cdots)$.
A class of nonsymmetric finite cycles has been destroyed by $\tau$, and two classes of infinite cycles have been created.
Thus $\vr(\tau\pi)=\vr(\pi)+1$.

\smallskip

\noindent\textit{Case 1c.}
The element $b$ is in the negative of the cycle containing $a$ or in a cycle that is a mod-$2n$ translate of the negative of the cycle containing $a$, say $b=-a_i+2kn$ for $2\le i\le m$.
Then $\tau\pi$ has cycles $(a_1\,\cdots\,a_{i-1}\,\,\,-a_1+2kn\,\cdots\,-a_{i-1}+2kn)$ and $(a_i\,\cdots\,a_m\,\,\,-a_i+2kn\,\cdots\,-a_m+2kn)$.
These cycles could be both symmetric or one symmetric and one tiny.
In any case, they are in different classes.
Multiplying by $\tau$ creates these two classes, destroys a class of nonsymmetric finite cycles, and changes no other cycles, so $\vr(\tau\pi)=\vr(\pi)$ or $\vr(\tau\pi)=\vr(\pi)+1$.

\smallskip

\noindent\textit{Case 1d.}
The element $b$ is in a nonsymmetric finite cycle not in the class of $(a_1\,\cdots\,a_m)$, say $b=b_1$ in the cycle $(b_1\,\cdots\,b_p)$.
Then $\tau\pi$ has a nonsymmetric finite cycle $(a_1\,\cdots\,a_m\,\,\,b_1\,\cdots\,b_p)$, so $\vr(\tau\pi)=\vr(\pi)+1$.

\smallskip

\noindent\textit{Case 1e.}
There is a symmetric cycle $(b_1\,\cdots\,b_p\,\,\,-b_1+2kn\,\cdots\,-b_p+2kn)$ with $b_1=b$.
Then $\tau\pi$ has a symmetric cycle 
\[(a_1\,\cdots\,a_m\,\,\,b_1\,\cdots\,b_p\,\,\,-a_1+2kn\,\cdots\,-a_m+2kn\,\,\,-b_1+2kn\,\cdots\,-b_p+2kn).\]
A class of nonsymmetric finite cycles was destroyed and the number of symmetric cycles is unchanged, so $\vr(\tau\pi)=\vr(\pi)+1$.

\smallskip

\noindent\textit{Case 1f.}
There is a tiny cycle containing $b$.
Arguing as in Case~1e with $p=1$, we see that a class of nonsymmetric finite cycles and a class of tiny cycles has been destroyed, while a symmetric cycle has been created.
Thus $\vr(\tau\pi)=\vr(\pi)$.

\smallskip

\noindent\textit{Case 1g.}
There is an infinite cycle $(\cdots\,b_1\,\cdots\,b_p\,\,\,b_1+2kn\,\cdots)$ with $b_1=b$.
Then $\tau\pi$ has an infinite cycle $(\cdots\,a_1\,\cdots\,a_m\,\,\,b_1\,\cdots\,b_p\,\,\,a_1+2kn\,\cdots)$, so $\vr(\tau\pi)=\vr(\pi)+1$.

\medskip

\noindent\textbf{Case 2.}
The element $a$ is in a symmetric cycle of $\pi$.
Up to adding the same multiple of $2n$ to both $a$ and $b$, we can take $a$ to be in a principal symmetric cycle $(a_1\,\cdots\,a_m\,\,\,-a_1+2kn\,\cdots\,-a_m+2kn)$ with $a_1=a$ and $k=0$ or~$1$.

\smallskip

\noindent\textit{Case 2a.}
The element $b$ is in the same cycle as $a$.
Up to swapping $a$ and $b$, we can assume that $b=a_i$ for $2\le i\le m$.
Multiplying by $\tau$ creates a class of nonsymmetric finite cycles containing $(a_1\,\cdots\,a_{i-1})$ and $(-a_1+2kn\,\cdots\,-a_{i-1}+2kn)$ and a cycle $(a_i\,\cdots\,a_m\,\,\,-a_i+2kn\,\cdots\,{-a_m+2kn})$, which could be symmetric or tiny.
(It is tiny if and only if $i=m$ and $a_i=a_m$ is one of the entries of the principal tiny cycle matching $(a_1\,\cdots\,a_m\,\,\,-a_1+2kn\,\cdots\,-a_m+2kn)$, in which case, the tiny cycle produced is that principal tiny cycle.)
If the cycle is not tiny, then $\vr(\tau\pi)=\vr(\pi)-1$ and if if cycle is tiny, then $\vr(\tau\pi)=\vr(\pi)$.

\smallskip

\noindent\textit{Case 2b.}
The element $b$ is in a cycle in the same class as the cycle containing~$a$.
Up to swapping $a$ and $b$, we can assume that $b=a_i+2qn$ for $2\le i\le m$ and $q\neq0$.
Then $\tau\pi$ has infinite cycles $(\!(\cdots\,a_1\,\cdots\,a_{i-1}\,\,\,a_1-2qn\,\cdots)\!)$ and cycles 
\[(\!(a_i\,\cdots\,a_m\,\,\,-a_i+2(k-q)n\,\cdots\,-a_m+2(k-q)n)\!)\] that are either symmetric or tiny.
Thus $\vr(\tau\pi)=\vr(\pi)$ or $\vr(\tau\pi)=\vr(\pi)+1$.

\smallskip

\noindent\textit{Case 2c.}
The element $b$ is in a symmetric cycle not in the class of the cycle containing~$a$, say $(b_1\,\cdots\,b_p\,\,\,-b_1+2qn\,\cdots\,-b_p+2qn)$ with $b_1=b$.
The action of~$\tau$ destroys two classes of symmetric finite cycles and creates a new cycle containing the sequence $a_1\,\cdots\,a_m\,\,\,-b_1+2kn\,\cdots\,-b_p+2kn\,\,\,\,a_1+2(k-q)n$.
If $k=q$, the new cycle is nonsymmetric finite and $\vr(\tau\pi)=\vr(\pi)$.
If $k\neq q$, the new cycle is infinite and $\vr(\tau\pi)=\vr(\pi)+1$.

\smallskip

\noindent\textit{Case 2d.}
The element $b$ is in a class of upper tiny cycles.
The cycle is $(b\,\,\,{-b+4qn})$, where $i$ is one of the numbers on the upper double point, $b=i+2qn$, and $q\in\integers$.
The action of $\tau$ destroys a class of tiny cycles and a class of symmetric cycles and creates a new cycle containing the sequence $a_1\,\cdots\,a_m\,\,\,{-b+2kn}\,\,\,\,{a_1+2(k-2q)n}$.
Recall that $k\in\set{0,1}$.
If $k=2q$, then $k=q=0$ and the new cycle is nonsymmetric finite, so $\vr(\tau\pi)=\vr(\pi)-1$.
This is if and only if $b$ is in the principal tiny cycle that matches $(a_1\,\cdots\,a_m\,\,\,{-a_1+2kn}\,\cdots\,{-a_m+2kn})$, specifically both are upper.
If $k\neq 2q$, then the new cycle is infinite and $\vr(\tau\pi)=\vr(\pi)$.

\smallskip

\noindent\textit{Case 2e.}
The element $b$ is in a class of lower tiny cycles.
The principal lower cycle is $(c\,\,-c+2n)$, where $j$ is the positive number labeling the lower double point and~$c$ is either $j$ or $-j+2n$.
The cycle containing $b$ is $(b\,\,\,\,{-b+(4q+2)n})$ for $q\in\integers$ and $b=c+2qn$.
The action of $\tau$ destroys a class of tiny cycles and a class of symmetric cycles and creates a new cycle containing $a_1\,\cdots\,a_m\,\,\,-b+2kn\,\,\,a_1+2(k-2q-1)n$.
Again, $k\in\set{0,1}$.
If $k=2q+1$, then $k=1$ and $q=0$ and the new cycle is nonsymmetric finite, so $\vr(\tau\pi)=\vr(\pi)-1$.
This is if and only if $b$ is in the principal tiny cycle that matches $(a_1\,\cdots\,a_m\,\,\,-a_1+2kn\,\cdots\,-a_m+2kn)$, specifically with both lower.
If $k\neq2q+1$, then the new cycle is infinite and $\vr(\tau\pi)=\vr(\pi)$.

\smallskip

\noindent\textit{Case 2f.}
There is an infinite cycle $(\cdots\,b_1\,\cdots\,b_p\,\,\,b_1+2qn\,\cdots)$ with $b_1=b$.
Then $\tau\pi$ has a symmetric cycle containing $a_1\,\cdots\,a_m\,\,\,-b_1+2kn\,\cdots\,-b_p+2kn\,\,\,-a_1+2(k-q)n$.
Thus a class of infinite cycles is destroyed by the action of $\tau$ but the number of classes of symmetric cycles remains the same, so $\vr(\tau\pi)=\vr(\pi)$.


\medskip

\noindent\textbf{Case 3.}
The element $a$ is in a tiny cycle of $\pi$.
Up to adding the same multiple of~$2n$ to $a$ and $b$, we can take $a$ to be in a principal tiny cycle $(a\,\,\,-a+2kn)$ for $k=0$ or~$1$.
If $k=0$, then $a$ is the positive or negative number on the upper double point.
If $k=1$, then~$a$ is $j$ or $-j+2n$, where $j$ is the positive number on the lower double point.
Also, $b$ is not in a tiny cycle of the same class, because $a\not\equiv\pm b\mod{2n}$.

\smallskip

\noindent\textit{Case 3a.}
The element $b$ is in a tiny cycle in the other class.
If $k=0$ ($a$ upper and $b$ lower), then as in Case~2e, the cycle containing $b$ is $(b\,\,\,\,{-b+(4q+2)n})$.
If $k=1$ ($a$ lower and $b$ upper), then as in Case~2d, the cycle containing $b$ is $(b\,\,\,\,{-b+4qn})$.
Concisely, the cycle containing $b$ is $(b\,\,\,-b+(4q+2-2k)n)$.
The action of $\tau$ destroys both classes of tiny cycles and creates an infinite cycle $(\cdots\,a\,\,\,-b+2kn\,\,\,a-2(2q+1-2k)n\,\cdots)$, which is flat if and only if $q=k$ or $q=k-1$.
Thus $\vr(\tau\pi)=\vr(\pi)-1$.

\smallskip

\noindent\textit{Case 3b.}
There is an infinite cycle $(\cdots\,b_1\,\cdots\,b_p\,\,\,b_1+2qn\,\cdots)$ with $b_1=b$.
The action of $\tau$ destroys the tiny cycle and infinite cycle and creates a symmetric cycle containing $a\,\,\,-b_1+2kn\,\cdots\,-b_p+2kn\,\,\,-a+2(k-q)n$, so ${\vr(\tau\pi)=\vr(\pi)-1}$.

\medskip

\noindent\textbf{Case 4.}
There is an infinite cycle $(\cdots\,a_1\,\cdots\,a_m\,\,\,a_1+2kn\,\cdots)$ with $a_1=a$.

\smallskip

\noindent\textit{Case 4a.}
The element $b$ is in the same infinite cycle.
Thus $b=a_i+2qkn$ for some $i=2,\ldots,m$ and integer $q$.
Then $\tau\pi$ has a cycle containing the sequence $a_1\,\cdots\,a_{i-1}\,\,\,{a_1-2qkn}$ (nonsymmetric finite if and only if $q=0$ and otherwise infinite) and a cycle containing the sequence $a_i\,\cdots\,a_m\,\,\,a_i+2(q+1)kn$ (nonsymmetric finite if and only if $q=-1$ and otherwise infinite).
If $q\in\set{0,-1}$, then the action of $\tau$ creates a class of nonsymmetric finite cycles and replaces a class of infinite cycles by another class of infinite cycles, so $\vr(\tau\pi)=\vr(\pi)-1$.
(The new infinite cycles are flat if and only if the original infinite cycles are flat.)
If $q\not\in\set{-1,0}$ then the action of $\tau$ replaces the infinite cycle by two infinite cycles, so $\vr(\tau\pi)=\vr(\pi)$.

\smallskip

\noindent\textit{Case 4b.}
The element $b$ is in a mod-$2n$ translate of the cycle containing~$a$.
Then $|k|>1$ and $b=a_i+2qn$ for some $i=2,\ldots,m$  and $q\not\equiv0\mod k$.
The action of~$\tau$ destroys the infinite cycle and creates infinite cycles ${(\cdots\,a_1\,\cdots\,a_{i-1}\,\,\,a_1-qn\,\cdots)}$ and $(\cdots\,a_i\,\cdots\,a_m\,\,\,a_i+(k+q)n\,\cdots)$, so ${\vr(\tau\pi)=\vr(\pi)}$.

\smallskip

\noindent\textit{Case 4c.}
The element $b$ is in the negative of the cycle containing $a$ or in a mod-$2n$ translate of the negative of the cycle containing $a$.
Then $b=-a_i+2qn$ for $i=2,\ldots,m$ and $q\in\integers$, and $\tau\pi$ has a cycle $(a_1\,\cdots\,a_{i-1}\,\,\,-a_1+2qn\,\cdots\,-a_{i-1}+2qn)$ and a cycle $(a_i\,\cdots\,a_m\,\,\,-a_i+2(q+k)n\,\cdots\,-a_m+2(q+k)n)$,
each of which can be symmetric or tiny.
If neither is tiny, then $\vr(\tau\pi)=\vr(\pi)-1$.
If one or both is tiny, then $\vr(\tau\pi)=\vr(\pi)$ or $\vr(\tau\pi)=\vr(\pi)+1$.

\smallskip

\noindent\textit{Case 4d.}
The element $b$ is in an infinite cycle $(\cdots\,b_1\,\cdots\,b_p\,\,\,b_1+2qn\,\cdots)$, with~${b_1=b}$, not in the class of $(\cdots\,a_1\,\cdots\,a_m\,\,\,a_1+2kn\,\cdots)$.
Then $\tau\pi$ has a cycle containing the sequence $a_1\,\cdots\,a_m\,\,\,b_1+2kn\,\cdots\,b_p+2kn\,\,\,a_1+2(k+q)n$.
If $q=-k$, this is nonsymmetric finite and $\vr(\tau\pi)=\vr(\pi)-1$.
If $q\neq-k$, this is infinite and $\vr(\tau\pi)=\vr(\pi)$.

\medskip

\noindent\textbf{Case 5.}
The element $\tau$ is a loop $\ell_a=(\!(\cdots\,a\,\,\,a+2n\,\cdots)\!)$ for $a\in\set{\pm1,\ldots,\pm(n-1)}$.

\smallskip

\noindent\textit{Case 5a.}
The element $a$ is in a nonsymmetric finite cycle $(a_1\,\cdots\,a_m)$ with $a_1=a$.
Then $\tau\pi$ has an 
infinite cycle $(\cdots\,a_1\,\cdots\,a_m\,\,\,a_1+2n\,\cdots)$, so $\vr(\tau\pi)=\vr(\pi)+1$.

\smallskip

\noindent\textit{Case 5b.}
The element $a$ is in a cycle $(a_1\,\cdots\,a_m\,\,\,{-a_1+2kn}\,\cdots\,{-a_m+2kn})$ that is symmetric and has $a_1=a$.
In $\tau\pi$, this cycle is replaced by a symmetric cycle $(a_1\,\cdots\,a_m\,\,\,{-a_1+2(k-1)n}\,\cdots\,{-a_m+2(k-1)n})$, so $\vr(\tau\pi)=\vr(\pi)$.

\smallskip

\noindent\textit{Case 5c.}
The element $a$ is in a tiny cycle of $\pi$.   
Since $a\in\set{\pm1,\ldots,\pm(n-1)}$, there are three possibilities for what the tiny cycle is:
an upper tiny cycle $(a\,\,\,-a)$, a lower tiny cycle $(a\,\,\,-a+2n)$ with $a>0$, or a lower tiny cycle $(a\,\,\,-a-2n)$ with $a<0$.
In the first case, $\tau\pi$ has a cycle $(a\,\,\,-a-2n)$, in the second, it has a cycle $(a\,\,\,-a)$, and in the third, it has $(a\,\,\,-a-4n)$.
In every case, the action of $\tau$ destroys a class of tiny cycles and creates a class of symmetric cycles, so $\vr(\tau\pi)=\vr(\pi)-1$.

\medskip

\noindent\textit{Case 5d.}
There is an infinite cycle $(\cdots\,a_1\,\cdots\,a_m\,\,\,a_1+2kn\,\cdots)$ with $a_1=a$.
Then $\tau\pi$ has a cycle containing $a_1\,\cdots\,a_m\,\,\,a_1+2(k+1)n$.
If $k=-1$, then the action of $\tau$ changes a flat infinite cycle into a class of nonsymmetric finite cycles, so $\vr(\tau\pi)=\vr(\pi)-1$.  
Otherwise, $\vr(\tau\pi)=\vr(\pi)$. 
\end{proof}

\begin{proof}[Proof of Proposition~\ref{interval rank}]
Lemma~\ref{d annulus label} says that $c$ has a class of infinite cycles and two classes of tiny cycles.
Therefore, $\vr(c)=n$.
Since also $\vr(1)=0$, Lemma~\ref{only one} implies that a reduced word for $c$ in the alphabet $T\cup L$ has at least $n$ letters.
But the defining word for $c$ as a product of simple reflections is, in particular, a word for $c$ as a product of $n$ elements of $T\cup L$.
We see that a reduced word for $c$ in the alphabet $T\cup L$ has exactly $n$ letters.
If $\pi\in[1,c]_{T\cup L}$, then there exists a reduced word $\tau_1\cdots\tau_n$ for $c$ in the alphabet $T\cup L$ and an index $k\in\set{0,\ldots,n}$ such that $\pi=\tau_{k+1}\cdots\tau_n$.
Lemma~\ref{only one} also implies that $\vr(\pi)=n-k=\ell_{T\cup L}(\pi)$.
\end{proof}

\subsection{Detailed cycle structures}\label{struct sec}
As a tool in proving the validity of the eventual combinatorial model for $[1,c]_{T\cup L}$, we list, and give shorthand names to, all of the cycle structures that a permutation $\pi\in[1,c]_{T\cup L}$ can have.
We also list and name some cycle structures that we will show that $\pi$ cannot have.
List~\ref{structures} shows the structures that can occur, while List~\ref{bad structures} shows structures that can't occur.

We next list and name all of the cases in the proof of Lemma~\ref{only one} where $\vr$ decreases by~$1$. 
This appears as List~\ref{moves} where we also indicate the case number from Lemma~\ref{only one}.
In this list, $\pi\in\Stildejes$ and $\tau\in T\cup L$.
If $\tau\in T$, then $a$ and $b$ are (convenient mod-$2n$ representatives of) integers such that $\tau=(\!(a\,\,\,b)\!)_{2n}$.
If $\tau\in L$, then $a$ is an integer in $\set{\pm1,\ldots,\pm(n-1)}$ such that $\tau=(\!(\cdots\,a\,\,\,a+2n\,\cdots)\!)$.




\newcommand{\Inf}{\mathbf{Inf}}
\newcommand{\NonflatInf}{\mathbf{NonflatInf}}
\renewcommand{\Tiny}{\mathbf{Tiny}}
\newcommand{\Sym}{\mathbf{Sym}}
\newcommand{\NonSym}{\mathbf{NonSym}}

\begin{listing}
\caption{Cycle structures in $[1,c]_{T\cup L}$.}
\label{structures}
\begin{minipage}{350pt}
$\Inf^1\Tiny^2\NonSym^k$.
One class of \emph{flat} infinite cycles, both classes of tiny cycles and $k$ classes of nonsymmetric finite cycles.

\bigskip

$\Inf^2\NonSym^k$.
Two classes of \emph{flat} infinite cycles and $k$ classes of nonsymmetric finite cycles.

\bigskip

$\Inf^1\NonSym^k$.
One class of \emph{flat} infinite cycles and $k$ classes of nonsymmetric finite cycles.

\bigskip

$\Tiny^1\Sym^1\NonSym^k$.
One class of symmetric cycles, a \emph{matching} class of tiny cycles, and $k$ classes of nonsymmetric finite cycles.

\bigskip

$\Tiny^2\Sym^2\NonSym^k$.
Both classes of tiny cycles, two classes of symmetric cycles \emph{not matching each other} (and thus each matching one of the tiny cycles), and $k$ classes of nonsymmetric finite cycles.

\bigskip

$\Inf^1\Tiny^1\Sym^1\NonSym^k$.
One class of \emph{flat} infinite cycles, one class of symmetric cycles, a \emph{matching} class of tiny cycles, and $k$ classes of nonsymmetric finite cycles.

\bigskip

$\Tiny^2\NonSym^k$.
Both classes of tiny cycles and $k$ classes of nonsymmetric finite cycles.

\bigskip

$\NonSym^k$.
Only nonsymmetric finite cycles, in $k$ classes.

\bigskip

\end{minipage}
\end{listing}

\begin{listing}
\caption{Some cycle structures that can't occur in $[1,c]_{T\cup L}$.}
\label{bad structures}
\begin{minipage}{350pt}
$\Inf^1\NonflatInf^1\NonSym^k$.
One class of flat infinite cycles, one class of \emph{nonflat} infinite cycles, and $k$ classes of nonsymmetric finite cycles.

\bigskip

$\Sym^2\NonSym^k$.
Two classes of symmetric cycles, \emph{not matched}, and $k$ classes of nonsymmetric finite cycles.

\bigskip

$\Inf^1\Sym^2\NonSym^k$.
One class of \emph{flat} infinite cycles, two classes of symmetric cycles, \emph{not matched}, and $k$ classes of nonsymmetric finite cycles.

\bigskip

$\NonflatInf^1\Sym^2\NonSym^k$.
One class of \emph{nonflat} infinite cycles, two classes of symmetric cycles, \emph{not matched}, and $k$ classes of nonsymmetric finite cycles.

\bigskip

$\Tiny^1\Sym^3\NonSym^k$.
One class of tiny cycles, three classes of symmetric cycles, and $k$ classes of nonsymmetric finite cycles.

\bigskip

$\Inf^1\Sym^3\NonSym^k$.
One class of infinite cycles, three classes of symmetric cycles, and $k$ classes of nonsymmetric finite cycles.

\bigskip

$\NonflatInf^1\NonSym^k$.
One class of \emph{nonflat} infinite cycles and $k$ classes of nonsymmetric finite cycles.

\bigskip

\end{minipage}
\end{listing}

\newcommand{\SplitNonSym}{\mathit{SplitNonSym}}
\newcommand{\SplitSym}{\mathit{SplitSym}}
\newcommand{\CombineSymTiny}{\mathit{CombineSymTiny}}
\newcommand{\CombineTiny}{\mathit{CombineTiny}}
\newcommand{\CombineTinyInf}{\mathit{CombineTinyInf}}
\newcommand{\SplitInf}{\mathit{SplitInf}}
\newcommand{\CombineInfAndNeg}{\mathit{CombineInfAndNeg}}
\newcommand{\CombineInfInf}{\mathit{CombineInfInf}}
\newcommand{\EnlargeTiny}{\mathit{EnlargeTiny}}
\newcommand{\InfToNonSym}{\mathit{InfToNonSym}}

\begin{listing}
\caption{Moves taking $\pi$ to $\tau\pi$ with $\vr(\tau\pi)=\vr(\pi)-1$.}
\label{moves}
\begin{minipage}{350pt}
$\SplitNonSym$ (Case 1a).
There is a nonsymmetric finite cycle of $\pi$ containing $a$ and $b$.
In $\tau\pi$, this cycle is split into two nonsymmetric finite cycles.

\medskip

$\SplitSym$ (Case 2a).
There is a symmetric cycle (without loss of generality principal) of $\pi$ containing $a$ and $b$, naming $a$ and $b$ so that the distance forward in the cycle from $a$ to $b$ is less than the distance from $b$ to $a$, and ruling out the case where both $b$ is one of the entries of the principal tiny cycle matching the symmetric cycle and also $b$ precedes $-a$ or $-a+2n$ in the symmetric cycles.
In $\tau\pi$, this cycle is split into a symmetric cycle and a pair of nonsymmetric finite cycles in the same class.

\medskip

$\CombineSymTiny$ (Case 2de).
The element $a$ is in a principal symmetric cycle of~$\pi$ and $b$ is in the matching principal tiny cycle.
The elements of this symmetric and tiny cycle in $\pi$ form a pair of nonsymmetric finite cycles in $\tau\pi$ in the same class.

\medskip

$\CombineTiny$ (Case 3a).
The element $a$ is in a tiny cycle of $\pi$, while $b$ is in a tiny cycle in the other class.
In $\tau\pi$, there is a class of infinite cycles (negatives of each other, flat or nonflat), each alternating between a mod-$2n$ translate of an upper double point and a mod-$2n$ translate of a lower tiny point.

\medskip

$\CombineTinyInf$ (Case 3b).
The element $a$ is in a tiny cycle of $\pi$ and $b$ is in an infinite cycle.
In $\tau\pi$, in place of these cycles, there is a class of symmetric cycles.
If $|q|=1$ in Case 3b (e.g.\ if $\pi$ has no nonflat infinite cycles), then the symmetric cycle that is created does not match the tiny cycle that is destroyed.

\medskip

$\SplitInf$ (Case 4a).
The elements $a$ and $b$ are in the same infinite cycle, with~$b$ between $a$ and the next element of the cycle that is congruent to $a$ modulo $2n$ (before or after $a$ in the cycle).
In $\tau\pi$, this infinite cycle is replaced by a nonsymmetric finite cycle and an infinite cycle, which is flat if and only if the cycle in~$\pi$ is flat.

\medskip

$\CombineInfAndNeg$ (Case 4c).
The element $a$ is $a_1$ in an infinite cycle of the form $(\cdots\,a_1\,\cdots\,a_m\,\,\,a_1+2kn\,\cdots)$ and $b$ is the entry $-a_i+2qn$ in the negative of that cycle or in a mod-$2n$ translate of the negative of that cycle, ruling out the case where $i=2$ and $(a_1\,\,\,-a_1+2qn)$ is tiny and the case where $i=m$ and $(a_m\,\,\,-a_m+2(q+k)m)$ is tiny.
In $\tau\pi$, this class of infinite cycles is replaced by two classes of symmetric cycles that are matched if and only if $k$ is even. 

\medskip

$\CombineInfInf$ (Case 4d).
There is an infinite cycle $(\cdots\,a_1\,\cdots\,a_m\,\,\,a_1+2kn\,\cdots)$ with $k\in\integers$ having $a_1=a$ and in infinite cycle $(\cdots\,b_1\,\cdots\,b_p\,\,\,b_1-2kn\,\cdots)$ in a different class but with the same $k$, having $b_1=b$.
In $\tau\pi$, these two classes of infinite cycles are replaced by a single class of nonsymmetric finite cycles.

\medskip

$\EnlargeTiny$ (Case 5c).
The element $\tau$ is a loop $\ell_a=(\!(\cdots\,a\,\,\,a+2n\,\cdots)\!)$ with $a\in\set{\pm1,\ldots,\pm(n-1)}$ and $a$ also is in a tiny cycle of $\pi$.   
Then in $\tau\pi$ this class of tiny cycles is replaced by a class of ($2$-element) symmetric cycles that don't match the tiny cycle in $\pi$ that contains $a$.

\medskip

$\InfToNonSym$ (Case 5d).
The element $\tau$ is a loop $\ell_a=(\!(\cdots\,a\,\,\,a+2n\,\cdots)\!)$ for $a\in\set{\pm1,\ldots,\pm(n-1)}$ and there is an infinite cycle $(\cdots\,a_1\,\cdots\,a_m\,\,\,a_1-2n\,\cdots)$ in $\pi$ with $a_1=a$.
In $\tau\pi$, this class of infinite cycles is replaced by a class of nonsymmetric finite cycles.
\end{minipage}
\end{listing}

\clearpage

\begin{prop}\label{cycle types}
If $\pi\in[1,c]_{T\cup L}$, then $\pi$ has one of the cycle structures on List~\ref{structures}. 
\end{prop}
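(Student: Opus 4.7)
The plan is to induct on $\vr(\pi)$. The base case $\vr(\pi)=0$ forces $\pi=1$, which has no nontrivial cycles and so exhibits the trivial cycle structure $\NonSym^0$ on List~\ref{structures}. For the inductive step, the argument in the proof of Proposition~\ref{interval rank} shows that any $\pi\in[1,c]_{T\cup L}$ with $\vr(\pi)>0$ can be written as $\pi=\tau\pi'$ with $\tau\in T\cup L$ and $\pi'\in[1,c]_{T\cup L}$ satisfying $\vr(\pi')=\vr(\pi)-1$ (just peel the leftmost letter off a reduced $T\cup L$-word for $\pi$ that extends to a reduced word for $c$). By the induction hypothesis, $\pi'$ has a structure on List~\ref{structures}, and the task is to verify the same for $\pi$.

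Because $\vr$ increases by exactly $1$ in the move $\pi'\mapsto\tau\pi'=\pi$, this transition falls under one of the sub-cases of the proof of Lemma~\ref{only one} in which $\vr(\tau\pi')=\vr(\pi')+1$, namely portions of Cases 1b, 1c, 1d, 1e, 1g, 2b, 2c, 4c, and 5a. The plan is to enumerate, for each of these rank-increasing sub-cases and each starting structure on List~\ref{structures}, how the cycle decomposition is modified, and to confirm that the resulting decomposition still appears on List~\ref{structures} rather than on List~\ref{bad structures}. The ``merging'' cases (1d, 1e, 1g, and the $+1$ branch of 2c) simply consolidate an existing class with another of its type and so are essentially routine, reducing the number of classes by one without introducing any new cycle type. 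The ``splitting'' and ``creation'' cases (1b, 1c, the $+1$ branch of 2b, the tiny-producing branch of 4c, and 5a) are where all the real work lies, since these introduce new infinite, symmetric, or tiny classes.

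The main obstacle will be controlling the three ways the forbidden structures on List~\ref{bad structures} could arise: (a) a nonflat infinite class, (b) two symmetric classes without a full pair of matching tinies, and (c) three or more symmetric classes. The tools for (a) are that Case 5a manufactures an infinite cycle of period exactly $2n$, while Cases 1b, 2b, 2c, and 4c produce infinite cycles whose period is read off either from the input class (via Lemma~\ref{sym obs}) or from the mod-$2n$ offset between $a$ and $b$ in the reflection $(\!(a\,b)\!)_{2n}$; starting from an $\Inf^1$ or $\Inf^2$ input already on List~\ref{structures} (where $|k|=1$), one checks the output $|k|$ stays at $1$. For (b), the key observation is that whenever a symmetric class is created in Cases 1c, 2b, 2c, or 4c, its parity parameter $k$ from the notation of \eqref{sym cyc} is dictated by the integer $q$ appearing in the reflection, and this parity must match the upper/lower type of a tiny class that is either already present in $\pi'$ or produced by the same move. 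For (c), one observes that each rank-increasing move creates at most two symmetric classes and only from an input with at most one symmetric class, so three never accumulate from a start on List~\ref{structures}.

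The proof will thus be essentially a finite tabulation: for each pair (starting structure on List~\ref{structures}, applicable rank-increasing sub-case of Lemma~\ref{only one}), the description in the proof of that lemma directly yields the cycle decomposition of $\pi$, and the parity/flatness bookkeeping above shows the outcome is on List~\ref{structures}. The conceptual difficulty is not in any single step but in organizing the case analysis so that the forbidden structures on List~\ref{bad structures} are visibly unreachable; the essential content is the parity tracking via Lemma~\ref{sym obs} and the fact that $c$ itself, by Lemma~\ref{d annulus label}, has structure $\Inf^1\Tiny^2$ with a flat infinite class and two matched tinies, which anchors the inductive invariants.
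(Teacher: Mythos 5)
Your plan inverts the paper's direction: the paper works \emph{downward} from $c$, you work \emph{upward} from $1$. That inversion is not benign here, and it leaves a genuine gap in the inductive step. You claim that tabulating the rank-increasing sub-cases of Lemma~\ref{only one} against starting structures on List~\ref{structures} will ``confirm that the resulting decomposition still appears on List~\ref{structures}.'' But this confirmation simply fails in the abstract. Take Case~1b starting from $\NonSym^m$: with $\tau=(\!(a\,\,\,b)\!)_{2n}$ and $b=a_i+2kn$, the result is a pair of classes of infinite cycles each of period $|k|$, and nothing in the hypothesis ``$\pi'$ has a cycle structure on List~\ref{structures}'' bounds $|k|$. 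For $|k|\ge 2$ you get two classes of \emph{nonflat} infinite cycles, a structure on neither list. The same issue appears in the $\vr$-increasing branches of Cases~2b and 2c, where the period of the produced infinite cycle is again $|q|$ with no a priori bound. Your sentence about ``starting from an $\Inf^1$ or $\Inf^2$ input... one checks the output $|k|$ stays at $1$'' does not apply to these cases, because they start from $\NonSym$ or $\Sym$ classes, not from existing infinite cycles.

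The missing ingredient is exactly the constraint $\pi\le_{T\cup L} c$, and using it is the whole content of the paper's proof. The paper shows that each rank-\emph{decreasing} move takes a structure on List~\ref{structures} to a structure on List~\ref{structures} or List~\ref{bad structures}, and then separately shows that every structure on List~\ref{bad structures} is a dead end — no sequence of rank-decreasing moves from it can ever reach $\NonSym^{n-1}$ (by a parity/counting invariant: $\CombineSymTiny$ is the only move destroying a symmetric class and it also destroys a tiny class, while no move creates a tiny class). Since the downward chain $c\to\cdots\to\pi\to\cdots\to 1$ starts at the good structure of $c$ and must end at $\NonSym^{n-1}$, it can never touch a bad structure, so every $\pi$ on it is good. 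Your upward induction discards the ``must continue to $c$'' information at exactly the moment it is needed, and without it the inductive step is false. To salvage your approach you would have to prove, independently, that the particular letter $\tau$ coming from a reduced word for $c$ always has $|k|=1$ in Cases~1b, 2b, 2c — but that statement is essentially equivalent to the proposition itself and is not provable by local tabulation.
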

\begin{proof}
By Proposition~\ref{interval rank}, every element of $[1,c]_{T\cup L}$ is obtained from $c$ by multiplying on the left by a sequence of elements $\tau\in T\cup L$ with each multiplication decreasing $\vr$ by $1$.
The cycle structure of $c$ is $\Inf^1\Tiny^2\NonSym^0$ and the cycle structure of $1$ is $\NonSym^{n-1}$.
We prove the proposition by checking two assertions:
If $\pi$ has cycle structure on List~\ref{structures} and $\pi'$ is obtained by one of the moves on List~\ref{moves}, then $\pi'$ has cycle structure on List~\ref{structures} or List~\ref{bad structures}.
If $\pi$ has cycle structure on List~\ref{bad structures}, then no sequence of moves on List~\ref{moves} leads to the cycle structure $\NonSym^{n-1}$.

To begin, for every cycle structure on List~\ref{structures}, we list the possible moves and the resulting cycle structure. 

\medskip

\noindent
$\Inf^1\Tiny^2\NonSym^k$.
\[\begin{array}{|rcl|}\hline
\Inf^1\Tiny^2\NonSym^k &\xrightarrow{\SplitNonSym} & \Inf^1\Tiny^2\NonSym^{k+1}\\\hline
\Inf^1\Tiny^2\NonSym^k &\xrightarrow{\CombineTiny} & \Inf^2\NonSym^k\\\hline
\Inf^1\Tiny^2\NonSym^k &\xrightarrow{\CombineTiny} & \Inf^1\NonflatInf^1\NonSym^k\\\hline
\Inf^1\Tiny^2\NonSym^k &\xrightarrow{\CombineTinyInf} & \Tiny^1\Sym^1\NonSym^k\\\hline
\Inf^1\Tiny^2\NonSym^k &\xrightarrow{\SplitInf} & \Inf^1\Tiny^2\NonSym^{k+1}\\\hline
\Inf^1\Tiny^2\NonSym^k &\xrightarrow{\CombineInfAndNeg} &\Tiny^2\Sym^2\NonSym^k\\\hline
\Inf^1\Tiny^2\NonSym^k &\xrightarrow{\EnlargeTiny} &\Inf^1\Tiny^1\Sym^1\NonSym^k\\\hline
\Inf^1\Tiny^2\NonSym^k &\xrightarrow{\InfToNonSym} &\Tiny^2\NonSym^{k+1}\\\hline
\end{array}\]

\bigskip

\noindent
$\Inf^2\NonSym^k$
\[\begin{array}{|rcl|}\hline
\Inf^2\NonSym^k &\xrightarrow{\SplitNonSym} & \Inf^2\NonSym^{k+1}\\\hline
\Inf^2\NonSym^k &\xrightarrow{\SplitInf} & \Inf^2\NonSym^{k+1}\\\hline
\Inf^2\NonSym^k &\xrightarrow{\CombineInfAndNeg} & \Inf^1\Sym^2\NonSym^k\\\hline
\Inf^2\NonSym^k &\xrightarrow{\CombineInfInf} & \NonSym^{k+1}\\\hline
\Inf^2\NonSym^k &\xrightarrow{\InfToNonSym} & \Inf^1\NonSym^{k+1}\\\hline
\end{array}\]

\bigskip

\noindent
$\Inf^1\NonSym^k$
\[\begin{array}{|rcl|}\hline
\Inf^1\NonSym^k &\xrightarrow{\SplitNonSym} & \Inf^1\NonSym^{k+1}\\\hline
\Inf^1\NonSym^k &\xrightarrow{\SplitInf} & \Inf^1\NonSym^{k+1}\\\hline
\Inf^1\NonSym^k &\xrightarrow{\CombineInfAndNeg} & \Sym^2\NonSym^k\\\hline
\Inf^1\NonSym^k &\xrightarrow{\InfToNonSym} & \NonSym^{k+1}\\\hline
\end{array}\]

\bigskip

\noindent
$\Tiny^1\Sym^1\NonSym^k$.
\[\begin{array}{|rcl|}\hline
\Tiny^1\Sym^1\NonSym^k &\xrightarrow{\SplitNonSym} & \Tiny^1\Sym^1\NonSym^{k+1}\\\hline
\Tiny^1\Sym^1\NonSym^k &\xrightarrow{\SplitSym} & \Tiny^1\Sym^1\NonSym^{k+1}\\\hline
\Tiny^1\Sym^1\NonSym^k &\xrightarrow{\CombineSymTiny} & \NonSym^{k+1}\\\hline
\Tiny^1\Sym^1\NonSym^k &\xrightarrow{\EnlargeTiny} & \Sym^2\NonSym^k\\\hline
\end{array}\]

\bigskip

\noindent
$\Tiny^2\Sym^2\NonSym^k$.
\[\begin{array}{|rcl|}\hline
\Tiny^2\Sym^2\NonSym^k &\xrightarrow{\SplitNonSym} & \Tiny^2\Sym^2\NonSym^{k+1}\\\hline
\Tiny^2\Sym^2\NonSym^k &\xrightarrow{\SplitSym} & \Tiny^2\Sym^2\NonSym^{k+1}\\\hline
\Tiny^2\Sym^2\NonSym^k &\xrightarrow{\CombineSymTiny} & \Tiny^1\Sym^1\NonSym^{k+1}\\\hline
\Tiny^2\Sym^2\NonSym^k &\xrightarrow{\CombineTiny} & \Inf^1\Sym^2\NonSym^k\\\hline
\Tiny^2\Sym^2\NonSym^k &\xrightarrow{\CombineTiny} & \NonflatInf^1\Sym^2\NonSym^k\\\hline
\Tiny^2\Sym^2\NonSym^k &\xrightarrow{\EnlargeTiny} & \Tiny^1\Sym^3\NonSym^k\\\hline
\end{array}\]

\bigskip

\noindent
$\Inf^1\Tiny^1\Sym^1\NonSym^k$.
\[\begin{array}{|rcl|}\hline
\Inf^1\Tiny^1\Sym^1\NonSym^k &\xrightarrow{\SplitNonSym} & \Inf^1\Tiny^1\Sym^1\NonSym^{k+1}\\\hline
\Inf^1\Tiny^1\Sym^1\NonSym^k &\xrightarrow{\SplitSym} & \Inf^1\Tiny^1\Sym^1\NonSym^{k+1}\\\hline
\Inf^1\Tiny^1\Sym^1\NonSym^k &\xrightarrow{\CombineSymTiny} & \Inf^1\NonSym^{k+1}\\\hline
\Inf^1\Tiny^1\Sym^1\NonSym^k &\xrightarrow{\CombineTinyInf} & \Sym^2\NonSym^k\\\hline
\Inf^1\Tiny^1\Sym^1\NonSym^k &\xrightarrow{\SplitInf} & \Inf^1\Tiny^1\Sym^1\NonSym^{k+1}\\\hline
\Inf^1\Tiny^1\Sym^1\NonSym^k &\xrightarrow{\CombineInfAndNeg} & \Tiny^1\Sym^3\NonSym^k\\\hline
\Inf^1\Tiny^1\Sym^1\NonSym^k &\xrightarrow{\EnlargeTiny} & \Inf^1\Sym^3\NonSym^k\\\hline
\Inf^1\Tiny^1\Sym^1\NonSym^k &\xrightarrow{\InfToNonSym} & \Tiny^1\Sym^1\NonSym^{k+1}\\\hline
\end{array}\]

\bigskip

\noindent
$\Tiny^2\NonSym^k$.
\[\begin{array}{|rcl|}\hline
\Tiny^2\NonSym^k &\xrightarrow{\SplitNonSym} & \Tiny^2\NonSym^{k+1}\\\hline
\Tiny^2\NonSym^k &\xrightarrow{\CombineTiny} & \Inf^1\NonSym^k\\\hline
\Tiny^2\NonSym^k &\xrightarrow{\CombineTiny} & \NonflatInf^1\NonSym^k\\\hline
\Tiny^2\NonSym^k &\xrightarrow{\EnlargeTiny} & \Tiny^1\Sym^1\NonSym^k\\\hline
\end{array}\]

\bigskip

\noindent
$\NonSym^k$.
\[\begin{array}{|rcl|}\hline
\NonSym^k &\xrightarrow{\SplitNonSym} & \NonSym^{k+1}\\\hline
\end{array}\]

\bigskip

We see that, for every cycle structure on List~\ref{structures}, the result of applying a move from List~\ref{moves} is a cycle structure on List~\ref{structures} or List~\ref{bad structures}.

Next, for every cycle structure on List~\ref{bad structures}, we argue that no sequence of moves can lead to the cycle structure $\NonSym^{n-1}$.
First of all, there is only one move on List~\ref{moves} that destroys a class of symmetric cycles, namely $\CombineSymTiny$, but this move also destroys a tiny cycle.
Since also no move creates a tiny cycle, we see if $\pi$ has more classes of symmetric cycles than classes of tiny cycles, no sequence of moves from List~\ref{moves} can destroy all of the symmetric cycles of $\pi$.

All but two cycle structures on List~\ref{bad structures} have more classes of symmetric cycles than tiny cycles.
The remaining two are the structures $\Inf^1\NonflatInf^1\NonSym^k$ and $\NonflatInf^1\NonSym^k$.
We check that there is no way to destroy the class of nonflat infinite cycles without creating a symmetric cycle (and thus having more symmetric cycles than tiny cycles). 
The only moves that can be done at all are $\SplitInf$, which can't get rid of a nonflat cycle, $\CombineInfAndNeg$, which creates two classes of symmetric cycles and no tiny cycles, and $\InfToNonSym$, which only applies to $\Inf^1\NonflatInf^1\NonSym^k$ and changes it to $\NonflatInf^1\NonSym^{k+1}$.
(The move $\CombineInfInf$ can't be applied to $\Inf^1\NonflatInf^1\NonSym^k$ precisely because one class of infinite cycles is flat and one is not.)
%
\end{proof}

We also need a version of Proposition~\ref{cycle types} for the smaller interval $[1,c]_T$.

\begin{prop}\label{cycle types T}
If $\pi\in[1,c]_T$, then the cycle structure of $\pi$ is one of the following from List~\ref{structures}.
\begin{itemize}
\item $\Inf^1\Tiny^2\NonSym^k$, 
\item $\Inf^2\NonSym^k$, 
\item $\Tiny^1\Sym^1\NonSym^k$, 
\item $\Tiny^2\Sym^2\NonSym^k$, or 
\item $\NonSym^k$ 
\end{itemize}
\end{prop}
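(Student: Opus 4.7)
The plan is to mimic the proof of Proposition~\ref{cycle types} but restrict to reflection moves only. Concretely, if $\pi\in[1,c]_T$ then there is a reduced word $c=\tau_1\tau_2\cdots\tau_n$ with every $\tau_i\in T$ such that $\pi=\tau_{k+1}\cdots\tau_n$ for some $k\in\set{0,\ldots,n}$. The intermediate permutations all lie in $[1,c]_T$, and, crucially, every move in the sequence from $c$ down to the identity is an instance of a move in List~\ref{moves} whose governing case in Lemma~\ref{only one} is Case~1, 2, 3, or 4 (reflection cases) rather than Case~5 (loop case). Thus the two loop-driven moves $\EnlargeTiny$ (Case~5c) and $\InfToNonSym$ (Case~5d) never occur along this sequence.

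First I would revisit the transition tables from the proof of Proposition~\ref{cycle types} and cross off each row that uses $\EnlargeTiny$ or $\InfToNonSym$. Starting from the cycle structure $\Inf^1\Tiny^2\NonSym^0$ of $c$ and allowing only the remaining rows, I would follow all reachable cycle structures. By direct inspection of the eight tables, the only cycle structures on List~\ref{structures} reachable this way are exactly the five listed in the statement (the structures $\Inf^1\NonSym^k$, $\Inf^1\Tiny^1\Sym^1\NonSym^k$, and $\Tiny^2\NonSym^k$ are removed, since each of them is created only by a loop-move row). Some structures on List~\ref{bad structures} can still appear as immediate results of reflection moves (for instance $\Inf^1\NonflatInf^1\NonSym^k$ from $\CombineTiny$ applied to $\Inf^1\Tiny^2\NonSym^k$).

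To rule out the bad structures, I would invoke the second half of the proof of Proposition~\ref{cycle types}: no element with a structure from List~\ref{bad structures} can be connected to the cycle structure $\NonSym^{n-1}$ of the identity via moves from List~\ref{moves}, even when loops are allowed. Since $\pi\in[1,c]_T$ forces the reduced word $\tau_1\cdots\tau_n$ to descend all the way from $c$ to the identity via reflection moves, $\pi$ itself cannot have a cycle structure on List~\ref{bad structures}. Combined with the previous paragraph, this forces $\pi$'s structure to be one of the five claimed.

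The main bookkeeping task — rather than a real obstacle — is verifying that the reachability analysis, with the two loop rows deleted, still closes up inside the five listed structures plus bad structures. This is straightforward: removing rows from the tables can only shrink the set of structures reachable from $c$, and a direct scan of the remaining rows confirms that $\Inf^1\NonSym^k$, $\Inf^1\Tiny^1\Sym^1\NonSym^k$, and $\Tiny^2\NonSym^k$ are produced exclusively by the deleted loop moves and hence never appear.
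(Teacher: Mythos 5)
Your approach is essentially the one the paper uses: appeal to Proposition~\ref{cycle types} (or re-derive the bad-structures argument) to conclude that $\pi$'s structure lies on List~\ref{structures}, then revisit the transition tables with the two loop-move rows ($\EnlargeTiny$ and $\InfToNonSym$) removed and check which structures are reachable from $c$. The paper's version is just the compressed form: since $[1,c]_T\subseteq[1,c]_{T\cup L}$, bad structures are already excluded by Proposition~\ref{cycle types}, so one only scans the tables for reachability among List~\ref{structures} entries.

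One small imprecision in your third paragraph: $\Inf^1\NonSym^k$ is \emph{not} produced exclusively by the deleted loop moves. It is also produced by the reflection moves $\CombineSymTiny$ (from $\Inf^1\Tiny^1\Sym^1\NonSym^k$) and $\CombineTiny$ (from $\Tiny^2\NonSym^k$). What rescues the argument is a two-step chain: those two source structures are themselves created only by the deleted loop moves (up to self-preserving splits like $\SplitNonSym$), so they are unreachable, and hence $\Inf^1\NonSym^k$ is unreachable in turn. Your conclusion is correct, but the ``direct scan'' needs to be iterated once to see it, rather than being immediate on the first pass through the tables.
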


%
%
%
%
%
%
%
%
%
%

\begin{proof}
Since $[1,c]_T$ is a subset of $[1,c]_{T\cup L}$, Proposition~\ref{cycle types} says that every permutation in $[1,c]_T$ has a cycle structure described in List~\ref{structures}.
We look back at the proof of Proposition~\ref{cycle types}, disallowing moves $\EnlargeTiny$ and $\InfToNonSym$ because they correspond to multiplication by loops.
We see that only the five cycle structures listed in the proposition can be reached from $c$ by moves described in List~\ref{moves}, disallowing $\EnlargeTiny$ and $\InfToNonSym$.
\end{proof}

\section{Symmetric NC partitions of an annulus with two double points}\label{nc D sec}
The combinatorial model for $[1,c]_{T\cup L}$ in type~$\afftype{D}_{n-1}$ takes place in $D$, the annulus with two double points defined in Section~\ref{sym ann doub sec}.  
This model is a special case of a definition of \newword{symmetric noncrossing partitions of a marked surface with double points} given in \cite[Section~3]{surfnc}.

\subsection{Noncrossing partitions}\label{nc D defs sec}
Recall from Section~\ref{sym ann doub sec} that the choice of a Coxeter element is equivalent to a placement of the numbers $\set{\pm1,\ldots,\pm(n-1)}$ on the boundary and double points of $D$.
The points with these labels are \newword{numbered points}, and more specifically \newword{inner points}, \newword{outer points}, or \newword{double points}.
(``Numbered points'' here are ``marked points'' in~\cite{surfnc}, because the numbering of points is not important there.)
Recall that the symmetry $\phi$ maps each numbered point $i$ to $-i$, including mapping each double point to the opposite-signed double point at the same location.

A \newword{boundary segment} of $D$ is a curve contained in the boundary of $D$, connecting a numbered point to a numbered point, but intersecting no numbered points other than its endpoints.
The endpoints of a boundary segment coincide if and only if there is only one marked point on the component of the boundary containing that segment.
(This is the case $n=4$.)

We will consider certain subsets of $D$ up to a notion of \newword{symmetric ambient isotopy}, meaning that they are related by a homeomorphism from $D$ to itself that fixes the boundary pointwise, fixes each double point, commutes with $\phi$ and is homotopic to the identity by a homotopy that fixes the boundary and fixes each double point at every step and commutes with $\phi$ at every step.


An \newword{arc} in $D$ is a non-oriented curve $\alpha$ in $D$, having endpoints at numbered points and satisfying these requirements:
\begin{itemize}
\item
$\alpha$ does not intersect itself or $\phi(\alpha)$ except possibly at endpoints.
\item
$\alpha$ does not intersect the double points or the boundary of $D$, except at its endpoints.
\item
$\alpha$ does not bound a monogon in $D$ (even if its endpoints are two different double points at the same location in $D$).
\item 
$\alpha$ does not combine with a boundary segment to bound a digon in $D$. 
\end{itemize}
A curve $\alpha$ is an arc if and only if $\phi(\alpha)$ is an arc, and we call $\alpha,\phi(\alpha)$ a \newword{symmetric pair of arcs}.
Symmetric pairs of arcs are considered up to symmetric ambient isotopy and up to swapping an arc $\alpha$ with~$\phi(\alpha)$.

Noncrossing partitions of $D$ are defined below as collections of embedded blocks.
As we define embedded blocks, we will refer forward to Figure~\ref{nc ex fig} for examples.
An \newword{embedded block} in $D$ is a closed subset $E$ with either $E\cap\phi(E)=\emptyset$ or $E=\phi(E)$ and fitting one of the descriptions below.
(We emphasize that the condition $E\cap\phi(E)=\emptyset$ allows the possibility that $E$ contains a double point and $\phi(E)$ contains the opposite double point at the same location.)
If $E\cap\phi(E)=\emptyset$, then $(E,\phi(E))$ is a \newword{symmetric pair of blocks} and must be one of the following:
\begin{itemize}
\item
a \newword{symmetric pair of trivial blocks}, meaning a pair of numbered points related by $\phi$ and therefore numbered $\pm i$ for some $i$ (Figure~\ref{nc ex fig}, top-left $\pm4$ and top-middle $\pm1$);
\item
a \newword{symmetric pair of curve blocks}, meaning a symmetric pair of arcs or symmetric pair of boundary segments, each having two distinct endpoints (Figure~\ref{nc ex fig}, top-middle and -right, bottom-right);
\item
a symmetric pair of \newword{disk blocks}, closed disks each of whose boundaries is a union of arcs and/or boundary segments of $D$ (Figure~\ref{nc ex fig}, second row-left, the two blocks containing $6$ and $-6$);
\item
a symmetric pair of \newword{non-dangling annular blocks}, each a closed annulus with each of its  boundary components a union of arcs and/or boundary segments of $D$ (Figure~\ref{nc ex fig}, top-left); or
\item
a symmetric pair of \newword{dangling annular blocks}, each a closed annulus with one of its boundary components a closed curve in the interior of D not containing the double points and one of its boundary components a union of arcs and/or boundary segments of $D$ (Figure~\ref{nc ex fig}, third row-middle and -right and fourth row-middle).
\end{itemize}


\noindent
If $E=\phi(E)$, then $E$ is a \newword{symmetric block} and must be one of the following:
\begin{itemize}
\item
a \newword{symmetric disk block}, a closed disk in $D$ whose whose boundary is a union of arcs and/or boundary segments of $D$, including as a special case a \newword{stitched disk block}, which has two boundary points at opposite double points in the same location, thus appearing to have two boundary points identified (Figure~\ref{nc ex fig} top-middle, second row-left and -middle, bottom-middle);
\item
a \newword{symmetric non-dangling annular block}, an annulus with each of its  boundary components a union of arcs and/or boundary segments of $D$ (Figure~\ref{nc ex fig} third row-left); or
\item
a \newword{symmetric dangling annular block}, an annulus with each of its boundary components a closed curve in the interior of D not containing the double points (Figure~\ref{nc ex fig} bottom-left).
\end{itemize}

The first two cases (symmetric pairs of trivial blocks or curve blocks) are also called pairs of \newword{degenerate disk blocks}. 

\begin{figure}
\scalebox{0.47}{\includegraphics{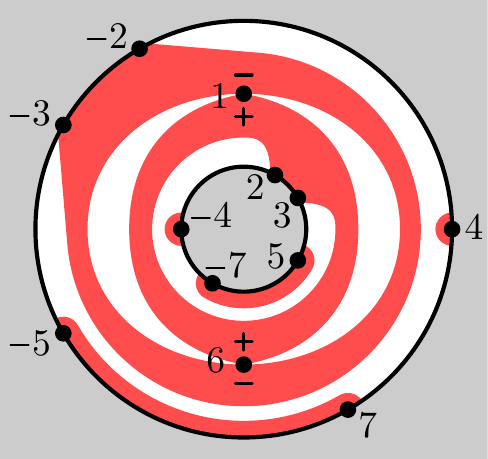}}
\quad 
\scalebox{0.47}{\includegraphics{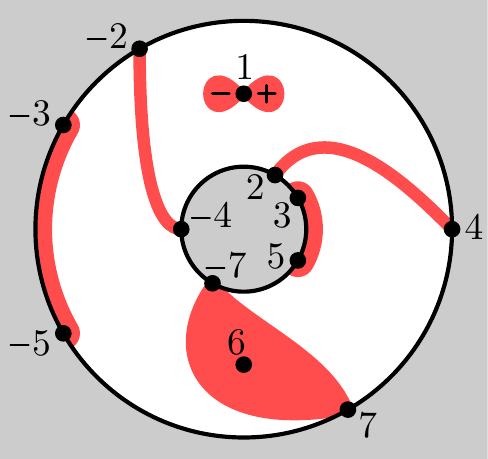}}
\quad 
\scalebox{0.47}{\includegraphics{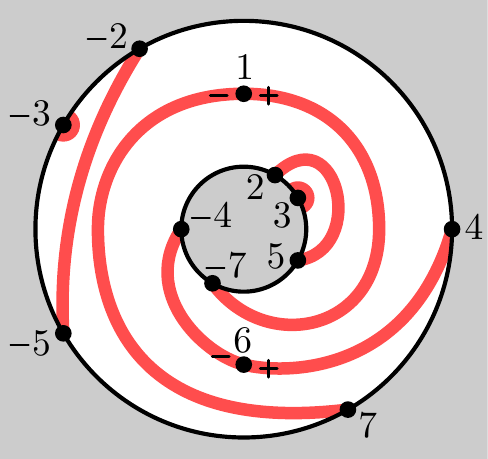}}\\[12pt]
\scalebox{0.47}{\includegraphics{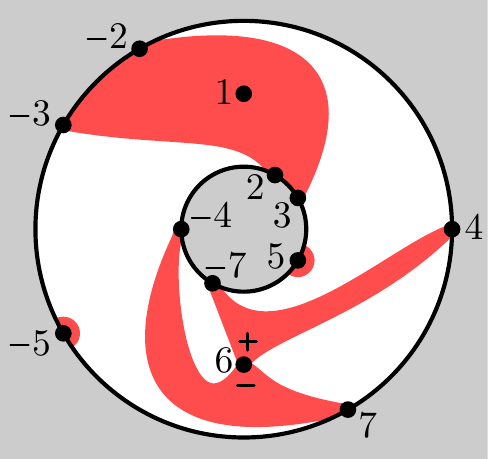}}
\quad 
\scalebox{0.47}{\includegraphics{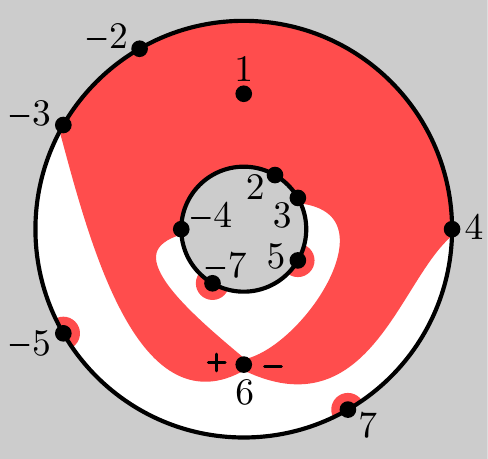}}
\quad 
\scalebox{0.47}{\includegraphics{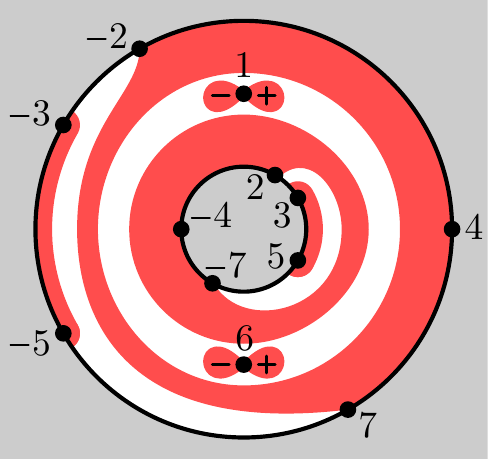}}\\[12pt]
\scalebox{0.47}{\includegraphics{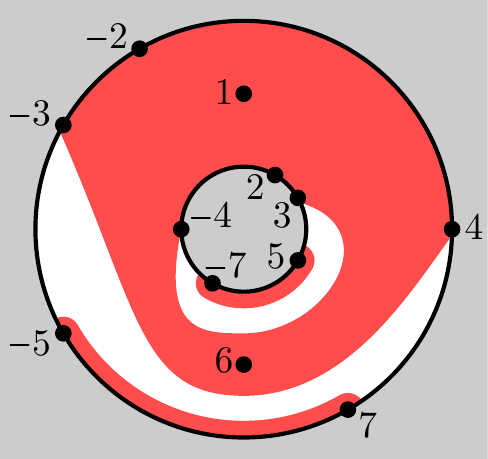}}
\quad 
\scalebox{0.47}{\includegraphics{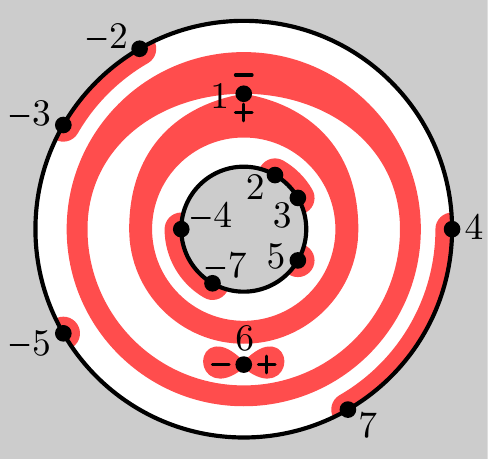}}
\quad 
\scalebox{0.47}{\includegraphics{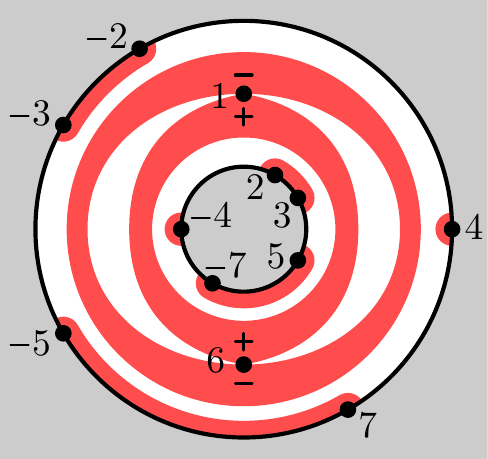}}\\[12pt]
\scalebox{0.47}{\includegraphics{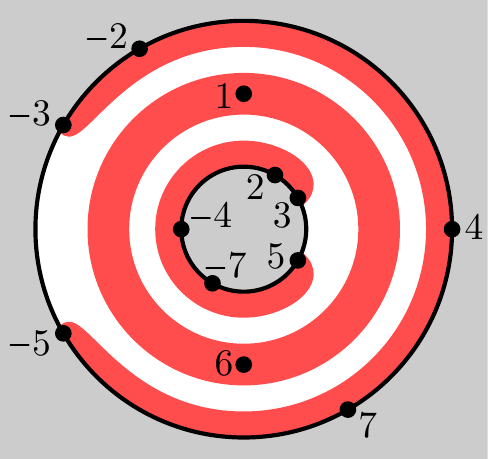}}
\quad 
\scalebox{0.47}{\includegraphics{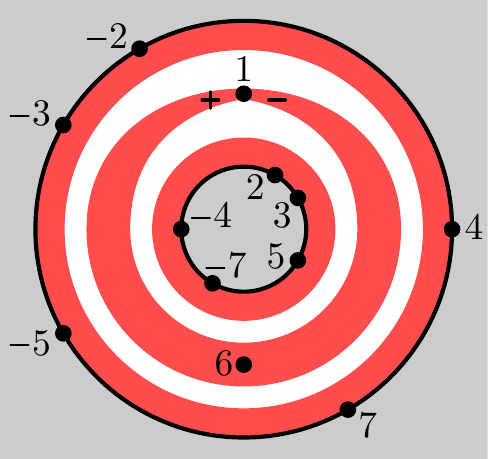}}
\quad 
\scalebox{0.47}{\includegraphics{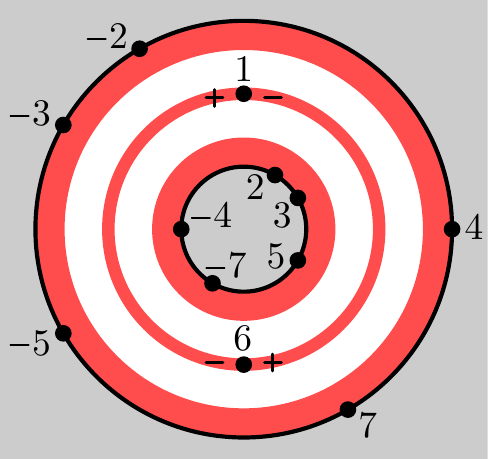}}
\caption{Symmetric noncrossing partitions of an annulus with two double points}
\label{nc ex fig}
\end{figure}

Double points may lie on the boundary of a block or in the interior of a block.
If a double point is in the interior of a block, then the other double point in the same location is also.
Embedded blocks with double points on their boundaries are distinguished by which double point (of a pair of double points at the same location) is on their boundary.
In pictures, the double point in a block is indicated by $+$ or $-$.
Stitched disks are distinguished by which double point is at which point along the boundary, again indicated in pictures by $+$ or $-$ in the appropriate place.
A symmetric disk block contains one pair of double points in its interior.
A symmetric annular block contains both pairs of double points in its interior.

A \newword{(symmetric) noncrossing partition} of $D$ is a collection $\P$ of disjoint embedded blocks such that every numbered point is in some block of $\P$, such that the action of $\phi$ permutes the blocks of $\P$, and having at most two annular blocks.
We emphasize that the requirement of disjoint embedded blocks allows the possibility that a block $E$ of $\P$ contains a double point and $\phi(E)\neq E$, so that $\phi(E)$ is also a block of $\P$ and contains the opposite double point at the same location.
The requirement that $\P$ has at most two annular blocks rules out the possibility that $\P$ has two dangling annular blocks that share (up to isotopy) a component of their boundary.
Noncrossing partitions are considered up to symmetric isotopy.
An \newword{embedding} of $\P$ is a specific symmetric-isotopy representative of~$\P$.

\begin{example}\label{D exs}
Figure~\ref{nc ex fig} shows several examples of symmetric noncrossing partitions of an annulus with two double points.
This is the case where $n=8$ and $c=s_3s_6s_2s_0s_1s_5s_7s_4$.
(Compare Figure~\ref{sym ann fig}.)
To make the pictures more legible, degenerate disk blocks are shown with some thickness.
\end{example}


We partially order the symmetric noncrossing partitions of an annulus with two double points as follows:
$\P \leq \Q$ if and only if there exists an embedding of $\P$ and an embedding of $\Q$ with each block of $\P$ contained in some block of $\Q$.
We write $\tNCDc$ for the set of noncrossing symmetric partitions of an annulus with two double points with this partial order and $\tNCDcircc$ for the subposet induced by the set of noncrossing partitions with no dangling annular blocks.  

The following theorem originally appeared in~\cite{BThesis}.
We quote it here as \cite[Theorem~4.5]{surfnc}, which is a special case of \cite[Theorem~3.18]{surfnc}.

\begin{theorem}\label{D tilde main}
The poset $\tNCDc$ of symmetric noncrossing partitions of an annulus with $n-3$ marked points on each boundary and two pairs of double points is graded, with rank function given by $n-1$ minus the number of symmetric pairs of distinct non-annular blocks plus the number of symmetric annular blocks.
\end{theorem}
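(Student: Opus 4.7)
My plan is to verify that the function
\[
\rho(\P)=(n-1)-p(\P)+a(\P),
\]
where $p(\P)$ is the number of $\phi$-orbits of size two among the non-annular blocks of $\P$ (the ``symmetric pairs of distinct non-annular blocks'') and $a(\P)$ is the number of symmetric annular blocks of $\P$, is a rank function on $\tNCDc$. It suffices to show that $\rho(\hat0)=0$ and that every cover relation $\P\lessdot\Q$ satisfies $\rho(\Q)-\rho(\P)=1$; gradedness and the identification of the rank function with $\rho$ then follow by induction on the length of saturated chains.

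At $\hat0$ every numbered point is its own trivial block, yielding $n-1$ symmetric pairs of trivial blocks and no annular block, and thus $\rho(\hat0)=0$. At the maximum element (a single symmetric annular block containing all numbered points) one reads off $\rho=n$. These sanity checks fix the formula at the two extremes.

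The bulk of the argument is an enumeration of the cover relations, each of which is an atomic $\phi$-equivariant merge of blocks, with the change in $(p,a)$ computed in each case. The cases I would treat are: two pair orbits of non-annular blocks merging into a single pair orbit of non-annular blocks ($p$ decreases by $1$, $a$ unchanged); a symmetric non-annular block absorbing a pair orbit of non-annular blocks ($p$ decreases by $1$); a pair orbit of non-annular blocks closing up across one of the double points to become a single symmetric non-annular block ($p$ decreases by $1$, since the new symmetric block is not counted in $p$); a pair orbit of non-annular blocks enlarging into a pair orbit of annular blocks, or a symmetric non-annular block enlarging into a symmetric annular block, together with the analogous absorptions involving annular blocks, each of which is checked to give $\Delta\rho=+1$ by direct bookkeeping; and finally a pair orbit of annular blocks coalescing into a single symmetric annular block ($p$ unchanged, $a$ increases by $1$).

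The main obstacle is ensuring this list of covers is exhaustive, and in particular ruling out apparently larger symmetric merges as cover relations. For example, two pair orbits of non-annular blocks could in principle coalesce directly into a single symmetric non-annular block across a double point, which would give $\Delta p=-2$ and hence $\Delta\rho=2$. To exclude such merges from the list of covers I would exhibit explicit intermediate symmetric noncrossing partitions: first merge the two pair orbits into a single pair orbit of non-annular blocks (rank change $+1$), then close that pair orbit across the axis (rank change $+1$). Analogous intermediate factorizations rule out all other candidate merges whose naive rank change would exceed $1$, including those that simultaneously create annular blocks or consume both pairs of double points at once. Once this classification is complete, every cover changes $\rho$ by exactly $1$, and $\tNCDc$ is graded with the claimed rank function.
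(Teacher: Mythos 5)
The paper does not itself prove this theorem: it is quoted from the references cited immediately before the statement, and the relevant machinery (curve sets, simple symmetric pairs of connectors, and Proposition~\ref{conn edge cov d} characterizing covers as single augmentations) is recalled only \emph{after} the theorem, precisely because the cited proof relies on it. So there is no in-paper proof to compare against; I am comparing with the framework the paper sets up for working with $\tNCDc$.

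Your strategy---check the rank function at $\hat 0$ and verify that every cover changes it by exactly $1$---is the right one, but as written the argument has two concrete gaps. First, you never characterize what the covers of $\tNCDc$ actually are. The poset is defined by containment of embedded blocks up to symmetric ambient isotopy, and it is not automatic that every cover is one of your ``atomic $\phi$-equivariant merges.'' Your list also omits ways a block can grow without absorbing another block: for instance a symmetric disk block can enlarge to a symmetric annular block by acquiring a noncontractible boundary curve, or a symmetric disk containing one pair of double points can expand to swallow the trivial pair at the other double point, and the latter would change the rank statistic by $2$ if it were a cover. Without something like Proposition~\ref{conn edge cov d}---which says a cover adds exactly one simple symmetric pair of connectors to the curve set---you have no way to certify your list is complete or to rule the bad growths out. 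Second, your exhaustiveness argument is a gesture rather than a proof: you exhibit an intermediate for one ``too-large'' merge and then assert that ``analogous intermediate factorizations rule out all other candidate merges.'' What is actually needed is an a priori reason why any relation $\P<\Q$ with rank gap at least $2$ admits a strictly intermediate noncrossing partition closed under $\phi$ (and one must verify the intermediate is genuinely noncrossing, which is not automatic when you split a symmetric augmentation into two steps). That is exactly what the curve-set and simple-connector machinery supplies, and it is the piece missing from your sketch.
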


\begin{remark}\label{error}
In \cite{BThesis} and an early version of~\cite{surfnc}, it was erroneously claimed that $\tNCDc$ is a lattice.
This is false, and a counterexample is given in \cite[Example~3.17]{surfnc}.
In Section~\ref{lat sec}, we complete $\tNCDc$ to a lattice, with guidance from~\cite{McSul}.
\end{remark}

The proof of Theorem~\ref{D tilde main} in \cite{surfnc} begins with the \newword{curve set} $\curve(\P)$ of a noncrossing partition $\P$, the set of all arcs and boundary segments contained in blocks of $\P$.
We mention some facts that can be found in~\cite{surfnc}:
A noncrossing partition is determined by its curve set.
The partial order on $\tNCDc$ is containment of curve sets.
The set $\curve(\P)$ can be viewed as a set of noncrossing partitions; $\P$ is the join of that set.

Cover relations among noncrossing partitions are described explicitly in \cite[Section~3]{surfnc} in the generality of symmetric marked surfaces with double points.
We summarize briefly here.
Examples specific to $\tNCDc$ are in \cite[Figure~11]{surfnc} and \cite[Figure~12]{surfnc}.
Those figures also contain examples in a symmetric annulus with only one double point (see Section~\ref{aff type b}) that are still useful for understanding $\tNCDc$.

A \newword{simple connector} for $\P\in\tNCDc$ is an arc or boundary segment $\kappa\not\in\curve(\P)$ such that there exist blocks $E$ and $E'$ of $\P$ and an isotopy representative of $\kappa$ that starts inside $E$, leaves $E$, \emph{intersects no other block of $\P$}, then enters $E'$ and stays there until it ends there.
We allow $E=E'$, but if so, since $\kappa$ is not in $\curve(\P)$, it does not have a representative that is entirely contained in $E$.
A \newword{simple symmetric pair of connectors} for $\P$ is a symmetric pair $\kappa,\phi(\kappa)$ of simple connectors for $\P$, but ruling out one possibility:
We disallow the case where~$\kappa$ and $\phi(\kappa)$ combine with blocks of $\P$ to bound a disk in $D$ containing a pair of double points that form a symmetric pair of trivial blocks of $\P$. 
Examples of pairs~$\kappa,\phi(\kappa)$ ruled out by this definition are in the bottom row of \cite[Figure~9]{surfnc}.

If $\kappa,\phi(\kappa)$ is a simple symmetric pair of connectors for $\P$, the \newword{augmentation of~$\P$ along $\kappa,\phi(\kappa)$}, written $\P\cup\kappa\cup\phi(\kappa)$ is the smallest symmetric noncrossing partition greater than $\P$ and having $\kappa$ and $\phi(\kappa)$ in its curve set.
Supposing that $\kappa$ connects blocks $E$ and $E'$ of $\P$, the basic construction of $\P\cup\kappa\cup\phi(\kappa)$ is to replace $E$ and $E'$ with the union of $E$ and $E'$ with $\kappa$ and $\phi(\kappa)$, possibly ``thickening'' the union to make it a disk or annulus.
More details are given in \cite[Section~3]{surfnc}.

The following proposition is a special case of \cite[Proposition~3.27]{surfnc}.

\begin{prop}\label{conn edge cov d}
If $\P,\Q\in\tNCDc$, then $\P\covered \Q$ if and only if there exists a simple symmetric pair of connectors $\kappa,\phi(\kappa)$ for $\P$ such that $\Q=\P\cup\kappa\cup\phi(\kappa)$.
\end{prop}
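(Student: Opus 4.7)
The plan is to prove the two directions separately, using Theorem~\ref{D tilde main} as the essential engine: since $\tNCDc$ is graded, a cover relation $\P \covered \Q$ is equivalent to a strict inequality $\P < \Q$ together with a rank jump of exactly one.

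For the ``if'' direction, I would fix a simple symmetric pair of connectors $\kappa,\phi(\kappa)$ for $\P$, form $\Q = \P\cup\kappa\cup\phi(\kappa)$ via the explicit construction of \cite[Section~3]{surfnc}, and verify both that $\P < \Q$ and that the rank of $\Q$ exceeds that of $\P$ by exactly one. This reduces to a case analysis organized by (i) whether $\kappa$ connects two distinct blocks of $\P$ or has both endpoints in a single block, (ii) whether the block(s) involved are preserved or swapped by $\phi$, and (iii) the types of the block(s) involved (trivial, disk, non-dangling annular, dangling annular, stitched). In each case I would tally how augmentation changes both the number of symmetric pairs of distinct non-annular blocks and the number of symmetric annular blocks, and then read the rank change from Theorem~\ref{D tilde main}. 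The prohibition built into the definition of ``simple symmetric pair of connectors''---forbidding the configuration where $\kappa \cup \phi(\kappa)$ together with blocks of $\P$ bounds a disk enclosing a symmetric pair of trivial double-point blocks---matches exactly the case in which augmentation destroys two symmetric pairs of trivial blocks while producing one symmetric disk block, i.e.\ a rank jump of two. Ruling this out enforces the desired rank jump of one.

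For the ``only if'' direction, suppose $\P \covered \Q$. Since a noncrossing partition is determined by its curve set, the set $\curve(\Q) \setminus \curve(\P)$ is nonempty; pick $\kappa$ in it. The curve $\kappa$ meets the blocks of $\P$ in a finite collection of closed sub-arcs (containing its endpoints), and some sub-arc lying strictly between two consecutive such meetings, extended slightly into the enclosing blocks of $\P$, is a simple connector $\kappa'$ for $\P$. If $\kappa',\phi(\kappa')$ is already a simple symmetric pair of connectors, let $\Q' = \P\cup\kappa'\cup\phi(\kappa')$. Otherwise $\kappa',\phi(\kappa')$ falls in the forbidden configuration, and I would choose a different sub-arc of $\kappa$ or of $\phi(\kappa)$ to sidestep it; this is possible because $\kappa$ lies in $\curve(\Q)$ and must contribute to the passage from $\P$ to $\Q$. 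In either case, $\P < \Q' \leq \Q$, and the cover relation forces $\Q = \Q'$.

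The main obstacle is the bookkeeping in the ``if'' direction, especially the cases involving annular blocks: verifying that augmentation never produces a third annular block (which would violate the definition of a noncrossing partition), keeping track of stitched disks and blocks meeting double points, and properly handling the distinction between dangling and non-dangling annular blocks both before and after augmentation. Because this proposition is a special case of \cite[Proposition~3.27]{surfnc}, a clean presentation would simply match hypotheses to that general statement; otherwise the direct case analysis is tedious but mechanical once the augmentation recipe is unpacked.
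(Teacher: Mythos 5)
The paper does not prove this proposition; it quotes it verbatim as a special case of \cite[Proposition~3.27]{surfnc}, whose proof lives in that other paper. Your closing sentence recognizes this and is in fact the paper's entire argument, so on that score you match the paper.

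Your sketch of a direct proof, however, has real gaps in both directions. In the ``if'' direction, you assert that the forbidden configuration (where $\kappa\cup\phi(\kappa)$ together with blocks of $\P$ bounds a disk enclosing a symmetric pair of trivial double-point blocks) is the \emph{only} way augmentation could raise rank by more than one; that is precisely what needs to be proved, not assumed. The rank function from Theorem~\ref{D tilde main} counts symmetric pairs of distinct non-annular blocks and symmetric annular blocks, and augmentation can alter both counts at once (merging a symmetric pair into a symmetric block, turning a disk into an annulus, absorbing trivial blocks, etc.); one has to check every topological type of augmentation and see that the count moves by exactly one in each allowed case, not merely that the one excluded case moves by two. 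In the ``only if'' direction, your argument that one can ``sidestep'' a forbidden sub-arc because ``$\kappa$ must contribute to the passage from $\P$ to $\Q$'' is not a proof: nothing you've said rules out that every sub-arc of $\kappa$, and of $\phi(\kappa)$, yields a forbidden pair, forcing you to look at a different curve in $\curve(\Q)\setminus\curve(\P)$, and you haven't shown one exists that works. These are exactly the kind of subtleties that \cite[Proposition~3.27]{surfnc} is designed to encapsulate, which is why the paper delegates to it rather than reprove it.
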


\subsection{Isomorphism}
We saw in Section~\ref{proj sec} how the projection of a $\Stildedes$-orbit to the Coxeter plane leads to the construction of the symmetric annulus $D$ with two double points.
We now show that symmetric noncrossing partitions of $D$ model the intervals $[1,c]_T$ and $[1,c]_{T\cup L}$.
Specifically, we define a map $\perm^D:\tNCDc\to\Stildejes$ and show that $\perm^D$ is an isomorphism from $\tNCDc$ to $[1,c]_{T \cup L}$, and that its restriction is an isomorphism from $\tNCDcircc$ to $[1,c]_T$.   

We define $\perm^D$ by reading the cycle notation of a permutation from the embedded blocks of a noncrossing partition, as explained in more detail below.
We obtain a class of nonsymmetric finite cycles from each symmetric pair of disk blocks and a class of symmetric cycles from each symmetric disk block (stitched or not).
We obtain a class of tiny cycles from each pair of double points that is in the interior of a block.
We obtain a class of infinite cycles from each symmetric non-dangling annular block and from each symmetric pair of dangling annular blocks.
We obtain two classes of infinite cycles from each symmetric pair of non-dangling annular blocks.
For the purposes of the map, we consider a degenerate block to be a small disk with one or two numbered points on its boundary.

The \newword{dateline} is a radial segment from the inner boundary to the outer boundary, passing through the lower double point and shown as a vertical gray segment in the right picture of Figure~\ref{sym ann fig}.
Crossing the dateline in the clockwise direction (right to left in pictures) is considered a \newword{positive} crossing, while crossing the dateline in the counterclockwise direction (left to right in pictures) is a \newword{negative} crossing.
Since the lower double point is on the dateline, there is a convention for crossing the dateline as we move to or from that double point:
The \emph{positive} double point is infinitesimally \emph{counterclockwise} (right) of the dateline, while the \emph{negative} double point is infinitesimally \emph{clockwise} (left) of the dateline.
Thus, moving clockwise (i.e.\ from the right) to the positive lower double point or moving counterclockwise (i.e.\ to the right) from the positive lower double point \emph{does not} cross the dateline, but moving counterclockwise (i.e.\ from the left) to the positive lower double point or moving clockwise (i.e.\ to the left) from the positive lower double point \emph{does} cross the dateline.
The negative lower double point behaves in the opposite way.

We now explain how to read cycles from blocks.
Suppose $\P\in\tNCDc$.
Given a block $E$ of $\P$ and a component of the boundary of~$E$, we obtain a cycle by reading along the boundary with the interior of $E$ on the right and recording the numbered points, but keeping track of the total number~$w$ of times we cross the date line (positive crossings minus negative crossings) and adding $2wn$ to the numbered points as we record them.
We return to the starting point with $w=0$ if and only if $E$ is a disk.
In this case, if we record numbered points $a_1,\ldots,a_k$ and then return to $a_1$, the permutation $\perm^D(\P)$ has cycles $(a_1\,\cdots\,a_k)_{2n}$.
If $E$ is not a symmetric disk, then we obtain the negatives of these cycles from the boundary of $\phi(E)$, so that $\perm^D(\P)$ has cycles $(\!(a_1\,\cdots\,a_k)\!)_{2n}$.
If we return to the starting point with $w\neq0$, then $w=\pm1$ and $E$ is an annular block, with $w=+1$ if and only if we are reading the outer boundary of $E$.
In this case, we read a cycle of the form $(\cdots\,a_1\,\cdots\,a_k\,\,\,a_1\pm n\,\cdots)$, where $a_1\pm n$ is $a_1+wn$.
We also read the negative of this cycle from $\phi(E)$ (whether $\phi(E)=E$ or not), so $\perm^D(\P)$ has cycles $(\!(\cdots\,a_1\,\cdots\,a_k\,\,\,a_1\pm n\,\cdots)\!)$.
When $E$ is a dangling annular block, we only read a cycle from the component of its boundary that contains numbered points.

If the upper double point is numbered $\pm i$ and is in the interior of some block of~$\P$, then we record tiny cycles $(i\,\,-i)_{2n}$.
If the lower double point is numbered $\pm j$ for $j>1$ and is in the interior of some block of~$\P$, then we record tiny cycles $(j\,\,-j+2n)_{2n}$.


\begin{theorem}\label{isom d}
The map $\perm^D:\tNCDc\to\Stildejes$ is an isomorphism from $\tNCDc$ to the interval $[1,c]_{T\cup L}$ in $\Stildejes$.   
It restricts to an isomorphism from $\tNCDcircc$ to the interval $[1,c]_T$ in $\Stildedes$.
\end{theorem}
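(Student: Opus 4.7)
The plan is to show that $\perm^D$ is a well-defined, rank-preserving bijection from $\tNCDc$ onto $[1,c]_{T\cup L}$ that sends covers to covers, hence an order isomorphism, and then to deduce the restriction to $\tNCDcircc$. First, I would verify that $\perm^D(\P)\in\Stildejes$: the cycle-reading recipe respects $2n$-translation and negation because blocks come in $\phi$-invariant pairs (or are $\phi$-invariant themselves) and dateline crossings are tracked via the $2wn$ shift, while the jointly-even parity condition reduces to a direct count of the parity contributions of each block type. I would then compute $\perm^D(\hat{1})=c$ directly from the recipe and Lemma~\ref{d annulus label}, where $\hat{1}$ is the partition whose only block is the symmetric non-dangling annulus containing both pairs of double points.

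Next, I would establish rank-preservation by matching Theorem~\ref{D tilde main} to the definition of $\vr$ used in Proposition~\ref{interval rank}, comparing contributions block-type by cycle-class-type: each symmetric pair of disk blocks contributes one class of nonsymmetric finite cycles, each symmetric disk block (stitched or not) contributes one class of symmetric cycles, each pair of double points interior to a block contributes one class of tiny cycles, and annular blocks contribute one or two classes of infinite cycles so that the $+1$ per symmetric annular block in Theorem~\ref{D tilde main} aligns with the combined $+\frac12$ for tiny classes and $-\frac12$ for symmetric classes in $\vr$. Summing contributions yields the equality.

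The main obstacle is showing that $\perm^D$ sends covers to covers. By Proposition~\ref{conn edge cov d}, every cover $\P\covered\Q$ is obtained by augmenting $\P$ along a simple symmetric pair of connectors $\kappa,\phi(\kappa)$; I would case-analyze by the geometric configuration of $\kappa$ and exhibit, in each case, an explicit $\tau\in T\cup L$ with $\perm^D(\Q)=\tau\cdot\perm^D(\P)$. Typically $\tau=(\!(a\,\,b)\!)_{2n}$ with $a$ and $b$ the endpoints of $\kappa$, shifted by an appropriate multiple of $2n$ reflecting dateline crossings; while $\tau=\ell_a$ arises exactly when the augmentation attaches (or consumes) a dangling annular block by wrapping around the annulus. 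In each case one traces the boundaries of the new blocks of $\Q$ and verifies that the cycle change matches the corresponding entry on List~\ref{moves}. This bookkeeping parallels the proof of Lemma~\ref{only one} and is lengthy but elementary, and it is the step on which the whole theorem hinges.

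Once covers map to covers and $\perm^D(\hat{1})=c$, the image is a downward-closed subposet of $[1,c]_{T\cup L}$ containing $c$. Injectivity is clear because an embedded noncrossing partition can be reconstructed from its cycle data: each cycle of $\perm^D(\P)$ prescribes the boundary traversal of a block, and the positions of numbered points as inner, outer, or double determine the embedding up to symmetric isotopy. For surjectivity, I would pair Proposition~\ref{cycle types} with the block-type/cycle-class correspondence from the rank argument: every cycle structure in List~\ref{structures} is realized by a noncrossing partition of the matching block profile. Finally, Proposition~\ref{cycle types T} rules out the cycle structures $\Inf^1\NonSym^k$, $\Inf^1\Tiny^1\Sym^1\NonSym^k$, and $\Tiny^2\NonSym^k$ from $[1,c]_T$; these are exactly the images under $\perm^D$ of the noncrossing partitions containing a dangling annular block, so the isomorphism restricts to one between $\tNCDcircc$ and $[1,c]_T$.
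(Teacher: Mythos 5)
Your broad outline — show $\perm^D(\hat{1})=c$, show that covers in $\tNCDc$ map to covers in $[1,c]_{T\cup L}$ (this is the paper's Proposition~\ref{cov ref loop}), and show injectivity via reconstruction from cycle data (Proposition~\ref{one to one d}) — matches the paper's structure. But there is a genuine gap at the crucial transition to surjectivity and order-isomorphism. The claim that once covers map to covers and $\perm^D(\hat{1})=c$, ``the image is a downward-closed subposet of $[1,c]_{T\cup L}$'' does not follow: a rank-preserving, cover-preserving injection with matching top elements need not have downward-closed image (the inclusion of a three-element chain into a four-element diamond lattice is a counterexample). What the argument actually needs is the converse lifting statement: if $\sigma\covered\pi$ in $[1,c]_{T\cup L}$ and $\pi=\perm^D(\Q)$, then there is $\P\covered\Q$ in $\tNCDc$ with $\perm^D(\P)=\sigma$. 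This is exactly the paper's Proposition~\ref{perm inv cov}, proved by its own substantial case analysis parallel to (and using) List~\ref{moves}; it simultaneously delivers surjectivity (by descending from $c$) and the order-preservation of the inverse map. Your proposal does not state or prove this.

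Your alternative surjectivity argument via Proposition~\ref{cycle types} is likewise insufficient: knowing that every $\pi\in[1,c]_{T\cup L}$ has a cycle structure on List~\ref{structures}, and that each such \emph{structure} is achieved by some noncrossing partition, does not show that this \emph{particular} $\pi$ equals $\perm^D(\P)$ for some $\P$. The reconstruction recipe from cycle data (used for injectivity) produces candidate blocks, but there is nothing in the cycle-structure information alone to guarantee those blocks embed without crossing. The inductive descent of Proposition~\ref{perm inv cov} circumvents this by producing $\P$ as a subdiagram of a partition already known to lie in $\tNCDc$. Finally, your treatment of the restriction to $\tNCDcircc$ is in the right spirit, but the paper must do additional work (Propositions~\ref{cov circ d}, \ref{perm inv cov circ}, \ref{cov ref circ}) precisely because one needs to know that covers in $\tNCDcircc$ coincide with covers in $\tNCDc$ between elements of $\tNCDcircc$, and that the lifted covers from $[1,c]_T$ land in $\tNCDcircc$; the cycle-structure bookkeeping you propose is suggestive but does not by itself establish these facts.
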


\begin{example}
We apply the map $\perm^D$ to the noncrossing partitions pictured in Figure~\ref{nc ex fig}, labeling the first row as $\P_1$ through $\P_3$, the second row as $\P_4$ through $\P_6$, etc.
{\allowdisplaybreaks
\begin{align*}
\perm^D(\P_1)&=(\!(\cdots\,1\,\,\,6\,\,\,17\,\cdots)\!)_{16}(\!(\cdots\,3\,\,\,2\,\,\,-\!\!13\,\cdots)\!)_{16}(\!(4)\!)_{16}(\!(5\,\,\,9)\!)_{16}\\
\perm^D(\P_2)&=(\!(1)\!)_{16}(\!(2\,\,\,4)\!)_{16}(\!(3\,\,\,5)\!)_{16}(6\,\,\,10)_{16}(7\,\,\,9)_{16}\\
\perm^D(\P_3)&=(\!(1\,\,\,9)\!)_{16}(\!(2\,\,\,5)\!)_{16}(\!(3)\!)_{16}(\!(4\,\,\,6)\!)_{16}\\
\perm^D(\P_4)&=(1\,\,-\!1)_{16}(2\,\,-\!3\,\,-\!2\,\,\,3)_{16}(\!(4\,\,\,6\,\,\,9)\!)_{16}(\!(5)\!)_{16}\\
\perm^D(\P_5)&=(1\,\,-\!1)_{16}(2\,\,-\!4\,\,-\!10\,\,-\!3\,\,-\!2\,\,\,4\,\,\,10\,\,\,3)_{16}(\!(5)\!)_{16}(\!(7)\!)_{16}\\
\perm^D(\P_6)&=(\!(1)\!)_{16}(\!(\cdots\,2\,\,-\!4\,\,-\!\!7\,\,-\!\!14\,\cdots)\!)_{16}(\!(3\,\,\,5)\!)_{16}(\!(6)\!)_{16}\\
\perm^D(\P_7)&=(1\,\,-\!1)_{16}(\!(\cdots\,2\,\,-\!4\,\,-\!\!13\,\,-\!\!14\,\cdots)\!)_{16}(\!(5\,\,\,9)\!)_{16}(6\,\,\,10)_{16}\\
\perm^D(\P_8)&=(\!(\cdots\,1\,\,\,17\,\cdots)\!)_{16}(\!(2\,\,\,3)\!)_{16}(\!(4\,\,\,7)\!)_{16}(\!(5)\!)_{16}(\!(6)\!)_{16}\\
\perm^D(\P_9)&=(\!(\cdots\,1\,\,\,6\,\,\,17\,\cdots)\!)_{16}(\!(2\,\,\,3)\!)_{16}(\!(4)\!)_{16}(\!(5\,\,\,9)\!)_{16}\\
\perm^D(\P_{10})&=(1\,\,-\!1)_{16}(\!(2\,\,-\!4\,\,-\!\!7\,\,-\!\!11\,\,\,3)\!)_{16}(6\,\,\,10)_{16}\\
\perm^D(\P_{11})&=(1\,\,-\!17)_{16}(\!(\cdots\,2\,\,-\!4\,\,-\!\!7\,\,-\!\!11\,\,-\!13\,\,-\!14\,\cdots)\!)_{16}(6\,\,\,10)_{16}\\
\perm^D(\P_{12})&=(\!(1\,\,-\!6)\!)_{16}(\!(\cdots\,2\,\,-\!4\,\,-\!\!7\,\,-\!\!11\,\,-\!13\,\,-\!14\,\cdots)\!)_{16}
\end{align*}
}
\end{example}

To prove the first assertion of Theorem~\ref{isom d}, will prove the following three propositions in the next three sections (Sections~\ref{one to one d sec}, \ref{perm inv cov sec}, and~\ref{cov ref loop sec}).
The second assertion of Theorem~\ref{isom d} is proved in Section~\ref{circ sec}.

\begin{prop}\label{one to one d}
The map $\perm^D$ is one-to-one.
\end{prop}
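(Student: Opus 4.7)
The plan is to recover the curve set $\curve(\P)$ from the permutation $\perm^D(\P)$, since a noncrossing partition is determined by its curve set (as noted in Section~\ref{nc D defs sec}). Thus it suffices to show that the cycle data of $\perm^D(\P)$ determines, up to symmetric ambient isotopy, each block of $\P$ and therefore its boundary, which is a union of elements of $\curve(\P)$.

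First, I would match classes of cycles in $\perm^D(\P)$ to blocks of $\P$ using the cycle-type/block-type correspondence that is built into the definition of $\perm^D$: symmetric pairs of disk blocks yield classes of nonsymmetric finite cycles, symmetric (possibly stitched) disk blocks yield classes of symmetric cycles, pairs of double points lying in the interior of some block yield classes of tiny cycles, and annular blocks yield classes of infinite cycles (paired versus unpaired, and flat versus non-flat according to whether the block is a symmetric pair or a single symmetric block, and according to dangling versus non-dangling type). Proposition~\ref{cycle types} plus the enumeration in List~\ref{structures} limits exactly which combinations arise, so the cycle data already sorts the cycles unambiguously into ``candidate blocks.''

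Next, for each candidate block I would show that the cyclic sequence of numbered points recorded in the corresponding cycle (with the dateline-crossing bookkeeping for annular cycles) determines the boundary of that block up to isotopy. For a disk block the single boundary component is a closed curve passing through a specified cyclic sequence of numbered points; each pair of consecutive entries in the cycle is joined either by a boundary segment of $D$ or by an arc, and which possibility occurs is dictated by the fixed placement of the numbered points on $D$ determined by $c$ together with the noncrossing condition. The disk is then the unique region bounded by these curves. For an annular block, the two cycles from the two boundary components, together with the $\pm 1$ dateline-crossing data encoded in the infinite-cycle translation offset, determine the two boundary components up to isotopy; the noncrossing condition plus the restriction to at most two annular blocks forces the region between them to be unique up to symmetric isotopy. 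Tiny cycles certify which pairs of double points lie in the interior of these already-reconstructed blocks but contribute no new curves.

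The main obstacle is the annular case: correctly pairing the two infinite-cycle boundary components of a single annular block (or pair of dangling annular blocks), and verifying that the ambient region they bound is determined up to symmetric isotopy. For this step I would use the fact that $D$ has only two topological ends (the inner boundary and the axis through the two double points), so an embedded annulus in $D$ is determined up to isotopy by the homotopy classes of its two boundary components, and those classes are exactly what a class of infinite cycles together with its dateline-crossing data records. The dangling versus non-dangling distinction, and whether an annular cycle is part of a symmetric-pair class or a singleton class, is then read off from the allowed cycle structures in List~\ref{structures}, completing the recovery of $\P$ from $\perm^D(\P)$.
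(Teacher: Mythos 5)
Your proposal takes essentially the same approach as the paper: reconstruct the blocks of $\P$ directly from the cycle data of $\perm^D(\P)$, reading the annular blocks off the infinite, tiny, and symmetric cycle classes and the disk blocks off the non-tiny finite cycles. The paper organizes the annular reconstruction as an explicit case analysis on the number of infinite cycles (four, two, or zero), whereas you invoke Proposition~\ref{cycle types} to limit the combinations, but the underlying strategy is identical.
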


\begin{prop}\label{perm inv cov}
Suppose $\sigma\covered\pi$ in $[1,c]_{T\cup L}$ and $\pi=\perm^D(\Q)$ for some ${\Q\in\tNCDc}$.
Then there exists $\P\in\tNCDc$ such that $\sigma=\perm^D(\P)$ and $\P\covered\Q$.
\end{prop}

\begin{prop}\label{cov ref loop}
Suppose $\P,\Q\in\tNCDc$ have $\P\covered\Q$.
Then there exists a reflection or loop $\tau\in T\cup L$ such that $\perm^D(\Q)=\tau\cdot\perm^D(\P)$.
\end{prop}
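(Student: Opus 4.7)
The plan is to combine Proposition~\ref{conn edge cov d}, which identifies covers in $\tNCDc$ with simple symmetric pairs of connectors, with a direct reading of the cycle structure across the new arcs. By Proposition~\ref{conn edge cov d}, I may write $\Q = \P \cup \kappa \cup \phi(\kappa)$ for some simple symmetric pair of connectors $\kappa,\phi(\kappa)$, with $\kappa$ having endpoints $a$ and $b$ in blocks $E$ and $E'$ of $\P$ (possibly $E=E'$). The natural candidate for $\tau$ is the reflection $(\!(a\,\,\,b')\!)_{2n}$, where $b'$ is $b$ shifted by $2nw$ and $w$ is the signed count of dateline crossings of $\kappa$; in certain degenerate configurations $\tau$ will instead have the form $(a\,\,\,b')_{2n}$ (a reflection of the second type) or a loop $\ell_a$.

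The core observation is that left multiplication by $(\!(a\,\,\,b')\!)_{2n}$ acts on cycle notation in precisely the way that the cycle-reading rule for $\perm^D$ responds to gluing two boundary walks across $\kappa$: a pair of disjoint cycles through $a$ and $b'$ is concatenated, and the symmetric mirror pair of cycles is handled simultaneously by the $(\!(\cdots)\!)_{2n}$ convention. Since augmentation by $\kappa,\phi(\kappa)$ is exactly the operation of gluing the blocks $E$ and $E'$ (and their mirrors, when distinct) along these arcs, the cycle read from the boundary of the merged block in $\Q$ will match the cycle obtained by applying the candidate $\tau$ to $\perm^D(\P)$.

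I would then verify this by case analysis on the topological type of $(E, E')$: both disk blocks lying in disjoint mirror pairs; $E'=\phi(E)$ (so that augmentation produces a symmetric block, possibly stitched); one or both of $E, E'$ already symmetric; $E=E'$ with $\kappa$ increasing the genus of $E$ (e.g.\ turning a disk into an annulus, producing infinite cycles and in some subcases forcing $\tau$ to be a loop); cases in which $\kappa$ terminates at a double point and so modifies tiny cycles; and cases involving dangling annular blocks. In each case I would read off the cycles of $\perm^D(\Q)$ directly from the new boundary walks and verify that they coincide with the cycles produced by the candidate $\tau$ acting on $\perm^D(\P)$, cross-checking against the move list in List~\ref{moves} to confirm the cases exhaust all possibilities.

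The main obstacle is organizational rather than conceptual. The number of cases is large, driven by the many block types (degenerate, disk, annular dangling, annular non-dangling; symmetric or in a mirror pair), the possible positions of $a$ and $b$ (ordinary boundary, upper double point, lower double point), the possible signed dateline-crossing counts for $\kappa$, and the special exclusions built into the definition of a simple symmetric pair of connectors. Keeping the casework parallel to that of Lemma~\ref{only one}, so that each topological type of augmentation matches one named move from List~\ref{moves} and produces the correct $\tau$, will be essential to ensure no subcase (such as those involving stitched disks, the $\pm$ convention at a double point on the dateline, or the creation of flat versus nonflat infinite cycles) is missed.
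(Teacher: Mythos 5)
You begin with the same key input, Proposition~\ref{conn edge cov d}, and the overall strategy of tracing the boundary of the augmented block and matching it against a candidate generator is the paper's. However, the candidate generator you propose is wrong. You determine $\tau$ from the \emph{endpoints} of $\kappa$, but what actually governs the cycle splice is the pair of numbered points flanking the place where $\kappa$ \emph{crosses} the boundary of each block, and these need not coincide with the endpoints. Concretely, let $E$ be a disk block with boundary cycle $(a_1\,\,\,a_2\,\,\,a_3)$ whose three numbered points are consecutive on a boundary component of $D$, so that the unique arc in $\partial E$ joins $a_3$ to $a_1$. If $\kappa$ starts at the middle point $a_2$, it can only exit $E$ through that arc, hence between $a_3$ and $a_1$; the splice therefore happens at $a_3$, $a_1$, and the endpoint $a_2$ plays no role. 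If $E'$ is another disk block and the merged cycle turns out to be $(a_1\,\,\,a_2\,\,\,a_3\,\,\,b_1\,\,\,b_2\,\,\,b_3)$, the correct generator is $(\!(a_1\,\,\,b_1)\!)_{2n}$, whereas the endpoint-based $(\!(a_2\,\,\,b_1)\!)_{2n}$ produces the distinct cycle $(a_1\,\,\,b_1\,\,\,b_2\,\,\,b_3\,\,\,a_2\,\,\,a_3)$. This is exactly why the paper's proof first introduces the numbered points $p,q$ (and $p',q'$) adjacent to where $\kappa$ leaves $E$ (and $E'$) and assembles $\tau$ from $q$ and $q'$ rather than from the endpoints of $\kappa$.

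Your proposed case split on the topological type of $(E,E')$ would not surface this, because the issue is not the type of the blocks but the position of $\kappa$'s endpoint relative to its exit from the block. Two further notes: in type $\afftype{D}$ every reflection has the form $(\!(i\,\,\,j)\!)_{2n}$, so the single-parenthesis $(a\,\,\,b')_{2n}$ you mention never arises in $T\cup L$; and when $\kappa$ crosses the empty boundary component of a dangling annular block there is no flanking pair on that side at all, so the paper (Case~A of its proof) must instead extract a loop $\ell_q$ from a numbered point at the \emph{other} crossing of $\kappa$ --- a mechanism that the endpoint-based setup cannot express.
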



\begin{proof}[Proof of the first assertion of Theorem~\ref{isom d}, assuming Props.~\ref{one to one d}, \ref{perm inv cov},~\ref{cov ref loop}]
A maximal chain in $\tNCDc$ has the form $\P_0\covered\cdots\covered\P_n$, where $\P_0$ consists of trivial blocks and $\P_n$ has a single block (the entire annulus $D$).
Since $\perm(\P_0)$ is the identity $1$ and $\perm(\P_n)=c$, using Proposition~\ref{cov ref loop} to find $\tau_i\in T\cup L$ for each $\P_{i-1}\covered\P_i$, we write a word $\tau_1\cdots\tau_n$ for $c$.
This is a reduced word in the alphabet $T\cup L$ because $\ell_{T\cup L}=n$ by Proposition~\ref{interval rank}.
Since every $\P\in\tNCDc$ is on some maximal chain, it follows that $\perm^D$ maps $\tNCDc$ into $[1,c]_{T\cup L}$.
Furthermore, if $\P\covered\Q$, then this cover relation is on some maximal chain, so $\perm^D(\P)\covered_{T\cup L}\perm^D(\Q)$.
That is, $\perm^D$ is order-preserving.

Proposition~\ref{one to one d} says that $\perm^D$ is one-to-one.
Proposition~\ref{perm inv cov} and an easy induction (with base case $c$) shows that every element of $[1,c]_{T\cup L}$ is in the image of $\perm^D$.
We conclude that $\perm^D$ is a bijection from $\tNCDc$ to $[1,c]_{T\cup L}$.
Proposition~\ref{perm inv cov} also implies that the inverse map to $\perm^D$ is order-preserving.
Thus $\perm^D$ is an isomorphism.
\end{proof}

\subsection{Proof of Proposition~\ref{one to one d}}\label{one to one d sec}
We describe how to recover $\P$ from $\perm^D(\P)$.
Writing $\pi$ for $\perm^D(\P)$, we observe that $\pi$ has at most four infinite cycles.

Suppose $\pi$ has four infinite cycles.
Then $\P$ has a symmetric pair of non-dangling annular blocks, one with double and inner points and one with double and outer points.
(See, for example, the top-left picture of Figure~\ref{nc ex fig}.)
Two of the infinite cycles involve double points, one involves inner points and one involves outer points.
The infinite cycle with outer points determines the outer points in the outer annular block (the numbers strictly between $-n$ and $n$ that appear in the cycle).
The \emph{decreasing} infinite cycle involving double points determines the double points on the outer annular block in the same way.
Similarly, the infinite cycle involving inner points and the increasing infinite cycle involving outer points determine the inner annular block.

Suppose $\pi$ has two infinite cycles.
If the infinite cycles involve double points, then $\P$ has a symmetric pair of dangling annular blocks on those double points.
(For example, Figure~\ref{nc ex fig}, third row-right.)
The outer annular block is determined by the decreasing infinite cycle on double points and the inner annular block is determined by the increasing cycle.
Suppose instead that one infinite cycle involves inner points while the other involves outer points.
If $\pi$ has both classes of tiny cycles, then there is a symmetric non-dangling annular block containing these inner points and outer points (Figure~\ref{nc ex fig}, third row-left).
If $\pi$ does not have both classes of tiny cycles, then~$\P$ has a symmetric pair of dangling annular blocks, one on inner points and one on outer points (Figure~\ref{nc ex fig}, bottom-middle).
These annular blocks are determined from the infinite cycles as in earlier cases.

Suppose $\pi$ has no infinite cycles.
If $\pi$ has both classes of tiny cycles, but has no families of symmetric cycles, then $\P$ has a symmetric dangling annular block (Figure~\ref{nc ex fig}, bottom-left).
Otherwise, $\P$ has no annular blocks.

In every case, we have determined all of the annular blocks of $\P$ from $\perm^D(\P)$.
The disk blocks (including degenerate disks) are also determined by $\perm^D(\P)$, because its (non-tiny) finite cycles describe the boundaries of blocks.
\qed

\subsection{Proof of Proposition~\ref{perm inv cov}}\label{perm inv cov sec}
Proposition~\ref{cycle types} says that $\pi$ and $\sigma$ have cycle structures described in List~\ref{structures}.
Furthermore, $\sigma=\tau\cdot\pi$ where $\tau$ is as in List~\ref{moves} and we break into cases according to that list.
The cases are not numbered, but rather are labeled by names of moves and parenthetically by case numbers in Lemma~\ref{only one}.
The integers $a$ and $b$ (or just~$a$) are as in List~\ref{moves}.  
In each case, we describe how $\P$ is constructed from $\Q$.

\medskip

\noindent
\textbf{Case $\SplitNonSym$ (1a).}
The nonsymmetric finite cycle of $\pi$ containing $a$ and $b$ corresponds to a pair of nonsymmetric disks in~$\Q$.
This is split into two pairs of nonsymmetric disks (splitting each individual disk) to form~$\P$.

\medskip

\noindent
\textbf{Case $\SplitSym$ (2a).}
The symmetric cycle of $\pi$ containing $a$ and $b$ corresponds to a symmetric disk in $\Q$.
This is split into a smaller symmetric disk and a pair of nonsymmetric disks to form~$\P$.

\medskip

\noindent
\textbf{Case $\CombineSymTiny$ (2de).}
The symmetric cycle corresponds to a symmetric disk in $\Q$.
The matching tiny cycle comes from the double points $\pm d$ in the interior of the symmetric disk.
To form $\P$, the symmetric disk is split into a symmetric pair of nonsymmetric disks, one containing~$d$ and one containing~$-d$.

\medskip

\noindent
\textbf{Case $\CombineTiny$ (3a).}
Since $\pi$ has cycle structure described in List~\ref{structures}, that cycle structure is $\Inf^1\Tiny^2\NonSym^k$, $\Tiny^2\Sym^2\NonSym^k$, or $\Tiny^2\NonSym^k$.
But since $\sigma$ also has cycle structure described in List~\ref{structures}, we rule out the possibility that $\pi$ has cycle structure $\Tiny^2\Sym^2\NonSym^k$.
Thus $\pi$ has cycle structure $\Inf^1\Tiny^2\NonSym^k$ or $\Tiny^2\NonSym^k$.
Correspondingly, $\Q$ has a symmetric non-dangling or dangling annular block.
Since the infinite cycle in~$\sigma$ is flat, it has a class $(\!(\cdots\,a\,\,\,{-b+2kn}\,\,\,{a-2(2q+1-2k)n}\,\cdots)\!)$ of infinite cycles with $q=k$ or $q=k-1$, as explained in Case~3a of Lemma~\ref{only one}.
Here $a$ is in a principal tiny cycle matching $k$ and $b$ is in a tiny cycle $(b\,\,\,{-b+(4q+2-2k)n})$ in the other class.
Up to swapping $a$ and $b$, we can take $a$ to be upper, or in other words, $k=0$.
Up to negating both $a$ and $b$, we can take $a$ to be the positive upper double point~$i$.
Thus the class of infinite cycles is $(\!(\cdots\,i\,\,\,-b\,\,\,i-2(2q+1)n\,\cdots)\!)$.
There are four possibilities, because $q=0$ or $-1$ and because $b$ is associated to the positive or negative lower double point.
Writing $j$ for the positive lower double point, the possibilities are shown below, with the resulting classes of infinite cycles.
\[\small\begin{array}{c|c|c|c|c|c}
q	&(b\,\,\,{-b+(4q+2)n})		&b		&(\!(\cdots\,i\,\,\,-b\,\,\,i-2(2q+1)n\,\cdots)\!)	&\text{upper}&\text{lower}\\\hline
0	&(b\,\,\,{-b+2n})			&j		&(\!(\cdots\,i\,\,\,{-j}\,\,\,i-2n\,\cdots)\!)		&+		&-\\		
0	&(b\,\,\,{-b+2n})			&-j+2n	&(\!(\cdots\,i\,\,\,j-2n\,\,\,i-2n\,\cdots)\!)	&+		&+\\	
-1	&(b\,\,\,{-b-2n})			&-j		&(\!(\cdots\,i\,\,\,j\,\,\,i+2n\,\cdots)\!)		&-		&-\\	
-1	&(b\,\,\,{-b-2n})			&j-2n		&(\!(\cdots\,i\,\,\,{-j+2n}\,\,\,i+2n\,\cdots)\!)	&-		&+	
\end{array}\]
To form $\P$ in these four cases, we break the symmetric annular block in $\Q$ into a pair of nonsymmetric annular blocks each containing two double points in all four possible ways.
The last two columns of the table indicate the signs on upper and lower double points that are contained in the \emph{outer} annulus in $\P$.

\medskip

\noindent
\textbf{Case $\CombineTinyInf$ (3b).}
Since $\pi$ and $\sigma$ have cycle structures on List~\ref{structures} and are related by a $\CombineTinyInf$ move, $\pi$ has structure $\Inf^1\Tiny^2\NonSym^k$ and $\sigma$ has $\Tiny^1\Sym^1\NonSym^k$.
Thus $\Q$ has a non-dangling symmetric annular block.

Now $a$ is in a principal tiny cycle $(a\,\,\,-a+2kn)$ with $k=0$ or~$1$.
For~$i$ the positive upper double point and $j$ the positive lower double point, if $k=0$, then $a=\pm i$ and if $k=1$, then $a$ is $j$ or $-j+2n$.
Also, $b$ is $b_1$ in $(\cdots\,b_1\,\cdots\,b_p\,\,\,b_1+2qn\,\cdots)$, with $q=1$ or $-1$.
By passing from $(a\,\,\,b)$ to $({-a+2kn}\,\,\,{-b+2kn})$ if necessary, we can assume that $q=1$ (preserving the fact that $a$ is in a principal tiny cycle).
Then $\sigma$ has a symmetric cycle
$(a\,\,\,{-b_1+2kn}\,\cdots\,{-b_p+2kn}\,\,\,{-a+2(k-1)n}\,\,\,{b_1-2n}\,\cdots\,{b_p-2n})$.
By checking all four cases for $a$, we see that this symmetric cycle would be read by $\perm^D$ from a stitched disk:

If $k=0$, then the cycle is $(\pm i\,\,\,{-b_1}\,\cdots\,{-b_p}\,\,\,{\mp i-2n}\,\,\,{b_1-2n}\,\cdots\,{b_p-2n})$,
which is read from a stitched disk starting at $\pm i$, reading the inner points $-b_1,\ldots,-b_p$ and coming to $\mp i$ (with one positive crossing of the date line), then reading the outer points $b_1-2n,\ldots,b_p-2n$ before returning to $\pm i$ (undoing the positive crossing).

If $k=1$ and $a=j$, it is 
$(j\,\,\,{-b_1+2n}\,\cdots\,{-b_p+2n}\,\,\,{-j}\,\,\,{b_1-2n}\,\cdots\,{b_p-2n})$,
read from a stitched disk starting at $j$, reading inner points $-b_1+2n,\ldots,-b_p+2n$ and coming to $-j$ (with no net crossing of the date line), then reading the outer points $b_1-2n,\ldots,b_p-2n$ before returning to $j$ (againt no net crossing of the date line).

If $k=1$ and $a=-j+2n$, the cycle is 
\[(-j+2n\,\,\,{-b_1+2n}\,\cdots\,{-b_p+2n}\,\,\,{j-2n}\,\,\,{b_1-2n}\,\cdots\,{b_p-2n}),\]
which is read from a stitched disk starting at $-j+2n$, reading the inner points $-b_1+2n,\ldots,-b_p+2n$ and coming to $j$ (crossing the date line, in net, twice counterclockwise, once when leaving $-j+2n$ to the right and once when coming to $-j+2n$ from the left), and then reading the outer points $b_1-2n,\ldots,b_p-2n$ before returning to $-j+2n$ (having undone the two counterclockwise crossings).

In all of these cases, the stitched disk is contained in the symmetric non-dangling annular block of $\Q$ corresponding to the class $(\!(\cdots\,b_1\,\cdots\,b_p\,\,\,b_1+2qn\,\cdots)\!)$.
To form~$\P$, we replace this annular block with the appropriate stitched disk.

\medskip

\noindent
\textbf{Case $\SplitInf$ (4a).}
The infinite cycle of $\pi$ containing $a$ and $b$ corresponds to a boundary component of an annular block in $\Q$, either symmetric and non-dangling or part of a symmetric pair of annular blocks.
A nonsymmetric disk is split from the annular block in $\Q$ (and symmetrically, another nonsymmetric disk is split off of the opposite boundary or the other annulus) to form~$\P$.

\medskip

\noindent
\textbf{Case $\CombineInfAndNeg$ (4c).}
Since $\pi$ and $\sigma$ have structures on List~\ref{structures} and are related by a $\CombineInfAndNeg$ move, $\pi$ has structure $\Inf^1\Tiny^2\NonSym^k$ and~$\sigma$ has structure $\Tiny^2\Sym^2\NonSym^k$.
Thus $\Q$ has a symmetric non-dangling annular block.
Up to rewriting $(\!(a\,\,\,b)\!)_{2n}$ as $(\!(-a\,\,\,-b)\!)_{2n}$ (and because the infinite cycle is read from an annular block and thus is flat), we can assume that $a$ is $a_1$ in an infinite cycle $(\cdots\,a_1\,\cdots\,a_m\,\,\,a_1+2n\,\cdots)$ and $b=-a_i+2qn$ for some $i=2,\ldots,m$ and some $q\in\integers$.
In $\sigma$, there are cycles $(a_1\,\cdots\,a_{i-1}\,\,\,-a_1+2qn\,\cdots\,-a_{i-1}+2qn)$
and $(a_i\,\cdots\,a_m\,\,\,-a_i+2(q+1)n\,\cdots\,-a_m+2(q+1)n)$,
that are symmetric (because the case where they are tiny is disallowed but also because none of the entries $a_1,\ldots,a_m$ correspond to double points).
To form $\P$, the symmetric annular block in $\Q$ is split into two disjoint symmetric blocks, one upper and one lower.

\medskip

\noindent
\textbf{Case $\CombineInfInf$ (4d).}
The structure of $\pi$ is $\Inf^2\NonSym^k$ and the structure of $\sigma$ is $\NonSym^{k+1}$.
Since $\pi$ has no tiny cycles but two classes of flat infinite cycles, $\Q$ has two nonsymmetric non-dangling annular blocks.
Up to rewriting $(\!(a\,\,\,b)\!)_{2n}$ as $(\!(-a\,\,\,-b)\!)_{2n}$, we can take $a=a_1$ in an infinite cycle $(\cdots\,a_1\,\cdots\,a_m\,\,\,a_1+2n\,\cdots)$.
Therefore $b=b_1$ in an infinite cycle $(\cdots\,b_1\,\cdots\,b_p\,\,\,b_1-2n\,\cdots)$.
Since the $a$ and $b$ are in infinite cycles of different classes, they are the two cycles read from the two boundary components of the same annular block in $\Q$
In $\sigma$, these two classes of infinite cycles are replaced by a single class of nonsymmetric finite cycles $(\!(a_1\,\cdots\,a_m\,\,\,b_1+2n\,\cdots\,b_p+2n)\!)$.
This corresponds to a pair of nonsymmetric disk blocks, contained in the original pair of annular blocks.
We form $\P$ by replacing the annular blocks with the disks.

\medskip

\noindent
\textbf{Case $\EnlargeTiny$ (5c).}
Either $\pi$ has structure $\Inf^1\Tiny^2\NonSym^k$ and $\sigma$ has structure $\Inf^1\Tiny^1\Sym^1\NonSym^k$ or $\pi$ has structure $\Tiny^2\NonSym^k$ and $\sigma$ has structure $\Tiny^1\Sym^1\NonSym^k$.
In either case, $\tau=\ell_a=(\!(\cdots\,a\,\,\,a+2n\,\cdots)\!)$ for $a\in\set{\pm1,\ldots,\pm(n-1)}$.
Taking $i$ to be the positive upper double point and~$j$ to be the positive lower double point, there are four possibilities for cycles in $\sigma$ involving double points, shown here with the corresponding stitched disks. 

\smallskip

\begin{longtable}{|c|c|c|c|}\hline&&\\[-10pt]
tiny cycle in $\pi$	&$a$		&cycles in $\sigma$ involving double points&\\\hline
\raisebox{13pt}{$(a\,\,\,{-a})$}		&\raisebox{13pt}{$i$}		&\raisebox{13pt}{$(i\,\,\,-i-2n)_{2n}\cdot(j\,\,\,{-j+2n})_{2n}$}&\scalebox{0.9}{\includegraphics{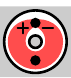}}\\\hline
\raisebox{13pt}{$(a\,\,\,{-a})$}		&\raisebox{13pt}{$-i$}	&\raisebox{13pt}{$(-i\,\,\,i-2n)_{2n}\cdot(j\,\,\,{-j+2n})_{2n}$}&\scalebox{0.9}{\includegraphics{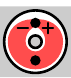}}\\\hline
\raisebox{13pt}{$(a\,\,\,{-a+2n})$}	&\raisebox{13pt}{$j$}		&\raisebox{13pt}{$(j\,\,\,-j)_{2n}\cdot(i\,\,\,-i)_{2n}$}&\scalebox{0.9}{\includegraphics{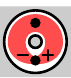}}\\\hline
\raisebox{13pt}{$(a\,\,\,{-a-2n})$	}	&\raisebox{13pt}{$-j$	}	&\raisebox{13pt}{$(-j\,\,\,j-4n)_{2n}\cdot(i\,\,\,-i)_{2n}$}&\scalebox{0.9}{\includegraphics{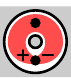}}\\\hline
\end{longtable}

\smallskip

If $\pi$ has structure $\Inf^1\Tiny^2\NonSym^k$ and $\sigma$ has $\Inf^1\Tiny^1\Sym^1\NonSym^k$, then $\Q$ has a non-dangling symmetric annular block.
To form $\P$, this annular block is replaced by a symmetric pair of dangling annular blocks, one on the outer points and one on the inner points, and one of the stitched disks shown above.
If $\pi$ has structure $\Tiny^2\NonSym^k$ and $\sigma$ has structure $\Tiny^1\Sym^1\NonSym^k$, then $\Q$ has a symmetric dangling annular block.
To form $\P$, this annular block is replaced by one of the stitched disks shown above.

\medskip

\noindent
\textbf{Case $\InfToNonSym$ (5d).}
This case can happen whenever $\pi$ has one or more classes of infinite cycles.
Again, $\tau$=$\ell_a=(\!(\cdots\,a\,\,\,a+2n\,\cdots)\!)$.
If $a$ is in a symmetric non-dangling annular block in $\Q$, then $\P$ is formed by cutting that annular block into a symmetric dangling annular block and a symmetric pair of dangling annular blocks (one on inner points and one on outer points) and then cutting the inner and outer annular blocks into disks.
If $a$ is in a nonsymmetric non-dangling annular block, then $\P$ is formed by cutting that block into two dangling annular blocks and cutting the piece containing $a$ into a disk (and then making the symmetric cuts).
If $a$ is in a dangling annular block, then $\P$ is formed by cutting that annular block into a disk.
\qed

\subsection{Proof of Proposition~\ref{cov ref loop}}\label{cov ref loop sec}
By Proposition~\ref{conn edge cov d}, there exists a simple symmetric pair of connectors $\kappa,\phi(\kappa)$ for $\P$ with $\Q=\P\cup\kappa\cup\phi(\kappa)$.
The curve $\kappa$ starts in a block $E$ of $\P$ and ends in a block $E'$ of $\P$, possibly with $E'=E$.
There are two possibilities for how $\kappa$ crosses the boundary of a block.
It passes either between two numbered points (possibly coinciding) on the boundary of the block or through the ``empty boundary component'' of a dangling annular block (the component of the boundary of the block that contains no numbered points). 

We rule out the possibility that $\kappa$ leaves $E$ through an empty boundary component and enters $E'$ through an empty boundary component.
Suppose to the contrary.
Then $E\neq E'$, because if $E=E'$, either $\kappa$ is isotopic to a curve that never leaves $E$ or $\kappa$ intersects $\phi(\kappa)$.
Since $\P$ has at most $2$ annular blocks, the empty boundary components of $E$ and $E'$ can't coincide up to isotopy.
Thus $E$ and $E'$ are a symmetric pair of dangling annular blocks, one containing outer points and one containing inner points.
Now $\kappa$ and $\phi(\kappa)$ bound a disk in containing a pair of double points that form a symmetric pair of trivial blocks of $\P$ (in fact, two such disks), contradicting the definition of a simple symmetric pair of connectors for~$\P$.

By this contradiction, we assume, without loss of generality, that $\kappa$ leaves~$E$ between two numbered points in the boundary of $E$.
Let $p$ and $q$ be numbers equivalent modulo $2n$ to those two numbered points such that $\perm^D(\P)$ has a cycle $\gamma$ with a subsequence $p,q$.
Thus $p$ is on the left as $\kappa$ leaves $E$ and $q$ on the right.
It is possible that $p$ and $q$ correspond to the same numbered point.
In that case, $q=p$ and $\gamma$ is a $1$-cycle if $E$ is a degenerate disk block (a point), and otherwise $q=p\pm2n$, $\gamma$ is a loop, and $E$ is an annular block one of whose boundary components contains only one numbered point, equivalent modulo $2n$ to $p$.
We argue in two cases, based on how $\kappa$ enters $E'$.

\medskip
\noindent
\textbf{Case A.}  The curve $\kappa$ enters $E'$ through an empty boundary component.
Then~$E$ is a disk containing only outer points, only inner points, or only double points.

If $E$ contains only outer points, then the subsequence of $\gamma$ from $q$ to $p$ is read in clockwise order along the outer boundary of $D$.
Let $\tau$ be the loop $\ell_q$.
Then~$\tau\gamma$ is an infinite cycle that follows the same points from $q$ to $p$ and then sends $p$ to $q+2n$.
This new infinite cycle is in $\perm^D(\Q)$, read off from the outer boundary of the non-dangling annular block $E''$ in $\Q$ containing $E$ and~$E'$. 
(If $E'$ is a symmetric dangling annular block, then this non-dangling annular block in $\Q$ is symmetric.
If $E',\phi(E')$ is a symmetric pair of dangling annular blocks, then this non-dangling annular block in $\Q$ is part of a symmetric pair.)
Thus $\perm(\Q)=\tau\cdot\perm(\P)$.
If~$E$ contains only inner points, then we argue similarly with $\tau=\ell_q^{-1}$.

If $E$ contains only double points, then $E$ is a degenerate disk or a stitched disk and $E'$ is a dangling annulus with either only inner points or only outer points.
Suppose $E$ is a degenerate disk.  
If $E'$ contains only inner points, let $\tau=\ell_p$, so that $\tau\cdot\perm^D(\P)$ has a new cycle with $q$ then $p$ then $q+2n$ (or just $p$ then $p+2n$ if $p=q$.
This new infinite cycle is in $\perm^D(\Q)$, read from the outer boundary of the nonsymmetric non-dangling annular block in $\Q$ containing $E$ and $E'$.
If $E'$ contains only outer points, then we set $\tau=\ell_q^{-1}$ argue similarly.

If $E$ is a stitched disk, then the symmetric pair of dangling annuli in $\P$ (one with outer points and one with inner points) becomes one symmetric annulus in~$\Q$, with the same outer points and inner points.
Thus the cycles in $\perm^D(\P)$ associated with the stitched disk (one class of 2-element symmetric cycles and one class of tiny cycles) are replaced with the two classes of tiny cycles.
There are four possibilities for~$E$, shown in the left column below.
The right column shows the cycles associated with the stitched disk.
(Here, and throughout the proof, $i$ is the positive upper double point and $j$ is the positive lower double point.)
The middle column shows the loop~$\tau$ such that $\tau\cdot\perm(\P)$ has the two classes of tiny cycles.
\begin{longtable}{|c|c|c|}\hline&&\\[-10pt]
\scalebox{0.9}{\includegraphics{stitched1.pdf}}&\raisebox{13pt}{$(\!(\cdots i\,\,\,\,{i-2n}\cdots)\!)_{2n}$}&\raisebox{13pt}{$(i\,\,\,\,{-i-2n})_{2n}\cdot(j\,\,\,{-j+2n})_{2n}$}\\\hline
\scalebox{0.9}{\includegraphics{stitched2.pdf}}&\raisebox{13pt}{$(\!(\cdots i\,\,\,\,{i+2n}\cdots)\!)_{2n}$}&\raisebox{13pt}{$(i\,\,\,\,{-i+2n})_{2n}\cdot(j\,\,\,{-j+2n})_{2n}$}\\\hline
\scalebox{0.9}{\includegraphics{stitched3.pdf}}&\raisebox{13pt}{$(\!(\cdots j\,\,\,\,{j+2n}\cdots)\!)_{2n}$}&\raisebox{13pt}{$(j\,\,\,\,{-j+4n})_{2n}\cdot(i\,\,\,-i)_{2n}$}\\\hline
\scalebox{0.9}{\includegraphics{stitched4.pdf}}&\raisebox{13pt}{$(\!(\cdots j\,\,\,\,{j-2n}\cdots)\!)_{2n}$}&\raisebox{13pt}{$(j\,\,\,\,{-j})_{2n}\cdot(i\,\,\,-i)_{2n}$}\\\hline
\end{longtable}

\medskip
\noindent
\textbf{Case B.}
The curve $\kappa$ enters $E'$ between numbered points.
We define $p'$, $q'$, and $\gamma'$ for $E'$ and $\kappa$ just as we did for $E$ and $\kappa$.
Thus the numbered point corresponding to $p'$ is on the left and the numbered point corresponding to $q'$ is on the right as $\kappa$ \emph{leaves $E'$ heading for $E$}. 
So far, we have only specified $p'$ and $q'$ up to adding the same multiple of $2n$ to both, but we will become more definite below.

If $E$ is a nonsymmetric disk and $E=E'$, then $E$ does not contain both outer points and inner points.
(If $E$ contains both outer and inner points, then $\kappa$ intersects $\phi(E)$.)
Thus without loss of generality, $E$ either contains only outer points, or only outer points and double points, or only double points.
If $E$ contains only outer points, then $p'=p$ and $q'=q$ and $\Q$ has a new dangling annular block containing the outer points in $E$.
Setting $\tau=\ell_q$, we see that $\tau\cdot\perm^D(\P)$ has the class of infinite cycles corresponding to this dangling annular block, so $\perm^D(\Q)=\tau\cdot\perm^D(\P)$.
If $E$ contains outer and double points, then up to reversing the direction of $\kappa$ (and thus swapping $(p,q)$ with $(p',q')$) we can take $p$ and $q'$ to correspond to outer points and $p'$ and $q$ to correspond to double points.
Possibly $p'=q$.
In $\Q$, there is a non-dangling annular block containing the same numbered points as $E$.
Taking $\tau$ to be $(\!(q\,\,\,{q'+2n})\!)_{2n}$, we have $\perm^D(\Q)=\tau\cdot\perm^D(\P)$.
If~$E$ contains only double points, then $\Q$ has a dangling annular block containing only the double points in $E$.
Without loss of generality, we can assume that the dangling annular block containing $E$ is outside of the dangling annular block containing $\phi(E)$.
If $E$ is a trivial block on a double point $p=q=p'=q'$, then taking $\tau=\ell_p$, we see that $\perm^D(\Q)=\tau\cdot\perm^D(\P)$.
If $E$ is an arc connecting an upper double point to a lower double point, then $p'=p$ and $q'=q$.
Setting $\tau=\ell_q^{-1}$, we again have $\perm^D(\Q)=\tau\cdot\perm^D(\P)$.

If at least one of $E$ and $E'$ is a nonsymmetric disk but $E\neq E'$, then the cycles $\gamma$ and $\gamma'$ in $\perm^D(\P)$ are combined in $\perm^D(\Q)$.
We choose $p'$ and $q'$ so that the combined cycle contains a subsequence $p\,\,\,q'$, and equivalently contains a subsequence $p'\,\,\,q$.
The effect of this choice (if $E'$ is a disk) is that, as we read cycles in the definition of $\perm^D(\Q)$, we reach the a numbered point where we record $p$, follow the boundary of $E$ with the interior of $E$ on the right, turn left to follow close to $\kappa$ to the boundary of $E'$, and turn left to follow the boundary of $E'$ with the interior of $E'$ on the right to a numbered point where we record~$q'$.  
(If instead $E$ is a disk, the same is true, swapping primes for non-primes throughout.)

If neither of $E$ and $E'$ is a nonsymmetric disk, then (since~$\kappa$ leaves~$E$ between numbered points and enters $E'$ between numbered points), there are three possibilities:
the block $E$ is a nonsymmetric annular block containing double points and $E'=\phi(E)$;
the block $E$ is a stitched disk and $E'=\phi(E)=E$; or
the blocks $E$ and~$E'$ are distinct non-stitched symmetric disks.

If $E$ is a nonsymmetric annular block containing double points and $E'=\phi(E)$, then $E$ contains two double points.
(If $E$ contains only one, then there is a pair of trivial blocks in $\P$, and $\kappa$ and $\phi(\kappa)$ combine with $E$ and $E'$ to bound a disk containing these trivial blocks.)
Thus $\perm^D(\P)$ has a class of infinite cycles involving all the double points and $\perm^D(\Q)$ instead has both classes of tiny cycles.
The change from $\perm^D(\P)$ to $\perm^D(\Q)$ only involves the double points, so we can ignore the inner and outer points of annular blocks of $\P$ and $\Q$ and work as if $E$ is a dangling annular block.
The four possibilities for~$E$ and $E'=\phi(E)$ are shown below.
The right column shows the associated class of infinite cycles.
The middle column shows $\tau\in T$ such that $\tau\cdot\perm(\P)$ has the two classes of tiny cycles. 
\begin{longtable}{|c|c|c|}\hline&&\\[-10pt]
\scalebox{0.85}{\includegraphics{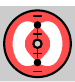}}
&\raisebox{13pt}{$(\!(i\,\,\,\,{-j})\!)_{2n}$}
&\raisebox{13pt}{$(\!(\cdots i\,\,\,\,j\,\,\,\,{i+2n}\cdots)\!)_{2n}$}
\\\hline
\scalebox{0.85}{\includegraphics{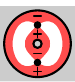}}
&\raisebox{13pt}{$(\!(i\,\,\,\,{j-2n})\!)_{2n}$}
&\raisebox{13pt}{$(\!(\cdots i\,\,\,\,{-j+2n}\,\,\,\,{i+2n}\cdots)\!)_{2n}$}
\\\hline
\scalebox{0.85}{\includegraphics{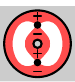}}
&\raisebox{13pt}{$(\!(i\,\,\,\,j)\!)_{2n}$}
&\raisebox{13pt}{$(\!(\cdots i\,\,\,\,{-j}\,\,\,\,{i-2n}\cdots)\!)_{2n}$}
\\\hline
\scalebox{0.85}{\includegraphics{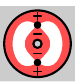}}
&\raisebox{13pt}{$(\!(i\,\,\,\,{-j+2n})\!)_{2n}$}
&\raisebox{13pt}{$(\!(\cdots i\,\,\,\,{j-2n}\,\,\,\,{i-2n}\cdots)\!)_{2n}$}\\\hline
\end{longtable}

If $E$ is a stitched disk block and $E'=\phi(E)=E$, then we have already handled the case where $E$ contains only double points.
(In that case, $\P$ and $\Q$ are just like the case where $E$ is a stitched disk with only double points and $E'$ is a dangling annular block.)
Thus we assume that $E$ has outer points and inner points.
Up to swapping $\kappa$ and $\phi(\kappa)$, we can assume that $p$ is inner and~$q$ is double.
By adding the same multiple of $2n$ to $p$ and $q$, we can choose $q\in\set{\pm i,-j,j-2n}$.
Then~$\gamma$ (the cycle in $\perm^D(\P)$ containing a subsequence $p,q$) is a symmetric cycle
\[(p\,\,\,q\,\,\,a_1\,\cdots\,a_m\,\,\,{-p+2kn}\,\,\,{-q+2kn}\,\,\,{-a_1+2kn}\,\cdots\,{-a_m+2kn})\]
such that $a_1,\ldots,a_m$ is an increasing sequence corresponding to outer points, with $k$ even if and only if $q$ corresponds to a lower double point and $k$ odd if and only if $q$ corresponds to an upper double point.
But in fact, checking each of the four cases for $q$, we see that $k\in\set{0,1}$, so that $\gamma$ is a principal cycle.
In $\perm^D(\Q)$, the class of $\gamma$ becomes $(\!(\cdots\,a_1\,\cdots\,a_m\,\,\,{-p+2kn}\,\,\,{a_1+2n}\,\cdots\,)\!)$, a class of infinite cycles consisting of outer points, and the double points corresponding to $q$ are in a tiny cycle.
Setting $\tau=(\!({-q+2kn}\,\,\,{a_1+2n})\!)_{2n}$, the desired class of infinite cycles is in $\tau\cdot\perm(\P)$.
Checking all four cases for $q$ (keeping in mind that $k=1$ when $q=\pm 1$ and $k=0$ when $q=-j+2n$ or $q=-j$), we see that $\tau\cdot\perm(\P)$ also has the tiny cycles for the double point $q$.
Specifically, if $q=\pm i$, then $k=1$ and $\tau\cdot\perm^D$ has $(i\,\,\,{-i})$ and if $q=j-2n$ or $q=-j$, then $k=0$ and $\tau\cdot\perm^D$ has $(j\,\,\,{-j+2n})$.

If $E$ and $E'$ are distinct non-stitched symmetric disks, then in $\Q$, these blocks are combined into a non-dangling symmetric annular block.
Without loss of generality,~$E$ is upper and $E'$ is lower.
We may as well take $\gamma$ to be a principal symmetric cycle.
Since $\kappa$ leaves $E$ between $p$ and $q$ (with $p$ on the left) and enters $E'$ between $p'$ and $q'$ (with $p'$ on the right), up to swapping $\kappa$ with $\phi(\kappa')$, we can assume that $p$ and $q'$ are both outer while $p'$ and $q$ are both inner.
By adding the correct multiple of $2n$ to $p'$ and $q'$, when following the path from $p$ to $q'$ along the boundary of~$E$, left along $\kappa$, and left along the boundary of $E'$ and recording integers from the numbered points is in the definition of $\perm^D$, we read $p$ and then $q'$.
We can write 
\begin{align*}
\gamma&=(p\,\,\,q\,\,\,a_1\,\cdots\,a_m\,\,\,{-p}\,\,\,{-q}\,\,\,{-a_1}\,\cdots\,{-a_m})\\
\gamma'&=(p'\,\,\,q'\,\,\,a'_1\,\cdots\,a'_{m'}\,\,\,\,{-p'+2kn}\,\,\,\,{-q'+2kn}\,\,\,\,{-a'_1+2kn}\,\cdots\,{-a_{m'}+2kn}),
\end{align*}
with $k$ odd.
Since $p$ and $q'$ are both outer, then since $-p$ is inner, all of the points $q,a_1,\ldots,a_m$ are inner as well, and similarly, all the points $a'_1,\ldots,a'_{m'},{-p'+2n}$ are outer.
Setting $\tau=(\!(q\,\,\,q')\!)$, we see that $\tau\cdot\perm^D(\P)$ has the infinite cycle
\[(\cdots{-q}\,\,\,{-a_1}\,\cdots\,{-a_m}\,\,\,p\,\,\,q'\,\,\,a'_1\,\cdots\,a'_{m'}\,\,\,\,{-p'+2kn}\,\,\,\,{-q+2kn}\cdots)\]
But the sequences ${-q}\,\,\,{-a_1}\,\cdots\,{-a_m}\,\,\,p\,\,\,q$ and $q'\,\,\,a'_1\,\cdots\,a'_{m'}\,\,\,\,{-p'+2kn}\,\,\,\,{-q'+2kn}$ come from reading outer points in clockwise order according to the definition of $\perm^D$. Thus this is the cycle that $\perm^D$ reads along the outer boundary of the non-dangling symmetric annular block in $\Q$.
(In particular, $k=1$.)
\qed

\subsection{Noncrossing partitions with no dangling annular blocks}\label{circ sec}
We now prove the second assertion of Theorem~\ref{isom d}.
The proof consists of first relating covers in $\tNCDcircc$ to covers in $\tNCDc$ and then following the same argument as for  the first assertion, re-using most of the technical details.

\begin{prop}\label{cov circ d}
Suppose $\P,\Q\in\tNCDcircc$.
Then $\Q$ covers $\P$ in $\tNCDcircc$ if and only if $\Q$ covers $\P$ in $\tNCDc$.
If $\P\covered\Q$ and $\kappa,\phi(\kappa)$ is a simple symmetric pair of connectors for $\P$ such that $\Q=\P\cup\kappa\cup\phi(\kappa)$ and if $\kappa$ connects a block $E$ of $\P$ to itself, then $E$ contains double points and also contains inner points and/or outer points.
\end{prop}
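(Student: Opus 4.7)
The first sentence's easy direction is immediate: since $\tNCDcircc$ is an induced subposet of $\tNCDc$, a cover in $\tNCDc$ between two elements of $\tNCDcircc$ automatically descends to a cover in $\tNCDcircc$, as any strictly intermediate $\tNCDcircc$-element is in particular a strictly intermediate $\tNCDc$-element.

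I plan to prove the second sentence first, then deduce the converse direction of the first. Assume $\P\covered\Q$ in $\tNCDc$ with $\P,\Q\in\tNCDcircc$, write $\Q=\P\cup\kappa\cup\phi(\kappa)$ via Proposition~\ref{conn edge cov d}, and suppose $\kappa$ connects a block $E$ to itself. I case-split on what numbered points $E$ contains, ruling out each of: (i) only outer; (ii) only inner; (iii) only double; (iv) inner and outer but no double. The key topological input for (i) and (ii) is that when all numbered points of $E$ lie on a single boundary component of $D$, the arc $\kappa$ cannot cobound a disk $F\subseteq D\setminus E$ with a portion of $\partial E$ without violating one of: the ``no digon'' clause of the arc definition, the constraint $\kappa\notin\curve(\P)$, or the exclusion from the definition of simple symmetric pair of connectors of a disk containing a symmetric pair of trivial double-point blocks. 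Hence $\kappa$ must wrap around the hole of the annulus $D$ before returning to $E$, so that the thickened union $E\cup\kappa$ is an annulus one of whose boundary components is a closed curve in the interior of $D$ carrying no numbered points---a dangling annular block, contradicting $\Q\in\tNCDcircc$. Case (iii) is handled by direct inspection of the few degenerate possibilities for $E$, and case (iv) requires $E$ to be a nonsymmetric disk (since a symmetric disk with both inner and outer numbered points contains a pair of double points in its interior), where a similar wrap-around analysis again forces a dangling boundary.

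For the converse of the first sentence, I plan to show that no $\R\in\tNCDc$ can lie strictly between $\P$ and $\Q$ without also lying in $\tNCDcircc$, which immediately contradicts the $\tNCDcircc$-cover assumption. Using gradedness of $\tNCDc$ (Theorem~\ref{D tilde main}), I would build saturated $\tNCDc$-chains through such a hypothetical $\R$ and invoke the second sentence repeatedly, showing that a dangling annular block of $\R$ cannot be created from $\P$ or resolved into $\Q$ by a sequence of $\tNCDc$-covers compatible with the dangling-free endpoints. The principal obstacle will be case (iv) of the second sentence, where $E$ is a nonsymmetric disk containing both inner and outer numbered points but no doubles, since such disks have the subtlest topology in $D$ and the most delicate interplay between $\kappa$ and $\phi(\kappa)$; a secondary difficulty is the bookkeeping required to trace dangling annular blocks through chains of augmentations in the converse argument.
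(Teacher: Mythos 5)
The first-sentence easy direction matches the paper. But your plan to prove the second assertion first and then deduce the converse direction of the first assertion has a genuine gap. The second assertion only constrains a single-step augmentation taking $\P$ directly to $\Q$ with both endpoints in $\tNCDcircc$; it says nothing about the individual covers along a saturated $\tNCDc$-chain $\P\covered\cdots\covered\R\covered\cdots\covered\Q$ passing through some $\R\notin\tNCDcircc$, because those covers involve at least one partition carrying a dangling annular block, to which the assertion does not apply. So ``invoking the second sentence repeatedly'' along such a chain gives no traction, and the intermediate goal you set --- that every $\R\in\tNCDc$ strictly between $\P$ and $\Q$ must already lie in $\tNCDcircc$ --- is as hard as the converse direction itself and is not delivered by the second assertion.

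The paper proves the converse directly and supplies exactly the construction your outline is missing: given $\P<\R\covered\Q$ in $\tNCDc$ with $\R\notin\tNCDcircc$, it locates the dangling annular block of $\R$ contained in a non-dangling annular block of $\Q$, splits into the four possible configurations, and in each exhibits a simple symmetric pair of connectors $\kappa,\phi(\kappa)$ for $\P$ with $\kappa,\phi(\kappa)\in\curve(\Q)$ such that $\P\cup\kappa\cup\phi(\kappa)$ has no dangling annular block; that partition then lies strictly between $\P$ and $\Q$ in $\tNCDcircc$, contradicting the $\tNCDcircc$-cover hypothesis. The second assertion is afterwards read off as the dangling-avoidance condition on a self-connecting $\kappa$, so it is a corollary of the first assertion rather than a lemma for it. Your case analysis of the second sentence is in the right spirit, but as you anticipate, case (iv) (inner and outer numbered points but no double point on $E$) requires careful control of the interaction of $E$, $\phi(E)$, $\kappa$, and $\phi(\kappa)$; the paper sidesteps that analysis entirely by the above reordering.
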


\begin{proof}
If $\P\covered\Q$ in $\tNCDc$, then $\P\covered\Q$ in $\tNCDcircc$, because $\tNCDcircc$ is an induced subposet.

Conversely, suppose $\P\covered\Q$ in $\tNCDcircc$.
Suppose for the sake of contradiction that there exists $\R\in\tNCDc$ with $\P<\R\covered\Q$. 
Then 
since $\R\not\in\tNCDcircc$, there is a dangling annular block $E$ in $\R$ contained in a non-dangling annular block $F$~of~$\Q$.
Since $\R\covered\Q$, there are four possibilities for $E$ and $F$, which we treat as four separate cases.
In each case, we will find a contradiction to the supposition that $\P\covered\Q$ in $\tNCDcircc$ by exhibiting a simple symmetric pair of connectors $\kappa,\phi(\kappa)$ for $\P$ such that  $\P\cup\kappa\cup\phi(\kappa)$ has no annular block and $\kappa,\phi(\kappa)\in\curve(\Q)$.
Since $\Q$ has an annular block, we can conclude that $\P<(\P\cup\kappa\cup\phi(\kappa))<\Q$ in $\tNCDcircc$ in each case.

\medskip

\noindent\textbf{Case A.}
$E$ is a nonsymmetric dangling annular block containing double points and $F$ is a nonsymmetric non-dangling annular block containing double points and, without loss of generality, outer points.
Then $\R$ has a nonsymmetric disk block~$E'$ containing only outer points and there is a simple symmetric pair of connectors $\kappa,\phi(\kappa)$ for $\R$ such that $\kappa$ connects $E$ to $E'$.
(For example, $\R$ might be the noncrossing partition pictured in the third row-right picture of Figure~\ref{nc ex fig}.
Then~$E$ is the embedded block containing $-1$ and $-6$ and $E'$ might be the embedded block containing $-2$ and $-3$.)
In $\P$, the double point that is an endpoint of $\kappa$ is contained in a nonsymmetric disk block (containing no inner or outer points) and the outer points in $E'$ are contained in some number of nonsymmetric disk blocks.
We can choose $\kappa$ so that $\kappa,\phi(\kappa)$ is also a simple symmetric pair of connectors for $\P$.
Since~$\kappa$ connects two distinct nonsymmetric disk blocks, $\P\cup\kappa\cup\phi(\kappa)$ has no annular block.

\medskip

\noindent\textbf{Case B.}
$E$ is a nonsymmetric dangling annular block containing (without loss of generality) outer points and $F$ is a nonsymmetric non-dangling annular block containing double points and outer points.
This case is like Case A, with the roles of double points and outer points reversed:
$\R$ has a nonsymmetric disk block $E'$ containing only double points connected to $E$ by $\kappa$ in a simple symmetric pair of connectors $\kappa,\phi(\kappa)$ for $\R$.
(For example, $\R$ might be the bottom-right picture of Figure~\ref{nc ex fig}.)
In $\P$, the double points are in nonsymmetric disk blocks containing no inner or outer points and the outer points in $E'$ are contained in nonsymmetric disk blocks.
We can choose $\kappa$ so that $\kappa,\phi(\kappa)$ is also a simple symmetric pair of connectors for $\P$.
Again, $\P\cup\kappa\cup\phi(\kappa)$ has no annular block.

\medskip

\noindent\textbf{Case C.}
$E$ is a nonsymmetric dangling annular block containing (without loss of generality) outer points and $F$ is a symmetric non-dangling annular block.
This case only occurs when $\R$ has a stitched disk $E'$ containing only double points.
(For example, $\R$ might be the bottom-middle picture of Figure~\ref{nc ex fig}.)
There is a simple symmetric pair of connectors $\kappa,\phi(\kappa)$ for $\R$ connecting $E$ to~$E'$.
In $\P$, the outer points in $E$ are contained in nonsymmetric disks.
The stitched disk $E'$ of $\R$ may be a block of $\P$ or the double points may be contained in nonsymmetric blocks of~$\P$ (containing only double points).
In either case, we can choose $\kappa$ so that $\kappa,\phi(\kappa)$ is also a simple symmetric pair of connectors for $\P$.
The noncrossing partition $\P\cup\kappa\cup\phi(\kappa)$ has no annular block.

\medskip

\noindent\textbf{Case D.}
$E$ is a symmetric dangling annular block and $F$ is a symmetric non-dangling annular block.
In this case, there is a nonsymmetric disk block $E'$ containing the outer points in $F$ and a simple symmetric pair of connectors $\kappa,\phi(\kappa)$ for $\R$ such that $\kappa$ connects $E$ to $E'$.
(For example, $\R$ might be the bottom-left picture of Figure~\ref{nc ex fig}.)
In $\P$, either the double points are contained in a stitched disk (containing no inner or outer points) or in nonsymmetric disk blocks.
As before, we can choose $\kappa$ so that $\kappa,\phi(\kappa)$ is also a simple symmetric pair of connectors for~$\P$, and we see that $\P\cup\kappa\cup\phi(\kappa)$ has no annular block.

In every case, we have contradicted the supposition that $\P\covered\Q$ in $\tNCDcircc$.
We conclude that $\Q$ covers $\P$ in $\tNCDcircc$ if and only if $\Q$ covers $\P$ in $\tNCDc$.

The second assertion of the proposition is immediate:  It states the condition on~$\kappa$ that prevents the augmentation from creating a dangling annular block.
\end{proof}

As a consequence of Proposition~\ref{cov circ d}, the rank function in $\tNCDcircc$ is the restriction of the rank function in $\tNCDc$.
It is also true that the rank function $\ell_T$ in $[1,c]_T$ is the restriction to $[1,c]_T$ of the rank function $\ell_{T\cup L}$ in $[1,c]_{T\cup L}$.

The bulk of the proof of the first assertion of Theorem~\ref{isom d} consisted of proving two propositions (Propositions~\ref{perm inv cov} and~\ref{cov ref loop}).
We now prove the analogous propositions for $\tNCDcircc$ and $[1,c]_T$, re-using most of the work.

\begin{prop}\label{perm inv cov circ}
Suppose $\sigma\covered\pi$ in $[1,c]_T$ and $\pi=\perm^D(\Q)$ for some $\Q\in\tNCDcircc$.
Then there exists $\P\in\tNCDcircc$ such that $\sigma=\perm^D(\P)$ and $\P\covered\Q$.
\end{prop}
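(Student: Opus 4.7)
The plan is to invoke Proposition~\ref{perm inv cov} to produce a candidate $\P\in\tNCDc$ with $\sigma=\perm^D(\P)$ and $\P\covered\Q$ in $\tNCDc$, then verify that $\P$ lies in $\tNCDcircc$ and conclude via Proposition~\ref{cov circ d}. To apply Proposition~\ref{perm inv cov}, I first observe that the hypothesis $\sigma\covered\pi$ in $[1,c]_T$ lifts to the same cover relation in $[1,c]_{T\cup L}$: writing $\sigma=\tau\pi$ for a reflection $\tau\in T$, Lemma~\ref{only one} yields $\vr(\sigma)\ge\vr(\pi)-1$, while $\sigma<_{T\cup L}\pi$ combined with Proposition~\ref{interval rank} forces $\vr(\sigma)<\vr(\pi)$. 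Therefore $\vr(\sigma)=\vr(\pi)-1$, and Proposition~\ref{perm inv cov} supplies the desired $\P\in\tNCDc$. It remains only to show $\P\in\tNCDcircc$.

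The key step is a case-by-case inspection of the construction of $\P$ in the proof of Proposition~\ref{perm inv cov}. Because $\tau\in T$, only Cases~1--4 of Lemma~\ref{only one} arise, corresponding to the eight non-loop moves on List~\ref{moves}: $\SplitNonSym$, $\SplitSym$, $\CombineSymTiny$, $\CombineTiny$, $\CombineTinyInf$, $\SplitInf$, $\CombineInfAndNeg$, and $\CombineInfInf$. Moreover, Proposition~\ref{cycle types T} restricts the cycle structures of $\pi$ and $\sigma$ to the five listed there, ruling out several otherwise possible transitions---for instance, a $\CombineTiny$ move applied to $\Inf^1\Tiny^2\NonSym^k$ must land in $\Inf^2\NonSym^k$, not in $\Inf^1\NonflatInf^1\NonSym^k$. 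In each surviving case I walk through the recipe for $\P$ given in the proof of Proposition~\ref{perm inv cov}: the three ``disk-only'' moves $\SplitNonSym$, $\SplitSym$, $\CombineSymTiny$ never touch annular blocks; $\CombineTinyInf$, $\CombineInfAndNeg$, and $\CombineInfInf$ replace annular blocks of $\Q$ with a stitched disk or with disks in $\P$; and $\CombineTiny$ and $\SplitInf$ merely modify non-dangling annular blocks, with the modification preserving the property of being non-dangling.

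The main technical obstacle is this last group of cases, $\CombineTiny$ and $\SplitInf$, where I need to confirm that no modification of an annular block of $\Q$ inadvertently produces an empty boundary component (and hence a dangling annular block) in $\P$. For $\CombineTiny$, the constraint from Proposition~\ref{cycle types T} rules out the outputs involving a nonflat infinite class, leaving only the split of the symmetric non-dangling annular block of $\Q$ into a symmetric pair of non-dangling annular blocks. For $\SplitInf$, the infinite class persists in $\sigma$ and remains flat, so the smaller annular block in $\P$ retains numbered points on both boundary components. Once these geometric checks are complete, $\P\in\tNCDcircc$, and Proposition~\ref{cov circ d} upgrades $\P\covered\Q$ from $\tNCDc$ to $\tNCDcircc$, completing the proof.
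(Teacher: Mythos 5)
Your proof is correct and follows essentially the same strategy as the paper's: lift the cover to $[1,c]_{T\cup L}$, invoke Proposition~\ref{perm inv cov}, and then inspect the case-by-case construction in its proof (with the loop cases $\EnlargeTiny$ and $\InfToNonSym$ excluded because $\tau\in T$) to confirm no dangling annular block is created. The paper states this last step quite tersely, while you spell out the verification per move; you also justify the lifting of the cover via $\vr$ and Lemma~\ref{only one}, whereas the paper simply appeals to the fact that $\ell_T$ is the restriction of $\ell_{T\cup L}$, but both justifications are sound.
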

\begin{proof}
Since $\sigma\covered\pi$ in $[1,c]_T$, also  $\sigma\covered\pi$ in $[1,c]_{T\cup L}$.
By Proposition~\ref{perm inv cov}, there exists $\P\in\tNCDc$ such that $\sigma=\perm^D(\P)$ and $\P\covered\Q$.
We know $\Q\in\tNCDcircc$, and we look at the proof of Proposition~\ref{perm inv cov} to see that $\P\in\tNCDcircc$:
Leaving out Cases $\EnlargeTiny$ and $\InfToNonSym$, which correspond to multiplying by a loop, we see that the construction of $\P$ from $\Q$ does not create a dangling annular block.
\end{proof}

\begin{prop}\label{cov ref circ}
Suppose $\P,\Q\in\tNCDcircc$ have $\P\covered\Q$.
Then there exists a reflection $\tau\in T$ such that $\perm^D(\Q)=\tau\cdot\perm^D(\P)$.
\end{prop}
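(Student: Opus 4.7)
The plan is to reduce to Proposition~\ref{cov ref loop} and then exclude every subcase of its proof in which the resulting element lies in $L$ rather than $T$. By Proposition~\ref{cov circ d}, the cover $\P \covered \Q$ in $\tNCDcircc$ is also a cover in $\tNCDc$, so Proposition~\ref{conn edge cov d} supplies a simple symmetric pair of connectors $\kappa, \phi(\kappa)$ for $\P$ with $\Q = \P \cup \kappa \cup \phi(\kappa)$, and Proposition~\ref{cov ref loop} yields some $\tau \in T \cup L$ with $\perm^D(\Q) = \tau \cdot \perm^D(\P)$. The task reduces to showing that $\tau$ may always be chosen in $T$.

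I would then inspect the case analysis of Proposition~\ref{cov ref loop} to locate its loop outputs. That proof splits on whether $\kappa$ enters its target block $E'$ through an empty boundary component (Case A) or between two numbered points (Case B). In Case A the chosen $\tau$ is always a loop, but $E'$ must be a dangling annular block; since $\P \in \tNCDcircc$ contains no dangling annular blocks, Case A never arises. In Case B, $\tau$ is selected from $L$ only in the self-connector subcases where $\kappa$ joins a nonsymmetric disk $E$ to itself and $E$ consists entirely of one kind of numbered point: only outer points (giving $\tau = \ell_q$), a trivial block at a double point (giving $\tau = \ell_p$), or an arc between upper and lower double points (giving $\tau = \ell_q^{-1}$).

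The second assertion of Proposition~\ref{cov circ d} rules out exactly these configurations: whenever $\kappa$ connects a single block $E$ to itself in a cover inside $\tNCDcircc$, the block $E$ must contain a double point \emph{and} at least one inner or outer point. In Case B this leaves only the mixed self-connector subcases (both double and outer, or both double and inner), where Proposition~\ref{cov ref loop} already produces the reflection $\tau = (\!(q\,\,\,{q'+2n})\!)_{2n}$, together with the two-block subcases, where the chosen $\tau$ is likewise a reflection that combines the relevant boundary cycles. I expect no genuine difficulty beyond this bookkeeping: the geometric content has already been established in Propositions~\ref{cov circ d} and~\ref{cov ref loop}, and the main obstacle is simply verifying that the structural restriction from Proposition~\ref{cov circ d} aligns with precisely those subcases of Proposition~\ref{cov ref loop} where the output is a loop.
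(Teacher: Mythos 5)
Your proposal is correct and follows essentially the same strategy as the paper: reduce to Proposition~\ref{cov ref loop} via Proposition~\ref{cov circ d}, then exclude the subcases that return a loop. The paper compresses this to the single observation that every loop-producing subcase in the proof of Proposition~\ref{cov ref loop} forces a dangling annular block in $\P$ (Case~A, where $E'$ is dangling) or in $\Q$ (Case~B self-connector on only outer/inner/double points), which is excluded since $\P,\Q\in\tNCDcircc$. Your version spells out the same exclusions, using the no-dangling-block hypothesis directly in Case~A and the second assertion of Proposition~\ref{cov circ d} in Case~B; since that second assertion is itself just a restatement of the no-dangling-block constraint for self-connectors, your argument is a more explicit but equivalent form of the paper's.
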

\begin{proof}
In light of Proposition~\ref{cov circ d}, Proposition~\ref{cov ref loop} give us a reflection or loop $\tau\in T\cup L$ such that $\perm^D(\Q)=\tau\cdot\perm^D(\P)$.
In the proof of Proposition~\ref{cov ref loop}, in every case where $\tau$ is a loop, either $\Q$ or $\P$ has a dangling annular block.
Thus $\tau$ is in fact a reflection.
\end{proof}

\begin{proof}[Proof of the second assertion of Theorem~\ref{isom d}]
Given a maximal chain $\P_0\covered\cdots\covered\P_n$ in $\tNCDcircc$, use Proposition~\ref{cov ref circ} to write a word $\tau_1\cdots\tau_n$ for $c$ in the alphabet $T$.
This is reduced because $\ell_T(c)=n$.
Since every $\P\in\tNCDcircc$ is on a maximal chain, $\perm^D$ maps $\tNCDcircc$ into $[1,c]_T$.
Furthermore, if $\P\covered\Q$, this cover relation is on some maximal chain, so $\perm^D(\P)\covered_{T\cup L}\perm^D(\Q)$.
Thus $\perm^D$ is order-preserving.

Proposition~\ref{one to one d} says that $\perm^D$ is one-to-one.
Proposition~\ref{perm inv cov circ} and an easy induction shows that $\perm^D$ maps $\tNCDcircc$ onto $[1,c]_T$.
Thus $\perm^D$ restricts to a bijection from $\tNCDcircc$ to $[1,c]_T$.
Proposition~\ref{perm inv cov circ} also implies that the inverse of the restriction is order-preserving, so $\perm^D$ is an isomorphism.
\end{proof}

\section{McCammond and Sulway's lattice in type $\afftype{D}$}\label{barred sec}
Much of the interest in the interval $[1,c]_T$ is motivated by the study of Artin groups.
When $[1,c]_T$ is a lattice (for example when $W$ is finite), it can be used to prove strong results about the Artin group associated to~$W$.
When $W$ is affine, $[1,c]_T$ is often not a lattice, as, for example, we have seen in affine type~$\afftype{D}$.
McCammond and Sulway \cite{McSul} proved many of the same strong results about Artin groups associated to affine Coxeter groups by extending $W$ to a larger group where the analog of $[1,c]_T$ is a lattice.
Briefly, the key idea is the following:
Some elements of $W$ act as translations in the affine reflection representation of~$W$.
We factor the translations that appear in $[1,c]_T$ into translations that are not in $W$ and consider the larger group generated by $W$ and these ``factored translations''.

In this section, we review the notion of factored translations from \cite{McSul}, as slightly generalized in~\cite{affncA}.
We will see that the interval $[1,c]_{T\cup L}$ from Section~\ref{gen sec}, which was shown in Theorem~\ref{isom d} to be isomorphic to $\tNCDc$, is obtained by factoring the translations in $[1,c]_T$, but not factoring them as completely as required in the construction of~\cite{McSul}.
We describe a way to further factor the loops at double points to recover, up to isomorphism, the lattice of McCammond and Sulway in type~$\afftype{D}$.
We close the section with a discussion of how to describe the lattice with planar diagrams and some additional algebraic information.

\subsection{Factored translations in general affine type}
Factored translations make sense for general Coxeter groups of affine type, and we begin with a short summary in this full generality.
More details are in \cite{McFailure,McSul}, and a brief account with conventions and aims similar to the present paper is found in \mbox{\cite[Section~5.1]{affncA}}.

Recall from Section~\ref{gen sec} the reflection representation of a Coxeter group~$W$ on~$V$.
When~$W$ is affine, the affine reflection representation of~$W$ is the restriction of the dual representation to the affine hyperplane $E=\set{x\in V^*:\br{x,\delta}=1}$.

For each Coxeter element $c$, the \newword{Coxeter axis} is the unique line in $E$ fixed as a set by~$c$.
The Coxeter axis is the intersection of $E$ with the Coxeter plane in~$V^*$, and its direction is $\omega_c(\delta,\,\cdot\,)$.
A reflection in $W$ is \newword{horizontal} if its reflecting hyperplane in~$E$ is parallel to the Coxeter axis.
A root $\beta+k\delta\in\Phi$ with $\beta\in\Phi_\fin$ defines a horizontal reflection if and only if $\omega_c(\delta,\beta)=0$.
The set of roots $\beta\in\Phi_\fin$ such that the roots $\beta+k\delta$ are horizontal is a finite root system called the \newword{horizontal root system}.
If the horizontal root system has irreducible components $\Psi_1,\ldots,\Psi_k$, let $U_i$ denote the span of $\set{K(\,\cdot\,,\beta):\beta\in\Psi_i}$.
Also, let $U_0$ be the line spanned by $\omega_c(\delta,\,\cdot\,)$.
The linear subspace $E_0$ parallel to $E$ is an orthogonal direct sum ${U_0\oplus\cdots\oplus U_k}$.

The group of translations in $W$ is generated by products of two reflections in $W$ with adjacent parallel reflecting hyperplanes in $E$.
If $w$ is a translation in $[1,c]_T$ with translation vector $\lambda$, write $\lambda=\lambda_0+\lambda_1+\cdots+\lambda_k$ with $\lambda_i\in U_i$ for $i=0,\ldots,k$.
Fix real numbers $q_1,\ldots,q_k$ with $\sum_{i=1}^kq_k=1$ and define vectors $\lambda_i+q_i\lambda_0$ for $i=1,\ldots,k$.
These vectors sum to~$\lambda$.
The \newword{factored translations} associated to~$w$ are the $k$ translations defined by these translations vectors.
Let $F$ be the set of all factored translations associated to translations $w\in[1,c]_T$.

Each factored translation on $E$ defines a linear transformation on $V^*$, so the group generated by $W$ and $F$ (or equivalently by $T$ and $F$) has a representation on $V^*$ and thus dually, a representation on $V$.

The translations in $[1,c]_T$ have reflection length $2$.
We assign each factored translation a length so that the factors of each translation have lengths summing to~$2$.
We write $[1,c]_{T\cup F}$ for the interval between $1$ and $c$ in the supergroup (the prefix order on the supergroup with respect to the alphabet $T\cup F$ with this length function).
As before, we think of this interval as a labeled poset.
In principle, one might want a generating set that is closed under passing to inverses.
Thus one would consider $[1,c]_{T\cup F^\pm}$, where $F^\pm$ is the set of elements of $F$ and their inverses.
But even if we allow the larger alphabet, only letters from $T\cup F$ would appear in reduced words for $c$.
(See Proposition~\ref{if a fact}, below.)

The \newword{interval group} associated to $[1,c]_{T\cup F}$ is given by an abstract presentation, generated by those elements of $T\cup F$ that appear as labels in $[1,c]_{T\cup F}$, with relations equating label sequences on different unrefinable chains with the same endpoints.

The constants $q_i$ do not appear in \cite{McSul}.
In the language of this paper, \cite{McSul} uses $q_i=\frac1k$ for all~$i$.
Our use of constants $q_i$ in \cite{affncA} and here is guided by factorizations of translations arising from the planar combinatorial models, which use choices of the~$q_i$ different from $q_i=\frac1k$.
Thus the following theorem, which is \cite[Theorem~5.1]{affncA}, is important in relating our work to~\cite{McSul}.

\begin{thm}\label{same interval group}
Given $q_1+\cdots+q_k=1$ and $q'_1+\cdots+q'_k=1$, let~$F$ and~$F'$ be the corresponding sets of factored translations.
Then the intervals $[1,c]_{T\cup F}$ and $[1,c]_{T\cup F'}$ are isomorphic as labeled posets.
The interval group constructed from $[1,c]_{T\cup F}$ is isomorphic to the interval group constructed from $[1,c]_{T\cup F'}$.
\end{thm}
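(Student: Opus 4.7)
The plan is to construct an explicit bijection $\Phi:T\cup F\to T\cup F'$ of labels and show it extends to a labeled poset isomorphism, from which the interval group isomorphism follows formally.

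First, I would define $\Phi$ to be the identity on $T$.  For each translation $w\in[1,c]_T$, writing $\lambda=\lambda_0+\lambda_1+\cdots+\lambda_k$ for its translation vector in the decomposition $E_0=U_0\oplus U_1\oplus\cdots\oplus U_k$, the factored translation $f_{w,i}\in F$ has vector $\lambda_i+q_i\lambda_0$.  Set $\Phi(f_{w,i})=f'_{w,i}$, the factored translation with vector $\lambda_i+q'_i\lambda_0$.  The crucial point is that $\Phi$ preserves the underlying translation and the ``direction'' index $i$:  for every $w$, both $f_{w,1}\cdots f_{w,k}=w$ and $f'_{w,1}\cdots f'_{w,k}=w$, because $\sum q_i=\sum q'_i=1$ and the $\lambda_i$ for $i\ge 1$ are the same.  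The assigned lengths also match, since they are governed by the splitting into $k$ pieces, not by the values of the $q_i$.

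Next, I would realize both supergroups inside the affine group $V^*\rtimes W_{\mathrm{fin}}$, where $W_{\mathrm{fin}}$ is the finite Weyl group acting on $V^*$.  In this semidirect product, elements are pairs (translation vector, finite Weyl element) and composition is fully determined by vector sums together with the action of $W_{\mathrm{fin}}$.  I would then verify that $\Phi$ extends to a group isomorphism from $\br{T\cup F}$ to $\br{T\cup F'}$ by checking generating relations:  conjugation $tft^{-1}$ for $t\in T$ is controlled by the reflection's action on translation vectors, and this action respects the direct sum $U_0\oplus U_1\oplus\cdots\oplus U_k$ because $U_0$ is the Coxeter-axis direction and each $U_i$ is an irreducible component of the horizontal reflection representation.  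Consequently $t\cdot(\lambda_i+q_i\lambda_0)=(t\cdot\lambda_i)+q_i(t\cdot\lambda_0)$, from which the relation $tf_{w,i}t^{-1}=f_{twt^{-1},i}$ in $F$ corresponds under $\Phi$ to the analogous relation in $F'$.  Products of factored translations match similarly, since vector sums are linear in the $q_i$.

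The main step is then transporting the prefix/labeling structure.  Given any saturated chain $1=x_0\lessdot x_1\lessdot\cdots\lessdot x_n=c$ in $[1,c]_{T\cup F}$ with labels $\tau_1,\ldots,\tau_n\in T\cup F$, I would show $\Phi(\tau_1)\cdots\Phi(\tau_j)$ for $j=0,\ldots,n$ is a strictly increasing chain in the prefix order on $\br{T\cup F'}$ with top element $c$, whose labels have the prescribed lengths summing to the reflection length of $c$.  This requires checking two things:  that $\Phi(\tau_1)\cdots\Phi(\tau_n)=c$ (follows from the group isomorphism, as $\tau_1\cdots\tau_n=c$), and that the length additivity passes through (follows because the length of $\Phi(\tau_j)$ equals the length of $\tau_j$ for every label).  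This gives a bijection between saturated chains, hence the labeled poset isomorphism.  The isomorphism of interval groups is then immediate:  both groups are presented with the same unrefinable-chain relations after applying~$\Phi$.

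The main obstacle I anticipate is rigorously justifying that the prefix order really is determined by this label-and-length data, rather than by finer combinatorial information about which factored translations can appear as prefixes in a given chain.  Concretely, one must rule out the possibility that some subword in the alphabet $T\cup F$ is forced to equal a particular group element for one choice of $q_i$ but not for another.  This is handled by the observation that all identifications between subwords are consequences of the semidirect-product multiplication, and the semidirect-product multiplication depends only on the decomposition $E_0=U_0\oplus\cdots\oplus U_k$ and on the relation $\sum q_i=1$, both of which are preserved under $\Phi$.
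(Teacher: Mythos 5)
The paper quotes this theorem from \cite{affncA} without reproducing its proof, so a direct comparison is not possible, but your proposal contains a genuine gap that I can identify from the material that \emph{is} in the paper.

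Your key claim is that conjugation by $t\in T$ ``respects the direct sum $U_0\oplus U_1\oplus\cdots\oplus U_k$,'' from which you derive $tf_{w,i}t^{-1}=f_{twt^{-1},i}$. This holds only for \emph{horizontal} reflections $t\in T_H$: a non-horizontal reflection $t$ has linear part that moves the Coxeter-axis direction $\omega_c(\delta,\cdot)$ spanning $U_0$, so $\bar t\cdot(\lambda_i+q_i\lambda_0)$ is generally \emph{not} of the form $\mu_j+q_j\mu_0$ for any translation $\mu$ and index $j$, i.e.\ $tft^{-1}$ is not a factored translation at all. Your later claim that ``the semidirect-product multiplication depends only on the decomposition $E_0=U_0\oplus\cdots\oplus U_k$ and on the relation $\sum q_i=1$'' is also false as stated: the multiplication depends on the actual translation vectors, so whether a particular product of letters equals a particular element genuinely depends on the values of the $q_i$, not just on their sum. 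So the map $\Phi$ does not obviously extend to a group isomorphism $\br{T\cup F}\to\br{T\cup F'}$, and the obstacle you flag in your final paragraph is real and not resolved by your argument.

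What makes the theorem true is the content you did not use, namely Proposition~\ref{if a fact}: any reduced word for $c$ in the alphabet $T\cup F$ that contains a factored translation uses only \emph{horizontal} reflections, exactly $k$ factored translations (the factors of a single translation), and the letters can be reordered so that the letters associated to each $\Psi_i$ form a contiguous commuting block whose product $c_i$ depends only on $c$ and $q_i$. This confines the part of the interval where $q$-dependence could matter to the product $[1,c_1]\times\cdots\times[1,c_k]$, and on each factor the map $\lambda_i+q_i\lambda_0\mapsto\lambda_i+q'_i\lambda_0$ is a shear in $U_i\oplus U_0$ that commutes with the horizontal reflections (which act trivially on $U_0$) and carries $c_i$ to $c'_i$. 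The identity map on $[1,c]_T$ then glues compatibly because $f_{w,1}\cdots f_{w,k}$ and $f'_{w,1}\cdots f'_{w,k}$ both equal $w$, using $\sum q_i=\sum q'_i=1$. Your proposal contains many of the right ingredients (the identity on $T$, the map $f_{w,i}\mapsto f'_{w,i}$, the length matching, the deduction of the interval-group statement from the labeled-poset statement), but you need to replace the global group-isomorphism step with this structural analysis of reduced words, otherwise the argument does not close.
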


McCammond and Sulway \cite[Theorem~8.9]{McSul} show that $[1,c]_{T\cup F}$ is a lattice when $q_i=\frac1k$ for all $i$.
Combining with Theorem~\ref{same interval group}, we have the following version of their theorem.

\begin{thm}\label{lattice}
For an arbitrary choice of real numbers $q_1,\ldots,q_k$ summing to~$1$, let $F$ be the corresponding set of factored translations.
Then $[1,c]_{T\cup F}$ is a lattice.
\end{thm}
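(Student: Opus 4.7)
The plan is to deduce Theorem~\ref{lattice} directly from the two ingredients just assembled, namely Theorem~\ref{same interval group} and the McCammond--Sulway lattice theorem \cite[Theorem~8.9]{McSul}. My proposal is really just to fit these together, since no new ingredient is required.

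First I would fix an arbitrary tuple $(q_1,\ldots,q_k)$ with $\sum_{i=1}^k q_i = 1$ and write $F$ for the associated set of factored translations. Alongside this I would take the uniform tuple $q_i' = 1/k$ for each $i$ and write $F'$ for its factored translations; this is the choice used in \cite{McSul}. Theorem~\ref{same interval group} then gives an isomorphism of labeled posets $[1,c]_{T\cup F} \cong [1,c]_{T\cup F'}$, and in particular an isomorphism of posets. Since being a lattice is invariant under poset isomorphism, it is enough to check that $[1,c]_{T\cup F'}$ is a lattice.

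For the uniform choice $q_i' = 1/k$, this last assertion is precisely \cite[Theorem~8.9]{McSul}. Combining this with the isomorphism supplied by Theorem~\ref{same interval group} completes the argument. There is no real obstacle here: the substantive work lives in Theorem~\ref{same interval group} (proved in \cite[Section~5.1]{affncA}, where the effect of varying the constants $q_i$ is analyzed) and in \cite[Theorem~8.9]{McSul}. The only point I would want to double-check when writing this up is that the comparison of the two intervals preserves precisely the poset structure whose lattice property is in question, but this matching is exactly what Theorem~\ref{same interval group} is designed to provide, so no separate verification should be needed.
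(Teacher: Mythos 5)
Your argument is exactly the paper's: the paper deduces the theorem by citing \cite[Theorem~8.9]{McSul} for the uniform choice $q_i=\tfrac1k$ and then invoking Theorem~\ref{same interval group} to transfer the lattice property to arbitrary $(q_1,\ldots,q_k)$. Nothing is missing, and the approach matches.
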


The following proposition is a concatenation of results of~\cite{McSul} whose arguments work for general choices of the $q_i$.
The proposition appears as \cite[Proposition~5.4]{affncA}.

\begin{prop}\label{if a fact}
For an arbitrary choice of real numbers $q_1,\ldots,q_k$ summing to~$1$, suppose an element of $F^\pm$ appears as one of the letters in a reduced word for $c$ in the alphabet $T\cup F^\pm$.
\begin{enumerate}[\quad\rm\bf1.]
\item \label{fact horiz}
Each reflection in the word is horizontal.
\item \label{ref in int}
Each reflection in the word is contained in $[1,c]_T$.
\item \label{n-2 ref}
There are exactly $n-2$ reflections in the word.
\item \label{k trans}
There are exactly $k$ translations in the word.
\item \label{all factors}
The translations in the word are the $k$ factors of some translation in $[1,c]_T$.
(In particular, they are elements of $F$, whereas \textit{a priori} they are in $F^\pm$.)
\item \label{reorder}
The reduced word can be reordered, by swapping letters that commute in the group, so that, for each~$i$, all reflections associated to $\Psi_i$ and all factored translations associated to $U_i$ are adjacent to each other.
\item \label{fact c}
For each $i$, let $c_i$ be the product of the subword consisting only of reflections associated to $\Psi_i$ and all factored translations associated to $U_i$.
Then $c_i$ depends only on $c$ and the constants $q_i$, not on the choice of reduced word. 
\end{enumerate}
\end{prop}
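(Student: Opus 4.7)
The plan is to reduce each of the seven assertions to the case $q_1 = \cdots = q_k = 1/k$ treated in \cite{McSul}, via the labeled-poset isomorphism of Theorem~\ref{same interval group}. This reduction works because the horizontal root system, the orthogonal decomposition $E_0 = U_0 \oplus U_1 \oplus \cdots \oplus U_k$, the set of translations in $[1,c]_T$, and the assignment of each factored translation to its parent component $U_i$ and parent translation are all intrinsic to $c$ and independent of the choice of $q_1, \ldots, q_k$. Only the specific translation vector $\lambda_i + q_i \lambda_0$ attached to an individual factored translation depends on the $q_i$.

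For assertions (1)--(5), I would start with a reduced word $\tau_1 \cdots \tau_n$ for $c$ in $T \cup F$ containing a factored translation. Theorem~\ref{same interval group} produces a corresponding reduced word for $c$ in $T \cup F'$ (with $q'_i = 1/k$) in which each reflection is unchanged and each factored translation is replaced by the factored translation in $F'$ attached to the same component and the same parent translation. Applying the results of \cite{McSul} to the new word yields the analogs of (1)--(5) for it. Since each assertion refers only to $q_i$-independent data---horizontality of a reflection (defined via $\omega_c(\delta,\,\cdot\,)$), membership of a reflection in $[1,c]_T$, counts of letters by type, and identities of parent translations---the assertions transfer back to the original word.

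For (6), the commutations forced by the orthogonal decomposition suffice: reflections from distinct $\Psi_i$'s commute because their reflecting hyperplanes in $E$ are orthogonal, factored translations associated with distinct $U_i$'s commute because they translate in orthogonal directions, and a factored translation associated with $U_i$ commutes with any reflection associated with $\Psi_{i'}$ for $i' \neq i$. Applying these commutations iteratively brings together all letters associated with a given component, as required.

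The hardest part is assertion (7), the independence of $c_i$ from the chosen reduced word. I would first invoke Theorem~\ref{lattice} to conclude that $[1,c]_{T \cup F}$ is a lattice. Given two reduced words for $c$ in $T \cup F$, the corresponding products $c_0 c_1 \cdots c_k$ and $c'_0 c'_1 \cdots c'_k$ both give decompositions of $c$ respecting the orthogonal decomposition, and each $c_i$ should be characterized as the product of the labels of some canonical chain in the sub-lattice of $[1,c]_{T \cup F}$ consisting of elements ``supported on component $i$''---a characterization that does not refer to a particular word. The delicate point is identifying these per-component sub-posets as sub-lattices whose top element is determined by $c$ and the $q_i$ alone; as in \cite{McSul}, this comes from recognizing the supergroup as a direct product along the orthogonal decomposition $U_0 \oplus U_1 \oplus \cdots \oplus U_k$, and this product structure is insensitive to the choice of $q_i$.
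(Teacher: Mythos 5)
The paper gives no in-line proof of Proposition~\ref{if a fact}; it is cited from \cite[Proposition~5.4]{affncA} with the remark that it is ``a concatenation of results of \cite{McSul} whose arguments work for general choices of the $q_i$.'' The paper's route, in other words, is to re-examine McCammond--Sulway's proofs and verify that they go through unchanged for arbitrary $q_i$. Your route for assertions (1)--(5) is genuinely different and more structural: reduce to the $q_i = 1/k$ case handled in \cite{McSul} via the labeled-poset isomorphism of Theorem~\ref{same interval group}. That is a valid meta-argument, but it silently uses a stronger fact than the bare statement of Theorem~\ref{same interval group}: the label bijection underlying the isomorphism must be the identity on $T$ and must carry each factored translation in $F$ to the factored translation in $F'$ attached to the same component $U_i$ and the same parent translation. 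That is how the isomorphism in \cite{affncA} is built, but your reduction stands or falls on it and should say so explicitly. For (6), you reprove via orthogonality exactly what \cite{McSul} proves; there is no reduction available here, and the direct argument is fine.

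For (7), your sketch stops short: you correctly observe that McCammond--Sulway's direct-product structure is what is ultimately responsible, but you do not actually deduce word-independence of $c_i$ from the reduction you set up. In fact the reduction does close (7). By (6), reorder both words into block form $c_1\cdots c_k$ and $c'_1\cdots c'_k$; the running prefixes $c_1,\ c_1c_2,\ \ldots,\ c_1\cdots c_k = c$ are elements of $[1,c]_{T\cup F}$, and because $\Phi$ is a labeled-poset isomorphism it carries the maximal chain of the reordered word to the maximal chain of the transferred word, so $\Phi(c_1\cdots c_j) = \tilde c_1\cdots\tilde c_j$ for every $j$. Invoking the $q'_i=1/k$ case gives $\tilde c_1\cdots\tilde c_j = \tilde c'_1\cdots\tilde c'_j$ for each $j$, and pulling back by injectivity of $\Phi$ gives $c_1\cdots c_j = c'_1\cdots c'_j$, whence $c_j = c'_j$ by left-cancellation. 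Finally, the $c_0$ in your last paragraph is spurious: there is no component $0$ among the $c_i$ of assertion (7).
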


\subsection{Factored translations in affine type D}\label{McSul D sec} 
The type-$\afftype{D}$ Coxeter group $\Stildedes$ has only one conjugacy class of Coxeter element. 
Thus for various choices of Coxeter elements, the intervals $[1,c]_T$ are isomorphic (by conjugating each element of $[1,c]_T$ by the same fixed element of $W$).
In particular, the isomorphism sends translations to translations.
For this reason, it is enough to work with one fixed Coxeter element
\[c=s_0\cdots s_{n-1}=(\!(\cdots\,2\,\,\,3\,\cdots\,{n-2}\,\,\,\,2+2n\,\cdots)\!)\,(1\,\,\,{-1})_{2n}\,({n-1}\,\,\,{n+1})_{2n},\] 
corresponding to the choice of $\pm1$ and $\pm(n-1)$ to be double points, $2,\ldots,n-2$ to be outer points, and $-2,\ldots,-n+2$ to be inner points.
Statements for general~$c$ can be recovered using source-sink moves as in Lemma~\ref{d annulus label}, as discussed in Section~\ref{change cox sec}.

The following fact is immediate from Lemma~\ref{d horiz proj}.
We omit the proof.

\begin{prop}\label{D horiz}
Choose the Coxeter element $c=s_0\cdots s_{n-1}$ of $\Stildedes$.
Then
\begin{enumerate}[\quad\rm\bf1.]
\item \label{D horiz root}
The horizontal root system is 
\[\set{\pm(\e_j-\e_i):\,2\le i<j\le n-2}\cup\set{\pm(\e_{n-1}-\e_1)}\cup\set{\pm(\e_{n-1}+\e_1)},\]
and this is a decomposition into irreducible components.
\item \label{D horiz ref}
The horizontal reflections in $[1,c]_T$ are:
\begin{itemize}
\item $(\!(i\,\,\,\,j)\!)_{2n}$ and $(\!(i\,\,\,\,{j-2n})\!)_{2n}$ such that ${2\le i<j\le n-2}$, 
\item $(\!(1\,\,\,\,{n-1})\!)_{2n}$, $(\!(1\,\,\,\,{n+1})\!)_{2n}$, $(\!(1\,\,\,\,{-n-1})\!)_{2n}$, and $(\!(1\,\,\,\,{-n+1})\!)_{2n}$.
\end{itemize}
\item\label{D horiz NC} 
The horizontal reflections in $[1,c]_T$ are the elements $\perm^D(\P)$ for $\P\in\tNCDc$ whose only nontrivial blocks are a pair of nonsymmetric disks, each containing exactly two numbered points, both outer, both inner, or both double.
\end{enumerate}
\end{prop}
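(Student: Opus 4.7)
The plan is to prove the three parts in a coordinated way, with part 1 as a direct calculation, part 2 as a combination of part 1 with a membership check in $[1,c]_T$, and part 3 as the translation of part 2 through the isomorphism $\perm^D$.

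For part 1, I would just expand each root of $\Phi_\fin = \set{\pm\e_j\pm\e_i : 1\le i<j\le n-1}$ and apply Lemma~\ref{d horiz proj}. For the Coxeter element $c=s_0\cdots s_{n-1}$, the outer points are $\set{2,\ldots,n-2}$, the inner points are $\set{-2,\ldots,-(n-2)}$, and the double points are $\set{\pm1,\pm(n-1)}$, so $\omega_c(\delta,\e_k)$ equals $+2$, $-2$, or $0$ accordingly (and $\omega_c(\delta,\e_{-k})=-\omega_c(\delta,\e_k)$). A root $\pm\e_j\pm\e_i$ is horizontal iff the two signed basis vectors have $\omega_c(\delta,\cdot)$-values that cancel. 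Case analysis (both outer, both inner, both double, or a balanced outer/inner pair) gives exactly the three families $\pm(\e_j-\e_i)$ for $2\le i<j\le n-2$, $\pm(\e_{n-1}-\e_1)$, and $\pm(\e_{n-1}+\e_1)$ (the outer/inner case $\e_j+\e_i$ with $j$ outer and $i$ inner does not arise in $\Phi_\fin$ since both indices are positive). The irreducible decomposition then follows from $K$-orthogonality: the $\e_2,\ldots,\e_{n-2}$ span is $K$-orthogonal to $\e_1$ and $\e_{n-1}$, and $K(\e_{n-1}-\e_1,\e_{n-1}+\e_1)=K(\e_{n-1},\e_{n-1})-K(\e_1,\e_1)=0$, giving components of type $A_{n-5}$ and $A_1\times A_1$.

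For part 2, a reflection in $\Stildedes$ has the form $(\!(a\,\,b)\!)_{2n}$ with $a\not\equiv\pm b\mod 2n$, and its corresponding real root $\e_b-\e_a$ reduces modulo $\delta$ (equivalently, modulo $\e_{i+2n}-\e_i$) to a root of $\Phi_\fin$. So horizontality for the reflection means exactly that the reduced root lies in the system from part 1, which, by Lemma~\ref{d horiz proj}, says $a$ and $b$ are of the same type (both outer, both inner, or both double) modulo $2n$. After normalizing $a$ to lie in $\set{1,\ldots,n-1}$ (shifting $a$ and $b$ by the same multiple of $2n$, which leaves the reflection unchanged), and enumerating the distinct mod-$2n$ choices for $b$ that arise in a rank-$1$ noncrossing partition, one recovers the listed families. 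To show each listed reflection actually lies in $[1,c]_T$, I would use Theorem~\ref{isom d} and exhibit a rank-1 element $\P\in\tNCDcircc$ whose $\perm^D(\P)$ is that reflection: a symmetric pair of nonsymmetric disks, each consisting of a single arc, with the two families $(\!(i\,\,j)\!)_{2n}$ and $(\!(i\,\,j-2n)\!)_{2n}$ corresponding to the two isotopy classes of arcs between outer points (direct and once-around), and the four double-double reflections corresponding to the four arcs between the two double-point locations (two sides of the dateline, each with or without crossing).

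Finally, part 3 is essentially a re-reading of part 2 under Theorem~\ref{isom d}: the reflections in $[1,c]_T$ are precisely the rank-$1$ elements, and these correspond to rank-$1$ elements of $\tNCDcircc$. Such a rank-$1$ partition without dangling annular blocks must be either a symmetric pair of nonsymmetric disks (each containing two numbered points, giving a nonsymmetric finite $2$-cycle class and hence a reflection) or a symmetric disk on a pair $\set{\pm i}$ (giving a symmetric cycle class, hence \emph{not} a reflection in $\Stildedes$). By the characterization in part 2, the former produces a horizontal reflection exactly when the two numbered points in each disk are of the same type, which is the claim. The main obstacle I anticipate is the careful bookkeeping in part 2 that confirms the listed reflections exhaust the horizontal reflections in $[1,c]_T$ — one needs to argue that arcs winding more than once around the annulus cannot contribute additional rank-$1$ NC partitions because such an arc would intersect its image under $\phi$, and therefore no extra reflections beyond the stated list arise.
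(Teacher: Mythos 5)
Your approach is a reasonable way to fill in the proof that the paper omits (the paper simply asserts the proposition ``is immediate from Lemma~\ref{d horiz proj}''). Part~1 is indeed the direct computation you describe, and routing parts 2 and 3 through the isomorphism $\perm^D$ of Theorem~\ref{isom d} and the rank-$1$ characterization of reflections in $[1,c]_T$ is in the spirit of what the paper leaves to the reader. However, I would flag a few details.

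First, a minor bookkeeping slip: the type-$A$ component $\set{\pm(\e_j-\e_i): 2\le i<j\le n-2}$ has $n-3$ indices and hence $n-4$ simple roots, so it is $A_{n-4}$, not $A_{n-5}$. This does not affect part~1 as stated (which asserts only that the displayed union is the decomposition into irreducibles), but it does affect your orthogonality count: $\dim(U_1)$ should come out to $n-4$, matching $\dim E_0 = n-1$ minus $\dim U_0 = 1$ minus $\dim U_2 = \dim U_3 = 1$.

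Second, the alternative case you introduce in part~3 --- a rank-$1$ noncrossing partition given by ``a symmetric disk on a pair $\set{\pm i}$'' --- does not actually occur in $D$. By definition, a symmetric disk block in the annulus with two double points necessarily contains one pair of double points in its interior, so by the rank formula of Theorem~\ref{D tilde main} (or by computing $\vr$: one symmetric class, one tiny class, $n-3$ trivial pairs gives $\vr=2$) it has rank at least $2$. So the rank-$1$ elements of $\tNCDcircc$ are exactly symmetric pairs of degenerate disk blocks (arcs), each a nonsymmetric finite $2$-cycle under $\perm^D$, hence a reflection. Your argument still goes through, since the extra case was being ruled out anyway, but you should recognize that it cannot arise. (The analogue in the one-double-point annulus $B$ \emph{does} allow a symmetric degenerate curve block, which is why the definitions in Section~\ref{sym one sec} introduce symmetric arcs separately; that distinction does not exist in $D$.)

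Third, the ``obstacle'' you identify --- confirming that the listed reflections exhaust the horizontal reflections in $[1,c]_T$ --- is real and the heaviest part of the argument you sketch, and your proposed resolution (arcs with high winding number intersect their $\phi$-images) is correct in spirit but needs to be made precise: an embedded arc with both endpoints on the outer boundary bounds a disk with a boundary segment, so ``winding'' is really about which marked points lie in that disk; the constraint that $\alpha$ and $\phi(\alpha)$ be disjoint (and that the augmentation not produce a disallowed digon enclosing trivial double-point blocks) limits the outcome to exactly two isotopy classes in the outer-outer case and four in the double-double case. Filling this in carefully is exactly the kind of detail the paper hands off to the machinery of \cite[Section~3]{surfnc}.
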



Proposition~\ref{D horiz} allows us to describe the orthogonal decomposition of the hyperplane ${E_0=\set{x\in V^*:\br{x,\delta}=0}}$.
The numbering of $U_1,\ldots,U_k$ is arbitrary, but the following lemma makes an implicit decision on how to number.
The lemma is an easy consequence of Lemma~\ref{big omega} and Proposition~\ref{D horiz}, and we omit the details.

\begin{lemma}\label{orth decomp}
Choose the Coxeter element $c=s_0\cdots s_{n-1}$ of $\Stildedes$.
The orthogonal decomposition $E_0=U_0\oplus\cdots\oplus U_k$ has $k=3$ and 
\begin{itemize}
\item $U_0$ is the span of $-\rho_0-\rho_1+\rho_{n-2}+\rho_{n-1}$.
\item $U_0\oplus U_1$ is the set of vectors $\sum_{i=0}^{n-1}c_i\rho_i\in E_0$ with $c_0=c_1$ and $c_{n-2}=c_{n-1}$.
\item $U_2$ is the span of $-\rho_0+\rho_1+\rho_{n-2}-\rho_{n-1}$.
\item $U_3$ is the span of $\rho_0-\rho_1+\rho_{n-2}-\rho_{n-1}$.
\end{itemize}
\end{lemma}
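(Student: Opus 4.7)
The plan is to execute the direct calculation that the lemma attributes to its two cited results. Proposition~\ref{D horiz}.\ref{D horiz root} immediately gives $k=3$ with the three components $\Psi_1,\Psi_2,\Psi_3$ labelled as in the lemma, so only the concrete descriptions of $U_0,U_1,U_2,U_3$ remain. For $U_0$, I would apply Lemma~\ref{big omega} to the orientation of the Dynkin diagram of $\afftype{D}_{n-1}$ induced by $c=s_0 s_1\cdots s_{n-1}$, which points every edge from lower to higher index. At each internal path node $\alpha_\ell$ with $3 \le \ell \le n-4$, the single incoming and outgoing non-leaf edges contribute $+2$ and $-2$ and cancel. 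At the branch node $\alpha_2$ the two leaf incomings from $\alpha_0,\alpha_1$ contribute $+2$ total, which cancels the non-leaf outgoing to $\alpha_3$ contributing $-2$; symmetrically at $\alpha_{n-3}$. Only the four leaves survive, giving $\omega_c(\delta,\,\cdot\,)=-2\rho_0-2\rho_1+2\rho_{n-2}+2\rho_{n-1}$, so $U_0$ is spanned by the vector claimed.

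For $U_1,U_2,U_3$, I would use that in type $\afftype{D}$ the simple coroots equal the simple roots, so that $K(\,\cdot\,,\beta)=\sum_\ell K(\alpha_\ell,\beta)\rho_\ell$, where $K(\alpha_\ell,\alpha_m)$ is the simply-laced symmetrized Cartan matrix (namely $2$ on the diagonal, $-1$ on adjacent pairs, $0$ otherwise). For $\Psi_1$, writing $\e_j-\e_i=\alpha_i+\cdots+\alpha_{j-1}$ and forming differences of consecutive such roots shows that $U_1$ equals the span of $K(\,\cdot\,,\alpha_\ell)$ for $\ell=2,\ldots,n-3$. Expanding these gives: at $\ell=2$, the vector $-\rho_0-\rho_1+2\rho_2-\rho_3$; for $3 \le \ell \le n-4$, the Laplacian vectors $-\rho_{\ell-1}+2\rho_\ell-\rho_{\ell+1}$; and at $\ell=n-3$, the vector $-\rho_{n-4}+2\rho_{n-3}-\rho_{n-2}-\rho_{n-1}$. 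Each of these satisfies $c_0=c_1$ and $c_{n-2}=c_{n-1}$, as does the generator of $U_0$, so $U_0\oplus U_1$ is contained in the claimed subspace of $E_0$.

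To upgrade containment to equality, I would do a dimension count inside $E_0$. Using $\br{\rho_\ell,\delta}$ (which equals $1$ for the four leaf indices and $2$ otherwise), the equation cutting out $E_0$ in the $\rho$-basis is linearly independent of the two equality constraints $c_0=c_1$ and $c_{n-2}=c_{n-1}$, so the claimed subspace has dimension $n-3$, matching $\dim U_0+\dim U_1=1+(n-4)$. Finally, for $\Psi_2$ and $\Psi_3$ I would expand $\e_{n-1}-\e_1=\alpha_1+\alpha_2+\cdots+\alpha_{n-2}$ and $\e_{n-1}+\e_1=\alpha_0+\alpha_2+\alpha_3+\cdots+\alpha_{n-2}$ and apply the formula directly; the interior $\alpha_\ell$-coefficients telescope to $0$, and only the four leaf node coefficients survive, producing $-\rho_0+\rho_1+\rho_{n-2}-\rho_{n-1}$ and $\rho_0-\rho_1+\rho_{n-2}-\rho_{n-1}$. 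The only mildly delicate step is the adjacency bookkeeping at the branch nodes $\alpha_2$ and $\alpha_{n-3}$, where three neighbors must be handled; this is the main obstacle, and it is entirely mechanical.
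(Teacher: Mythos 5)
Your calculation is correct and fills in exactly the routine verification that the paper declines to write out (it states the lemma is an ``easy consequence of Lemma~\ref{big omega} and Proposition~\ref{D horiz}'' and omits the details). In particular, the computation of $\omega_c(\delta,\,\cdot\,)=-2\rho_0-2\rho_1+2\rho_{n-2}+2\rho_{n-1}$ via Lemma~\ref{big omega}, the identification of $U_1$ with the span of $K(\,\cdot\,,\alpha_\ell)$ for $\ell=2,\dots,n-3$, the dimension count inside $E_0$, and the evaluations of $K(\,\cdot\,,\e_{n-1}\mp\e_1)$ all check out, so this is the intended argument.
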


We now characterize the translations in $[1,c]_T$.

\begin{prop}\label{D trans}
Choose the Coxeter element $c=s_0\cdots s_{n-1}$ of $\Stildedes$.
Then the translations in $[1,c]_T$ are the permutations ${(\!(\cdots\,a\,\,\,a+2n\,\cdots)\!)}\cdot(\!(\cdots\,b\,\,\,b-2n\,\cdots)\!)$ for $a\in\set{2,\ldots,n-2}$ and $b\in\set{\pm1,\pm(n-1)}$.
In $E$, the translations vectors are
\[
\rho_a-\rho_{a-1}+
\begin{rcases}\begin{dcases}
-\rho_0&\text{if }a=2,\\
\rho_{n-1}&\text{if }a=n-2\\
\end{dcases}\end{rcases}
+
\begin{rcases}\begin{dcases}
\pm\rho_0\mp\rho_1&\text{if }b=\pm1,\\
\pm\rho_{n-2}\mp\rho_{n-1}&\text{if }b=\pm(n-1)\\
\end{dcases}\end{rcases}.
\]
\end{prop}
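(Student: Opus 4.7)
The plan is to combine a cycle-structure analysis with the noncrossing partition model of Theorem~\ref{isom d}, then read off the translation vector by duality.

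The first step characterizes translations algebraically. A permutation $\pi\in\Stildes$ acts as a translation on the affine reflection representation $E\subset V^*$ precisely when $\pi(y)-y\in\langle\delta\rangle$ for every $y\in V$. Since $\pi(\e_i)=\e_{\pi(i)}$ and $\e_{i+2kn}=\e_i+k\delta$, this is equivalent to $\pi(i)\equiv i\pmod{2n}$ for every $i$. Under this constraint, no symmetric cycle or tiny cycle can occur (they would force some $a\equiv-a\pmod n$), every nonsymmetric finite cycle is trivial, and every infinite cycle has length $m=1$. Combined with Proposition~\ref{cycle types T}, the only rank-$2$ cycle structure available to a translation in $[1,c]_T$ is $\Inf^2\NonSym^{n-3}$ with both flat classes of length $m=1$, so any such translation has the form $\ell_{a_1}^{\epsilon_1}\ell_{a_2}^{\epsilon_2}$ for distinct mod-$2n$ indices $a_1,a_2\in\{1,\ldots,n-1\}$ and signs $\epsilon_i\in\{\pm1\}$.

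The second step uses the NC partition model to pin down the parameterization. By Theorem~\ref{isom d}, $\pi\in[1,c]_T$ iff $\pi=\perm^D(\P)$ for some $\P\in\tNCDcircc$. From the proof of Proposition~\ref{one to one d}, a permutation with four flat infinite cycles in the image of $\perm^D$ comes from a symmetric pair of non-dangling annular blocks, and the four boundary cycles split as two involving double points (the inner-facing boundary of each block), one involving outer points (the outer block's outer boundary), and one involving inner points (the inner block's inner boundary). Matching this against $\ell_{a_1}^{\epsilon_1}\ell_{a_2}^{\epsilon_2}$ forces one index $a$ to be a positive outer point in $\{2,\ldots,n-2\}$ with $\epsilon=+1$ (yielding $\ell_a$, read clockwise along the outer boundary of $D$), and the other $b$ to be a signed double point in $\{\pm1,\pm(n-1)\}$ with $\epsilon=-1$ (yielding $\ell_b^{-1}$, read clockwise along the inner-facing boundary). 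Conversely, for each such $(a,b)$ I would construct the pair of annular blocks explicitly and verify via the definition of $\perm^D$ that the resulting permutation is exactly $\ell_a\ell_b^{-1}$.

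For the translation vector, I would use the duality convention $(\pi x)(v)=x(\pi^{-1}v)$ underlying the paper: if $\pi(y)=y+g(y)\delta$ on $V$, then $\pi$ translates $E$ by $-g\in V^*$. Direct computation gives $g(\e_a)=1$, $g(\e_b)=-1$ (with $\e_{-i}=-\e_i$ handling negative indices) and $g(\e_i)=0$ otherwise, and the relation $\e_{n+1}=\delta-\e_{n-1}$ together with $g(\delta)=0$ forces $g(\e_{n+1})=-g(\e_{n-1})$. Since $\alpha_i^\vee=\alpha_i$ in type $\afftype D$, evaluating $-g$ on the simple roots $\alpha_0=\e_1+\e_2$, $\alpha_i=\e_{i+1}-\e_i$ for $1\le i\le n-2$, and $\alpha_{n-1}=\e_{n+1}-\e_{n-2}$, then expanding in the $\rho_i$ basis, yields the advertised piecewise formula. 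The exceptional terms for $a\in\{2,n-2\}$ arise where the telescoping chain $\alpha_1,\ldots,\alpha_{n-2}$ meets the forked ends $\alpha_0,\alpha_{n-1}$, and the $b$-dependent terms record whether the double-point contribution lands on the $\rho_0,\rho_1$ leaves (upper double point) or on the $\rho_{n-2},\rho_{n-1}$ leaves (lower double point). The main obstacle is the orientation bookkeeping in the second step: matching the four infinite cycles to the four boundary components of the two annular blocks with the correct signs, and ruling out alternative parameterizations (such as $a$ inner, or $a,b$ both outer, or $a,b$ both double) by tracing the case analysis of Proposition~\ref{one to one d} to confirm that they produce either dangling annular blocks or no valid NC partition in $\tNCDcircc$.
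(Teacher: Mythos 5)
Your proposal follows essentially the same route as the paper's proof: both identify the translations in $[1,c]_T$ with a specific family of noncrossing partitions in $\tNCDcircc$ (a symmetric pair of non-dangling annular blocks, each carrying one outer or inner point and one double point), and both compute the translation vector by dualizing the $\delta$-shift $g$ in the action on the simple-root basis of $V$; your sign bookkeeping (evaluating $-g$ on the $\alpha_i$) reproduces the stated piecewise formula correctly. The only real difference is organizational: you derive $\pi(i)\equiv i\mod{2n}$ directly, rule out symmetric, tiny, and nonflat infinite cycle types, and invoke Proposition~\ref{cycle types T} to land on $\Inf^2\NonSym^{n-3}$, whereas the paper passes tersely from the translation-subgroup generators $\ell_a\ell_b^{-1}$ together with Theorem~\ref{isom d} to the same conclusion; the two routes are equivalent, and yours fills in inferences the paper leaves implicit.

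One remark worth recording: your analysis yields $a\in\set{2,\ldots,n-2}$, the set of outer points for $c=s_0\cdots s_{n-1}$, which is exactly what the translation-vector formula requires (it uses $\rho_a$ with $a\le n-2$ and has its special case at $a=n-2$) and what the noncrossing-partition model produces. The proposition as printed writes $a\in\set{2,\ldots,n}$; including $a=n-1$ would give $\ell_{n-1}\ell_b^{-1}$, whose four infinite cycles all lie on double points and which therefore corresponds to no $\P\in\tNCDcircc$ by the four-infinite-cycle case of the proof of Proposition~\ref{one to one d}. So your range is the correct one, and the statement's appears to be a slip.
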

\begin{proof}
The subgroup of $\Stildedes$ consisting of translations is generated by elements
\[(\!(a\,\,\,b)\!)\cdot(\!(a\,\,\,b+2n)\!)=(\!(\cdots\,a\,\,\,a+2n\,\cdots)\!)\cdot(\!(\cdots\,b\,\,\,b-2n\,\cdots)\!)\] for $a,b\in\integers\setminus\set{\ldots,-n,0,n,\ldots}$ with $a\neq \pm b\mod{2n}$.
Thus Theorem~\ref{isom d} implies that the translations in $[1,c]_T$ are precisely the images under $\perm^D$ of noncrossing partitions $\P$ with exactly two nontrivial blocks, namely a symmetric pair of annular blocks, each having two numbered points, one inner or outer and one a double point.
The outer of the two has a numbered point $a\in\set{2,\ldots,n-2}$ and a numbered point $b\in\set{\pm1,\pm(n-1)}$.

For such a $\P$, the action of $\perm^D(\P)$ on the simple roots basis of~$V$ is
\[
\alpha_0=\e_1+\e_2\,\,\mapsto\alpha_0
+\{\delta\text{ if }a=2\}
+\{\mp\delta\text{ if }b=\pm1\}
\]
\[\alpha_{n-1}=\e_{n+1}-\e_{n-2}\,\,\mapsto\alpha_{n-1}
+\{-\delta\text{ if }a=n-2\}
+\{\pm\delta\text{ if }b=\pm(n-1)\}
\]
and for $i=1\ldots,n-2$,\\[-12pt]
\[\alpha_i=\e_{i+1}-\e_i\,\,\mapsto\alpha_i
+\begin{rcases}\begin{dcases}
\delta&\text{if }a=i+1\\
-\delta&\text{if }a=i
\end{dcases}\end{rcases}
+\begin{rcases}\begin{dcases}
\mp\delta&\text{if }b=\pm(i+1)\\
\pm\delta&\text{if }b=\pm i
\end{dcases}\end{rcases}.
\]
Let $M$ be the matrix of $\perm^D(\P)$ acting on column vectors of simple-root coordinates.
The action of $\perm^D(\P)$ on a vector $x=\sum_{i=0}^{n-1}c_i\rho_i$ is the inverse of the action of $M$ on the row vector $[c_0\,\cdots\,c_{n-1}]$.
(We have used the fact that roots and co-roots coincide.)
If $x\in E=\set{x\in V^*:\br{x,\delta}=1}$ we can check that the dual action sends $x$ to $x-\lambda$, where~$\lambda$ is the vector described in the proposition.
\end{proof}

The McCammond-Sulway construction (with general coefficients $q_i$) factors translations in $[1,c]_T$ into factors $\lambda_i+q_i\lambda_0$ for $i=1,2,3$.
However, the translations described in Proposition~\ref{D trans} naturally factor into two factors ${(\!(\cdots\,a\,\,\,a+2n\,\cdots)\!)}$ and $(\!(\cdots\,b\,\,\,b-2n\,\cdots)\!)$, each of which is a loop, as defined in Section~\ref{D big sec}.
Define
\begin{align*}
\lambda_\out&=\rho_a-\rho_{a-1}+
\begin{rcases}\begin{dcases}
-\rho_0&\text{if }a=2,\\
\rho_{n-1}&\text{if }a=n-2\\
\end{dcases}\end{rcases}\\ 
\lambda_\doub&=\begin{rcases}\begin{dcases}
\pm\rho_0\mp\rho_1&\text{if }b=\pm1,\\
\pm\rho_{n-2}\mp\rho_{n-1}&\text{if }b=\pm(n-1)\\
\end{dcases}\end{rcases}.
\end{align*}
Arguing exactly as in Proposition~\ref{D trans}, we have the following proposition.
\begin{prop}\label{D trans loops} 
Choose the Coxeter element $c=s_0\cdots s_{n-1}$ of $\Stildedes$.
Given a translation ${(\!(\cdots\,a\,\,\,a+2n\,\cdots)\!)}\cdot(\!(\cdots\,b\,\,\,b-2n\,\cdots)\!)$ in $[1,c]_T$, factor its translation vector $\lambda$ as $\lambda_\out+\lambda_\doub$.
Then $(\!(\cdots\,a\,\,\,a+2n\,\cdots)\!)$ is the translation with vector $\lambda_\out$ and $(\!(\cdots\,b\,\,\,b-2n\,\cdots)\!)$ is the translation with vector $\lambda_\doub$.
Thus the set of factored translations obtained from factorizations of $\lambda$ as $\lambda_\out+\lambda_\doub$ are the loop elements $\ell_i$ such that $i$ is an outer point or a (positive or negative) double point.
\end{prop}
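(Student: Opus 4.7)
The plan is to repeat the simple-root-matrix computation of Proposition~\ref{D trans} separately for each loop factor, extracting the individual translation vectors $\lambda_\out$ and $\lambda_\doub$ from the corresponding pieces of the matrix.

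First I would compute how $\ell_a = (\!(\cdots\,a\,\,\,a+2n\,\cdots)\!)$ acts on the basis $\e_1,\ldots,\e_{n-1}$: by definition $\ell_a$ sends $a\mapsto a+2n$ and $-a\mapsto -a-2n$ and fixes every other index modulo $2n$, so $\ell_a(\e_a)=\e_a+\delta$, $\ell_a(\e_{-a})=\e_{-a}-\delta$, and $\ell_a(\e_j)=\e_j$ for $j\not\equiv\pm a\mod{2n}$. Expanding $\alpha_0=\e_1+\e_2$, $\alpha_i=\e_{i+1}-\e_i$, and $\alpha_{n-1}=\e_{n+1}-\e_{n-2}$ and plugging in, the resulting action on each simple root $\alpha_i$ is precisely the ``$a$-contribution'' (the first $\mathit{rcases}$ block) in the displayed formula in the proof of Proposition~\ref{D trans}, with the ``$b$-contribution'' removed. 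The dual-representation argument used at the end of that proof is additive in these contributions, so the inverse-transpose of the resulting simple-root matrix acts on $x\in E$ by $x\mapsto x-\lambda_\out$, yielding precisely the claimed translation vector.

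Next I would apply the same calculation to the second loop $(\!(\cdots\,b\,\,\,b-2n\,\cdots)\!)$, which is $\ell_{-b}$. It fixes every $\e_j$ except $\ell_{-b}(\e_b)=\e_b-\delta$ and $\ell_{-b}(\e_{-b})=\e_{-b}+\delta$. The only simple roots affected are $\alpha_0,\alpha_1$ (when $b=\pm1$) or $\alpha_{n-2},\alpha_{n-1}$ (when $b=\pm(n-1)$), and the induced $\delta$-contributions to the simple-root matrix match the ``$b$-contribution'' (the second $\mathit{rcases}$ block) from the proof of Proposition~\ref{D trans}. The identical dual-action argument then gives translation vector $\lambda_\doub$. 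Since $(\!(\cdots\,a\,\,\,a+2n\,\cdots)\!)\cdot(\!(\cdots\,b\,\,\,b-2n\,\cdots)\!)$ is the translation with vector $\lambda=\lambda_\out+\lambda_\doub$ by Proposition~\ref{D trans}, and translation vectors are additive and uniquely determined, this is consistent and completes the identification of each factor.

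For the last sentence, I would observe that as $a$ ranges over the outer points $\set{2,\ldots,n-2}$ and $b$ over the double points $\set{\pm1,\pm(n-1)}$, the first factors $(\!(\cdots\,a\,\,\,a+2n\,\cdots)\!)=\ell_a$ run through all outer-point loops $\ell_{\pm 2},\ldots,\ell_{\pm(n-2)}$ (via the double-bracket symmetry, each choice of $a$ produces $\ell_a$, whose inverse $\ell_{-a}$ is obtained by interchanging the roles of the $(\,\cdot\,)_{2n}$ and $\phi$-conjugate cycles, i.e.\ by using the negated reading convention), and the second factors $\ell_{-b}$ run through all four double-point loops $\ell_{\pm1},\ell_{\pm(n-1)}$. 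Thus every loop at an outer or double point arises as a $\lambda_\out$- or $\lambda_\doub$-factor of some translation in $[1,c]_T$, and no others do.

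I do not anticipate a real obstacle: the paper explicitly flags that the argument parallels Proposition~\ref{D trans}, and the only bookkeeping care needed is sign-tracking to line up each loop $\ell_i$ with the correct $\lambda_\out$ or $\lambda_\doub$ (in particular noting the $\ell_{-b}$ vs.\ $\ell_b$ correspondence coming from the $(\!(\cdots\,b\,\,\,b-2n\,\cdots)\!)$ convention in Proposition~\ref{D trans}).
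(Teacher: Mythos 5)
Your approach matches the paper's own proof exactly, which is the one-line remark ``Arguing exactly as in Proposition~\ref{D trans}.''  The computation of the action of each loop on the simple-root basis, followed by the inverse-transpose argument on $E$, is precisely the intended argument, and your identification $(\!(\cdots\,b\,\,\,b-2n\,\cdots)\!)=\ell_{-b}$ is correct.

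One small slip in the last paragraph: you claim the first factors $\ell_a$ for $a\in\set{2,\ldots,n-2}$ ``run through all outer-point loops $\ell_{\pm 2},\ldots,\ell_{\pm(n-2)}$'' via a reading-convention symmetry.  This is wrong on two counts: $\ell_{-a}=\ell_a^{-1}$ is a genuinely different permutation from $\ell_a$ (not the same cycle read differently), and for $a$ outer the element $\ell_{-a}$ is an \emph{inner}-point loop, not an outer one, hence is not among the factored translations (the paper later makes this explicit by setting $L_\out=\set{\ell_a:a=2,\ldots,n-2}$).  The correct bookkeeping is simpler than what you wrote: the $\lambda_\out$-factors are precisely $\ell_2,\ldots,\ell_{n-2}$, and the $\lambda_\doub$-factors are $\ell_{-b}$ as $b$ ranges over $\set{\pm1,\pm(n-1)}$, which already gives all four double-point loops.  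Your final conclusion is correct; the parenthetical justifying it is both unnecessary and mistaken, and should simply be deleted.
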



We now show that the factorization $\lambda=\lambda_\out+\lambda_\doub$ can be further factored to obtain a factorization that comes from the orthogonal decomposition of~$E_0$.

\begin{lemma}\label{McSul 3}
Choose the Coxeter element $c=s_0\cdots s_{n-1}$ of $\Stildedes$.
Given a translation in $[1,c]_T$ with translation vector $\lambda$, write $\lambda=\lambda_0+\lambda_1+\lambda_2+\lambda_3$ with $\lambda_i\in U_i$ for $i=0,1,2,3$.
Then $\lambda_0+\lambda_1=\lambda_\out$ and $\lambda_2+\lambda_3=\lambda_\doub$.
More specifically,
\begin{itemize}
\item $\lambda_2=\pm\frac12(-\rho_0+\rho_1+\rho_{n-2}-\rho_{n-1})$ and 
\item $\lambda_3=\pm\frac12(\rho_0-\rho_1+\rho_{n-2}-\rho_{n-1})$.
\end{itemize}
The signs depend on which of the four loops on double points $\lambda_\doub$ corresponds~to. 
\end{lemma}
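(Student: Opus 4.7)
The strategy is to exploit the uniqueness of the orthogonal direct sum decomposition $E_0 = U_0 \oplus U_1 \oplus U_2 \oplus U_3$: to identify $\lambda_0+\lambda_1$ with $\lambda_\out$ and $\lambda_2+\lambda_3$ with $\lambda_\doub$, it suffices to show that $\lambda_\out$ already lies in $U_0 \oplus U_1$. Then, since $\lambda_2, \lambda_3$ are uniquely determined and each $U_i$ for $i=2,3$ is one-dimensional, the explicit formulas will follow from a small linear system.

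First I would verify that $\lambda_\out \in U_0 \oplus U_1$. By Lemma~\ref{orth decomp}, a vector $\sum_{i=0}^{n-1} c_i\rho_i$ lies in $U_0\oplus U_1$ if and only if $c_0 = c_1$ and $c_{n-2} = c_{n-1}$. Using the formula for $\lambda_\out$ from Proposition~\ref{D trans loops}, this is a direct case check on $a$: for $2 < a < n-2$, the expression $\rho_a - \rho_{a-1}$ has zero coefficient on each of $\rho_0,\rho_1,\rho_{n-2},\rho_{n-1}$; in the boundary cases $a=2$ and $a=n-2$, the added term $-\rho_0$ or $\rho_{n-1}$ exactly cancels the asymmetric contribution of $\rho_a-\rho_{a-1}$, restoring $c_0=c_1$ and $c_{n-2}=c_{n-1}$. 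Consequently $\lambda_\out \in U_0 \oplus U_1$, and uniqueness of the orthogonal decomposition of $\lambda = \lambda_\out + \lambda_\doub$ forces $\lambda_0+\lambda_1=\lambda_\out$ and $\lambda_2+\lambda_3 = \lambda_\doub$.

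For the explicit formulas, write $v_2 = -\rho_0+\rho_1+\rho_{n-2}-\rho_{n-1}$ and $v_3 = \rho_0-\rho_1+\rho_{n-2}-\rho_{n-1}$, the spanning vectors of $U_2$ and $U_3$ from Lemma~\ref{orth decomp}. We seek $\alpha,\beta$ with $\alpha v_2 + \beta v_3 = \lambda_\doub$. For each of the four values of $b$, the formula for $\lambda_\doub$ is supported on $\{\rho_0,\rho_1,\rho_{n-2},\rho_{n-1}\}$, so the equation reduces to a trivial $2\times 2$ system (the $\rho_0$- and $\rho_{n-2}$-coefficients determine $\alpha$ and $\beta$, and the $\rho_1$- and $\rho_{n-1}$-coefficients match automatically since $\lambda_\doub \in U_2\oplus U_3$). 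Solving in each case yields $\alpha,\beta \in \{\pm\tfrac12\}$; the signs flip between $b$ and $-b$ and between the cases $b=\pm 1$ and $b=\pm(n-1)$, giving the claimed formulas $\lambda_2 = \pm\tfrac12 v_2$ and $\lambda_3 = \pm\tfrac12 v_3$.

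The only real obstacle is bookkeeping the four sign combinations, but this is a mechanical exercise with no conceptual difficulty: both $U_2$ and $U_3$ are one-dimensional, the vectors $v_2$ and $v_3$ together span the relevant coordinate $4$-plane, and in every case the resulting $\alpha,\beta$ are $\pm\tfrac12$.
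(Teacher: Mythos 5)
Your proof is correct and takes essentially the same approach as the paper: verify $\lambda_\out\in U_0\oplus U_1$ via the coefficient criterion of Lemma~\ref{orth decomp}, invoke uniqueness of the orthogonal decomposition, and directly check that the two stated vectors (with appropriate signs) lie in $U_2$, $U_3$ and sum to $\lambda_\doub$. The paper compresses these steps into two sentences; you have simply made the case checks explicit.
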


\begin{proof}
Lemma~\ref{orth decomp} implies that $\lambda_\out$ is in $U_0\oplus U_1$.
Furthermore, the vectors \linebreak
$\pm\frac12(-\rho_0+\rho_1+\rho_{n-2}-\rho_{n-1})$ and $\pm\frac12(\rho_0-\rho_1+\rho_{n-2}-\rho_{n-1})$ are in $U_2$ and $U_3$ respectively and, with the appropriate signs, add up to $\lambda_\doub$.
%
\end{proof}

As a consequence of Lemma~\ref{McSul 3}, we can set $q_1=1$ and $q_2=q_3=0$ and factor the translation as $\lambda=(\lambda_1+\lambda_0)+\lambda_2+\lambda_3=\lambda_\out+\lambda_2+\lambda_3$.
Thus the factored translations are described in the following proposition.

\begin{prop}\label{D factored}
For $c=s_0\cdots s_{n-1}$ and $q_1=1$ and $q_2=q_3=0$, the factored translations vectors are
\begin{multline*}\set{\rho_a-\rho_{a-1}+
\begin{rcases}\begin{dcases}
-\rho_0&\text{if }a=2,\\
\rho_{n-1}&\text{if }a=n-2\\
\end{dcases}\end{rcases}:a\in\set{2,\ldots,n-2}}\\
\cup\set{\pm\frac12(-\rho_0+\rho_1+\rho_{n-2}-\rho_{n-1},\pm\frac12(\rho_0-\rho_1+\rho_{n-2}-\rho_{n-1})}.
\end{multline*}
\end{prop}

Also by Lemma~\ref{McSul 3}, for $q_1=1$ and $q_2=q_3=0$, the group generated by $T$ and~$F$ is a supergroup of the group $\Stildejes$ of affine jointly even-signed permutations.
In the next section, we describe the supergroup.

\subsection{Affine barred even-signed permutations}\label{barred subsec}
We now describe the group generated by reflections and factored translations for $q_1=1$ and $q_2=q_3=0$.
For the purpose of defining $[1,c]_{T\cup F}$, we define a factored translation associated to $U_1$ to have length $1$ and a factored translation associated to $U_2$ or $U_3$ to have length $\frac12$.

For any $\lambda$, Lemma~\ref{McSul 3} says that $\lambda_2$ and $\lambda_3$ are translations whose vector is half of the translation associated to a loop on an upper double point plus half of the translation associated to a loop on a lower double point.
Again looking back at the proof of Proposition~\ref{D trans}, we see that the corresponding linear maps on $V$ send $\e_1$ to $\e_1\pm\frac12\delta$, send $\e_{n-1}$ to $\e_{n-1}\pm\frac12\delta$, send $\e_{n+1}$ to $\e_{n+1}\pm\frac12\delta$ (taking the opposite sign for $\e_{n+1}$ as for $\e_{n-1}$), and fix $\e_2,\ldots,\e_{n-2}$.
These linear maps also fix $\delta=\e_{n+1}+\e_{n-1}$.

Define upper-barring and lower-barring operations on $V$, inverse to each other, by $\bar{v}=v+\frac\delta2$ and $\ubar{v}=v-\frac\delta2$.
The sets $\set{\e_i:i\not\equiv0\mod n}$ and $\set{\eb_i:i\not\equiv0\mod n}$ are disjoint, but $\set{\eb_i:i\not\equiv0\mod n}$ coincides with $\set{\eub_i:i\not\equiv0\mod n}$ because $\eb_i=\eub_{i+2n}$.
All elements of $W$ and the four linear maps corresponding to the possibilities for $\lambda_2$ and $\lambda_3$ permute $\set{\e_i:i\not\equiv0\mod n}\cup\set{\eb_i:i\not\equiv0\mod n}$ and fix $\delta$.
We will describe them as permutations of indices, with $\bar{\imath}$ standing for the index on $\eb_i$ and $\ubar{i}$ standing for the index on $\eub_i$.

To do so, we extend notions of negation and adding $2n$ to barred indices.
Specifically, $-\bar\imath$ means $\underline{-i}$ because it stands for $-\eb_i=-(\e_i+\frac\delta2)=-\e_i-\frac\delta2$.
Similarly, $-\ubar{i}$ means $\overline{-i}$.
Also, $\bar\imath+2n$ means $\overline{i+2n}$, because it stands for $\eb_i+\delta=\e_i+\frac{3\delta}2=\overline{\e_{i+2n}}$.

A linear map $\pi$ that permutes $\set{\e_i:i\not\equiv0\mod n}\cup\set{\eb_i:i\not\equiv0\mod n}$ and fixes $\delta$ must also commute with the barring operators.
That is (writing $\pi$ as a permutation of indices),  $\pi(\bar\imath)=\overline{\pi(i)}$ and $\pi(\ubar\imath)=\underline{\pi(i)}$.
Also, $\pi(i+2n)=\pi(i)+2n$ and $\pi(\overline{i+2n})=\overline{\pi(i)}+2n$.
Finally, $\pi(-i)=-\pi(i)$ and $\pi(-\bar\imath)=-\pi(\bar\imath)$.

We will use double parentheses $(\!(\cdots)\!)$ with an expanded meaning in this context, namely $(\!(\cdots)\!)$ not only means a cycle and its negative, but also the cycles obtained by upper-barring each entry in the original cycle.
With this notation, the permutations corresponding to the possibilities for $\lambda_2$ and $\lambda_3$ are:
\begin{align*}
\fuu&=(\!(\cdots\,1\,\,\,\,\bar{1}\,\,\,\,1+2n\,\cdots)\!)\cdot(\!(\cdots\,n-1\,\,\,\,\overline{n-1}\,\,\,\,3n-1\,\cdots)\!)\\
\fud&=(\!(\cdots\,1\,\,\,\,\bar{1}\,\,\,\,1+2n\,\cdots)\!)\cdot(\!(\cdots\,n-1\,\,\,\,\underline{n-1}\,\,\,\,-n-1\,\cdots)\!)\\
\fdu&=(\!(\cdots\,1\,\,\,\,\ubar{1}\,\,\,\,1-2n\,\cdots)\!)\cdot(\!(\cdots\,n-1\,\,\,\,\overline{n-1}\,\,\,\,3n-1\,\cdots)\!)\\
\fdd&=(\!(\cdots\,1\,\,\,\,\ubar{1}\,\,\,\,1-2n\,\cdots)\!)\cdot(\!(\cdots\,n-1\,\,\,\,\underline{n-1}\,\,\,\,-n-1\,\cdots)\!).
\end{align*}
Thus the factorizations of loops on double points are 
\[\begin{array}{rclcrcl}
\ell_1&\!\!\!\!=\!\!\!\!&\fuu\fud&\qquad\qquad&
\ell_{n-1}&\!\!\!\!=\!\!\!\!&\fuu\fdu\\
\ell_{-1}&\!\!\!\!=\!\!\!\!&\fdu\fdd&&
\ell_{-(n-1)}&\!\!\!\!=\!\!\!\!&\fud\fdd.
\end{array}\]
The permutations $\fuu,\fud,\fdu,\fdd$ are more easily described as follows:
The first subscript tells whether to upper- or lower-bar every integer $1$ modulo $2n$, and the second subscript tells whether to upper- or lower-bar every integer $n-1$ modulo~$2n$.

Given a permutation $\pi$ of the unbarred and barred integers, 
define a map $\pi'$ on the unbarred integers by 
\[\pi'(i)=\begin{cases}
\pi(i)&\text{ if }\pi(i)\in\integers,\\[3pt]
\overline{\pi(i)}&\text{ if }\pi(i)=\ubar j\text{ for }j\in\integers\text{ and }\exists k\in\set{1,\ldots,n-1}\text{ s.t. }j\equiv -k\cmod{2n}\\[3pt]
\underline{\pi(i)}&\text{ if }\pi(i)=\bar\jmath\text{ for }j\in\integers\text{ and }\exists k\in\set{1,\ldots,n-1}\text{ s.t. }j\equiv k\cmod{2n}.
\end{cases}\]
An \newword{affine barred even-signed permutation} is a permutation $\pi$ of $\integers\cup\set{\bar\imath:i\in\integers}$ that commutes with barring, such that $\set{\pi(1),\ldots,\pi(n-1)}\cap\set{\bar\imath:i\in\integers}$ has an even number of elements and $\pi'$ is an affine jointly even-signed permutation.
The affine barred even-signed permutations form a group under composition, denoted $\Stildebes$.

Any element of $\Stildejes$ can be considered as an element of $\Stildebes$ by requiring that it commute with barring.
Define $L_\out=\set{\ell_a:a=2,\ldots,n-2}$ (the set of loops on outer points) and $F_\doub=\set{\fuu,\fud,\fdu,\fdd}$.
The following proposition is immediate from Proposition~\ref{D factored} and the considerations above.

\begin{prop}\label{D factored perms}
For $c=s_0\cdots s_{n-1}$ and $q_1=1$ and $q_2=q_3=0$, the factored translations, as elements of $\Stildebes$, are $L_\out\cup F_\doub$.
\end{prop}

\begin{prop}\label{bes prop ever}
The group $\Stildebes$ of affine barred even-signed permutations is generated by $T\cup L_\out\cup F_\doub$.
\end{prop}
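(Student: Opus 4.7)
The plan is to follow the template of Proposition~\ref{big group gen}, establishing containment in both directions. First, the factorizations $\ell_1=\fuu\fud$, $\ell_{-1}=\fdu\fdd$, $\ell_{n-1}=\fuu\fdu$, $\ell_{-(n-1)}=\fud\fdd$ displayed just before the proposition exhibit the four loops at double points as products of elements of $F_\doub$. Combined with $L_\out$, this gives $L\subseteq\langle L_\out\cup F_\doub\rangle$, so by Proposition~\ref{big group gen} we have $\Stildejes=\langle T\cup L\rangle\subseteq\langle T\cup L_\out\cup F_\doub\rangle$.

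For the inclusion $\langle T\cup L_\out\cup F_\doub\rangle\subseteq\Stildebes$, it suffices to verify $F_\doub\subseteq\Stildebes$ since $T\cup L_\out\subseteq\Stildejes\subseteq\Stildebes$.  Each $f\in F_\doub$ is, by construction via the expanded double-parenthesis notation, a permutation of $\integers\cup\set{\bar\imath:i\in\integers}$ commuting with barring.  Inspection of the displayed cycle structures shows that $f(i)=i$ for $i\in\set{2,\ldots,n-2}$, while $f(1)$ and $f(n-1)$ are the unique barred outputs on $\set{1,\ldots,n-1}$, giving bar count exactly~$2$.  The $\pi'$-recipe then strips each bar, returning $f'=1\in\Stildejes$.

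The reverse inclusion $\Stildebes\subseteq\langle T\cup L_\out\cup F_\doub\rangle$ is the main content.  Given $\pi\in\Stildebes$, I apply Proposition~\ref{big group gen} to $\pi'\in\Stildejes$ to obtain $\pi'=\tau_1\cdots\tau_m$ with each $\tau_i\in T\cup L$.  Lifting each occurrence of $\ell_{\pm1}$ or $\ell_{\pm(n-1)}$ to its $F_\doub$-factorization gives an element $\sigma\in\langle T\cup L_\out\cup F_\doub\rangle$ with $\sigma'=\pi'$.  Setting $\rho=\sigma^{-1}\pi\in\Stildebes$, we have $\rho'=1$, so the proof reduces to showing that every such $\rho$ lies in $\langle F_\doub\rangle$.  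The main obstacle here will be an induction on the bar count $|\set{i\in\set{1,\ldots,n-1}:\rho(i)\text{ is barred}}|$, analogous to the $\nneg/\nbig$ reduction in the proof of Proposition~\ref{big group gen}: left-multiplication by $f\in F_\doub$ toggles the bar status of $\rho(i)$ precisely when $\rho(i)$ lies in the mod-$2n$ class of $\pm1$ or $\pm(n-1)$, and conjugating $f$ by reflections in $T$ (which, by right-multiplication, permute the mod-$2n$ classes) allows bar-toggling at any class, so a suitable product of $f$'s strictly decreases the (necessarily even) bar count until it reaches zero.  At that point $\rho\in\Stildejes$ and $\rho'=1$ force $\rho=1$, completing the induction.
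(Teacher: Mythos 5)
Your strategy matches the paper's in outline—two-sided containment, with the reverse inclusion settled by a bar-count reduction using $T$-conjugates of $F_\doub$—but two of the intermediate claims are false, both because the map $\pi\mapsto\pi'$ is neither a ``strip all bars'' operation nor a group homomorphism.  Its definition strips a bar in a \emph{sign-dependent direction}: a barred value is written uniquely as $\bar\jmath$ with $j$ in a positive residue class, or as $\ubar\jmath$ with $j$ in a negative residue class, and $\pi'$ returns that $j$.

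The claim ``$f'=1$'' holds only for $f=\fuu$.  For $\fud$, one has $\fud(n-1)=\ubar{n-1}$ with $n-1$ in a positive class, so the recipe rewrites $\ubar{n-1}=\overline{n-1-2n}$ and strips downward, giving $\fud'(n-1)=n-1-2n$; hence $\fud'=\ell_{-(n-1)}\neq 1$.  Similarly $\fdu'=\ell_{-1}$ and $\fdd'=\ell_{-1}\ell_{-(n-1)}$.  These all lie in $\Stildejes$, so the conclusion $F_\doub\subseteq\Stildebes$ still holds, but the correct justification is the paper's statement that $f'$ is a product of $0$, $1$, or $2$ loops, not the identity.

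More seriously, since $\pi\mapsto\pi'$ is not a homomorphism, $\rho'=1$ does not follow from $\sigma'=\pi'$.  With $\sigma=\pi'$ and $\rho=(\pi')^{-1}\pi$, one finds $\rho(m)\in\set{m,\bar m,\ubar m}$, but the bar direction is governed by the sign class of $\pi'(m)$, not of $m$; when these disagree, the stripping recipe returns $m\pm 2n$ rather than $m$.  Concretely, $\pi=\fuu\cdot(\!(1\,\,\,{-2})\!)_{2n}\in\Stildebes$ has $\pi'=(\!(1\,\,\,{-2})\!)_{2n}$, so $\rho=(\!(1\,\,\,{-2})\!)_{2n}\fuu(\!(1\,\,\,{-2})\!)_{2n}$, and one checks $\rho'=\ell_{-2}\neq 1$.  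The paper instead decomposes as $\pi=g\,\pi'$ with $g=\pi(\pi')^{-1}$; for that $g$ the bar direction of $g(j)$ is governed by the sign of $j$ itself, so $g'=1$ does hold, and this ordering is exactly what the paper's phrase ``applying such permutations to the left of $\pi'$'' encodes.

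Fortunately your bar-count induction does not actually require $\rho'=1$.  Once the bar count reaches zero, the remaining element is unbarred, hence lies in $\Stildejes$, which your first paragraph already places inside $\langle T\cup L_\out\cup F_\doub\rangle$.  If you run the bar-count reduction directly on $\pi$ (dropping the $\sigma$- and $\rho$-bookkeeping and the final ``force $\rho=1$'' step), the argument closes and coincides with the paper's proof.
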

\begin{proof}
An element $f\in F_\doub$ is in $\Stildebes$ because it sends exactly two elements of $\set{1,\ldots,n-1}$ to barred elements and $f'$ is a product of $0$, $1$, or $2$ loops. 

Given $\pi\in\Stildebes$, Proposition~\ref{big group gen} says that $\pi'\in\Stildejes$ is a product of elements in $T\cup L_\out$.
Conjugating $f\in F_\doub$ by products of reflections of the form $(\!(i\,\,\,j)\!)_{2n}$ with $1\le i<j\le n-1$ yields a permutation that upper- or lower bar two elements in $\set{1,\ldots,n-1}$.
We obtain $\pi$ by applying such permutations to the left of~$\pi'$.
\end{proof}

The set $L_\out\cup F_\doub$ is the set of factored translations for $q_1=1$ and $q_2=q_3=0$.
As before, elements of $F_\doub$ have length $\frac12$ and elements of $T\cup L_\out$ have length~$1$.
The following corollary is an immediate consequence of Theorem~\ref{lattice}.

\begin{cor}\label{D big lattice}
The interval $[1,c]_{T\cup L_\out\cup F_\doub}$ in $\Stildebes$ is a lattice.
\end{cor}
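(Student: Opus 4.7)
The plan is to observe that Corollary~\ref{D big lattice} is an immediate specialization of Theorem~\ref{lattice} to the constants $q_1=1$ and $q_2=q_3=0$; essentially no new work is needed beyond matching the abstractly defined set $F$ of factored translations to the explicit set $L_\out\cup F_\doub$ constructed in this section.

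First, I would verify this identification. By Proposition~\ref{D trans loops}, every translation $w\in[1,c]_T$ has translation vector $\lambda=\lambda_\out+\lambda_\doub$. Lemma~\ref{McSul 3} writes $\lambda=\lambda_0+\lambda_1+\lambda_2+\lambda_3$ with $\lambda_i\in U_i$ and asserts $\lambda_0+\lambda_1=\lambda_\out$ and $\lambda_2+\lambda_3=\lambda_\doub$, together with explicit formulas for $\lambda_2$ and $\lambda_3$. With the choice $q_1=1$ and $q_2=q_3=0$, the three factored translations associated to $w$ have translation vectors $\lambda_1+q_1\lambda_0=\lambda_\out$, $\lambda_2+q_2\lambda_0=\lambda_2$, and $\lambda_3+q_3\lambda_0=\lambda_3$. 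By Proposition~\ref{D trans loops} the first of these is a loop in $L_\out$, and by matching the explicit formulas in Lemma~\ref{McSul 3} against the descriptions of $\fuu,\fud,\fdu,\fdd$ in Section~\ref{barred subsec}, the two possibilities for $\lambda_2$ and the two for $\lambda_3$ are realized exactly by the four elements of $F_\doub$. Conversely, each element of $L_\out\cup F_\doub$ arises this way from some translation in $[1,c]_T$, so $F=L_\out\cup F_\doub$.

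Second, I would check that the length conventions agree: Theorem~\ref{lattice} requires the factors of each translation to have lengths summing to~$2$, and here the $L_\out$ factor has been assigned length~$1$ while each $F_\doub$ factor has length~$\tfrac12$, summing to $1+\tfrac12+\tfrac12=2$. Finally, Proposition~\ref{bes prop ever} identifies the ambient group $\Stildebes$ with the group generated by $T\cup F$, so the ambient groups of the two constructions coincide. With these identifications in place, Theorem~\ref{lattice} directly yields that $[1,c]_{T\cup L_\out\cup F_\doub}$ is a lattice.

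The only obstacle is this bookkeeping match between the general framework and its concrete incarnation; the lattice property itself is delegated entirely to Theorem~\ref{lattice}.
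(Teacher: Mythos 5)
Your proposal is correct and takes the same route as the paper: the paper simply notes that $L_\out\cup F_\doub$ is the set of factored translations for $q_1=1$, $q_2=q_3=0$ (with the matching length conventions) and calls the corollary an immediate consequence of Theorem~\ref{lattice}. You spell out the identification via Proposition~\ref{D trans loops} and Lemma~\ref{McSul 3} rather than treating it as already established by the surrounding text, but the underlying argument is the same.
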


\subsection{A description of the lattice}\label{lat sec}
Write $\McSul^D$ for the lattice $[1,c]_{T\cup L_\out\cup F_\doub}$.
With sufficient determination (or stubbornness), one can make planar diagrams for $\McSul^D$.
However, it seems difficult to find diagrams that admit both a natural description of the map to permutations and a natural description of the partial order.  
Instead, we will use results of \cite{McSul} to describe the lattice as the union of $[1,c]_{T\cup L}$ (the image of $\tNCDc$ under $\perm^D$) and a finite set of additional elements of $\Stildebes$.
We begin by making Proposition~\ref{if a fact}.\ref{fact c} explicit for our special choice of~$c$.

\begin{lemma}\label{c1c2c3}
Choose the Coxeter element $c=s_0\cdots s_{n-1}$ of $\Stildedes$.
The factorization $c=c_1c_2c_3$ described in Proposition~\ref{if a fact}.\ref{fact c} has 
\begin{align*}
c_1&=(\!(\cdots\,2\,\,\,3\,\cdots\,n-2\,\,\,2n+2\,\cdots)\!)\\
c_2&=(\!(1\,\,\,\overline{-n-1})\!)_{2n}\\
c_3&=(\!(1\,\,\,\overline{-n+1})\!)_{2n}
\end{align*}
\end{lemma}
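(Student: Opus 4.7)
The plan is to exhibit a reduced word for $c$ in the alphabet $T\cup L_\out\cup F_\doub$ whose letters are pre-sorted into the three $\Psi_i/U_i$-blocks, and then read off $c_1,c_2,c_3$ as the products of each block via Proposition~\ref{if a fact}.\ref{fact c}. Since that proposition says the $c_i$'s depend only on $c$ and the $q_i$'s (not on the choice of reduced word), exhibiting one such word suffices.

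First I would verify directly that $c_1c_2c_3$ equals $c$ in $\Stildebes$. Because $c_1$ fixes every double point together with its barred and $2n$-translated copies, while $c_2c_3$ fixes every outer and inner point and its copies, the check splits into two disjoint pieces. On outer/inner points, $c_1$ equals, by construction, the infinite cycle factor of $c$. On the four classes of double points, short cycle computations suffice: for instance $c_3(1)=\overline{-n+1}$ and then $c_2(\overline{-n+1})=-1=c(1)$, and similarly $c_2c_3(n-1)=c_2(\overline{-1})=n+1=c(n-1)$, with analogous calculations for $-1$ and $n+1$.

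Next I would factor each $c_i$ as a product of the required letters. For $c_1$, the identity $c_1\cdot\ell_{n-2}^{-1} = (\!(2\,3\,\cdots\,n-2)\!)_{2n}$ follows by a direct computation on outer points, and $(\!(2\,3\,\cdots\,n-2)\!)_{2n}=s_2s_3\cdots s_{n-3}$ by tracking this product's action on $2,3,\ldots,n-2$; hence $c_1 = s_2s_3\cdots s_{n-3}\cdot\ell_{n-2}$. The simple reflections here lie in $\Psi_1$ (their roots $\e_{i+1}-\e_i$ with $2\le i\le n-3$ are of the form $\e_j-\e_i$ with $2\le i<j\le n-2$), and the translation vector $\lambda_{\ell_{n-2}}=\rho_{n-3}-\rho_{n-2}-\rho_{n-1}$, read off from $\ell_{n-2}(\e_{n-2})=\e_{n-2}+\delta$, lies in $U_0\oplus U_1$ by the criterion of Lemma~\ref{orth decomp}. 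For $c_2$ and $c_3$, I would first compute the translation vectors of the four elements of $F_\doub$ by evaluating their action on $\e_1,\e_{n-1},\e_{n+1}$: this yields $\lambda_{\fuu}=\frac12(\rho_0-\rho_1+\rho_{n-2}-\rho_{n-1})\in U_3$, $\lambda_{\fdd}=-\lambda_{\fuu}\in U_3$, $\lambda_{\fud}=\frac12(\rho_0-\rho_1-\rho_{n-2}+\rho_{n-1})\in U_2$ and $\lambda_{\fdu}=-\lambda_{\fud}\in U_2$, so $F_\doub\cap U_2=\{\fud,\fdu\}$ and $F_\doub\cap U_3=\{\fuu,\fdd\}$. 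A direct cycle check, using that $(\!(1,-n-1)\!)_{2n}$ and $(\!(1,-n+1)\!)_{2n}$ commute with upper-barring (as elements of $\Stildedes$), then gives $c_2 = (\!(1,-n-1)\!)_{2n}\cdot\fud$ and $c_3 = (\!(1,-n+1)\!)_{2n}\cdot\fuu$, where the two reflections are the horizontal reflections in $\Psi_2$ and $\Psi_3$ from Proposition~\ref{D horiz}.

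Concatenating produces the word $s_2\cdots s_{n-3}\,\ell_{n-2}\,(\!(1,-n-1)\!)_{2n}\,\fud\,(\!(1,-n+1)\!)_{2n}\,\fuu$, with $n-2$ horizontal reflections and $3$ factored translations of total length $(n-4)+1+1+\frac12+1+\frac12=n$, so it is a reduced word for $c$. Since its letters are already pre-sorted into the three $\Psi_i/U_i$-blocks, Proposition~\ref{if a fact}.\ref{fact c} identifies the block products with the canonical $c_1,c_2,c_3$, giving the formulas in the lemma. The main obstacle is the bookkeeping in the $\Stildebes$ cycle notation — keeping track of negation, $2n$-translation, and upper-barring simultaneously inside $(\!(\cdots)\!)_{2n}$ — both for the two-letter factorizations of $c_2, c_3$ and for confirming $c_1c_2c_3=c$; once the barring conventions are pinned down, all the verifications reduce to routine case checks on the handful of elements $\pm 1,\pm(n-1),\pm(n+1)$ and their barred copies.
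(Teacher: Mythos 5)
Your proof is correct and takes essentially the same route as the paper: verify $c_1c_2c_3=c$, exhibit a length-$n$ word for $c$ in $T\cup L_\out\cup F_\doub$ already sorted into $\Psi_i/U_i$-blocks, and invoke Proposition~\ref{if a fact}.\ref{fact c}. The only cosmetic differences are that you factor $c_1$ explicitly as $s_2\cdots s_{n-3}\,\ell_{n-2}$ instead of appealing to the rank of the corresponding noncrossing partition, and you use the commuted factorizations $c_2=(\!(1\,\,\,{-n-1})\!)_{2n}\cdot\fud$ and $c_3=(\!(1\,\,\,{-n+1})\!)_{2n}\cdot\fuu$ (the paper uses $\fud\cdot(\!(1\,\,\,{n-1})\!)_{2n}$ and $\fuu\cdot(\!(1\,\,\,{n+1})\!)_{2n}$), all of which are horizontal reflections from Proposition~\ref{D horiz}; note a small sign slip in the translation vector of $\ell_{n-2}$ (Proposition~\ref{D trans loops} gives $\rho_{n-2}-\rho_{n-3}+\rho_{n-1}$), but this is immaterial since you only use membership in $U_0\oplus U_1$.
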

\begin{proof}
One can compute that $c_1c_2c_3=c$.
To complete the proof, we express each $c_i$ as a product of horizontal reflections associated to~$\Psi_i$ and factored translations associated to $U_i$ such that the concatenation of the expressions is a reduced word~for~$c$.

First, $c_1$ corresponds to a noncrossing partition with one symmetric pair of dangling annular blocks and two pairs of trivial blocks at double points, with rank~$n-3$ in $\tNCDc$.
Thus there is an expression of length $n-3$ for $c_1$ in the alphabet $T\cup L$.
But this expression only involves reflections $(\!(a\,\,\,b)\!)$ with $a$ and $b$ outer and loops $\ell_a$ with~$a$ outer (that is, horizontal reflections associated to $\Psi_i$ and factored translations associated to~$U_i$).


Next, we write $c_2=\fud\cdot(\!(1\,\,\,{n-1})\!)_{2n}$ and note that $(\!(1\,\,\,{n-1})\!)_{2n}$ is a horizontal reflection in $[1,c]_T$ associated to $\Psi_2$ and $\fud$ is a factored translation for $U_2$.
We also write $c_3=\fuu\cdot(\!(1\,\,\,{n+1})\!)_{2n}$ and note that $(\!(1\,\,\,{n+1})\!)_{2n}$ is a horizontal reflection in $[1,c]_T$ associated to $\Psi_3$ and $\fuu$ is a factored translation for $U_3$.

Concatenating these expressions for $c_1$, $c_2$, and $c_3$, we obtain a word with $n+1$ letters and length $n$, because $\fud$ and $\fuu$ have length $\frac12$.
%
\end{proof}

Write $T_H$ for the horizontal reflections and write $F$ for $L_\out\cup F_\doub$.
The group generated by $T_H\cup F$ is called the \newword{factorable group} \cite[Definition~6.8]{McSul}.
We will consider the interval $[1,c]_{T_H\cup F}$ in the factorable group.
This interval is the subposet of $[1,c]_{T\cup F}$ consisting of elements that are products of horizontal reflections and translations in $F$.
In light of Proposition~\ref{if a fact}.\ref{reorder}--\ref{fact c}, $[1,c]_{T_H\cup F}$ is the product $[1,c_1]_{T_H\cup F}\times[1,c_2]_{T_H\cup F}\times[1,c_3]_{T_H\cup F}$, which is finite.
(For a much stronger statement, see \cite[Proposition~7.6]{McSul}.)

Every element of $[1,c]_{T\cup F}$ that is not already in $[1,c]_T$ is in $[1,c]_{T_H\cup F}$.
(This follows from Proposition~\ref{if a fact} or \cite[Lemma~7.2]{McSul}.)
But many of these elements are already in $[1,c]_{T\cup L}$:
Proposition~\ref{D horiz} and Lemma~\ref{c1c2c3} imply that $[1,c_1]_{T_H\cup F}$ equals $[1,c_1]_{T\cup L}$, so the elements of $[1,c]_{T\cup F}$ that are not in $[1,c]_{T\cup L}$ are the elements $x\in[1,c]_{T_H\cup F}$ having exactly one element of $F_\doub$ in any reduced expression for~$x$.
Equivalently, these are the elements $v_1v_2v_3\in[1,c]_{T_H\cup F}$ (with each $v_i\in[1,c_i]_{T_H\cup F}$) having exactly one element of $F_\doub$ in any reduced expression for~$v_2v_3$.

Write $\P_0$ for the noncrossing partition in $\tNCDc$ with only trivial blocks.
Write~$\P_1$ for the preimage of $c_1$ in $\tNCDc$.
This is the noncrossing partition with pairs of trivial blocks at the double points and a symmetric pair of dangling annular blocks, one containing all outer points and one containing all inner points.
Then $[1,c_1]_{T_H\cup F}=[1,c_1]_{T\cup L}=\perm^D([\P_0,\P_1])$, and we write $\perm_1$ for this poset.

The interval $[1,c_2]_{T_H\cup F}$ has six elements:  
Besides the bottom and top elements~$1$ and $c_2$, there are four pairwise incomparable elements $\fud$, $(\!(1\,\,\,{n-1})\!)_{2n}$, $\fdu$, and $(\!(1\,\,\,{-n-1})\!)_{2n}$.
Of these six elements, three ($\fud$, $\fdu$, and $c_2$) have an element of $F_\doub$ (necessarily only one) in any reduced expression for $v_2$.
Similarly, $[1,c_3]_{T_H\cup F}$ has six elements, including pairwise incomparable elements 
$\fuu$, $(\!(1\,\,\,{n+1})\!)_{2n}$, $\fdd$, and $(\!(1\,\,\,{-n+1})\!)_{2n}$.
Three of the elements ($\fuu$, $\fdd$, and $c_3$) have an element of $F_\doub$ in any reduced expression for $v_2$.

Write $\C_{23}$ for $[1,c_2]_{T_H\cup F}\times[1,c_3]_{T_H\cup F}$.
There are 18 elements $v_2v_3\in\C_{23}$ having exactly one element of $F_\doub$ in any reduced expression for~$v_2v_3$.
We write $\New$ for the subposet of $\C_{23}$ induced by these elements, so that $[1,c]_{T\cup F}\setminus[1,c]_{T\cup L}=\perm_1\times\New$.

Write $\P_{23}$ for the preimage of $c_2c_3$ in $\tNCDc$, the noncrossing partition whose only nontrivial block is a symmetric dangling annular block.
Define $\perm_{23}$ to be $\perm^D([\P_0,\P_{23}])=[1,c_2c_3]_{T\cup L}$, the complement of $\New$ in $\C_{23}$.
Figure~\ref{perm and new} shows the lattice $\C_{23}$, with elements of $\perm_{23}$ represented by the corresponding noncrossing partitions and elements of $\New$ given as permutations.
\begin{sidewaysfigure}
\scalebox{0.6}{\includegraphics{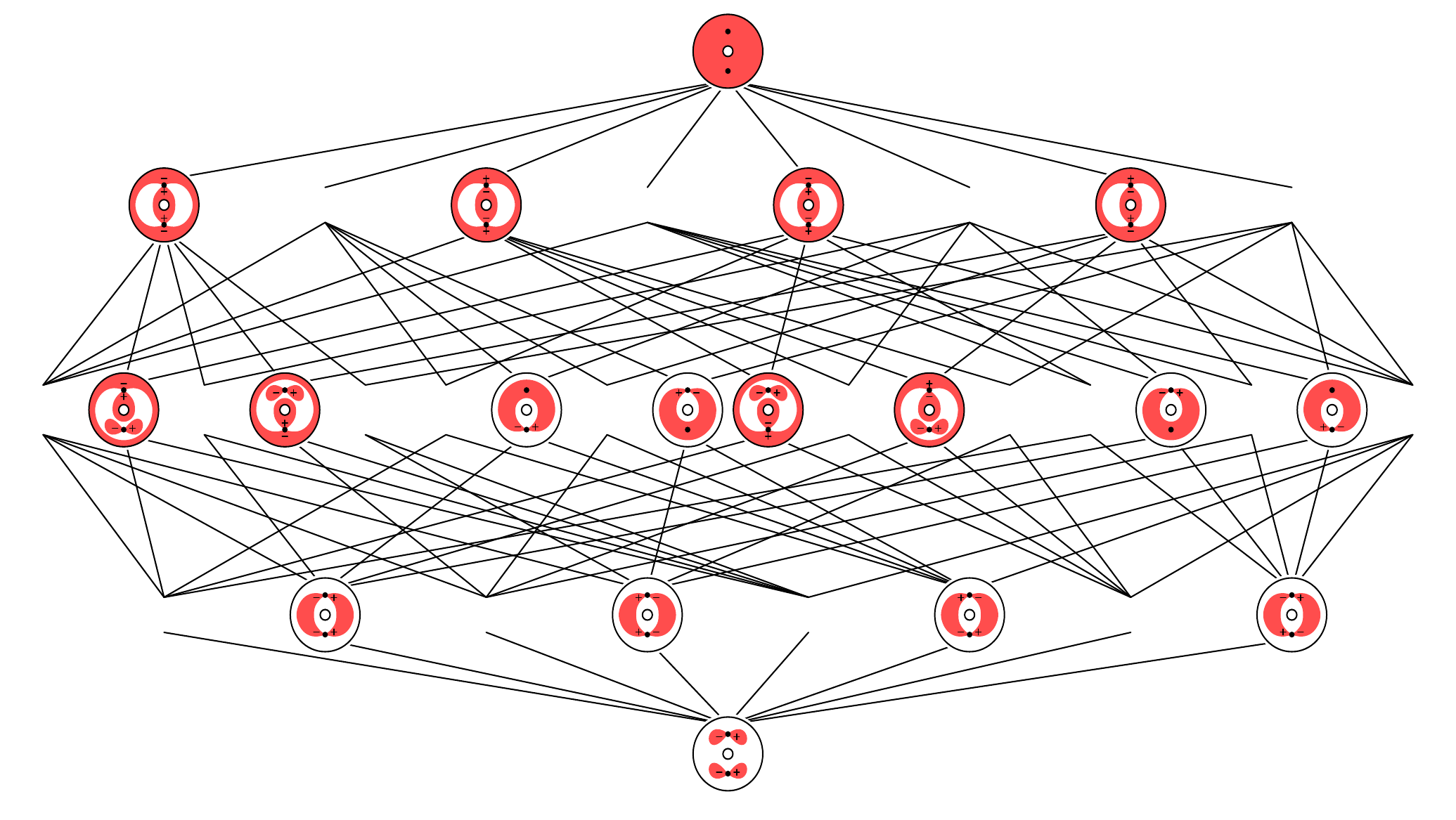}
\begin{picture}(0,0)(510,-40)
\put(-388,99){\Large$\fud$}
\put(-168,99){\Large$\fdu$}
\put(54,99){\Large$\fuu$}
\put(276,99){\Large$\fdd$}
\put(-467,239){\Large$c_2$}
\put(-374,244){\large$(1\,\,\,\,{n-1})$}
\put(-344,232){\large$\cdot\fuu$}
\put(-270,245){\large$(1\,\,\,{-n-1})$}
\put(-235,232){\large$\cdot\fuu$}
\put(-209,245){\large$(1\,\,\,{-n+1})$}
\put(-175,232){\large$\cdot\fud$}
\put(-100,245){\large$(1\,\,\,{-n+1})$}
\put(-66,232){\large$\cdot\fdu$}
\put(66,245){\large$(1\,\,\,\,{n-1})$}
\put(96,232){\large$\cdot\fdd$}
\put(172,245){\large$(1\,\,\,{-n-1})$}
\put(206,232){\large$\cdot\fdd$}
\put(238,245){\large$(1\,\,\,\,{n+1})$}
\put(266,232){\large$\cdot\fud$}
\put(345,245){\large$(1\,\,\,\,{n+1})$}
\put(373,232){\large$\cdot\fdu$}
\put(473,239){\Large$c_3$}
\put(-305,380){\Large$c_2\cdot(1\,\,\,\,{-n+1})$}
\put(-81,380){\Large$c_2\cdot(1\,\,\,\,{n+1})$}
\put(141,380){\Large$c_3\cdot(1\,\,\,\,{n-1})$}
\put(357,380){\Large$c_3\cdot(1\,\,\,\,{-n-1})$}
\end{picture}
}
\caption{The interval $[1,c_2c_3]_{T\cup F}$.  Cycles $(a\,\,\,b)$ in the figure should be read as $(\!(a\,\,\,b)\!)_{2n}$.}
\label{perm and new}
\end{sidewaysfigure}
Figure~\ref{perm only} shows the interval $[\P_0,\P_{23}]$ corresponding to the subposet $\perm_{23}$ of $\C_{23}$.
\begin{figure}
\scalebox{0.8}{\includegraphics{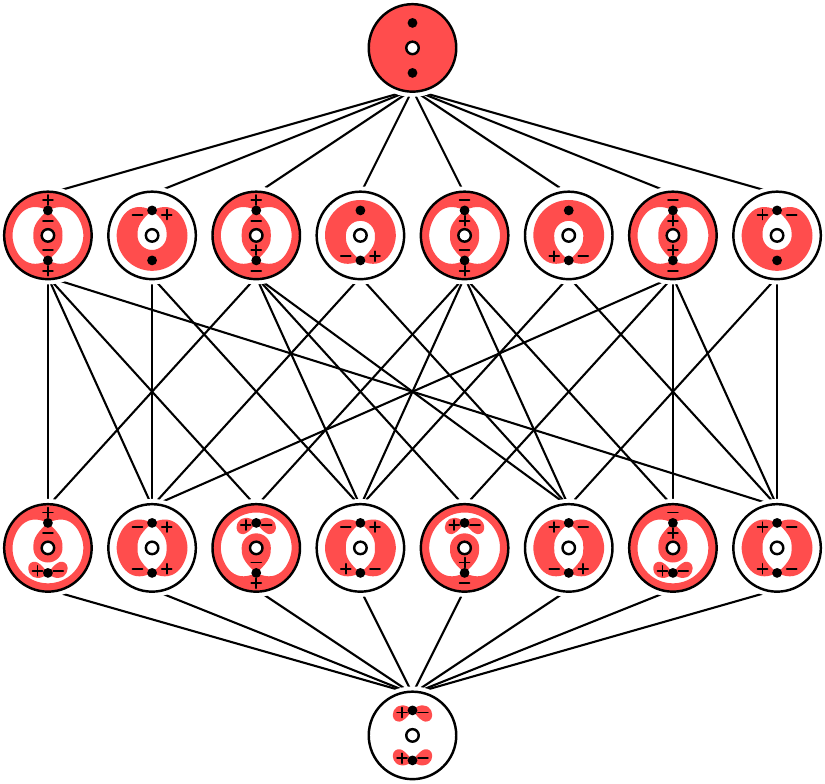}}
\caption{The non-lattice $[1,c_2c_3]_{T\cup L}$}
\label{perm only}
\end{figure}
In both figures, the noncrossing partition pictures omit the singleton blocks on inner and outer points.
(Alternatively, one could view these pictures as the case where $n=3$.
However, the Coxeter-theoretic results relating to these noncrossing partitions are only valid for $n\ge5$.)

Every element of $[1,c]_{T_H\cup F}\setminus[1,c]_T$ is above a translation, and thus (by Proposition~\ref{if a fact}) is only related, in $[1,c]_{T\cup F}$, to other elements of $[1,c]_{T_H\cup F}$.

Each permutation $\pi$ in $\McSul^D\setminus[1,c]_{T\cup L}$ factors as a permutation of ordinary and barred double points and a permutation of ordinary and barred outer and inner points.
Let $\nu$ be the map that takes $\pi$ to the ordered pair of these factors.

These considerations amount to a fairly detailed description of $\McSul^D$, which we summarize in Theorem~\ref{lat desc}, below.

\begin{theorem}\label{lat desc}
The lattice $\McSul^D$ is the union (in the sense of binary relations) of $[1,c]_{T\cup L}$ with $[1,c]_{T_H\cup F}=\perm_1\times\C_{23}$.
The intersection of $[1,c]_{T\cup L}$ with $[1,c]_{T_H\cup F}$ is $\perm_1\times\perm_{23}$.
The map $\nu$ is an isomorphism from the subposet of $\McSul^D$ induced by $\McSul^D\setminus[1,c]_{T\cup L}$ to $\perm_1\times\New$.
\end{theorem}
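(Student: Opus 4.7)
The plan is to read the theorem off from the preceding discussion in Section~\ref{lat sec}, assembling the three assertions in the order stated.

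For the set-theoretic equality $\McSul^D=[1,c]_{T\cup L}\cup[1,c]_{T_H\cup F}$, I will take any $x\in\McSul^D$, pick a reduced word $W_x$ for $x$ in $T\cup L_\out\cup F_\doub$, and extend it to a reduced word $W_c$ for $c$. Applying Proposition~\ref{if a fact} to $W_c$ gives a dichotomy: either $W_c$ contains no factored translation, in which case $W_c\subseteq T$ and so $x\in[1,c]_T\subseteq[1,c]_{T\cup L}$; or some letter of $W_c$ is a factored translation, in which case Proposition~\ref{if a fact}.\ref{fact horiz} forces every reflection in $W_x$ to be horizontal, so $W_x\subseteq T_H\cup F$ and $x\in[1,c]_{T_H\cup F}$. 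The subposet structure on $[1,c]_{T\cup L}$ inside $\McSul^D$ agrees with its intrinsic $(T\cup L)$-order because each loop on a double point equals a length-preserving product of two $F_\doub$ elements, so reduced $(T\cup L)$-factorizations of elements of $\Stildejes$ correspond bijectively to reduced $(T\cup L_\out\cup F_\doub)$-factorizations of the same elements.

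For the intersection, I invoke the product decomposition $[1,c]_{T_H\cup F}=\perm_1\times\C_{23}$ coming from Proposition~\ref{if a fact}.\ref{reorder}--\ref{fact c} together with Lemma~\ref{c1c2c3}. Any $x\in[1,c]_{T_H\cup F}$ factors uniquely as $v_1\cdot v_2v_3$ with $v_1\in\perm_1$ and $v_2v_3\in\C_{23}$. Since $\perm_1=\perm^D([\P_0,\P_1])\subseteq\Stildejes$, membership of $x$ in $\Stildejes$---equivalently, by the first part, in $[1,c]_{T\cup L}$---reduces to $v_2v_3\in\Stildejes$, which by the defining condition of $\New$ is exactly $v_2v_3\in\perm_{23}$. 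This yields the intersection $\perm_1\times\perm_{23}$.

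For the statement about $\nu$, I combine the two previous parts: $\McSul^D\setminus[1,c]_{T\cup L}$ equals $\perm_1\times\New$, realized as products $v_1\cdot v_2v_3$ with $v_1\in\perm_1$ acting only on ordinary and barred outer/inner indices and $v_2v_3\in\New$ acting only on ordinary and barred double indices. Because the supports are disjoint, $v_1$ and $v_2v_3$ are literally the two factors appearing in the definition of $\nu$, so $\nu$ is the evident bijection onto $\New\times\perm_{23}$. That $\nu$ and its inverse are order-preserving follows from the product structure on $[1,c]_{T_H\cup F}$, which forces every cover between elements of $\McSul^D\setminus[1,c]_{T\cup L}$ either to refine $v_1$ and fix $v_2v_3$ or to fix $v_1$ and refine $v_2v_3$, and these are precisely the covers of the product poset.

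The hard part will be the partial-order compatibility claim in the first paragraph, namely that $\ell_{T\cup L_\out\cup F_\doub}(\pi)=\ell_{T\cup L}(\pi)$ for every $\pi\in\Stildejes$ lying weakly below $c$. The $\le$ direction is immediate from the loop factorizations; the reverse requires showing that in any reduced $(T\cup L_\out\cup F_\doub)$-factorization of $\pi\in\Stildejes$ the $F_\doub$ letters can be paired into loops on double points, which will use the commutation structure in Proposition~\ref{if a fact}.\ref{reorder} together with Theorem~\ref{isom d}, which pins the rank on $[1,c]_{T\cup L}$ down through the isomorphism to $\tNCDc$.
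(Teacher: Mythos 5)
Your proposal establishes the set-theoretic equality $\McSul^D=[1,c]_{T\cup L}\cup[1,c]_{T_H\cup F}$, but the theorem asserts a union \emph{in the sense of binary relations}: the order on $\McSul^D$ must be exactly the union of the two order relations. Even after you show (as you try to in paragraphs~1 and~4) that $[1,c]_{T\cup L}$ and $[1,c]_{T_H\cup F}$ are both induced subposets of $\McSul^D$, you still have to rule out relations $x\le y$ in $\McSul^D$ with $x\in[1,c]_{T\cup L}\setminus[1,c]_{T_H\cup F}$ and $y\in[1,c]_{T_H\cup F}\setminus[1,c]_{T\cup L}$ (or vice versa); this is the content of the phrase ``only related, in $[1,c]_{T\cup F}$, to other elements of $[1,c]_{T_H\cup F}$'' in the paper's discussion, and it does not follow from the set equality together with the two induced-subposet facts. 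The paper gets it from the observation that every element of $[1,c]_{T_H\cup F}\setminus[1,c]_T$ has a factored translation in \emph{every} reduced expression; consequently any reduced word for $c$ passing through such an element contains a factored translation, and Proposition~\ref{if a fact}.\ref{fact horiz} then forces every reflection in the word --- in particular those producing any element comparable to it --- to be horizontal, placing that element in $[1,c]_{T_H\cup F}$ as well. Your own dichotomy in paragraph~1 contains the germ of this argument, but you apply it to a reduced word for $c$ extending a reduced word for a single $x$; to close the gap you must apply it to a reduced word for $c$ that passes through both $x$ and $y$. Without this, paragraph~3's claim about covers ``between elements of $\McSul^D\setminus[1,c]_{T\cup L}$'' is silent on covers that cross over into $[1,c]_{T\cup L}$.

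Two smaller points. First, the ``hard part'' you defer --- $\ell_{T\cup L}(\pi)=\ell_{T\cup L_\out\cup F_\doub}(\pi)$ for $\pi\in[1,c]_{T\cup L}$ --- does not need the pairing strategy you propose. Factoring each loop at a double point into two length-$\frac12$ elements of $F_\doub$ gives $\ell_{T\cup F}\le\ell_{T\cup L}$ pointwise on $\Stildejes$, and then subadditivity of $\ell_{T\cup F}$ on the factorization $\pi\cdot(\pi^{-1}c)=c$ together with $\ell_{T\cup F}(c)=n=\ell_{T\cup L}(c)$ forces equality at $\pi$ and at $\pi^{-1}c$; this simultaneously proves $[1,c]_{T\cup L}\subseteq[1,c]_{T\cup F}$ with the lengths agreeing. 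Second, in paragraph~3 you assert that $\nu(\pi)=(v_2v_3,v_1)$ lies in $\New\times\perm_{23}$, but your own identification places the second coordinate $v_1$ in $\perm_1$, not $\perm_{23}$; you should confirm which target poset the paper actually intends, rather than asserting a bijection onto a set that your argument does not produce.
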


\begin{remark}\label{model can be used}
Theorem~\ref{lat desc} falls short of a planar model for the lattice $\McSul^D$, but does provide effective tools for dealing concretely with the lattice.
Most elements of the lattice can be represented by their associated noncrossing partition $\tNCDc$.
The remaining elements can be represented by a pair $(\P,\pi)$ consisting of a noncrossing partition $\P\in[\P_0,\P_1]$ and an affine barred even-signed permutation $\pi\in\New$ (one of the 18 permutations appearing in Figure~\ref{perm and new}).
Order relations between elements of $\tNCDc$ are made in the usual way.
Order relations between two pairs are componentwise, in $\tNCDc$ and in the poset shown in Figure~\ref{perm and new}.
An order relation between a pair $(\P,\pi)$ and an element $\Q\in\tNCDc$ can only occur when $\perm^D(\Q)\in\perm_1\times\perm_{23}$.
This is when $\Q$ can be decomposed into two pieces $\Q_1$ and $\Q_{23}$, where $\Q_{23}$ is one of the diagrams shown in Figure~\ref{perm only} and $\Q_1$ is what remains when a thin annulus containing the double points is removed from $\Q$.
Putting pairs of trivial blocks at the double points, we can think of $\Q_1$ as an element of $\tNCDc$.
Order relations between $(\P,\pi)$ and $\Q$ are made by componentwise comparisons of $(\P,\pi)$ and $(\Q_1,\Q_{23})$, comparing $\P$ and $\Q_1$ in $\tNCDc$ and comparing $\pi$ and $\Q_{23}$ according to Figure~\ref{perm and new}.
\end{remark}

\subsection{General Coxeter elements}\label{change cox sec}
Recall that in Sections~\ref{McSul D sec}--\ref{lat sec}, we took a special choice of Coxeter element. 
This choice simplified the notation and the wording of arguments, but was not essential mathematically.
We now briefly state the results of Sections~\ref{barred subsec} and~\ref{lat sec} for an arbitrary Coxeter element $c$.

To begin, we define sets $L_\out$ and $F_\doub$ and $F=L_\out\cup F_\doub$, depending on $c$, such that the special choice of $c$ recovers the earlier definitions.
Given a Coxeter element $c$, represented as a  partition of $\set{\pm1,\ldots,\pm(n-1)}$ into outer, inner, and double points, let $L_\out$ be the set of loops $\ell_i$ for $i$ outer.
Let $F_\doub=\set{\fuu,\fud,\fdu,\fdd}$, where symbols $\fuu$, etc.\ now refer to the double points associated to $c$.
For example
\[
\fuu=(\!(\cdots\,i\,\,\,\,\bar{i}\,\,\,\,i+2n\,\cdots)\!)_{2n}\cdot(\!(\cdots\,j\,\,\,\,\overline{j}\,\,\,\,j+2n\,\cdots)\!)_{2n},
\]
where $i$ is the positive upper double point and $j$ is the positive lower double point, and $\fud$, $\fdu$, and $\fdd$ are defined analogously.
Let $\McSul_c^D$ denote $[1,c]_{T\cup F}$.
We define $\perm_1$, $\perm_{23}$, $\C_{23}$, and $\New$ exactly as before, but using the new arbitrary~$c$.

As in Lemma~\ref{d annulus label}, source-sink moves change the partition of $\set{\pm1,\ldots,\pm(n-1)}$ into outer, inner, and double points.
Every Coxeter element can be obtained from our special choice of Coxeter element by a sequence of source-sink moves.
Every source-sink move is conjugation by an element of $S$, and this conjugation makes the same rearrangements of the cycle notation of elements of $\Stildedes$, $\Stildejes$, and $\Stildebes$ as it makes of the numbering of outer, inner, and double points.
This conjugation is an isomorphism of the relevant partial orders on the groups and on noncrossing partitions.
In particular, the main results of Sections~\ref{barred subsec} and~\ref{lat sec} hold for all~$c$.
Here, we state Proposition~\ref{bes prop ever}, Corollary~\ref{D big lattice}, and Theorem~\ref{lat desc} for arbitrary~$c$.
 
\begin{theorem}\label{gen cox}
Let $c$ be an arbitrary Coxeter element of $\Stildedes$, and define $L_\out$ and $F_\doub$ according to~$c$.
Then
\begin{enumerate}[\quad\rm\bf1.]
\item
The group $\Stildebes$ is generated by $T\cup L_\out\cup F_\doub$.
\item
The interval $[1,c]_{T\cup L_\out\cup F_\doub}$ in $\Stildebes$ is a lattice.
\item
The lattice $\McSul^D_c$ is the union (in the sense of binary relations) of $[1,c]_{T\cup L}$ with $[1,c]_{T_H\cup F}=\perm_1\times\C_{23}$.
The intersection of $[1,c]_{T\cup L}$ with $[1,c]_{T_H\cup F}$ is $\perm_1\times\perm_{23}$.
The map $\nu$ is an isomorphism from the subposet of $\McSul_c^D$ induced by $\McSul_c^D\setminus[1,c]_{T\cup L}$ to $\New\times\perm_{23}$.
\end{enumerate}
\end{theorem}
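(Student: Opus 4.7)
The plan is to deduce the theorem from the special-case versions (Proposition~\ref{bes prop ever}, Corollary~\ref{D big lattice}, and Theorem~\ref{lat desc}) by transport of structure along source-sink moves. First, recall that any two Coxeter elements of $\Stildedes$ are related by a finite sequence of source-sink moves, each of which is conjugation by a simple reflection $s\in S$. Thus it suffices to show that if the conclusions of the theorem hold for some Coxeter element $c$, they also hold for $scs^{-1}$ for any source-sink move $s$, and then iterate starting from the specific $c=s_0\cdots s_{n-1}$ of Sections~\ref{barred subsec}--\ref{lat sec}.

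The key observation is that conjugation by $s\in S\subseteq\Stildedes\subseteq\Stildebes$ is an inner automorphism of $\Stildebes$ that restricts to automorphisms of $\Stildedes$ and of $\Stildejes$. In cycle notation, conjugation by $s$ simply relabels each entry by applying $s$ as a permutation of indices (extended to barred indices by commuting with the bar operator). By Lemma~\ref{d annulus label} (and the parallel bookkeeping in its proof), a source-sink move changes the partition of $\set{\pm1,\ldots,\pm(n-1)}$ into outer, inner, and double points in exactly the same way that conjugation by $s$ relabels the cycle notation of the Coxeter element. Therefore conjugation by $s$ sends the set $L_\out$ associated to $c$ bijectively onto the set $L_\out$ associated to $scs^{-1}$, and the four permutations $\fuu,\fud,\fdu,\fdd$ defined from the upper and lower double points of $c$ are mapped, as an unordered set, to those defined from the double points of $scs^{-1}$. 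Of course $sTs^{-1}=T$.

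Granting this, the three parts of the theorem follow at once. Part~1: because $T\cup L_\out\cup F_\doub$ generates $\Stildebes$ for the specific $c$ of Section~\ref{barred subsec} (Proposition~\ref{bes prop ever}) and conjugation is an automorphism of $\Stildebes$ carrying the generating set for $c$ to the generating set for $scs^{-1}$, the new set also generates. Part~2: since conjugation preserves the length function (with elements of $F_\doub$ weighted $\tfrac12$ and elements of $T\cup L_\out$ weighted $1$), it induces an isomorphism of labeled posets $[1,c]_{T\cup L_\out\cup F_\doub}\cong[1,scs^{-1}]_{T\cup L_\out\cup F_\doub}$, so the lattice property (Corollary~\ref{D big lattice}) is transferred. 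Part~3: the subposets $\perm_1$, $\perm_{23}$, $\C_{23}$, $\New$ are defined purely in terms of cycle and double-point data that are equivariant under conjugation, as is the splitting map $\nu$; hence the decomposition of Theorem~\ref{lat desc} transfers as well.

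The main obstacle is the middle step: verifying that the four elements of $F_\doub$, with their asymmetric dependence on upper versus lower double points and on the bar operator, transform correctly under every source-sink move. Moves that swap upper and lower double points (i.e.\ when the chosen double-point labels change between the inner and outer side of the Coxeter diagram), and moves that change which labels in $\set{\pm1,\pm2,\pm(n-2),\pm(n-1)}$ sit at the double points, require the most care. In each such case one checks that the rearrangement of cycle notation induced by conjugation by $s$ matches the rearrangement prescribed by Lemma~\ref{d annulus label}, so that the set $\set{\fuu,\fud,\fdu,\fdd}$ for $c$ is simply relabeled among the corresponding set for $scs^{-1}$. The remaining case analysis is routine and parallels the ``omit the simple details'' step in the proof of Lemma~\ref{d annulus label}.
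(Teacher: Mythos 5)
Your proposal matches the paper's argument: the paper also proves Theorem~\ref{gen cox} by transporting the special-case results (Proposition~\ref{bes prop ever}, Corollary~\ref{D big lattice}, Theorem~\ref{lat desc}) along source-sink moves, observing that each such move is conjugation by a simple reflection, which rearranges cycle notation in $\Stildedes$, $\Stildejes$, $\Stildebes$ exactly as it rearranges the outer/inner/double-point labeling, and hence gives an isomorphism of all the relevant labeled posets. Your extra care about $F_\doub$ being preserved as an unordered set is precisely the content of the paper's ``same rearrangements of the cycle notation'' remark, and both you and the paper leave the underlying case-check as routine.
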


\section{Affine type $\afftype{B}$}
\label{aff type b}
In this section, we construct a planar model for $[1,c]_T$ in type~$\afftype{B}$ by folding the type-$\afftype{D}$ model.
Details on such folding are in \cite[Section~2.3]{affncA}, with an example (folding $\afftype{A}$ to~$\afftype{C}$) in \cite[Section~4.1]{affncA}.
For the Coxeter-theoretic results in type $\afftype{B}_{n-1}$, we need $n\ge4$, but some results on the planar models work for $n\ge3$.

\subsection{Affine singly even-signed permutations}\label{aff sing sec}
We build a root system of affine type $\afftype B_{n-1}$ in the vector space $V$ from Section~\ref{C sec}.
The simple roots are $\alpha_0=\e_1$, $\alpha_i=\e_{i+1}-\e_i$ for $i=1,\ldots,n-2$ and $\alpha_{n-1}=\e_{n+1}-\e_{n-2}$.
The simple coroots are $\alpha\ck_0=2\alpha_0=2\e_1$ and otherwise $\alpha_i\ck=\alpha_i$.
The corresponding set $S$ of simple reflections contains $s_0=(1 \,\, -1)_{2n}$, $s_i=(\!(i \,\,\,i+1)\!)_{2n}$ for $i = 1,\ldots,n-2$, and $s_{n-1}=(\!(n-2 \,\,\,n+1)\!)_{2n}$.
These simple reflections generate a Coxeter group of type $\afftype B_{n-1}$, realized as the group $\Stildeses$ of \newword{affine singly even-signed permutations}.   
These are the permutations $\pi:\integers\to\integers$ such that
\begin{itemize}
    \item $\pi(i + 2n) = \pi(i) + 2n$,
    \item $\pi(i) = -\pi(-i)$, and
    \item $\set{i\in\integers:i<n,\pi(i)>n}$ has an even number of elements.
\end{itemize}
(Each such $\pi$ fixes all multiples of $n$.)
These are the ``singly'' even-signed affine permutations because they have only one evenness condition, in contrast to the doubly even-signed permutations.
The set of reflections in $\Stildeses$ is 
\[T=\set{(\!(i\,\,\,j)\!)_{2n}:j\not\equiv\pm i\cmod{2n}}\cup\set{(i\,\,\,-i)_{2n}:i\not\equiv0\cmod{2n}}.\]

In type~$\afftype{D}$, we considered the Coxeter group $\Stildedes$ and larger groups~$\Stildejes$ and~$\Stildebes$.
In type~$\afftype{B}$, we consider the Coxeter group $\Stildeses$ and the larger group $\Stildes$, the group of affine signed permutations from Section~\ref{C sec}.
Recall the set $L$ of loops from Section~\ref{D big sec}.

\begin{proposition}\label{big group gen B}
The group $\Stildes$ is generated by $S\cup\set{\ell_1}$ or by~$T\cup L$.
\end{proposition}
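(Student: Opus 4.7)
The plan is to mirror the argument of Proposition~\ref{big group gen}, where the situation is even simpler because $\Stildes$ differs from $\Stildeses$ only by dropping the single evenness condition on $\nbig$, so that $\Stildeses$ sits as an index-$2$ subgroup of~$\Stildes$. Accordingly it will suffice to produce a single element of $\Stildes\setminus\Stildeses$ and adjoin it to a known generating set of~$\Stildeses$.

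First I would record that $L\subseteq\Stildes$: from the cycle notation, each $\ell_i$ visibly satisfies $\ell_i(k+2n)=\ell_i(k)+2n$ and $\ell_i(-k)=-\ell_i(k)$. Next I would verify the direct type-$\afftype{B}$ analog of Lemma~\ref{loop big neg}, namely $\nbig(\pi\ell_i)=\nbig(\pi)\pm1$ for all $\pi\in\Stildes$ and $\ell_i\in L$; the proof is the same tracking of how the single fiber $\set{i+2kn:k\in\integers}$ and its negatives move across the window $\set{j\in\integers:j<n}$, and here one does not even need a companion $\nneg$ statement. Specializing to $\pi=1$ this gives $\nbig(\ell_1)=1$, so $\ell_1\notin\Stildeses$.

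For the first generating set, since $S$ generates $\Stildeses$ by the setup of Section~\ref{aff sing sec} and $\Stildeses$ has index~$2$ in $\Stildes$ (being the kernel of the parity-of-$\nbig$ homomorphism, which is preserved by the analog of Lemma~\ref{loop big neg}), the subgroup $\langle S\cup\set{\ell_1}\rangle$ properly contains $\Stildeses$ and hence equals~$\Stildes$. For the second generating set, $S\subseteq T$ gives $T\cup L\supseteq S\cup\set{\ell_1}$, so $\langle T\cup L\rangle=\Stildes$ follows immediately from the first part.

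The only nontrivial step in the plan is the $\nbig$-parity analog of Lemma~\ref{loop big neg}; I expect it to go through by exactly the case analysis used in type~$\afftype{D}$, simplified by the absence of the $\nneg$ bookkeeping, and I foresee no genuine obstacle.
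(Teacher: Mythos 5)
Your proposal is correct and mirrors the paper's proof almost exactly: note $L\subseteq\Stildes$, use the parity change from Lemma~\ref{loop big neg} to see that $\ell_1$ moves between the two $\nbig$-parity classes (so $\Stildeses\cup\set{\ell_1}$ generates $\Stildes$), and observe $T\cup L\supseteq S\cup\set{\ell_1}$. The only thing to flag is that Lemma~\ref{loop big neg} is already stated for all $\pi\in\Stildes$ (not just for $\Stildejes$), so it applies verbatim here and there is no separate ``type-$\afftype{B}$ analog'' to verify; the paper simply cites the lemma directly, and the more explicit coset/homomorphism framing you give, while true, is more than is needed.
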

\begin{proof}
Since $L\subseteq\Stildes$ and $S$ generates $\Stildeses$ and $S\subseteq T$, we need only check that $\Stildeses\cup\set{\ell_1}$ generates~$\Stildes$.
This follows from the first assertion of Lemma~\ref{loop big neg}.
\end{proof}

For the purposes of folding, we also realize the Coxeter group of type $\afftype{B}_{n-1}$ as a subgroup of a Coxeter group of type~$\afftype{D}_n$.
(The subgroup will be called $W$ to distinguish it from $\Stildeses$.)
Create a totally ordered set $\integers'$ isomorphic to $\integers$ by inserting symbols $(i-1)'$ and $(i+1)'$ with $i-1<(i-1)'<i<(i+1)'<i+1$ for every $i\equiv0\cmod{2n}$.
There is a natural notion of ``modulo $2n$'' on $\integers'$, but there are $2n+2$ classes ``modulo $2n$''.

Consider the group $W'$ of doubly even-signed permutations of $\integers'$, a Coxeter group of type $\afftype{D}_n$, isomorphic to $\Stilde^{\mathrm{des}}_{2n+2}$. 
Numbering the simple reflections with indices in~$\integers'$, the simple reflections are $s'_0=(\!(1' \,\,{-1})\!)_{2n}$, $s'_{1'}=(\!(1' \,\,\,1)\!)_{2n}$, $s'_i=(\!(i \,\,\,{i+1})\!)_{2n}$ for $i=2,\ldots,n-1$, and $s'_{n-1}=(\!(n-1 \,\,\,n+2)\!)_{2n}$,
where subscripts $2n$ are interpreted in the sense of ``modulo $2n$'' of the previous paragraph.
The simple reflections of~$W'$ are $S'=\set{s'_0,s'_{1'},s'_1,\ldots,s'_{n-1}}$.
The background on doubly even-signed permutations in Section~\ref{aff type d} is easily rephrased in terms of these permutations of~$\integers'$.

Let $\chi$ be the automorphism of $W'$ given by $\chi(\pi)=(1'\,\,\,{(-1)'})_{2n}\cdot\pi\cdot(1'\,\,\,{(-1)'})_{2n}$.
We also extend $\chi$ to maps, with the same name, on the larger sets of affine signed permutations of $\integers'$ and of affine barred signed permutations of~$\integers'$.
In the latter setting, we must write $\chi(\pi)=(\!(1'\,\,\,{(-1)'})\!)_{2n}\cdot\pi\cdot(\!(1'\,\,\,{(-1)'})\!)_{2n}$, where the double parentheses have the expanded meaning defined in Section~\ref{barred subsec}. 

The action of $\chi$ swaps $1'$ and~$(-1)'$ in the cycle notation of $\pi\in W'$, and similarly ``modulo~$2n$''.
The map $\chi$ is a diagram automorphism of $W'$, in the sense that it permutes $S'$ and preserves the Coxeter diagram.
Also, $\chi$ takes reflections to reflections, so it is an automorphism of the absolute order on $W'$.
Moreover, $\chi$ takes loops to loops, so it is also an automorphism of the partial order relative to the alphabet of reflections and loops in the larger group of jointly even-signed permutations of $\integers'$.
Finally, $\chi$ also preserves the set $F_\doub$ of factored loops on double points (whether $1'$ or $1$ is a double point), so it is an automorphism of the partial order on affine barred even signed permutations relative to the alphabet of reflections and factored translations.
The following fact is immediate.

\begin{lemma}\label{fixed}
An affine barred even-signed permutation of $\integers'$ is fixed by $\chi$ if and only if it fixes $1'$ and $(-1)'$ or has $2$-cycles $(\!(1'\,\,\,(-1)')\!)_{2n}$.
\end{lemma}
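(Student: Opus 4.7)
The plan is to exploit the fact that $\chi$ is conjugation by the involution $\tau = (\!(1'\,\,\,(-1)')\!)_{2n}$, so $\chi(\pi)=\pi$ is equivalent to $\pi$ commuting with $\tau$. Since $\tau^2 = 1$, the condition $\chi(\pi) = \tau\pi\tau = \pi$ is equivalent to $\pi\tau = \tau\pi$. I will analyze this commutation by evaluating at~$1'$.

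First, I will use the commutation condition $(\tau\pi)(1') = (\pi\tau)(1')$. Because $\pi$ satisfies $\pi(-i) = -\pi(i)$, the right-hand side equals $\pi((-1)') = -\pi(1')$. So if we set $x = \pi(1')$, the condition becomes $\tau(x) = -x$. The first key step is therefore to classify all solutions $x$ of $\tau(x) = -x$.

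The action of $\tau$ fixes every element except those congruent mod $2n$ (and under barring) to $\pm 1'$. For any $x$ with $\tau(x) = x$, the equation forces $x = -x$, which has no solution in the index set. For $x = 1' + 2n\ell$, direct computation gives $\tau(x) = (-1)' + 2n\ell$, and the requirement $\tau(x) = -x = (-1)' - 2n\ell$ forces $\ell = 0$, hence $x = 1'$; symmetrically, $x = (-1)' + 2n\ell$ forces $x = (-1)'$. For barred $x = \bar{y}$, using the expanded meaning of $(\!(\cdots)\!)_{2n}$ and the identities $\bar{y} = y + \frac{\delta}{2}$, $-\bar{y} = \underline{-y}$, the equation reduces to a nonzero multiple of $\delta$ equalling $0$, which is impossible. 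So $x \in \{1', (-1)'\}$, which is the main obstacle to anticipate — being careful with the expanded notation and ruling out barred solutions cleanly.

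Finally, I will convert this back to a cycle-structure statement and verify the converse. If $\pi(1') = 1'$, then by negation commutation $\pi((-1)') = (-1)'$, so $\pi$ fixes $1'$ and $(-1)'$. If $\pi(1') = (-1)'$, then $\pi((-1)') = 1'$, giving the transposition $(1'\,\,\,(-1)')$; the $2n$-periodicity $\pi(i+2n) = \pi(i)+2n$ and commutation with barring then force $\pi$ to contain all cycles in $(1'\,\,\,(-1)')_{2n}$. Conversely, in either case $\tau\pi\tau^{-1}$ agrees with $\pi$: the permutation $\tau$ acts non-trivially only on elements in the single $\pi$-cycle class determined by $1'$, and in both cases that class is visibly preserved by the swap $1' \leftrightarrow (-1)'$ (trivially in case one, and because the $2$-cycle $(1'\,\,\,(-1)')$ is invariant under the swap in case two). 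This gives the biconditional claimed.
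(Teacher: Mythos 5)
The paper declares this lemma ``immediate'' and supplies no written proof, so there is no argument of record to compare against; what you have produced is a correct, fully spelled-out verification of a fact the paper treats as self-evident. Your recasting of $\chi$-invariance as commutation with the involution $\tau=(1'\,\,(-1)')_{2n}$ and the evaluation at $1'$ (giving $\tau(\pi(1'))=-\pi(1')$ via $\pi(-i)=-\pi(i)$) isolates exactly the right constraint, and your case analysis correctly excludes both the $\tau$-fixed indices (where $x=-x$ fails) and the barred shifts (where a nonzero multiple of $\delta$ would have to vanish), leaving only $\pi(1')\in\{1',(-1)'\}$. The forward direction then follows from periodicity, negation, and barring compatibility, and your converse check---that $\tau$ only relabels indices in the single $\pi$-cycle class through $1'$, a class preserved by the swap in both scenarios---is the standard cycle-relabeling observation for conjugation. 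The one place where a reader might want a touch more care is the passage from $\pi(1')\in\{1',(-1)'\}$ to the claim that \emph{all} shifts and barred copies behave accordingly, but you do flag the relevant axioms ($2n$-periodicity and commutation with barring), so the argument is complete.
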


Let $W$ be the subgroup of $W'$ consisting of elements fixed by $\chi$.
We define an isomorphism $\eta:\Stildeses\to W$ that sends each singly even-signed permutation of~$\integers$ to a doubly even-signed permutation of $\integers'$ (fixed by $\chi$) by adjoining the singleton cycles $(1')_{2n}$ and $((-1)')_{2n}$ or the $2$-cycles $(1'\,\,\,{(-1)'})_{2n}$ to make the result doubly even-signed.
The inverse takes an element fixed by $\chi$ to a singly even-signed permutation of $\integers$ by deleting the cycles $(1')_{2n}$ and $((-1)')_{2n}$ or the cycle $(1'\,\,\,{(-1)'})_{2n}$.

The subgroup $W$ is a Coxeter group with simple reflections $\eta(S)$.
These simple reflections are $\eta(s_0)=s'_0s'_{1'}=(1 \,\, {-1})_{2n}(1' \,\,\,{(-1)'})_{2n}$, $\eta(s_i)=s'_i=(\!(i \,\,\,i+1)\!)_{2n}$ for $i=1,\ldots,n-2$, and $\eta(s_{n-1})=s'_{n-1}=(\!(n-1 \,\,\,n+2)\!)_{2n}$.
The reflections $\eta(T)$ and loops $\eta(L)$ in $W$ are obtained by applying $\eta$ to the reflections and loops in $\Stildeses$.

The Coxeter group $W$ is a folding of $W'$ in the sense of \cite[Section~2.3]{affncA}.
Each Coxeter element of $W$ is also a Coxeter element of $W'$, because the simple reflection~$\eta(s_0)$ of $W$ is the product of the two simple reflections $s'_0$ and $s'_{1'}$ of~$W'$.
Thus we can (in the first instance) encode a Coxeter element $c$ of $W$ as a partition of $\set{1',(-1)',\pm1,\ldots,\pm(n-1)}$ into outer, inner, and double points.
Since $\eta(s_0)=s'_0s'_{1'}$ the Coxeter element $c$ of $W'$ has $s_0'$ and $s'_{1'}$ either both preceding~$s'_1$ or both following $s'_1$, so $1'$ and $(-1)'$ are the upper double points.
Since the choice of upper double points never varies, in fact we encode a Coxeter element $c$ of $W$ as a partition of $\set{\pm1,\ldots,\pm(n-1)}$ into outer, inner, and double points.
The numbers $\pm(n-1)$ are lower double points if and only if $s_{n-1}$ and $s_{n-2}$ either both precede or both follow $s_{n-3}$ in $c$, and otherwise, $\pm(n-2)$ are lower double points.
The remaining numbers in $\set{\pm1,\ldots,\pm(n-1)}$ are inner or outer points.
For $i\in\set{1,\ldots,n-2}$, if $\pm i$ is not a double point, then $i$ is outer and~$-i$ is inner if and only if $s_{i-1}$ precedes~$s_i$ in $c$.
Otherwise, $i$ is inner and $-i$ is outer.
If $\pm(n-1)$ is not a double point, then $n-1$ is outer and $-(n-1)$ is inner if and only if $s_{n-3}$ precedes $s_{n-1}$ in $c$.
Otherwise $n-1$ is inner and $-(n-1)$ is outer.
The following lemma is now immediate from Lemma~\ref{d annulus label} by the isomorphism~$\eta$.

\begin{lemma}\label{b annulus label}
Let $c$ be a Coxeter element of $\Stildeses$, represented as a partition of $\set{\pm1,\ldots,\pm(n-1)}$ into outer, inner, and double points.
If $a_1, \ldots, a_{n-2}$ are the outer points in increasing order and~$p$ is the positive label of the lower double point, then $c$ is the permutation
\[
(\!(\cdots\,a_1\,\,\,a_2\,\cdots\,a_{n-2} \,\,\,a_1+2n\,\cdots)\!)(p \,\,\,{-p+2n})_{2n}
\]
The corresponding permutation $\eta(c)\in W$ has an additional factor $(1'\,\,\,(-1)')_{2n}$.
\end{lemma}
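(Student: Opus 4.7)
The plan is to deduce this statement by lifting to the type $\afftype{D}_n$ group $W'$, where Lemma~\ref{d annulus label} already applies, and then pushing the identity back down through the isomorphism $\eta$.

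First I would observe that $\eta(c)$ is a Coxeter element of $W'$. This is immediate from the relation $\eta(s_0) = s'_0 s'_{1'}$ and the fact that $\eta(s_i) = s'_i$ for $i \ge 1$: any expression of $c$ as a product of the simple reflections of $W$ expands, after applying $\eta$, to an expression of $\eta(c)$ as a product of the simple reflections of $W'$ in some order, with $s'_0$ and $s'_{1'}$ appearing adjacently.

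Next I would verify that the partition of $\set{\pm 1', \pm 1, \ldots, \pm(n-1)}$ into outer, inner, and double points associated to $\eta(c)$ (as in Section~\ref{sym ann doub sec}, applied to $W' \cong \Stilde^{\mathrm{des}}_{2n+2}$) extends the partition for $c$ by declaring $\pm 1'$ to be the upper double points. Indeed, because $s'_0$ and $s'_{1'}$ sit adjacently in any reduced expression for $\eta(c)$, they both precede or both follow $s'_1$, and so by the rule in Section~\ref{sym ann doub sec} the numbers $\pm 1'$ occupy the upper double point. The criteria determining the lower double points and the outer/inner partition of the remaining labels depend only on the relative positions of the simple reflections $s'_1, \ldots, s'_{n-1}$, which are the same as those of $s_1, \ldots, s_{n-1}$ inside $c$; hence the lower double points and outer/inner points of $\eta(c)$ agree with those described for $c$ in the lemma.

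Now I would apply Lemma~\ref{d annulus label} to $\eta(c)$. With outer points $a_1 < \cdots < a_{n-2}$, upper double points $\pm 1'$, and positive lower double point $p$, the lemma gives
\[
\eta(c) = (\!(\cdots\,a_1\,\,a_2\,\cdots\,a_{n-2}\,\,a_1 + 2n\,\cdots)\!)\,(1'\,\,\,{(-1)'})_{2n}\,(p\,\,\,{-p+2n})_{2n},
\]
which is the second (parenthetical) assertion of the lemma. Applying $\eta^{-1}$ deletes the factor $(1'\,\,\,(-1)')_{2n}$ by the definition of $\eta$, yielding the first displayed formula for $c$.

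There is no real obstacle: the only step requiring any care is the matching of the two combinatorial encodings, and this follows directly from the defining rules in Section~\ref{sym ann doub sec} once one observes that the only difference between the two partitions is the insertion of $\pm 1'$ as upper double points.
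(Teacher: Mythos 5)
Your proof is correct and is essentially the same as the paper's: the paper, in the paragraph preceding the lemma, already establishes that $\eta(c)$ is a Coxeter element of $W'$, that $\pm1'$ occupy the upper double point, and that the lower double points and inner/outer partition for $\eta(c)$ agree with those for $c$; it then states that the lemma is "immediate from Lemma~\ref{d annulus label} by the isomorphism $\eta$," which is precisely your remaining step of applying Lemma~\ref{d annulus label} to $\eta(c)$ and deleting the $(1'\,\,\,(-1)')_{2n}$ factor via $\eta^{-1}$.
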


We extend $\eta$ to an isomorphism (also called $\eta$) from $\Stildes$ to the group of jointly even-signed permutations of $\integers'$ fixed by $\chi$.
The map acts by adjoining the singleton cycles $(1')_{2n}$ and $((-1)')_{2n}$ or the $2$-cycles $(1'\,\,\,{(-1)'})_{2n}$ as needed to make the result jointly even-signed.
The inverse deletes the cycles $(1')_{2n}$ and $((-1)')_{2n}$ or deletes the cycle $(1'\,\,\,{(-1)'})_{2n}$.


We write $[1,c]^B_T$ for the interval in the absolute order on $\Stildeses$ and $[1,c]^B_{T\cup L}$ for the interval in the prefix/postfix/subword order on $\Stildes$ relative to the alphabet~$T\cup L$.
(See Proposition~\ref{big group gen B}.)
Recalling that $\eta(c)$ is a Coxeter element of $W$ and thus a Coxeter element of $W'$, we write $[1,\eta(c)]^D_{T'}$, $[1,\eta(c)]^D_{T'\cup L'}$, and $[1,\eta(c)]^D_{T'\cup F'}$ for the type-$\afftype{D}$ intervals defined earlier in the paper, but with elements that are affine permutations of $\integers'$ or affine barred permutations of $\integers'\cup\set{\bar\imath:i\in\integers'}$ .
We gather two facts that will be useful later.

\begin{lemma}\label{1cT B}
Let $c$ be a Coxeter element of $\Stildeses$, represented as a partition of $\set{\pm1,\ldots,\pm(n-1)}$ into outer, inner, and double points.
Then $\eta$ is an isomorphism from $[1,c]^B_T$ to the subposet of $[1,\eta(c)]^D_{T'}$ induced by elements fixed by~$\chi$.
\end{lemma}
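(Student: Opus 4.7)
Since $\eta$ is a group isomorphism from $\Stildeses$ onto $W=(W')^\chi$, the image of $[1,c]^B_T$ automatically consists of $\chi$-fixed elements; what remains is to match $\leq_T$-comparability in $\Stildeses$ with $\leq_{T'}$-comparability in $W'$ and to check surjectivity onto the $\chi$-fixed part of $[1,\eta(c)]^D_{T'}$. My plan is to reduce everything to a length formula
\[
\ell_{T'}(\eta(\pi))=\ell_T(\pi)+\epsilon(\pi)\qquad\text{for all }\pi\in\Stildeses,
\]
where $\epsilon(\pi):=\nneg(\pi)\bmod 2\in\set{0,1}$ records whether $\eta$ appends the $2$-cycle $(1'\,(-1)')_{2n}$ or the two singletons. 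This is consistent with $\ell_{T'}(\eta(c))=n+1=\ell_T(c)+1$ and the observation $\nneg(c)=1$ arising from the wrap-around of the infinite cycle in Lemma~\ref{b annulus label}.

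The upper bound in the length formula follows from expansion: $\eta$ sends each type-$1$ reflection of $T$ to a single $\chi$-fixed reflection of $T'$, and each type-$2$ reflection $(i,-i)_{2n}$ of $T$ to a product of two commuting, $\chi$-swapped reflections of $T'$ whose joint effect is $(i,-i)_{2n}\cdot(1'\,(-1)')_{2n}$. The identity
\[
(i,-i)_{2n}\,(j,-j)_{2n}=(\!(i,j)\!)_{2n}\,(\!(i,-j)\!)_{2n}\qquad(i\not\equiv\pm j\bmod 2n),
\]
combined with Hurwitz moves (which preserve the type of each letter, since conjugation of a type-$2$ reflection by any reflection of $T$ stays type-$2$), allows pairs of type-$2$ letters to be traded for pairs of type-$1$ letters in any reduced $T$-expression; hence some reduced $T$-expression has exactly $\epsilon(\pi)$ type-$2$ letters and yields a $T'$-expansion of length $\ell_T(\pi)+\epsilon(\pi)$. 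The matching lower bound is the reverse folding: Hurwitz moves on a reduced $T'$-expression for the $\chi$-fixed $\eta(\pi)$ yield a $\chi$-symmetric expression, from which single $\chi$-fixed reflections give type-$1$ letters of $T$ and commuting $\chi$-orbit pairs (together with a paired copy of $(1'\,(-1)')_{2n}$) give type-$2$ letters.

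With the length formula in place, the isomorphism follows quickly. If $\pi\leq_T\pi'\leq_T c$, write $c=\pi\rho\rho'$ additively in $T$; applying $\eta$, the length formula, and additivity of $\epsilon$ (a group homomorphism $\Stildeses\to\integers/2\integers$) yields $\ell_{T'}$-additivity and hence a chain $\eta(\pi)\leq_{T'}\eta(\pi')\leq_{T'}\eta(c)$. Conversely, for a $\chi$-fixed $\sigma\leq_{T'}\eta(c)$, reduced-write $\eta(c)=\sigma\tau$ in $T'$; applying $\chi$ and using $\chi(\sigma)=\sigma$ and $\chi(\eta(c))=\eta(c)$ forces $\chi(\tau)=\tau$, so $\tau\in W$, and the length formula applied to $\eta^{-1}(\sigma)$ and $\eta^{-1}(\tau)$ gives $\ell_T$-additivity, placing $\eta^{-1}(\sigma)$ in $[1,c]^B_T$. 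The main obstacle is the lower bound in the length formula: reduced $T'$-expressions are related only by Hurwitz moves rather than simple braid moves, so a careful combinatorial argument is needed to show that every reduced $T'$-expression for a $\chi$-fixed element can be folded into the $\chi$-symmetric form required for regrouping via $\eta^{-1}$.
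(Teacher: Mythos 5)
The paper's proof of this lemma is a one-liner: it cites a general fact about folding from the companion paper, \cite[Proposition~2.2]{affncA}, which says precisely that the interval $[1,\eta(c)]_{\eta(T)}$ in the folded Coxeter group $W$ is the induced subposet of $\chi$-fixed elements of $[1,\eta(c)]^D_{T'}$; the lemma then follows because $\eta$ is a group isomorphism taking $T$ to $\eta(T)$. Your approach is genuinely different: you attempt a direct proof via the proposed length formula $\ell_{T'}(\eta(\pi))=\ell_T(\pi)+\epsilon(\pi)$. The content of the general folding result is, in effect, exactly what your length-formula program is trying to reconstruct by hand.

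There are two gaps. First, the one you flag yourself: the lower bound $\ell_{T'}(\eta(\pi))\ge\ell_T(\pi)+\epsilon(\pi)$ is left unproved, and the ``fold a reduced $T'$-expression into a $\chi$-symmetric one'' step is not routine; Hurwitz moves are transitive on reduced $T'$-factorizations of the Coxeter element $\eta(c)$, but you would need this (or a substitute) for every $\chi$-fixed $\sigma\le_{T'}\eta(c)$, and then an argument that the Hurwitz orbit contains a $\chi$-symmetric representative. This is essentially the full force of the cited folding proposition, so ``acknowledging a careful combinatorial argument is needed'' is not a small footnote; it is the whole proof.

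Second, and this one you do not flag: even granting the length formula, the step from ``additivity of $\epsilon$ as a homomorphism to $\integers/2\integers$'' to ``$\ell_{T'}$-additivity'' does not follow. With $c=\pi\rho\rho'$ reduced in $T$, the length formula gives $\sum\ell_{T'}(\eta(\cdot))=n+(\epsilon(\pi)+\epsilon(\rho)+\epsilon(\rho'))$, while $\ell_{T'}(\eta(c))=n+1$. The mod-$2$ homomorphism only forces $\epsilon(\pi)+\epsilon(\rho)+\epsilon(\rho')$ to be odd, so a priori it could be $3$, and then additivity in $\ell_{T'}$ fails. To close this you would need the integer statement that at most one of $\epsilon(\pi),\epsilon(\rho),\epsilon(\rho')$ is $1$; equivalently, that any reduced $T$-word for $c$ has at most one type-$2$ letter, or that $\epsilon$ is monotone along $\le_T$ below $c$. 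The natural way to see either of these uses the noncrossing-partition isomorphism (Theorem~\ref{isom b}), which itself relies on the present lemma, so you would be arguing in a circle. A self-contained proof must establish this directly, which again is doing the work of the folding proposition. (The surjectivity half of your last paragraph, by contrast, is fine: there the parity constraint alone forces the two $\epsilon$-values to sum to $1$, since their sum is odd and bounded by $2$.)
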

\begin{proof}
An easy general fact about folding \cite[Proposition~2.2]{affncA} says that the interval $[1,\eta(c)]_{\eta(T)}$ in $W$ is the subposet of the interval $[1,\eta(c)]^D_{T'}$ induced by the set of elements fixed by $\chi$.
The lemma follows.
\end{proof}

\begin{remark}\label{harder}
The result about $[1,c]^B_{T\cup L}$ and $[1,\eta(c)]^D_{T'\cup L'}$ that is analogous to Lemma~\ref{1cT B} will fall out as a corollary (Corollary~\ref{isom b too}) as we prove our main results.
\end{remark}

\begin{lemma}\label{1cTF B}
Let $c$ be a Coxeter element of $\Stildeses$, represented as a partition of $\set{\pm1,\ldots,\pm(n-1)}$ into outer, inner, and double points.
Then the subposet of $[1,\eta(c)]^D_{T'\cup L'}$ induced by elements fixed by~$\chi$ equals the subposet of $[1,\eta(c)]^D_{T'\cup F'}$ induced by elements fixed by~$\chi$.
\end{lemma}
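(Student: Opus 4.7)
The plan is to prove set-theoretic equality of the two subposets; the partial orders will then agree because Theorem~\ref{gen cox} shows that $[1,\eta(c)]^D_{T'\cup L'}$ is a subposet of $[1,\eta(c)]^D_{T'\cup F'}$ with the induced order. One inclusion is immediate from that subposet relation. For the reverse inclusion I would argue by contradiction: assume some $\pi\in[1,\eta(c)]^D_{T'\cup F'}$ is fixed by $\chi$ but does not lie in $[1,\eta(c)]^D_{T'\cup L'}$. By Theorem~\ref{gen cox}, such a $\pi$ factors via $\nu$ as $\pi=v\cdot w$ with outer/inner part $v\in\perm_1$ and double-point part $w\in\C_{23}$ lying in $\New$. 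Since $\chi$ is conjugation by $(\!(1'\,\,\,(-1)')\!)_{2n}$, it acts trivially on all indices outside the classes $\pm1'\pmod{2n}$, so $\chi$ preserves this factorization and $\chi$-fixedness of $\pi$ forces $w$ itself to be $\chi$-fixed in $\C_{23}$. The problem thus reduces to showing that no element of $\New$ is fixed by $\chi$.

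Next I would determine the $\chi$-action on $\C_{23}=[1,c_2]_{T'_H\cup F'}\times[1,c_3]_{T'_H\cup F'}$. Because $\chi$ sends $\e_{1'}$ to $-\e_{1'}$, it interchanges the horizontal roots $\e_{n-1}-\e_{1'}$ and $\e_{n-1}+\e_{1'}$, swaps the irreducible components $\Psi_2\leftrightarrow\Psi_3$, and so swaps the two factors of $\C_{23}$. At the level of factored translations, upper-barring the class of $(-1)'$ coincides with lower-barring the class of $1'$, yielding $\chi(\fud)=\fdd$ and $\chi(\fuu)=\fdu$; at the level of reflections, $\chi$ sends $(\!(1',n-1)\!)_{2n}$ to $(\!(1',-(n-1))\!)_{2n}$ and $(\!(1',-n-1)\!)_{2n}$ to $(\!(1',n+1)\!)_{2n}$ by swapping $1'\leftrightarrow(-1)'$ in cycle notation. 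The $\chi$-fixed elements of $\C_{23}$ are therefore exactly the six pairs $(x,\chi(x))$ for $x\in[1,c_2]_{T'_H\cup F'}$: $(1,1)$, $(\fud,\fdd)$, $(\fdu,\fuu)$, the two reflection-reflection pairs $((\!(1',n-1)\!)_{2n},(\!(1',-n+1)\!)_{2n})$ and $((\!(1',-n-1)\!)_{2n},(\!(1',n+1)\!)_{2n})$, and $(c_2,c_3)$.

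The main obstacle is to verify that each of these six pairs gives a permutation in $\perm_{23}$ rather than $\New$, contradicting $w\in\New$. For $(1,1)$ this is trivial. For $(\fud,\fdd)$ and $(\fdu,\fuu)$ the identities $\fud\fdd=\ell_{-(n-1)}$ and $\fdu\fuu=\ell_{n-1}$ from Section~\ref{barred subsec} collapse each product to a single loop in $L'$. For the two reflection pairs, each factor lies in the absolute-order interval $[1,c_2]_{T'}$ or $[1,c_3]_{T'}$, so the product lies in $[1,c_2c_3]_{T'}\subseteq[1,\eta(c)]^D_{T'}\subseteq[1,\eta(c)]^D_{T'\cup L'}$. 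For $(c_2,c_3)$ I would factor $c_2=\fud\cdot(\!(1',n-1)\!)_{2n}$ and $c_3=\fuu\cdot(\!(1',n+1)\!)_{2n}$ from Lemma~\ref{c1c2c3}, commute the two reflections past the two factored translations (they commute because each reflection permutes the pair of classes already being barred by the factored translation it is crossing), and then apply $\fud\fuu=\ell_1$ to obtain an expression in $T'\cup L'$. Having eliminated every $\chi$-fixed element of $\C_{23}$ from $\New$, the contradiction is complete.
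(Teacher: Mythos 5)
The proposal is correct and follows the same basic strategy as the paper: the paper simply asserts (in two sentences) that no element of $[1,\eta(c)]^D_{T'\cup F'}$ using exactly one factor from $F_\doub$ can be $\chi$-fixed, and you prove this explicitly by identifying the $\chi$-fixed elements of $\C_{23}$ and checking that each lies in $\perm_{23}$ rather than $\New$. Your factorization through $\nu$ and the observation that $\chi$ acts trivially on the outer/inner factor are exactly right, as is the computation that $\chi$ swaps $\fuu\leftrightarrow\fdu$ and $\fud\leftrightarrow\fdd$ and swaps the two irreducible components of the horizontal root system (so swaps the two factors of $\C_{23}$), forcing $\chi$-fixed elements to be ``diagonal'' pairs $(x,\chi(x))$.

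Two steps could be tightened. First, for the two reflection--reflection pairs you invoke $[1,c_2]_{T'}\times[1,c_3]_{T'}\subseteq[1,c_2c_3]_{T'}$, but this product property is only established for the alphabet $T'_H\cup F'$ (Proposition~\ref{if a fact}), not for the absolute order on $T'$ alone; moreover $c_2$ and $c_3$ live in $\Stildebes$, not $W'$, so $[1,c_2]_{T'}$ is not literally meaningful. The cleaner observation is simply that the product of the two reflections has $\ell_{T'\cup F'}=2$ and admits a reduced expression in $T'$, hence uses zero $F_\doub$ letters and cannot lie in $\New$ by definition. Second, the $\chi$-fixedness analysis is carried out for the special Coxeter element of Lemma~\ref{c1c2c3}; you should note (as in Section~\ref{change cox sec}) that source-sink conjugation transports the whole picture to an arbitrary Coxeter element, so that no generality is lost. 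Neither issue affects the validity of the conclusion, and your argument usefully makes explicit what the paper leaves to the reader.
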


\begin{proof}
None of the factored translations $\fuu,\fud,\fdu,\fdd$ defined in Sections~\ref{barred subsec} and~\ref{change cox sec} are fixed by $\chi$, and furthermore, no element of $[1,\eta(c)]_{T'\cup F'}$ is fixed by $\chi$ if it is a product of elements of $T'\cup F'$ using exactly one of these factors $\fuu,\fud,\fdu,\fdd$.
Thus every element of $[1,\eta(c)]_{T'\cup F'}$ fixed by $\chi$ is actually in $[1,\eta(c)]_{T'\cup L'}$.
\end{proof}

\subsection{Symmetric noncrossing partitions of an annulus with one double point}\label{sym one sec}
We now introduce the combinatorial model for $[1,c]^B_{T\cup L}$ and $[1,c]_T$.
As in Section~\ref{nc D sec}, the definitions and basic results on noncrossing partitions are a special case of definitions and results in \cite[Section~3]{surfnc}.

In Section~\ref{aff sing sec}, we encoded a choice of Coxeter element $c$ of $\Stildeses$ as a choice of whether $\pm(n-1)$ or $\pm(n-2)$ are the bottom double points and a choice, for the remaining elements $i\in\set{1,\ldots(n-1)}$, of whether $i$ or $-i$ is outer.
For some fixed~$c$, let $B$ be the annulus with one double point with these numbered points and let~$\phi$ be the symmetry (which also exchanges two bottom double points).
The symmetry $\phi$ also has a second fixed point in $B$, besides the bottom double points, corresponding to the top double points in the type-$\afftype{D}$ model.
We refer to this as the ``non-double fixed point'' of $\phi$.

\begin{remark}[Projecting to the Coxeter plane]\label{B proj rem}
%
%
%
One can prove the type-$\afftype{B}$ analog of Theorem~\ref{d orb proj} by similar arguments and thereby see how the symmetric annulus~$B$ with one double point arises naturally by projecting $\set{\e_i:\,i\in\integers,\,i\not\equiv0\mod{n}}$ (an orbit of the action of $\Stildeses$ on $V$) to the Coxeter plane in $V^*$.
We omit the details.
\end{remark}

We define \newword{boundary segments} of $B$ and \newword{symmetric ambient isotopy} in $B$ just as in Section~\ref{nc D defs sec}.
An \newword{arc} in $B$ is a non-oriented curve $\alpha$ in $B$, having endpoints at numbered points and satisfying these requirements:
\begin{itemize}
\item
$\alpha$ does not intersect itself except possibly at endpoints.
\item
$\alpha$ is disjoint from double points and the boundary of $B$, except at endpoints.
\item
$\alpha$ does not bound a monogon in $B$ (even if its endpoints are the two double points).
\item 
$\alpha$ does not combine with a boundary segment to bound a digon in $B$. 
\item
Either $\alpha=\phi(\alpha)$ or $\alpha$ and $\phi(\alpha)$ don't intersect, except possibly at endpoints.
\item
$\alpha$ and $\phi(\alpha)$ do not combine to form a digon in $B$ unless that digon contains a double point.
However, we do not rule out $\alpha$ and $\phi(\alpha)$ if, at a vertex of the digon, the two edges incident to that point are actually attached to opposite double points.
The left picture of Figure~\ref{arc digon} shows a digon that is ruled out.
(At one vertex, both edges attach to $+$ and at the other vertex, both edges attach to $-$.)
The right picture shows a digon that is allowed.
\begin{figure}
\begin{tabular}{cc}
\scalebox{0.5}{\includegraphics{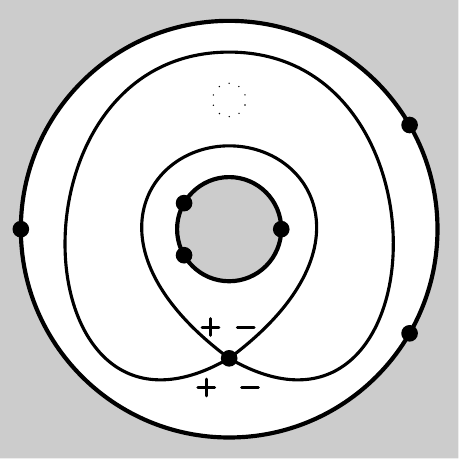}}&
\scalebox{0.5}{\includegraphics{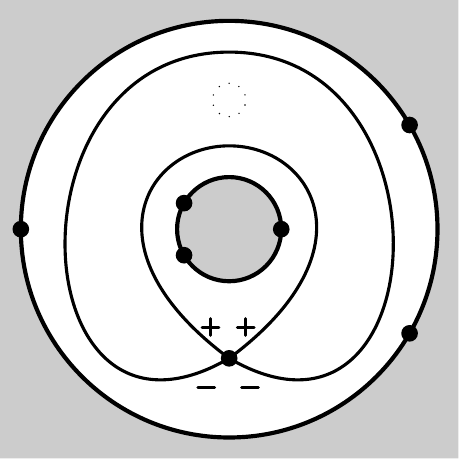}}
\end{tabular}
\caption{Left:  Not a symmetric pair of arcs;  Right:  A symmetric pair of arcs}
\label{arc digon}
\end{figure}
\end{itemize}
When $\alpha=\phi(\alpha)$, we call $\alpha$ a \newword{symmetric arc}.
When $\alpha$ and $\phi(\alpha)$ are disjoint except possibly at endpoints, we call $\alpha,\phi(\alpha)$ a \newword{symmetric pair of arcs}.
Symmetric arcs/pairs are considered up to isotopy and up to swapping $\alpha$ with~$\phi(\alpha)$.

An \newword{embedded block} in $B$ defined just as in Section~\ref{nc D defs sec} for embedded blocks in $D$, except that there is one more kind of symmetric block:
A degenerate \newword{curve block} consisting of a symmetric arc.
We note that in the annulus with only one double point, a stitched disk block must contain more numbered points than just the two double points on its boundary---otherwise the curves forming its boundary are not arcs.
In fact, the stitched disk block containing only double points is disallowed because its part is played by a symmetric degenerate block consisting of a symmetric arc with endpoints at the two copies of the double point.
A symmetric block contains either the double point or the non-double fixed point of $\phi$ in its interior, but not both.
A symmetric annular block contains both.

A \newword{(symmetric) noncrossing partition} of $B$ is defined exactly as in type $\afftype{D}$:  It is a collection $\P$ of disjoint embedded blocks such that every numbered point is in some block of $\P$, such that the action of $\phi$ permutes the blocks of $\P$, and having at most two annular blocks.
We also define the partial order on noncrossing partitions exactly as in type $\afftype{D}$.
We write $\tNCBc$ for the poset of noncrossing partitions of~$B$.
An annular block is \newword{non-dangling} if both components of its boundary contain numbered points.
Otherwise it is \newword{dangling}.
Let $\tNCBcircc$ stand for the subposet of $\tNCBc$ induced by noncrossing partitions with no dangling annular blocks.

\begin{example}\label{B exs}
Figure~\ref{nc ex fig B} shows examples of symmetric noncrossing partitions of an annulus with one double point in the case where $n=7$ and $c=s_2s_5s_1s_0s_4s_6s_3$.
\end{example}

\begin{figure}
\scalebox{0.47}{\includegraphics{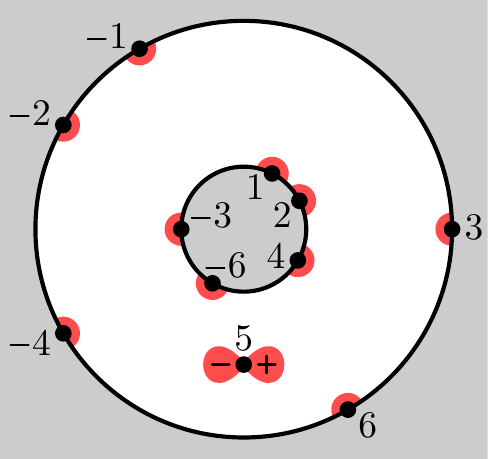}}
\quad 
\scalebox{0.47}{\includegraphics{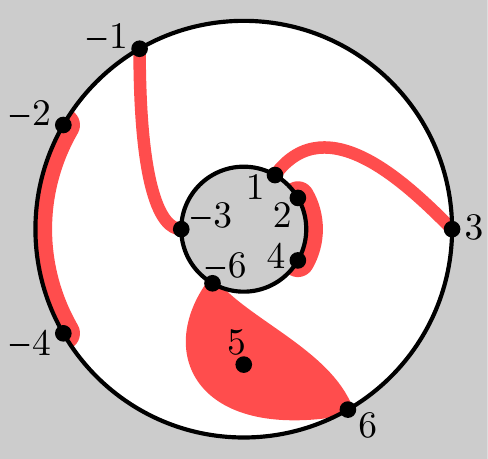}}
\quad 
\scalebox{0.47}{\includegraphics{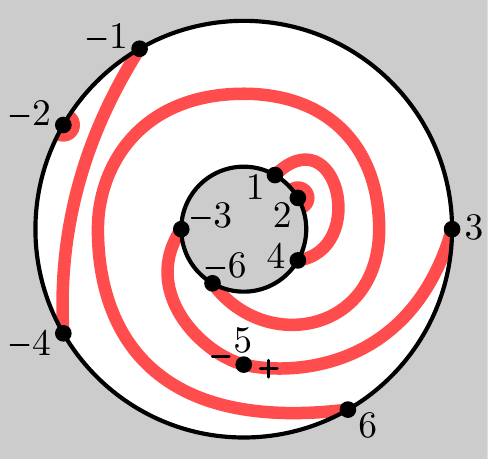}}\\[12pt]
\scalebox{0.47}{\includegraphics{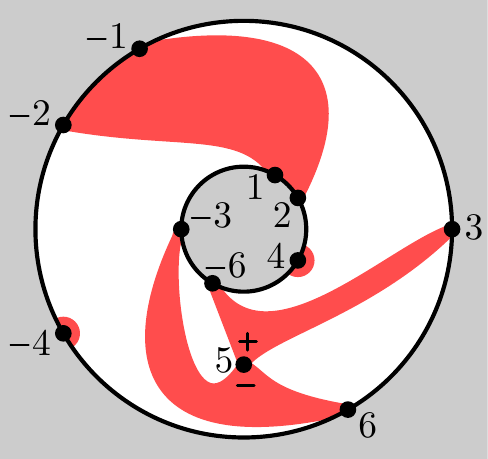}}
\quad 
\scalebox{0.47}{\includegraphics{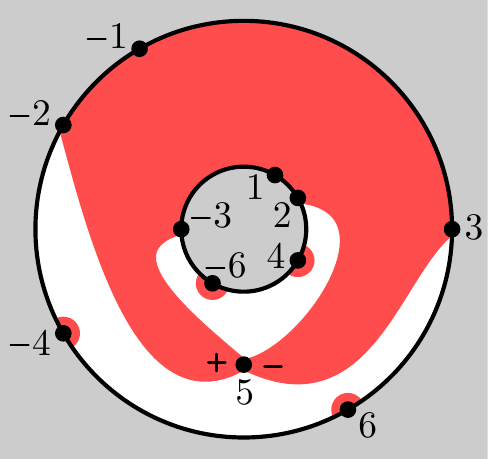}}
\quad 
\scalebox{0.47}{\includegraphics{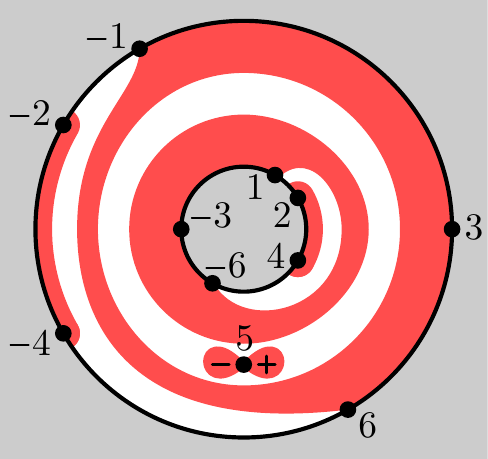}}\\[12pt]
\scalebox{0.47}{\includegraphics{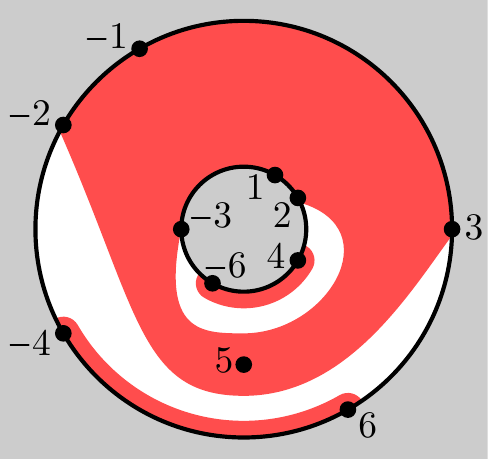}}
\quad 
\scalebox{0.47}{\includegraphics{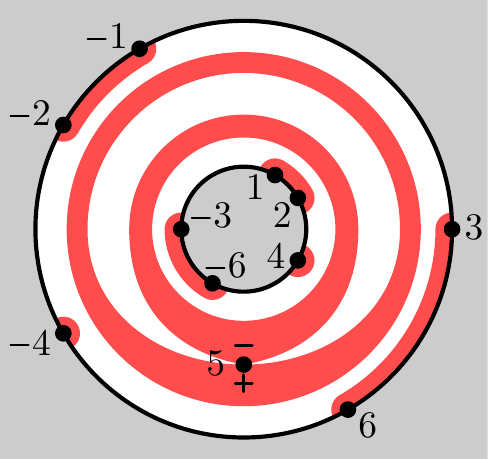}}
\quad 
\scalebox{0.47}{\includegraphics{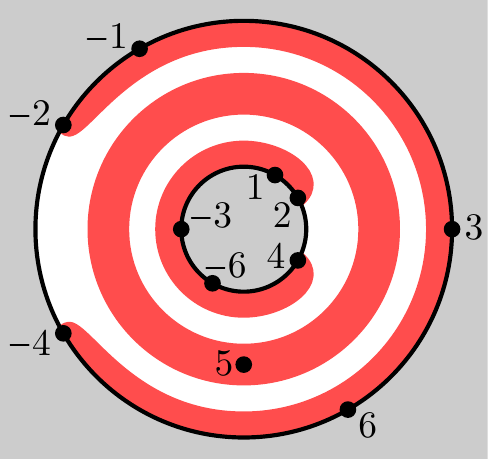}}\\[12pt]
\scalebox{0.47}{\includegraphics{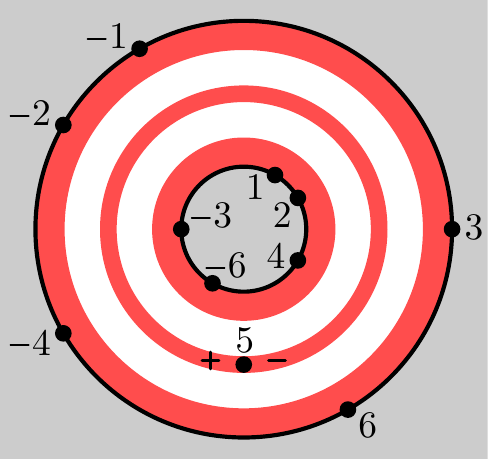}}
\caption{Symmetric noncrossing partitions of an annulus with one double point}
\label{nc ex fig B}
\end{figure}

The following theorem is \cite[Theorem~4.6]{surfnc}, a special case of \cite[Theorem~3.18]{surfnc}.

\begin{theorem}\label{B tilde main}
The poset $\tNCBc$ of symmetric noncrossing partitions of an annulus with $n-2$ marked points on each boundary and one pair of double points is graded, with rank function given by $n-1$ minus the number of symmetric pairs of distinct non-annular blocks plus the number of symmetric annular blocks.
\end{theorem}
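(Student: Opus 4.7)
The plan is to adapt the strategy used for Theorem~\ref{D tilde main} to $B$, noting that Theorem~\ref{B tilde main} is a special case of \cite[Theorem~3.18]{surfnc}. I would develop the combinatorics of $\tNCBc$ in close parallel to that of $\tNCDc$, paying attention only to the features specific to $B$: a single pair of double points rather than two, the existence of a non-double fixed point of $\phi$ in $B$, and the possibility of \emph{symmetric arcs} (arcs fixed by $\phi$) giving rise to symmetric degenerate curve blocks with no analog in $D$.

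First, I would verify the curve-set perspective: for $\P\in\tNCBc$, set $\curve(\P)$ equal to the union of the arcs (including symmetric arcs) and boundary segments lying in blocks of $\P$. Standard arguments show that $\P$ is determined by $\curve(\P)$ and that $\P\le\Q$ if and only if $\curve(\P)\subseteq\curve(\Q)$. Next, I would establish the direct analog of Proposition~\ref{conn edge cov d}: cover relations in $\tNCBc$ are precisely augmentations by simple symmetric connectors, where now a single connector $\kappa=\phi(\kappa)$ is allowed. This follows the template of \cite[Proposition~3.27]{surfnc}: given $\P<\Q$, pick a curve in $\curve(\Q)\setminus\curve(\P)$, isotope it to be a simple symmetric (pair of) connector(s) for $\P$, and then argue by induction on rank.

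The heart of the proof is then to verify that each augmentation changes the candidate rank $(n-1) - (\#\text{ symmetric pairs of non-annular blocks}) + (\#\text{ symmetric annular blocks})$ by exactly $+1$. This is a case analysis on how $\kappa$ (or a symmetric pair $\kappa,\phi(\kappa)$) interacts with the blocks it joins: merging two distinct non-annular blocks decreases the first count by one; turning a single non-annular block into an annular one decreases the first count and increases the second; merging an annular block with a non-annular block replaces both counts appropriately; and so on. A symmetric connector $\kappa=\phi(\kappa)$ merging a nonsymmetric pair into a single symmetric block needs its own bookkeeping, since it removes two non-annular blocks counted once as a pair but adds one symmetric non-annular block counted on its own. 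Combined with the observation that the rank formula evaluates to $0$ at the all-trivial-blocks partition and to $n-1$ at the partition consisting of $B$ itself, this yields both gradedness and the claimed rank function.

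The main obstacle is the case analysis in the third step: while each case is easy in isolation, the new phenomena in $B$ (symmetric arcs, the non-double fixed point of $\phi$, a connector lying along or crossing the axis of symmetry) create several new subcases, and one must distinguish carefully between a single symmetric block and a symmetric pair of blocks throughout. A cleaner alternative is to fold from type $\afftype{D}$: identify $\tNCBc$ with the subposet of $\tNCDetac$ of partitions invariant under the diagram automorphism $\chi$ swapping the two upper double points, and deduce gradedness and the rank formula from Theorem~\ref{D tilde main} by checking that each cover relation in the $\chi$-fixed subposet corresponds to either a single $\chi$-fixed cover of $\tNCDetac$ or a pair of covers exchanged by $\chi$, each contributing exactly one step in $\tNCBc$.
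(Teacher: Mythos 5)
The paper does not give a proof of Theorem~\ref{B tilde main}; it simply cites \cite[Theorem~4.6]{surfnc}, a special case of \cite[Theorem~3.18]{surfnc}. (Theorem~\ref{D tilde main} is likewise a citation.) So there is no internal proof to compare against, and your proposal in effect supplies a proof the paper outsources.

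Your first (direct) route, building $\curve(\P)$ and verifying that every augmentation along a simple symmetric (pair of) connector(s) raises the candidate rank by exactly one, is essentially the argument carried out in \cite[Section~3]{surfnc}, with the $B$-specific wrinkles (a single double point, a non-double $\phi$-fixed point, symmetric arcs and the resulting symmetric curve blocks) exactly as you flag. Your second (folding) route is consistent with the techniques the paper uses elsewhere in Section~\ref{aff type b}. One caution about the folding route: the paper's own analysis of covers in the $\chi$-fixed subposet (in the proof of Proposition~\ref{cov ref loop b}) \emph{invokes} Theorem~\ref{B tilde main} to know the $B$-rank difference is $1$, so you cannot reuse that argument as is; you would need to establish directly, without assuming gradedness of $\tNCBc$, that each cover in the $\chi$-fixed subposet is either a $\chi$-fixed cover of $\tNCDetac$ or the top of a rank-two interval whose two atoms are swapped by $\chi$. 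That is doable (and goes through the same case analysis the paper performs), but it is not free.

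One concrete numerical slip: you say the rank formula gives $n-1$ at the top element (the whole annulus $B$). It should give $n$: the top has zero symmetric pairs of distinct non-annular blocks and one symmetric annular block, so the rank is $(n-1)-0+1 = n$. This is consistent with $\ell_T(c)=n$ for the Coxeter element of $\Stildeses$ (type $\afftype B_{n-1}$ has $n$ simple reflections) and with the fact that $\tNCDetac$ has top rank $n+1$ while the map $\zeta$ raises rank by one exactly when the upper double points go from trivial blocks to the interior of a symmetric block.
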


As in type $\afftype{D}$, one may go up by a cover from $\P$ by finding a simple symmetric pair of connectors and forming the augmentation of $\P$ along the pair.
However, unlike in type $\afftype{D}$, the annulus $B$ with one double point admits symmetric arcs, and there is a notion of augmentation along a simple symmetric connector.
Full details on cover relations in $\tNCBc$ are in \cite[Section~3]{surfnc} in the generality of symmetric marked surfaces with double points.
In particular \cite[Figure~11]{surfnc} and \cite[Figure~12]{surfnc} include examples specific to $\tNCBc$.


\subsection{Isomorphisms}
We define a map $\perm^B:\tNCBc\to\Stildes$ by slightly modifying the definition of $\perm^D$.
We read cycles from each block as for $\perm^D$, except that for symmetric blocks containing the non-double fixed point of $\phi$, we don't record the tiny cycles that would come from the upper double points in type~$\afftype{D}$.
We treat symmetric curve blocks as we would treat non-degenerate symmetric disks.
We will prove the following theorem.

\begin{theorem}\label{isom b}
The map $\perm^B:\tNCBc\to\Stildes$ is an isomorphism from $\tNCBc$ to the interval $[1,c]^B_{T\cup L}$ in $\Stildes$.   
It restricts to an isomorphism from $\tNCBcircc$ to the interval $[1,c]^B_T$ in $\Stildeses$.
\end{theorem}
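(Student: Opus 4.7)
The plan is to deduce Theorem~\ref{isom b} from its type-$\afftype{D}$ counterpart (Theorem~\ref{isom d}) by folding the pair of upper double points $1',(-1)'$ of the annulus $D'$ associated to $\eta(c)$ into the single non-double fixed point of $\phi$ in $B$. Concretely, I will construct an order isomorphism $\iota:\tNCBc\to\tNCDetac$ whose image is the subposet of noncrossing partitions fixed by the involution $\chi$ (which swaps $1'$ and $(-1)'$ and their mod-$2n$ translates). I will then check the commutative diagram $\perm^D\circ\iota=\eta\circ\perm^B$, using the extension of $\eta$ to $\Stildes$ from Section~\ref{aff sing sec}. Combined with Theorem~\ref{isom d} and the folding identification of the $\chi$-fixed subposet of $[1,\eta(c)]^D_{T'\cup L'}$ with $\eta([1,c]^B_{T\cup L})$, this will yield both isomorphisms of Theorem~\ref{isom b}.

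The definition of $\iota$ proceeds block by block. A trivial block of $\P\in\tNCBc$ at the non-double fixed point of $\phi$ lifts to the symmetric pair of trivial blocks $\set{1'}$ and $\set{(-1)'}$ in $D'$; a symmetric arc, or symmetric pair of arcs, through the non-double fixed point lifts to the unique arc configuration in $D'$ that avoids the doubled region and is $\chi$-fixed; a non-trivial symmetric block containing the non-double fixed point in its interior or on its boundary lifts to the unique $\chi$-fixed block of $D'$ that projects back to it; and every block of $\P$ disjoint from the non-double fixed point lifts verbatim. By comparing the cover-relation description of \cite[Section~3]{surfnc} on both sides, I will show that $\iota$ is an order isomorphism onto the $\chi$-fixed subposet of $\tNCDetac$, and that it restricts to an isomorphism from $\tNCBcircc$ onto the $\chi$-fixed subposet of $\tNCDcircetac$.

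The commutativity $\perm^D(\iota(\P))=\eta(\perm^B(\P))$ will be verified block by block. Blocks of $\P$ disjoint from the non-double fixed point contribute identical cycles on both sides via the natural embedding $\integers\hookrightarrow\integers'$. A trivial block of $\P$ at the non-double fixed point contributes nothing to $\perm^B(\P)$ while its lift contributes the singleton cycles $(1')_{2n}$ and $((-1)')_{2n}$ to $\perm^D(\iota(\P))$, which is exactly what $\eta$ adjoins in the first case of Lemma~\ref{fixed}. A block of $\P$ meeting the non-double fixed point contributes a cycle whose lift in $D'$ acquires the additional transposition $(1'\,\,\,(-1)')_{2n}$ prescribed by the second case of Lemma~\ref{fixed}. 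This case check is the main obstacle of the argument, parallel in flavor to the analysis in the proof of Proposition~\ref{cov ref loop}, but each individual case is routine.

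With the commutative diagram in hand, Theorem~\ref{isom d} gives an order isomorphism from the $\chi$-fixed subposet of $\tNCDetac$ onto the $\chi$-fixed elements of $[1,\eta(c)]^D_{T'\cup L'}$, restricting to an isomorphism from the $\chi$-fixed subposet of $\tNCDcircetac$ onto the $\chi$-fixed elements of $[1,\eta(c)]^D_{T'}$. The standard folding principle applied to the alphabet $T'\cup L'$ (the argument of \cite[Proposition~2.2]{affncA}; compare Lemma~\ref{1cT B}) identifies these $\chi$-fixed subposets with $\eta([1,c]^B_{T\cup L})$ and $\eta([1,c]^B_T)$, respectively. Composing with $\iota^{-1}$ and $\eta^{-1}$ then yields the two isomorphisms of Theorem~\ref{isom b}; as a byproduct, the type-$\afftype{B}$ analog of Lemma~\ref{1cT B} promised in Remark~\ref{harder} falls out. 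The conceptual content is entirely contained in Theorem~\ref{isom d} and the folding principle, and the only substantive labor is the block-type verification of $\iota$ together with the commutative diagram.
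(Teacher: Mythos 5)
Your overall strategy---folding the type-$\afftype D$ result via the map $\zeta$ (which you call $\iota$) and the extension of $\eta$---is the same as the paper's, and your description of $\iota$ and the commutative diagram $\perm^D\circ\iota=\eta\circ\perm^B$ matches the paper's construction. However, there is a genuine circularity at the final step.

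You invoke ``the standard folding principle applied to the alphabet $T'\cup L'$'' to identify the $\chi$-fixed subposet of $[1,\eta(c)]^D_{T'\cup L'}$ with $\eta([1,c]^B_{T\cup L})$, citing \cite[Proposition~2.2]{affncA} and Lemma~\ref{1cT B} as a comparison. This is exactly the step that cannot be taken for free. The cited proposition is a statement about Coxeter groups with the alphabet of reflections; it does not apply to the larger groups $\Stildejes$ and $\Stildes$ with the enlarged alphabets $T'\cup L'$ and $T\cup L$, and indeed the paper flags this explicitly in Remark~\ref{harder}: the $T\cup L$ analogue of Lemma~\ref{1cT B} is \emph{not} established by a folding principle but ``falls out as a corollary'' (Corollary~\ref{isom b too}) \emph{after} Theorem~\ref{isom b} is proved. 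Using it to prove Theorem~\ref{isom b} is therefore circular.

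Concretely, the obstruction is the mismatch of lengths. We have $\vr(\eta(c))=n+1$ in the type-$\afftype D_n$ setting but $\ell_{T\cup L}(c)=n$; the generator $(i\ \ -i)_{2n}\in T$ maps under $\eta$ to a \emph{product} of two reflections in $T'$, so a single letter on the $B$ side can decrease $\vr$ on the $D$ side by $2$. Establishing that this drop-by-two happens exactly once along any maximal chain (so that $\ell_{T\cup L}(c)=n$ and the poset structures match) is precisely the content of Lemma~\ref{b at least n} and the two-case analysis in the paper's Propositions~\ref{cov ref loop b} and~\ref{perm inv cov b}: when $\zeta(\P)\covered\zeta(\Q)$ fails, $\zeta(\P)$ and $\zeta(\Q)$ differ by a chain of length two in $\tNCDetac$, and one must check that the corresponding product of two type-$\afftype D$ letters is a single element of $\eta(T)$. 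Your proposal never carries out this rank bookkeeping, and no appeal to a general folding lemma can replace it.

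To repair the argument you would need to do essentially what the paper does: prove the type-$\afftype B$ cover-relation statements (the analogues of Propositions~\ref{cov ref loop} and~\ref{perm inv cov}) directly, including the subcase where $\zeta(\P)$ is at distance two below $\zeta(\Q)$ in $\tNCDetac$, and prove that $\ell_{T\cup L}(c)=n$ by tracking the at-most-one drop of $\vr$ by $2$. Once that is done, the identification you want becomes a corollary rather than an input.
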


\begin{example}\label{perm B ex}
We apply $\perm^B$ to the elements of $\tNCBc$ shown in Figure~\ref{nc ex fig B}.
Label the top row of pictures left to right as $\P_1$, $\P_2$, $\P_3$, and label the next row left to right as $\P_4$, $\P_5$, $\P_6$, and so forth.  
{\allowdisplaybreaks
\begin{align*}
\perm^B(\P_1)&=(\!(1)\!)_{14}(\!(2)\!)_{14}(\!(3)\!)_{14}(\!(4)\!)_{14}(\!(5)\!)_{14}(\!(6)\!)_{14}\\
\perm^B(\P_2)&=(\!(1\,\,\,3)\!)_{14}(\!(2\,\,\,4)\!)_{14}(6\,\,\,8)_{14}(5\,\,\,9)_{14}\\
\perm^B(\P_3)&=(\!(1\,\,\,4)\!)_{14}(\!(2)\!)_{14}(\!(3\,\,\,5)\!)_{14}(6\,\,\,22)_{14}\\
\perm^B(\P_4)&=(1\,\,{-2}\,\,{-1}\,\,\,2)_{14}(\!(3\,\,\,5\,\,\,8)\!)_{14}(\!(4)\!)_{14}\\
\perm^B(\P_5)&=(1\,\,{-3}\,\,{-9}\,\,{-2}\,\,{-1}\,\,\,3\,\,\,9\,\,\,2)_{14}(\!(4)\!)_{14}(\!(6)\!)_{14}\\
\perm^B(\P_6)&=(\!(\cdots\,1\,\,{-3}\,\,{-6}\,\,{-13}\,\cdots)\!)_{14}(\!(2\,\,\,4)\!)_{14}(\!(5)\!)_{14}\\
\perm^B(\P_7)&=(\!(\cdots\,1\,\,{-3}\,\,{-12}\,\,{-13}\,\cdots)\!)_{14}(\!(4\,\,\,8)\!)_{14}(5\,\,\,9)_{14}\\
\perm^B(\P_8)&=(\!(1\,\,\,2)\!)_{14}(\!(3\,\,\,6)\!)_{14}(\!(4)\!)_{14}(\!(\cdots\,5\,\,\,{-9}\,\cdots)\!)_{14}\\
\perm^B(\P_9)&=(\!(1\,\,\,{-3}\,\,\,{-6}\,\,\,{-10}\,\,\,2)\!)_{14}(5\,\,\,9)_{14}\\
\perm^B(\P_{10})&=(\!(\cdots\,1\,\,{-3}\,\,{-6}\,\,{-10}\,\,\,{-12}\,\,\,{-15}\,\cdots)\!)_{14}(5\,\,\,23)_{14}
\end{align*}
}
\end{example}

To prove Theorem~\ref{isom b}, we give an alternative description of $\perm^B$ and prove results analogous to (and using) results from the proof of Theorem~\ref{isom d} (type $\afftype D$).

Let $c$ be a Coxeter element of $\Stildeses$, so that $\eta(c)$ is a Coxeter element of $W$ and of~$W'$.
Write $\tNCDetac$ for the noncrossing partitions of affine type $\afftype D$ with numbered points $\set{1',(-1)',\pm1,\ldots,\pm(n-1)}$, designated outer, inner, and double according to the Coxeter element $\eta(c)$.
The upper double points are $1'$ and $(-1)'$.
Similarly, write $\tNCDcircetac$ for the subposet of $\tNCDetac$ consisting of elements with no dangling annular blocks.
Reuse the symbol $\chi$ for the map on $\tNCDetac$ that swaps the double points $1'$ and $(-1)'$.
A noncrossing partition in $\tNCDetac$ is fixed by $\chi$ if and only if it either has singleton blocks at $1'$ and $(-1)'$ or has a symmetric disk or annular block containing the upper double points in its interior.
The map $\perm^D$ restricts to an isomorphism from the subposet of $\tNCDetac$ induced by noncrossing partitions fixed by $\chi$ to the subposet of $[1,\eta(c)]_{T'\cup L'}$ consisting of elements fixed by $\chi$.

There is an isomorphism $\zeta$ from $\tNCBc$ to the subposet of $\tNCDetac$ consisting of noncrossing partitions that are fixed by $\chi$:
Given $\P\in\tNCBc$, if $\P$ has a symmetric block containing the non-double $\phi$-fixed point of $B$, in $\zeta(\P)$ that block becomes a symmetric block containing the upper double points of $D$ in its interior and the other blocks of $\P$ are blocks of $\zeta(\P)$.
Otherwise, $\zeta(\P)$ is identical with $\P$ except that $\zeta(\P)$ has singleton blocks at the upper double points in $D$.
We see that $\perm^B=\eta^{-1}\circ\perm^D\circ\zeta$.
Since $\eta$ and $\zeta$ are bijections, Proposition~\ref{one to one d} implies the following proposition.

\begin{prop}\label{one to one b}
The map $\perm^B$ is one-to-one.
\end{prop}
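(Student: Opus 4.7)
The plan is to exploit the factorization $\perm^B = \eta^{-1} \circ \perm^D \circ \zeta$ established in the paragraph just before the statement. Since each factor in this composition is injective, the result will follow immediately.

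First I would invoke the definition of $\zeta:\tNCBc\to\tNCDetac$ and observe (as already noted in the text) that $\zeta$ is a bijection onto the subposet of $\tNCDetac$ consisting of noncrossing partitions fixed by $\chi$: given $\P\in\tNCBc$, the image $\zeta(\P)$ is uniquely determined by either converting the symmetric block containing the non-double $\phi$-fixed point of $B$ (if present) into a symmetric block whose interior contains the upper double points of $D$, or else adjoining singleton blocks at the upper double points. The inverse is equally explicit, so $\zeta$ is in particular one-to-one. Next I would note that $\eta^{-1}:\Stildes\,{\to}\,\Stildes$—more precisely, the restriction of $\eta^{-1}$ to the image of $\perm^D\circ\zeta$—is a bijection, since $\eta:\Stildes\to\{$jointly even-signed permutations of $\integers'$ fixed by $\chi\}$ is already an isomorphism.

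Then I would apply Proposition~\ref{one to one d}, which asserts that $\perm^D:\tNCDc\to\Stildejes$ is one-to-one; its restriction to $\zeta(\tNCBc)\subseteq\tNCDetac$ is therefore also one-to-one. Composing the three injective maps yields that $\perm^B$ is injective, which is the statement of Proposition~\ref{one to one b}.

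There is no genuine obstacle here: the hard work has been absorbed into Proposition~\ref{one to one d}, where the explicit procedure for reconstructing $\P$ from $\perm^D(\P)$ is carried out by a case analysis on the infinite and tiny cycles of $\pi$. The only thing worth checking carefully is that $\zeta$ really is a bijection and not merely a well-defined map—but this is immediate from the description above, because the only noncrossing partitions of $D$ fixed by $\chi$ are precisely those obtained from some $\P\in\tNCBc$ by the two constructions used to define $\zeta$, and these two constructions are mutually exclusive and reversible.
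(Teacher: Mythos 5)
Your proof is correct and is essentially identical to the paper's: the paper establishes the factorization $\perm^B=\eta^{-1}\circ\perm^D\circ\zeta$, notes that $\eta$ and $\zeta$ are bijections, and concludes injectivity directly from Proposition~\ref{one to one d}. You have merely spelled out the same one-line argument in a bit more detail.
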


\begin{prop}\label{cov ref loop b}
Suppose $\P,\Q\in\tNCBc$ have $\P\covered\Q$.
Then there exists a reflection or loop $\tau\in T\cup L$ such that $\perm^B(\Q)=\tau\cdot\perm^B(\P)$.
\end{prop}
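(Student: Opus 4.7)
The plan is to derive this from its type-$\afftype D$ analog (Proposition~\ref{cov ref loop}) via the folding isomorphism and the identification $\perm^B = \eta^{-1} \circ \perm^D \circ \zeta$ established in Section~\ref{sym one sec}. Given a cover $\P \covered \Q$ in $\tNCBc$, the images $\zeta(\P) < \zeta(\Q)$ are $\chi$-fixed elements of $\tNCDetac$, and I will study the interval $I = [\zeta(\P),\zeta(\Q)]$ in $\tNCDetac$.

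Comparing the rank functions in Theorem~\ref{B tilde main} and Theorem~\ref{D tilde main} shows that $I$ has height $1$ or $2$. In the height-$1$ case, Proposition~\ref{cov ref loop} produces $\tau' \in T' \cup L'$ with $\perm^D(\zeta(\Q)) = \tau' \cdot \perm^D(\zeta(\P))$; both permutations on the right are $\chi$-fixed, so $\tau'$ is $\chi$-fixed, and $\tau := \eta^{-1}(\tau')$ is an element of $T \cup L$ satisfying the desired identity. In the height-$2$ case, I pick any $\R$ with $\zeta(\P) \covered \R \covered \zeta(\Q)$; then $\R \neq \chi(\R)$ (otherwise a $\chi$-fixed intermediate would force $I$ to have height~$1$), and by $\chi$-symmetry $\zeta(\P) \covered \chi(\R) \covered \zeta(\Q)$ as well. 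Two applications of Proposition~\ref{cov ref loop} yield $\tau'_1, \tau'_2 \in T' \cup L'$ with $\perm^D(\zeta(\Q)) = \tau'_2 \tau'_1 \cdot \perm^D(\zeta(\P))$; applying $\chi$-conjugation to this identity shows that $\tau'_2 \tau'_1$ is $\chi$-fixed, so $\tau := \eta^{-1}(\tau'_2 \tau'_1)$ is a well-defined element of $\Stildes$ giving $\perm^B(\Q) = \tau \cdot \perm^B(\P)$.

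The main obstacle is showing, in the height-$2$ case, that $\tau$ actually lies in $T \cup L$ rather than being a longer product. I expect the height-$2$ lifts to correspond precisely to the covers $\P \covered \Q$ in $\tNCBc$ whose augmenting connector crosses or is incident to the non-double $\phi$-fixed point of $B$, translating in $D$ to a symmetric pair of connectors that cuts through the pair of upper double points. Running through a case analysis parallel to the proof of Proposition~\ref{cov ref loop}, $\tau'_2 \tau'_1$ should evaluate in each subcase to a permutation that, upon deleting the cycles on $1'$ and $(-1)'$, is either a long reflection $(a\,\,-a+2kn)_{2n}\in T$ or a loop $\ell_{\pm 1}\in L$---exactly the elements of $T \cup L$ outside $\eta^{-1}(T' \cup L')$. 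This matching inventory of ``extra'' elements is what makes the plan close.
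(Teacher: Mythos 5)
Your framework matches the paper's: both proofs fold through $\eta$ and $\zeta$ and split into a height-$1$ case and a height-$2$ case for the interval $[\zeta(\P),\zeta(\Q)]$ in $\tNCDetac$. The height-$1$ argument agrees with the paper exactly. But you explicitly leave a gap in the height-$2$ case, and it is the gap that carries the content of the proposition. Showing that $\tau'_2\tau'_1$ is $\chi$-fixed only gives (by Lemma~\ref{fixed}) that $\eta^{-1}(\tau'_2\tau'_1)$ is a well-defined element of $\Stildes$; it does not put that element in $T\cup L$. Your plan to ``run through a case analysis parallel to Proposition~\ref{cov ref loop}'' and verify the product is of the right form is exactly what still has to be done, and your conjectured inventory is also slightly wrong: every loop $\ell_a\in L$ already satisfies $\eta(\ell_a)=\ell_a\in L'$ and therefore can only appear in the height-$1$ case, so loops never arise in the height-$2$ case --- only long reflections $(i\,\,-i)_{2n}$ do.

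The paper closes the gap with a different choice of letters, avoiding the case analysis entirely. Rather than composing $\tau'_1$ (from $\zeta(\P)\covered\R$) with $\tau'_2$ (from $\R\covered\zeta(\Q)$) --- two labels read in series, whose product is hard to control --- the paper first pins down exactly when the height jumps to $2$: this happens iff $\zeta(\P)$ has a pair of trivial blocks at the upper double points $1',(-1)'$ that become interior to a symmetric block in $\zeta(\Q)$. It then exhibits \emph{two} intermediate elements, $\R$ and $\chi(\R)$, each obtained from $\zeta(\P)$ by a connector to one of the upper double points, and applies Proposition~\ref{cov ref loop} to the two covers from $\zeta(\P)$ (two labels read in parallel). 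The resulting letters have the explicit forms $\tau_1=(\!(1'\,\,\,i)\!)_{2n}$ and $\tau_2=\chi(\tau_1)=(\!((-1)'\,\,\,i)\!)_{2n}$; these commute, their product is $(1'\,\,\,(-1)')_{2n}\cdot(i\,\,\,-i)_{2n}=\eta\bigl((i\,\,\,-i)_{2n}\bigr)$, and $(i\,\,\,-i)_{2n}\in T$. If you want to finish your version of the argument, you either need this explicit identification of the two parallel labels, or you need to carry out the subcase analysis you sketch and verify that the series product $\tau'_2\tau'_1$ always collapses to $\eta$ of a long reflection.
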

\begin{proof}
If $\zeta(\P)\covered\zeta(\Q)$ in $\tNCDetac$, then Proposition~\ref{cov ref loop} says that there exists $\tau\in T'\cup L'$ such that $\perm^D(\zeta(\Q))=\tau\cdot\perm^D(\zeta(\P))$.
Since both $\zeta(\P)$ and $\zeta(\Q)$ are fixed by $\chi$, the reflection or loop $\tau$ fixes $1'$ and is thus in $T\cup L$.
If $\zeta(\P)$ is not covered by $\zeta(\Q)$ in $\tNCDetac$, then since $\P\covered\Q$ in $\tNCBc$, we compare Theorem~\ref{B tilde main} with Theorem~\ref{D tilde main} (replacing $n$ by $n+1$ in the latter).
The difference in rank between $\P$ and $\Q$ is $1$, so two symmetric pairs of nonannular blocks in $\P$ were combined into one symmetric pair of nonannular blocks in $\Q$, a symmetric pair of nonannular blocks was combined to form a symmetric disk block, or a symmetric block was changed to a symmetric annular block.
The only way for the difference in rank between $\zeta(\P)$ and $\zeta(\Q)$ to be greater than $1$ is if also $\zeta(\P)$ has a pair of trivial blocks at the upper double point that are contained in a symmetric block in $\zeta(\Q)$.
Thus $\Q$ was obtained from $\P$ by combining a symmetric pair of non-annular blocks into a symmetric disk block containing the upper (non-double) fixed point of $\phi$ or by changing a symmetric block containing the (lower) double point into a symmetric annular block.
Therefore, there exists $\R\in\tNCDetac$ such that $\zeta(\P)\covered\R\covered\zeta(\Q)$.
Indeed, there are two choices of $\R$:
One is obtained from $\zeta(\P)$ by a simple symmetric pair of connectors that connects the upper double points to a symmetric pair of blocks (or a symmetric disk block) and the other is the same but with the upper double points swapped.
The reflections $\tau_1$ and $\tau_2$ guaranteed by Proposition~\ref{cov ref loop} for $\zeta(\P)\covered\R$ with these two choices of $\R$ are also related by swapping the upper double points.
They commute, and $\tau_1\cdot\tau_2\cdot\zeta(\P)=\zeta(\Q)$.
But $\tau_1\cdot\tau_2$ is $\eta(\tau)$ for a reflection in $T$ of the form $(i\,\,\,-i)_{2n}$.
\end{proof}

\begin{prop}\label{perm inv cov b}
Suppose $\sigma\covered\pi$ in $[1,c]_{T\cup L}$ and $\pi=\perm^B(\Q)$ for some $\Q\in\tNCBc$.
Then there exists $\P\in\tNCBc$ such that $\sigma=\perm^B(\P)$ and $\P\covered\Q$.
\end{prop}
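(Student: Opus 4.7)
The plan is to reduce to the type-$\afftype{D}$ result (Proposition~\ref{perm inv cov}) via the isomorphism $\zeta$ from $\tNCBc$ to the subposet of $\tNCDetac$ consisting of $\chi$-fixed partitions, using the identity $\perm^B=\eta^{-1}\circ\perm^D\circ\zeta$. The images $\eta(\sigma)$ and $\eta(\pi)=\perm^D(\zeta(\Q))$ are $\chi$-fixed elements of $[1,\eta(c)]^D_{T'\cup L'}$ (by Lemma~\ref{1cT B}, Lemma~\ref{1cTF B}, and the discussion preceding Proposition~\ref{one to one b}). The goal is to produce a $\chi$-fixed $\R\in\tNCDetac$ with $\perm^D(\R)=\eta(\sigma)$ sitting at the appropriate rank level below $\zeta(\Q)$, then set $\P=\zeta^{-1}(\R)$.

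I would write $\sigma=\tau\pi$ for some $\tau\in T\cup L$ and split into two cases according to the shape of $\tau$. In the easy case, $\tau$ is a loop or $\tau=(\!(i\,\,\,j)\!)_{2n}$ with $j\not\equiv\pm i\cmod{2n}$, so $\eta(\tau)$ lies in $T'\cup L'$ and $\eta(\sigma)\covered\eta(\pi)$ in $[1,\eta(c)]^D_{T'\cup L'}$. Proposition~\ref{perm inv cov} then produces $\R\covered\zeta(\Q)$ in $\tNCDetac$ with $\perm^D(\R)=\eta(\sigma)$. Because $\eta(\sigma)$ is $\chi$-fixed and $\perm^D$ is one-to-one (Proposition~\ref{one to one d}), one has $\chi(\R)=\R$, so $\R=\zeta(\P)$ for some $\P\in\tNCBc$, and $\P\covered\Q$ in $\tNCBc$ follows by comparing the rank formulas in Theorems~\ref{B tilde main} and~\ref{D tilde main}.

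The main obstacle is the remaining case $\tau=(i\,\,\,-i)_{2n}$, where $\eta(\tau)$ factors in $W'$ as a product of two commuting reflections $\tau_1,\tau_2$ interchanged by $\chi$, and $\eta(\sigma)$ lies two covers below $\eta(\pi)$ in $[1,\eta(c)]^D_{T'\cup L'}$. Choosing an intermediate element $\sigma'=\tau_2\eta(\pi)$ and applying Proposition~\ref{perm inv cov} twice would yield a chain $\R\covered\R'\covered\zeta(\Q)$ with $\perm^D(\R)=\eta(\sigma)$. Although $\R'$ is not $\chi$-fixed in general, applying $\chi$ to the chain produces a parallel chain with the same bottom: since $\perm^D(\chi(\R))=\chi(\eta(\sigma))=\eta(\sigma)$, Proposition~\ref{one to one d} forces $\chi(\R)=\R$. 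The final, most delicate step is to verify that $\P=\zeta^{-1}(\R)$ satisfies $\P\covered\Q$ in $\tNCBc$; this amounts to recognizing the move $\R\to\zeta(\Q)$ as one of the two \emph{bridging} operations isolated in the proof of Proposition~\ref{cov ref loop b} — namely, either fusing a symmetric pair of non-annular blocks into a symmetric block across the non-double fixed point of $\phi$, or enlarging a symmetric block containing the lower double point into a symmetric annular block — each of which raises the rank in $\tNCBc$ by exactly $1$ according to Theorem~\ref{B tilde main}, even though it raises the rank in $\tNCDetac$ by $2$.
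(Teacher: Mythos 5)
Your proposal matches the paper's argument step for step: you reduce through $\eta$ and $\zeta$, split into cases according to whether $\eta(\tau)$ lies in $T'\cup L'$ or factors as $\tau_1\tau_2$ (the case $\tau=(i\,\,\,-i)_{2n}$), apply Proposition~\ref{perm inv cov} once or twice to produce $\R$ (and an intermediate $\R'$ in the hard case), and use injectivity of $\perm^D$ together with the $\chi$-fixedness of $\eta(\sigma)$ to conclude $\chi(\R)=\R$ and pull back $\P=\zeta^{-1}(\R)$. The only difference from the paper is presentational: you spell out the rank bookkeeping comparing Theorems~\ref{B tilde main} and~\ref{D tilde main} and identify the two ``bridging'' moves explicitly, whereas the paper leaves that implicit in its observation that the intermediate $\R'$ is not $\chi$-fixed, so $\zeta^{-1}(\R)$ is directly covered by $\Q$ in $\tNCBc$.
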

\begin{proof}
Let $\tau$ be the element of $T\cup L$ such that $\pi=\tau\sigma$.
If $\tau\in T'\cup L'$, then $\eta(\sigma)\covered\eta(\pi)=\tau\eta(\sigma)$.
Since $\eta(\pi)=\perm^D(\zeta(\Q))$, by Proposition~\ref{perm inv cov}, there exists $\R\in\tNCDetac$ such that $\eta(\sigma)=\perm^D(\R)$ and $\R\covered\zeta(\Q)$.
Taking $\P=\zeta^{-1}(\R)\in\tNCBc$, we have $\sigma=\perm^D(\P)$ and $\P\covered\Q$.

If $\tau\not\in T'\cup L'$, then $\eta(\tau)=\tau_1\tau_2$ for $\tau_1$ and $\tau_2$ as in the proof of Proposition~\ref{cov ref loop b}.
Arguing as in the paragraph above, twice, we obtain $\R_1,\R_2\in\tNCDetac$ such that $\eta(\sigma)=\perm^D(\R_1)$, $\eta(\tau_2\pi)=\perm^D(\R_2)$, and $\R_1\covered\R_2\covered\zeta(\Q)$.
Indeed, there are precisely two choices for $\R_2$, given by reversing the roles of $\tau_1$ and $\tau_2$.
Since $\tau_1$ and $\tau_2$ are related by $\chi$, these two choices for $\R_2$ are related by~$\chi$.
Furthermore, $\R_1$ is fixed by $\chi$.
The two choices of $\R_2$ are the only elements of $\tNCDetac$ strictly between $\R_1$ and $\zeta(\Q)$, and they are not fixed by $\chi$.
Taking $\P=\zeta^{-1}(\R_1)\in\tNCBc$, we see that $\sigma=\perm^B(\P)$ and $\P\covered\Q$ in $\tNCBc$.
\end{proof}

\begin{lemma}\label{b at least n}
Every word for $c$ in the alphabet $T\cup L$ has at least $n$ letters.
\end{lemma}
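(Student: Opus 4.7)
The plan is to transfer the type-$\afftype{D}$ rank computation (Proposition~\ref{interval rank}) across the folding embedding $\eta$.  Suppose $c=\tau_1\cdots\tau_k$ is any word with $\tau_i\in T\cup L$.  Applying $\eta$ yields $\eta(c)=\eta(\tau_1)\cdots\eta(\tau_k)$ in the group of affine jointly even-signed permutations of $\integers'$.  First I will classify the factors: if $\tau_i$ is a reflection of the form $(\!(a\,\,\,b)\!)_{2n}$ with $a\not\equiv\pm b\cmod{2n}$ or any loop $\ell_j$, then $\tau_i\in\Stildejes$ and $\eta(\tau_i)$ fixes $1'$ and $-1'$ pointwise and equals a single element of $T'\cup L'$; if instead $\tau_i=(a\,\,\,-a)_{2n}$ for some $a$, then $\eta(\tau_i)=\tau_i\cdot(1'\,\,\,-1')_{2n}$ factors as a commuting product $(\!(a\,\,\,1')\!)\cdot(\!(a\,\,\,-1')\!)$ of two reflections in $T'$ forming a $\chi$-orbit.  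I call the second kind of factor \emph{type~B} and the first kind \emph{type~A}.

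Expanding each $\eta(\tau_i)$ produces a word for $\eta(c)$ in the alphabet $T'\cup L'$ of length $k+b$, where $b$ is the number of type~B factors.  By Lemma~\ref{b annulus label}, $\eta(c)$ is a Coxeter element of the type-$\afftype{D}_n$ group $\Stilde^{\mathrm{des}}_{2n+2}$, so Proposition~\ref{interval rank} applied in that larger setting gives $\ell_{T'\cup L'}(\eta(c))=n+1$, whence $k+b\geq n+1$.  A parity argument -- $\eta(c)$ swaps $1'$ and $-1'$ (because of the factor $(1'\,\,\,-1')_{2n}$), while each type~A factor fixes both pointwise and each type~B factor swaps them -- forces $b$ to be odd, so $b\geq 1$.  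When $b=1$ this already yields $k\geq n$.

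The main obstacle is eliminating the possibility $b\geq 3$, for which the raw bound only gives $k\geq n-2$.  To handle it, I will introduce the refined statistic $\vr^B(\pi):=\vr(\eta(\pi))-\ep(\pi)$ on $\Stildes$, where $\ep(\pi)=1$ exactly when $\eta(\pi)$ contains the upper tiny cycle $(1'\,\,\,-1')_{2n}$, and show $\vr^B(\tau\pi)\geq\vr^B(\pi)-1$ for every $\tau\in T\cup L$; combined with $\vr^B(1)=0$ and $\vr^B(c)=n$ (using $\vr(\eta(c))=n+1$), this immediately gives $k\geq n$.  The type~A case follows from Lemma~\ref{only one} because $\ep$ is preserved.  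For the type~B case $\ep$ toggles: when $\ep(\pi)=1$ the drop of $\ep$ to $0$ contributes $+1$, which absorbs the worst-case $-2$ allowed by two applications of Lemma~\ref{only one}, so the inequality holds automatically.  The truly delicate case is $\ep(\pi)=0$ paired with a type~B multiplication, which reduces to showing $\vr(\eta(\tau\pi))\geq\vr(\eta(\pi))$: multiplying by the commuting pair $(\!(a\,\,\,1')\!),(\!(a\,\,\,-1')\!)$ cannot decrease $\vr$.  I expect to prove this by adapting the case analysis of Lemma~\ref{only one} to the two reflections simultaneously, exploiting that the creation of the new upper tiny cycle $(1'\,\,\,-1')$ contributes $+\tfrac{1}{2}$ to $\vr$ which is exactly balanced against the $-\tfrac{1}{2}$ contribution of the matching symmetric cycle created or modified at $\pm a$, while any further nonsymmetric/infinite-cycle effects cancel in conjugate pairs.
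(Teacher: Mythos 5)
Your approach is essentially the paper's: transfer to type~$\afftype D$ via $\eta$, apply Lemma~\ref{only one} there, and observe that the ``extra'' drop coming from a type-B letter is tied to the upper tiny cycle $(1'\,\,\,{(-1)'})_{2n}$. Your corrected statistic $\vr^B=\vr\circ\eta-\ep$ is a clean repackaging of what the paper phrases as ``the decrease by $2$ can happen only once,'' and it is arguably the tidier formulation, since it applies directly to an arbitrary word rather than tracking a maximal chain downward from $c$.

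The one place where your proposal is still a sketch is the deferred claim that a type-B multiplication with $\ep(\pi)=0$ cannot decrease $\vr\circ\eta$, and the heuristic you offer (the $+\tfrac12$ from the new tiny cycle cancelling a $-\tfrac12$ from a symmetric cycle) is not quite the right mechanism. The short route is this: when $\ep(\pi)=0$, the factor $(\!(a\,\,\,{(-1)'})\!)_{2n}$ necessarily hits the singleton cycle $((-1)')$, so it is a ``combine'' step in the sense of Lemma~\ref{only one}. In Cases~1d, 1e, 1g it raises $\vr$ by~$1$, after which the other factor $(\!(a\,\,\,1')\!)_{2n}$ can lower $\vr$ by at most~$1$, giving net change $\ge 0$. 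The only combine case that does not raise $\vr$ is Case~1f, where $a$ lies in the lower tiny cycle $(j\,\,\,{-j+2n})$; the result is the symmetric cycle $((-1)'\,\,\,j\,\,\,1'+2n\,\,\,{-j+2n})$, which has $k=1$, so $j$ and $1'$ sit in \emph{distinct} cycles of the same class and the second factor falls under Case~2b, which never decreases~$\vr$. That completes the inequality $\vr^B(\tau\pi)\ge\vr^B(\pi)-1$. Finally, your preliminary parity argument on the number $b$ of type-B letters is correct but redundant once the $\vr^B$ bound is in place.
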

\begin{proof}
Given $\tau\in T\cup L$, either $\eta(\tau)=\tau\in T'\cup L'$ or $\eta(\tau)$ is a product of two elements of $T'$, namely two reflections $\tau_1=(\!(1'\,\,\,i)\!)$ and $\tau_2=(\!({(-1)'}\,\,\,i)\!)$.
If $\pi\in[1,c]^B_{T\cup L}$ and $\tau\in T\cup L$, then $\eta(\tau\pi)=\eta(\tau)\eta(\pi)$, so by Lemma~\ref{only one}, $\vr(\eta(\tau\pi))\ge\vr(\pi)-2$ and also $\vr(\eta(\tau\pi))\ge\vr(\pi)-1$ whenever $\eta(\tau)=\tau$.
The case where $\vr$ decreases by $2$ is when $(\perm^D)^{-1}(\pi)$ has the upper double point in the interior of a block and $(\perm^D)^{-1}(\tau\pi)$ has trivial blocks at the upper double point.
Thus the decrease by~$2$ can happen only once when following a maximal chain in $[1,c]_{T\cup L}$ downwards from $c$.
Since $\vr(\eta(c))=n+1$, the lemma follows.
\end{proof}

\begin{proof}[Proof of Theorem~\ref{isom b}]
Using Proposition~\ref{cov ref loop b} and Lemma~\ref{b at least n}, we can argue as in the proof of Theorem~\ref{isom d} that $\perm^B$ is an order-preserving map from $\tNCBc$ into $[1,c]_{T\cup L}$.
Proposition~\ref{one to one b} says that $\perm^B$ is one-to-one.
By Proposition~\ref{perm inv cov b} and an easy induction, $\perm^B$ maps $\tNCBc$ onto $[1,c]_{T\cup L}$, and thus $\perm^B$ is a bijection from $\tNCBc$ to $[1,c]_{T\cup L}$.
Proposition~\ref{perm inv cov b} shows that the inverse of $\perm^B$ is also order-preserving, and thus $\perm^B$ is an isomorphism of posets.

The map $\zeta$ restricts to an isomorphism from $\tNCBcircc$ to the subposet of $\tNCDcircetac$ consisting of noncrossing partitions that are fixed by $\chi$.
Theorem~\ref{isom d} says that $\perm^D$ restricts to an isomorphism from $\tNCDcircetac$ to $[1,c]^D_T$.
It is immediate that it restricts further to an isomorphism from the subposet of $\tNCDcircetac$ consisting of elements fixed by $\chi$ to the subposet of $[1,c]^D_{T'}$ consisting of elements fixed by $\chi$.
Now by Lemma~\ref{1cT B}, the second assertion of the theorem follows from the first.
\end{proof}

Since $\perm^B=\eta^{-1}\circ\perm^D\circ\zeta$, the fact that $\perm^B$, $\perm^D$, and $\zeta$ are isomorphisms implies the following corollary, promised in Remark~\ref{harder}.

\begin{cor}\label{isom b too}
The map $\eta$ is an isomorphism from $[1,c]_{T\cup L}$ to the subposet of $[1,\eta(c)]_{T'\cup L'}$ consisting of noncrossing partitions fixed by $\chi$.
\end{cor}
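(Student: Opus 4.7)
The plan is to derive the corollary by simply composing the three isomorphisms we already have in hand. Writing $N^{\chi}$ for the subposet of $\tNCDetac$ of noncrossing partitions fixed by $\chi$ and $I^{\chi}$ for the subposet of $[1,\eta(c)]^D_{T'\cup L'}$ of group elements fixed by $\chi$, the relevant pieces are: (i) $\perm^B : \tNCBc \to [1,c]^B_{T\cup L}$ is an isomorphism, by Theorem~\ref{isom b}; (ii) $\zeta : \tNCBc \to N^{\chi}$ is an isomorphism, by construction in Section~\ref{sym one sec}; (iii) $\perm^D$ restricts to an isomorphism $N^{\chi} \to I^{\chi}$, as recorded in the paragraph preceding Proposition~\ref{one to one b} (which is itself a direct consequence of Theorem~\ref{isom d} plus the observation that $\perm^D$ intertwines the $\chi$-action on noncrossing partitions with conjugation by $(\!(1'\,\,\,(-1)')\!)_{2n}$ on permutations).

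First I would recall the identity $\perm^B = \eta^{-1} \circ \perm^D \circ \zeta$ established in Section~\ref{sym one sec}. Rearranging, this yields
\[
\eta \circ \perm^B \;=\; \perm^D \circ \zeta
\]
as maps from $\tNCBc$ into $W'$. Precomposing with $(\perm^B)^{-1}$ (which is well-defined by (i)) and viewing both sides as maps on $[1,c]^B_{T\cup L}$ gives
\[
\eta\big|_{[1,c]^B_{T\cup L}} \;=\; \perm^D \circ \zeta \circ (\perm^B)^{-1}.
\]
Each of the three factors on the right is an isomorphism of posets (by (i), (ii), (iii)), onto $N^{\chi}$, then onto $I^{\chi}$. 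Therefore the composite is an isomorphism of posets from $[1,c]^B_{T\cup L}$ onto $I^{\chi}$, which is exactly the conclusion of Corollary~\ref{isom b too}.

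There is essentially no obstacle here; the only point that deserves a sentence of justification is that $\perm^D$ really does send $N^{\chi}$ bijectively onto $I^{\chi}$ (not merely into $[1,\eta(c)]^D_{T'\cup L'}$). This is because the reading of cycles from blocks commutes with the swap of the two upper double points $1'$ and $(-1)'$, so $\perm^D \circ \chi = \chi \circ \perm^D$ where $\chi$ on the right is the conjugation in Section~\ref{aff sing sec}; hence $\P = \chi(\P)$ if and only if $\perm^D(\P) = \chi(\perm^D(\P))$. This mild compatibility check, already implicit in Section~\ref{sym one sec}, is the only nontrivial piece; everything else is formal composition of isomorphisms, so the proof fits in a few lines.
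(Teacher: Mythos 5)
Your proof is correct and matches the paper's own argument: the paper also proves Corollary~\ref{isom b too} by observing that $\perm^B=\eta^{-1}\circ\perm^D\circ\zeta$ and that $\perm^B$, $\perm^D$ (restricted to $\chi$-fixed partitions), and $\zeta$ are all isomorphisms. Your additional remark about $\perm^D$ intertwining the two $\chi$-actions is a fair articulation of a point the paper leaves implicit.
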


Recall that Corollary~\ref{D big lattice} says that $[1,\eta(c)]^D_{T'\cup F'}$ is a lattice.
By the usual easy lattice-theoretic fact, the subposet of $[1,\eta(c)]^D_{T'\cup F'}$ consisting of elements fixed by~$\chi$ is a lattice.
Lemma~\ref{1cTF B} says that the subposet of $[1,\eta(c)]^D_{T'\cup L'}$ consisting of elements fixed by $\chi$ is the same lattice.
Corollary~\ref{isom b too} and Theorem~\ref{isom b} now imply the following corollaries.

\begin{cor}\label{B lattice}
The interval $[1,c]^B_{T\cup L}$ in $\Stildes$ is a lattice.
\end{cor}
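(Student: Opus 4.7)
The plan is to transfer the lattice structure of Corollary~\ref{D big lattice} through the sequence of identifications already established in this section. I would begin by invoking Corollary~\ref{D big lattice} to obtain that $[1,\eta(c)]^D_{T'\cup F'}$ is a lattice. Next I would apply the standard lattice-theoretic fact: if $\chi$ is an order automorphism of a lattice $L$, then the subposet $L^\chi$ of $\chi$-fixed elements is itself a lattice, because for $x,y\in L^\chi$ the element $x\join_L y$ satisfies $\chi(x\join_L y)=\chi(x)\join_L\chi(y)=x\join_L y$ and so lies in $L^\chi$, with the analogous argument for meets. Thus the subposet of $\chi$-fixed elements of $[1,\eta(c)]^D_{T'\cup F'}$ is a lattice.

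I would then chain the two identifications that convert this fixed-point lattice into $[1,c]^B_{T\cup L}$. Lemma~\ref{1cTF B} identifies the $\chi$-fixed subposet of $[1,\eta(c)]^D_{T'\cup L'}$ with the $\chi$-fixed subposet of $[1,\eta(c)]^D_{T'\cup F'}$, so the former is the same lattice. Corollary~\ref{isom b too} then asserts that $\eta$ is an isomorphism from $[1,c]^B_{T\cup L}$ onto the $\chi$-fixed subposet of $[1,\eta(c)]^D_{T'\cup L'}$. Composing these, $[1,c]^B_{T\cup L}$ inherits the lattice structure from $[1,\eta(c)]^D_{T'\cup F'}$, which is the corollary's claim.

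The one point worth flagging is that $\chi$ must genuinely be an order automorphism of $[1,\eta(c)]^D_{T'\cup F'}$ for the fixed-point argument to apply. This was however already observed in the discussion preceding Lemma~\ref{fixed}: the map $\chi$ permutes the alphabet $T'\cup F'$ (fixing the length function, since $\chi$ permutes $F_\doub$ among itself and $T'$ among itself), and in particular preserves the corresponding prefix order with endpoints $1$ and $\eta(c)$. Given this, the proof is essentially a direct chaining of results, and the main effort has already been expended in proving Theorem~\ref{isom b}, Corollary~\ref{isom b too}, and Lemma~\ref{1cTF B}; no significant obstacle remains.
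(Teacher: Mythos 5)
Your proposal is correct and follows essentially the same chain of reasoning as the paper: invoke Corollary~\ref{D big lattice}, pass to the $\chi$-fixed subposet via the standard lattice-theoretic fact, identify it with the $\chi$-fixed subposet of $[1,\eta(c)]^D_{T'\cup L'}$ using Lemma~\ref{1cTF B}, and transport along $\eta$ via Corollary~\ref{isom b too}. Your explicit verification that $\chi$ is an order automorphism of $[1,\eta(c)]^D_{T'\cup F'}$ is a sound elaboration of what the paper records in the discussion preceding Lemma~\ref{fixed}.
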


\begin{cor}\label{B NC lattice}
$\tNCBc$ is a lattice.
\end{cor}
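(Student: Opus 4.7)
The plan is to observe that this corollary is an immediate consequence of the two results cited just before it, plus the general (and elementary) fact that the property of being a lattice is preserved under poset isomorphism. So the proof will essentially be a single implication, stringing together Theorem~\ref{isom b} and Corollary~\ref{B lattice}.

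More precisely, I would first invoke Theorem~\ref{isom b}, which provides that $\perm^B$ is an isomorphism of posets from $\tNCBc$ to $[1,c]^B_{T\cup L}$. Next, I would apply Corollary~\ref{B lattice}, which asserts that $[1,c]^B_{T\cup L}$ is a lattice. Since meets and joins are defined purely in terms of the partial order (greatest lower bounds and least upper bounds), any poset that is isomorphic to a lattice is itself a lattice, with meets and joins transported along the isomorphism. In our situation, for $\P,\Q\in\tNCBc$, the meet $\P\meet\Q$ is $(\perm^B)^{-1}(\perm^B(\P)\meet\perm^B(\Q))$ and similarly for joins.

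There is really no obstacle here: all the heavy lifting was done in proving Theorem~\ref{isom b} (which reduces the annulus with one double point to the $\chi$-fixed part of the two-double-point setting via $\zeta$ and $\eta$) and Corollary~\ref{B lattice} (which in turn derives the lattice property from Corollary~\ref{D big lattice} together with the observation, via Lemma~\ref{1cTF B}, that the $\chi$-fixed subposet of the type-$\afftype{D}$ McCammond--Sulway lattice coincides with the $\chi$-fixed subposet of $[1,\eta(c)]^D_{T'\cup L'}$, plus the standard fact that the fixed-point subposet of a group action by lattice automorphisms on a finite lattice is itself a lattice). The corollary just packages these facts into a statement purely about the combinatorial model. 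Thus I would simply conclude: by Theorem~\ref{isom b}, $\perm^B$ is a poset isomorphism from $\tNCBc$ to $[1,c]^B_{T\cup L}$; by Corollary~\ref{B lattice}, the latter is a lattice; hence $\tNCBc$ is a lattice, as claimed.
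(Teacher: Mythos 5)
Your proof is correct and takes essentially the same route as the paper, which derives both Corollary~\ref{B lattice} and Corollary~\ref{B NC lattice} together at the end of the same chain of reasoning; for $\tNCBc$ the intended reading is exactly yours, namely composing Theorem~\ref{isom b} (which exhibits $\perm^B$ as a poset isomorphism onto $[1,c]^B_{T\cup L}$) with Corollary~\ref{B lattice}. One tiny inaccuracy in your parenthetical aside: the ``usual easy lattice-theoretic fact'' invoked for Corollary~\ref{B lattice} does not require finiteness (and the intervals here can be infinite in affine type); the fixed points of a lattice automorphism always form a sublattice, since meets and joins of $\chi$-fixed elements are $\chi$-fixed.
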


\subsection{Factored translations in affine type B}
We now show that $\tNCBc$ is isomorphic to McCammond and Sulway's lattice in affine type~$\afftype{B}_{n-1}$.
We follow the outline of Section~\ref{barred sec}, omitting some details.
We fix the Coxeter element 
\[c=s_0\cdots s_{n-1}=(\!(\cdots\,1\,\,\,2\,\cdots\,{n-2}\,\,\,\,\,{1+2n}\,\cdots)\!)\,({n-1}\,\,\,{n+1})_{2n},\] 
corresponding to the choice of $\pm(n-1)$ to be double points, $1,\ldots,n-2$ to be outer points, and $-1,\ldots,-n+2$ to be inner points.
As in type~$\afftype D$, statements for general~$c$ can be recovered using source-sink moves.
(See Section~\ref{change cox sec}.)

For this choice of Coxeter element, the horizontal reflections in $[1,c]_T$ are $(\!(i\,\,\,\,j)\!)_{2n}$ and $(\!(i\,\,\,\,{j-2n})\!)_{2n}$ for $1\le i<j\le n-2$ as well as $(\!({-n+1}\,\,\,\,{n-1})\!)_{2n}$ and $(\!({-n-1}\,\,\,\,{n+1})\!)_{2n}$.
These are the elements $\perm^B(\P)$ where $\P$ is of one of the following forms:
Either
$\P$ has exactly two nontrivial blocks, a pair of nonsymmetric disks, each containing exactly two numbered points, one containing two outer points and one containing two inner points; or~$\P$ has exactly one nontrivial block, a stitched disk containing no numbered points except the double points $\pm(n-1)$.

The orthogonal decomposition of $E_0$ is $U_0\oplus U_1\oplus U_2$ where $U_0$ is the span of $-2\rho_0+\rho_{n-2}+\rho_{n-1}$, $U_0\oplus U_1$ is the set of vectors $\sum_{i=0}^{n-1}c_i\rho_i$ with $c_{n-2}=c_{n-1}$, and~$U_2$ is the span of $\rho_{n-2}-\rho_{n-1}$.     


The translations in $[1,c]_T$ are ${(\!(\cdots\,a\,\,\,a+2n\,\cdots)\!)}(\!(\cdots\,b\,\,\,b-2n\,\cdots)\!)$ such that $a\in\set{1,\ldots,n-2}$ and $b\in\set{\pm(n-1)}$.
In $E$, the translations vectors are
\[
\rho_a-\rho_{a-1}
+
\begin{rcases}\begin{dcases}
-\rho_0&\text{if }a=1,\\
\rho_{n-1}&\text{if }a=n-2\\
\end{dcases}\end{rcases}
+
\{\pm\rho_{n-2}\mp\rho_{n-1}\text{ if }b=\pm(n-1)\}
\]

The factorization of translations in $[1,c]_T$ into two loops ${(\!(\cdots\,a\,\,\,a+2n\,\cdots)\!)}$ and $(\!(\cdots\,b\,\,\,b-2n\,\cdots)\!)$ corresponds to translations by 
\[{\rho_a-\rho_{a-1}+\begin{rcases}\begin{dcases}
-\rho_0&\text{if }a=1,\\
\rho_{n-1}&\text{if }a=n-2\\
\end{dcases}\end{rcases}
}\in U_0\oplus U_1\]
 and ${\{\pm\rho_{n-2}\mp\rho_{n-1}\text{ if }b=\pm(n-1)\}\in U_2}$.
This is the factorization of translations in $[1,c]_T$ into factors $\lambda_i+q_i\lambda_0$ for $i=1,2$ with $q_1=1$ and $q_2=0$.
Thus the factored translations are loops $\ell_a$ for $a$ outer (i.e.\ $a=1,\ldots,n-1$) and $\ell_b$ for $b$ double (i.e.\ $b=\pm(n-1)$).
We see that $[1,c]_{T\cup L}$ coincides with $[1,c]_{T\cup F}$ for $q_1=1$ and $q_2=0$.
As in Section~\ref{change cox sec}, the same is true for any choice of Coxeter element~$c$.
In light of Theorems~\ref{same interval group} and~\ref{isom b}, we have proved the following theorem.

\begin{theorem}\label{gen cox b}
For any Coxeter element $c$ of $\Stildeses$ and any $q_1$ and $q_2$, construct factored translations~$F$.
Then $\tNCBc$ is isomorphic to the interval $[1,c]_{T\cup F}$ in $\Stildes$.
\end{theorem}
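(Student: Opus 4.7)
The plan is to combine Theorem~\ref{isom b} with Theorem~\ref{same interval group} and the explicit factored-translation computation carried out for the distinguished Coxeter element just before the statement of Theorem~\ref{gen cox b}. First I would fix the particular Coxeter element $c=s_0\cdots s_{n-1}$ and the particular choice $q_1=1$, $q_2=0$, and verify carefully the claim made in the paragraph preceding the theorem that, for this choice, the set $F=L_\out\cup\set{\ell_{n-1},\ell_{-(n-1)}}$ of factored translations coincides with the full loop set~$L$ appearing in Section~\ref{aff sing sec}. This uses that the orthogonal decomposition of $E_0$ is $U_0\oplus U_1\oplus U_2$, that each translation in $[1,c]_T$ has translation vector $\lambda_\out+\lambda_\doub$ with $\lambda_\out\in U_0\oplus U_1$ and $\lambda_\doub\in U_2$, and that the corresponding two factors are exactly the two loops appearing in the natural factorization ${(\!(\cdots\,a\,\,\,a+2n\,\cdots)\!)}\cdot(\!(\cdots\,b\,\,\,b-2n\,\cdots)\!)$. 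Consequently $[1,c]_{T\cup F}=[1,c]^B_{T\cup L}$ as labelled posets, where lengths match because elements of $F$ associated to $U_1$ and $U_2$ both receive length~$1$ under $q_1=1,q_2=0$ (translations in $\Stildeses$ have reflection length~$2$, and each is split into two loops each of which we assign length~$1$, consistent with Section~\ref{gen sec}).

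Next I would invoke Theorem~\ref{isom b} to identify $[1,c]^B_{T\cup L}$ with $\tNCBc$ as posets via $\perm^B$, giving the theorem for this specific $c$ and this specific choice of $q_1,q_2$. To remove the dependence on the $q_i$'s, I would apply Theorem~\ref{same interval group}: for any other choice $q'_1+q'_2=1$, the corresponding interval $[1,c]_{T\cup F'}$ is isomorphic to $[1,c]_{T\cup F}$ as a labelled poset. Composing with the previously obtained isomorphism yields the claim for this $c$ and arbitrary $q_1,q_2$.

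Finally, I would remove the restriction on~$c$ via source-sink moves, following the template of Section~\ref{change cox sec}. Any Coxeter element of $\Stildeses$ is obtained from our distinguished one by a sequence of source-sink moves, each of which is conjugation by a simple reflection of $\Stildeses$; such conjugation acts compatibly on cycle notation in $\Stildes$, on the labelling of numbered points on $B$ into outer/inner/double (and hence on $\tNCBc$), and on the prescription defining $L_\out$, $F_\doub$ and thus~$F$. It therefore induces an isomorphism of the associated labelled intervals $[1,c]_{T\cup F}$ and an isomorphism of the noncrossing-partition models $\tNCBc$, and is compatible with $\perm^B$.

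The main obstacle is the first step: one must check carefully that, for the distinguished~$c$, the specific factorization $\lambda=\lambda_\out+\lambda_\doub$ of each translation in $[1,c]_T$ genuinely lies in $U_0\oplus U_1$ and $U_2$ respectively (so that the resulting factored translations are loops and not some more exotic element), and one must check compatibility of length assignments so that $[1,c]_{T\cup L}$ and $[1,c]_{T\cup F}$ coincide not merely as sets but as graded posets. This is the type-$\afftype{B}$ analog of Lemma~\ref{McSul 3} and Proposition~\ref{D trans loops}, but is simpler because the lower double point contributes a single irreducible horizontal component $\Psi_2$ of type~$A_1$ rather than the two components one sees in type $\afftype{D}$, so no further splitting of the loops at double points is needed.
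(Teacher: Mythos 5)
Your proposal matches the paper's argument: identify the factored translations with loops for the distinguished Coxeter element and $(q_1,q_2)=(1,0)$, conclude that $[1,c]_{T\cup F}$ coincides with $[1,c]^B_{T\cup L}$, apply Theorem~\ref{isom b}, and then remove the special choices via Theorem~\ref{same interval group} and source-sink moves as in Section~\ref{change cox sec}.

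One small correction to the first step: for the distinguished $c$ the set of factored translations is $F=\{\ell_1,\ldots,\ell_{n-2},\ell_{n-1},\ell_{-(n-1)}\}$, which is a \emph{proper} subset of $L=\{\ell_{\pm1},\ldots,\ell_{\pm(n-1)}\}$ (the inner-point loops $\ell_{-1},\ldots,\ell_{-(n-2)}$ are missing), so $F\neq L$. What the paper asserts, and what you actually need, is only that the \emph{intervals} $[1,c]_{T\cup F}$ and $[1,c]^B_{T\cup L}$ agree as graded posets; this holds because only loops based at outer or double points occur in reduced $(T\cup L)$-words for $c$, as is visible from the loops that arise in the proof of Proposition~\ref{cov ref loop b} via Proposition~\ref{cov ref loop}. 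With this replacement your argument goes through as written.
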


\bigskip

\noindent
\textbf{Acknowledgments.}
The author thanks Laura Brestensky for helpful conversations and two anonymous referees for helpful corrections and suggestions.

\end{document}